\let\OLDthebibliography\thebibliography
\renewcommand\thebibliography[1]{
	\OLDthebibliography{#1}
	\setlength{\parskip}{0pt}
	\setlength{\itemsep}{.8pt}
} % change biblography spacing
\newcommand{\Z}{{\mathbb Z}}
\newcommand{\R}{{\mathbb R}}
\newcommand{\G}{{\mathcal G}}
\newcommand{\Rt}{{\mathbb R}^{3}}
\newcommand{\Rto}{{\mathbb R}^{2, 1}}
\newcommand{\Rfo}{{\mathbb R}^{4, 1}}
\newcommand{\Rttwo}{{\mathbb R}^{3, 2}}
\newcommand{\RP}{\mathbb{R}\mathrm{P}}
\newcommand{\black}{b}
\renewcommand{\S}{{\mathcal S}}
\newcommand{\Sz}{{\mathcal S}_z}
\newcommand{\sn}{\ensuremath\operatorname{sn}}
\newcommand{\cn}{\ensuremath\operatorname{cn}}
\newcommand{\dn}{\ensuremath\operatorname{dn}}
\newcommand{\lorsca}[1]{\left<#1\right>} % lorentz sp
\newcommand{\lmsca}[1]{\left<#1\right>_{3, 2}} % lorentz mob sp
\newtheorem{theorem}{Theorem}[section]
\newtheorem{proposition}[theorem]{Proposition}
\newtheorem{lemma}[theorem]{Lemma and Definition}
\newtheorem{definition}[theorem]{Definition}
\newtheorem{corollary}[theorem]{Corollary}
\newenvironment{remark}{\par\noindent{\em Remark.} % \hskip\labelsep%
\normalfont}{\par}
\newlength{\subfigureheight}
\title[Constant mean curvature surfaces from ring patterns]{Constant mean curvature surfaces from ring patterns: Geometry from combinatorics}
\author{Alexander I.~Bobenko, Tim Hoffmann and Nina Smeenk}
\address{\hspace{-1.3\parindent} Alexander I.~Bobenko, Nina Smeenk
		\newline
		Institute of Mathematics, Secr. MA 8-3, TU Berlin, 10623 Berlin, Germany \newline
		Email: {\normalfont \{bobenko,smeenk\}@math.tu-berlin.de}
		 \newline \newline
		Tim Hoffmann 
		\newline
		Dept. of Mathematics, TU Munich, 85748 Garching, Germany\newline
		Email: {\normalfont hoffmant@ma.tum.de}}
\keywords{
  discrete differential geometry,
  constant mean curvature, 
  minimal surfaces,
  circle patterns,
  ring patterns,
  variational principles}
  \date{\today}
\begin{document}
\maketitle

\begin{abstract}
	We define discrete constant mean curvature (cmc) surfaces in the three-dimensional Euclidean and Lorentz spaces in terms of sphere packings with orthogonally intersecting circles. These discrete cmc surfaces can be constructed from orthogonal ring patterns in the two-sphere and the hyperbolic plane. We present a variational principle that allows us to solve boundary value problems and to construct discrete analogues of some classical cmc surfaces. The data used for the construction is purely combinatorial - the combinatorics of the curvature line pattern. In the limit of orthogonal circle patterns we recover the theory of discrete minimal surfaces associated to Koebe polyhedra all edges of which touch a sphere. These are generalized to two-sphere Koebe nets, i.e., nets with planar quadrilateral faces and edges that alternately touch two concentric spheres.
\end{abstract}
%\tableofcontents

\section{Introduction}
\label{sec:introduction}

In the last two decades, the field of discrete
differential geometry emerged on the border of differential
and discrete geometry, see the books \cite{bobenko2008discrete, DDG-OberwolfachBook, DDG-SFBBook, DDG-Crane, DDG-Gu}. 
Whereas classical differential geometry investigates
smooth geometric shapes (such as surfaces), and discrete geometry studies
geometric shapes with a finite number of elements (polyhedra), discrete differential 
geometry aims at a development of discrete equivalents of the
geometric notions and methods of surface theory. The latter appears then
as a limit of the refinement of the discretization. Current progress in this
field is to a large extent stimulated by its relevance for computer graphics
and visualization.

One of the central problems of discrete differential geometry is to find
proper discrete analogues of special classes of surfaces, such as minimal,
constant mean curvature, isothermic, etc. Usually, one can suggest various
discretizations with the same continuous limit which have quite different
geometric properties. The goal of discrete differential geometry is to find a
discretization which inherits as many essential properties of the smooth
geometry as possible.

Our discretizations are based on quadrilateral meshes, i.e., we discretize
parametrized surfaces. For the discretization of a special class of surfaces,
it is natural to choose an adapted parametrization. In this paper, we
consider discretizations in terms of circles and spheres, which are treated as 
conformal curvature line discretizations of surfaces, and develop a method 
to investigate a discrete model for constant mean curvature (cmc) surfaces. 

\begin{figure}[htbp]
	\begin{minipage}{.30\linewidth}
		\includegraphics[width=\linewidth]{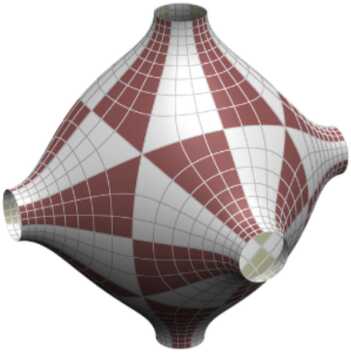}
	\end{minipage}
	\begin{minipage}{.30\linewidth}
		\includegraphics[width=\linewidth]{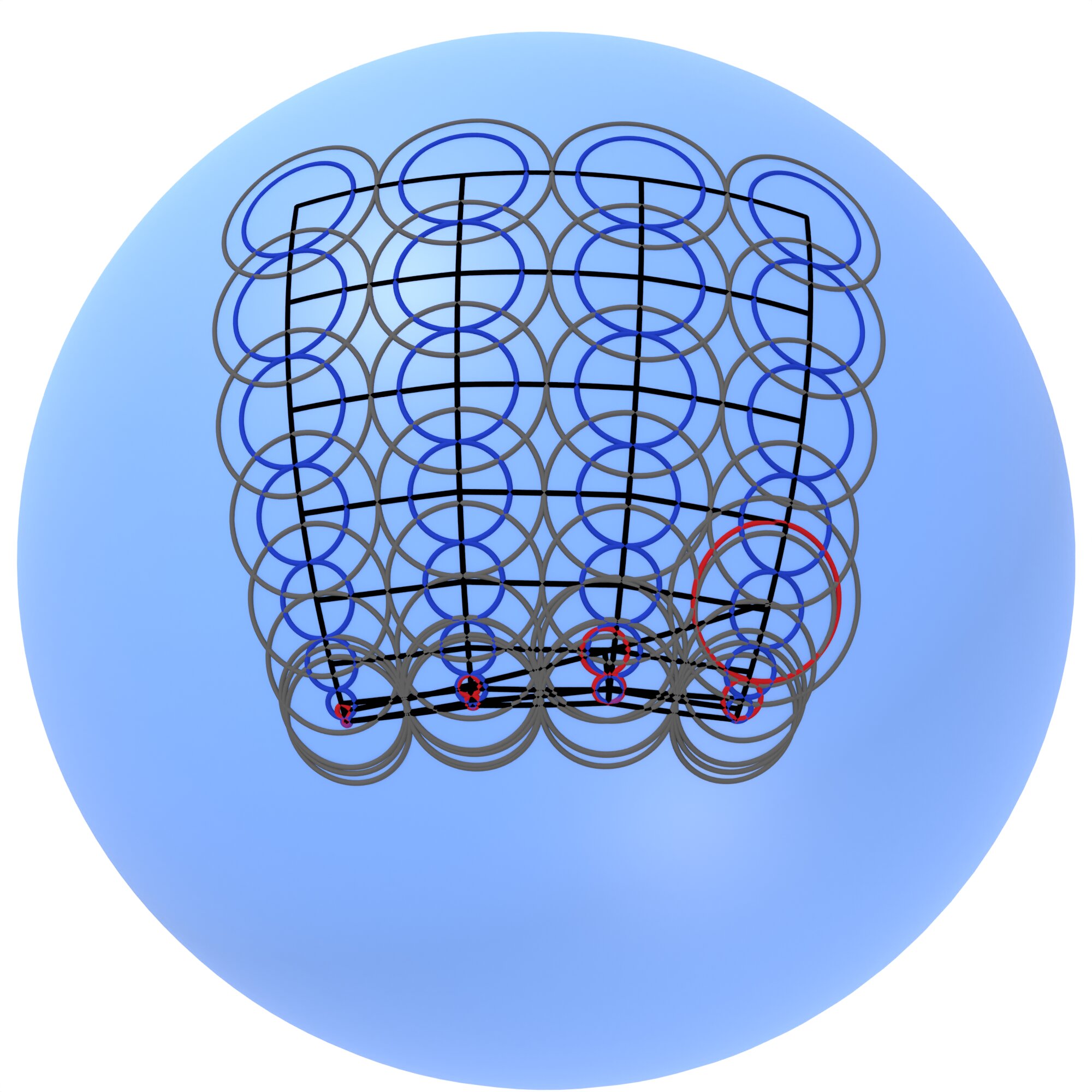}
	\end{minipage}
	%	\begin{minipage}{.325\linewidth}
		%		\includegraphics[width=\linewidth]{figures/Rt_Example_1_cmc}
		%	\end{minipage}
	\begin{minipage}{.30\linewidth}
		\includegraphics[width=\linewidth]{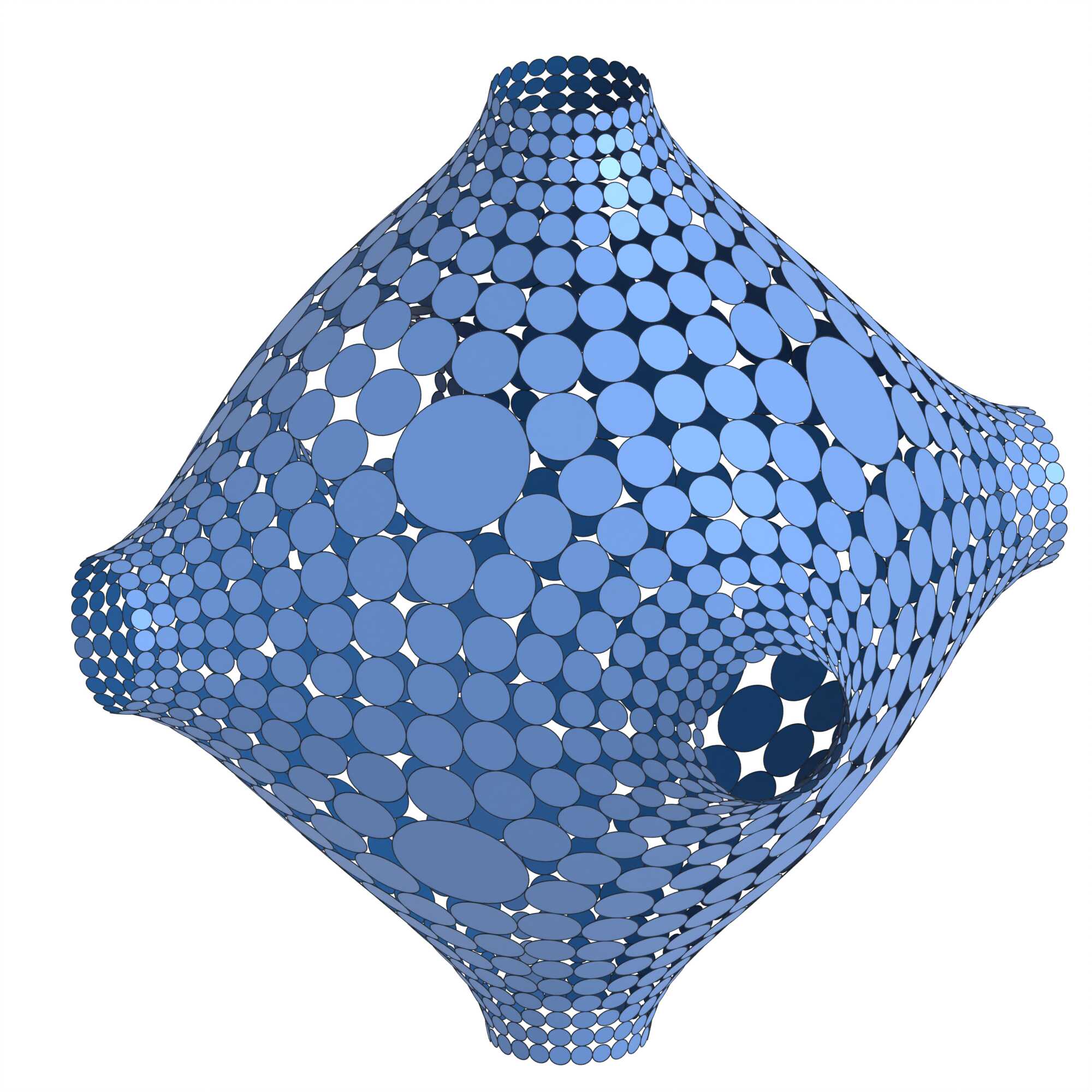}
	\end{minipage}
	
	\vspace{.5cm}
	
	\begin{minipage}{.30\linewidth}
		\includegraphics[width=\linewidth]{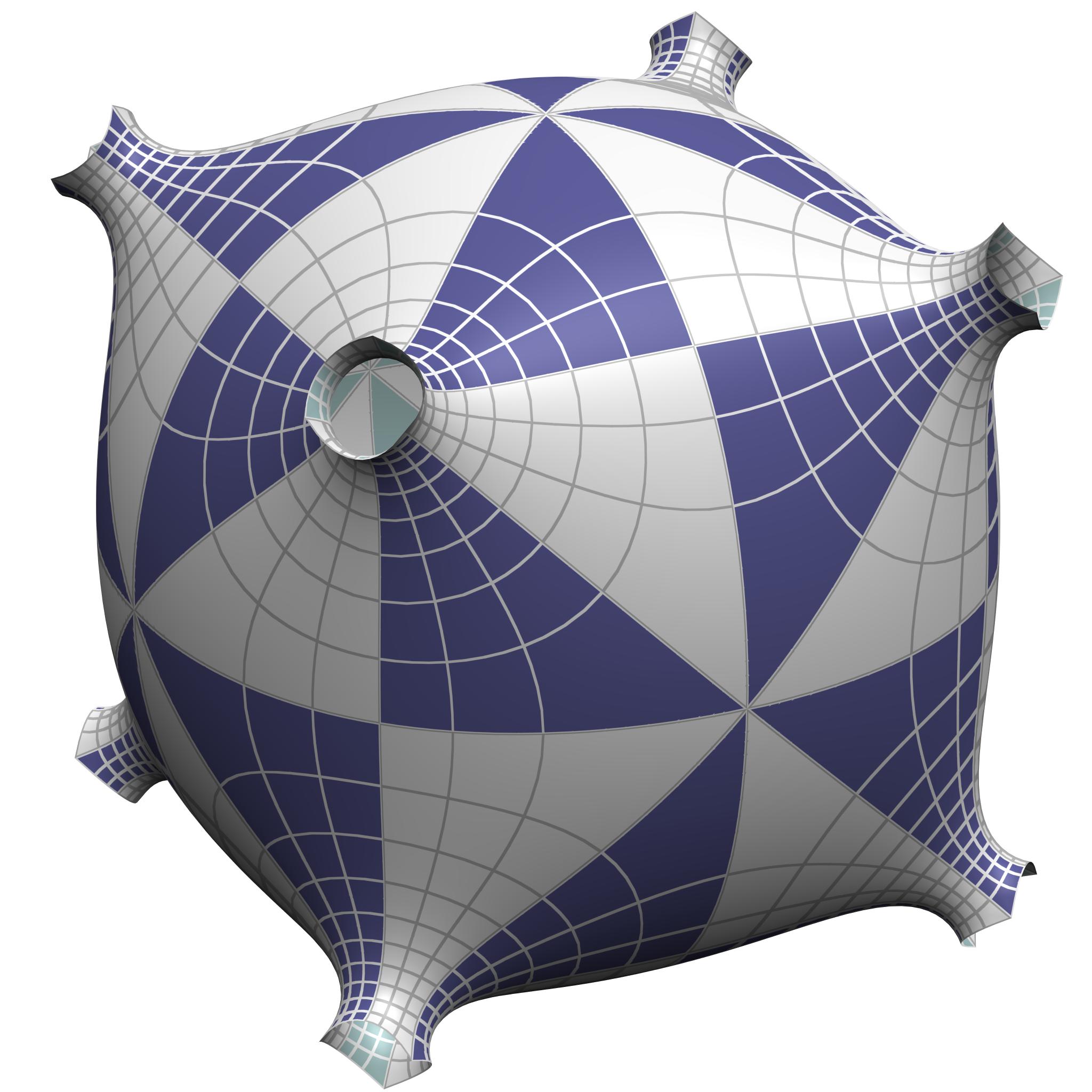}
	\end{minipage}
	\begin{minipage}{.30\linewidth}
		\includegraphics[width=\linewidth]{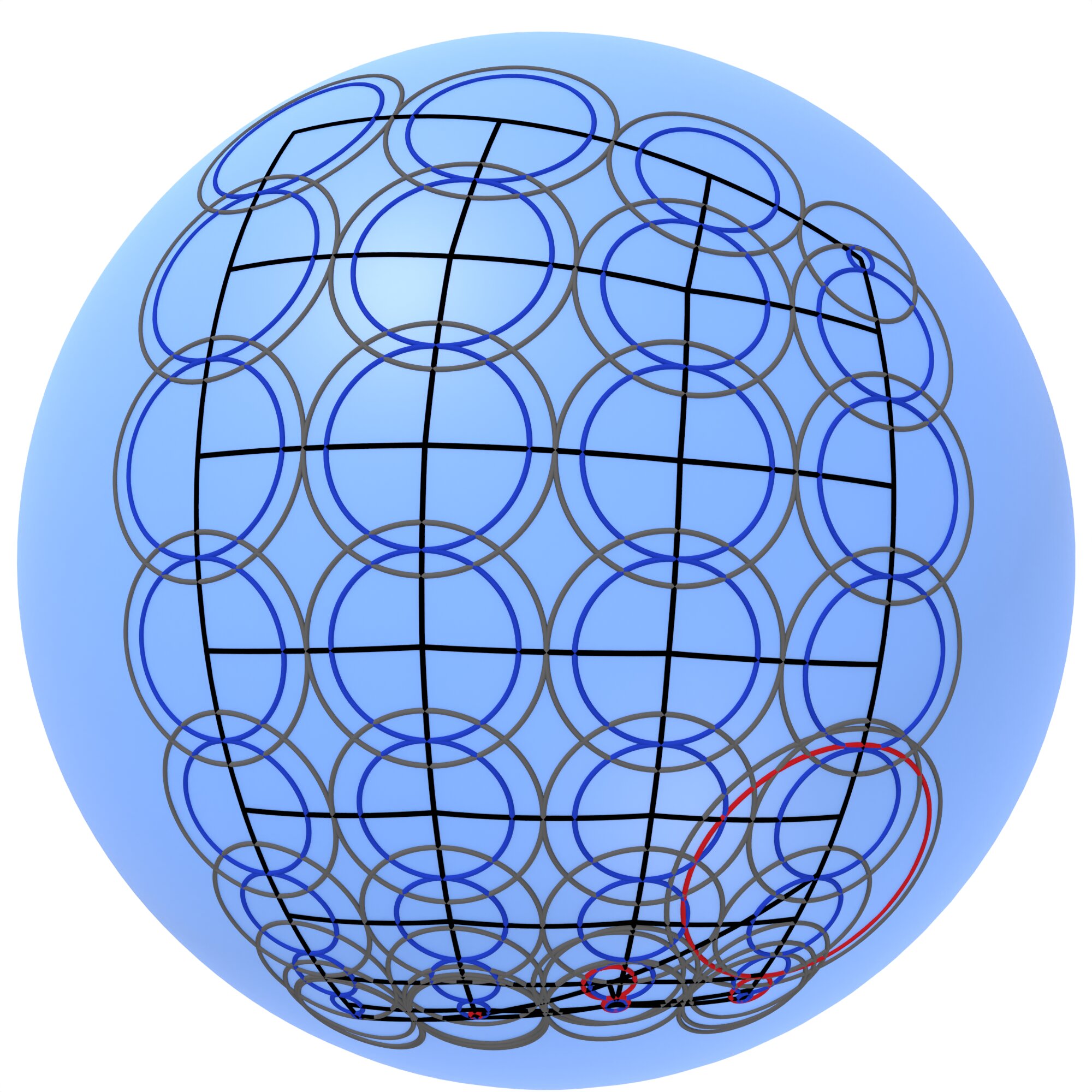}
	\end{minipage}
	%\begin{minipage}{.325\linewidth}
	%	\includegraphics[width=\linewidth]{figures/Rt_Example_2_cmc}
	%\end{minipage}7
	\begin{minipage}{.30\linewidth}
		\includegraphics[width=\linewidth]{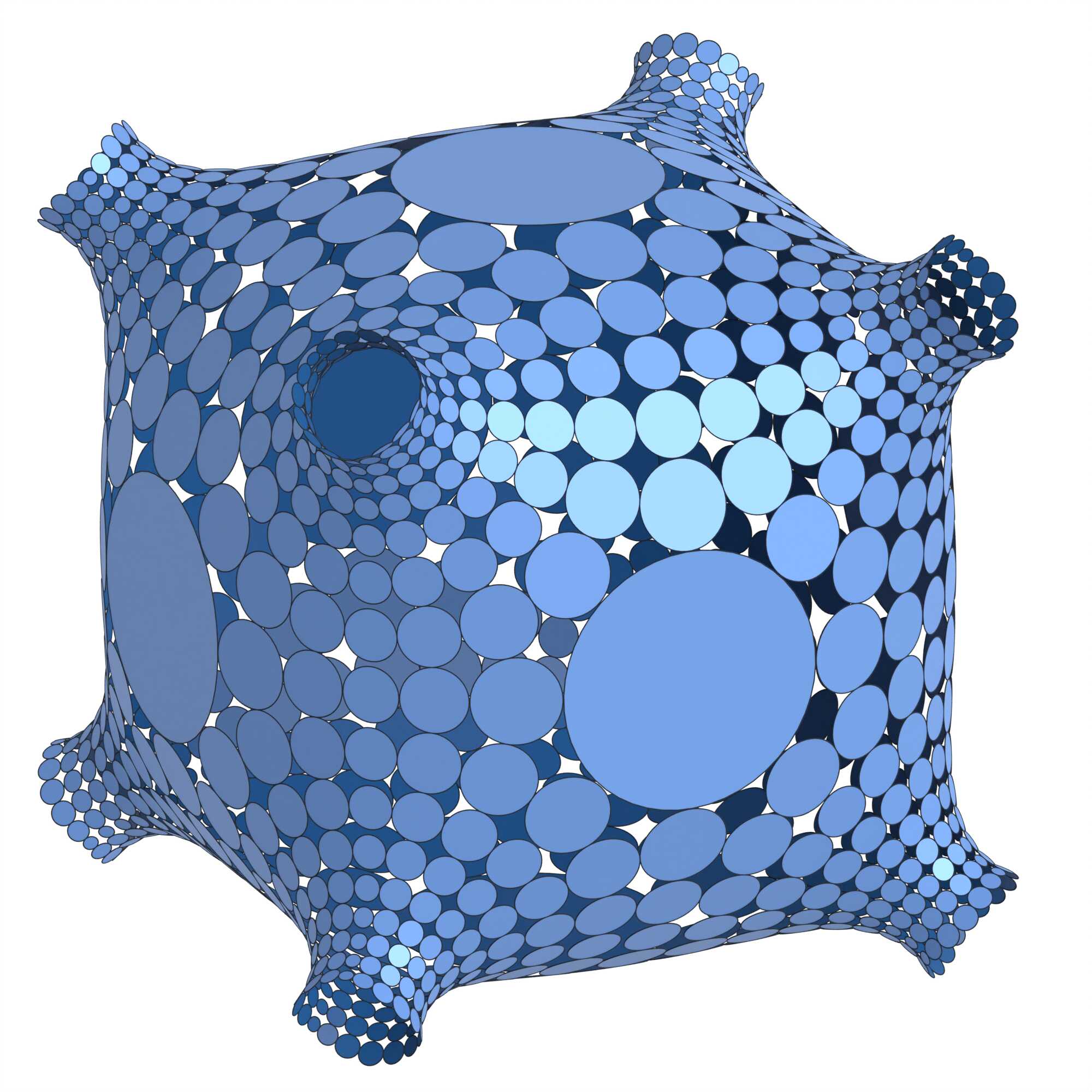}
	\end{minipage}
	\caption{Triply periodic cmc surfaces in $\Rt$: Schwarz's P surface (first line) and Schoen's I-WP surface (second line). The left column presents the smooth cmc surfaces from \cite{bobenko2021constant} obtained by the loop group method from the theory of integrable systems. The right column are the corresponding discrete cmc surfaces built by touching disks. Their spherical orthogonal ring patterns are shown in the middle.}
	\label{Fig:Introduction}
\end{figure}

We consider cmc surfaces as a subclass of isothermic surfaces. The
analogous discrete surfaces, {\em discrete S-isothermic surfaces} \cite{bobenko1999discretization, bobenko2008discrete}
consist of touching spheres, and of circles which intersect the spheres
orthogonally in their points of contact, see Section~\ref{sec:Rt_discrete_s_isothermic_surfaces} and Fig.~\ref{Fig:Rt_S_isothermic}. Isothermic surfaces are described by integrable equations, and thus belong to integrable differential geometry \cite{CiGoSy95-isothermic, bobenko1996discrete}.  They allow Darboux transformations and a duality transformation, called the Christoffel transformation. Smooth cmc surfaces $s$ are characterized among
isothermic surfaces by the property that their Christoffel dual $s^*$ is simultaneously a Darboux transform \cite{hertrich1997remarks}. In this case these surfaces are parallel, and define a common Gauss map $n=s^*-s$. 
The transformations and the characterization of cmc surfaces
carry over to the discrete domain. Thus, one arrives at the notion of {\em
  discrete S-isothermic cmc surfaces} \cite{hoffmann2010darboux, bobenko2016s}, or {\em discrete cmc surfaces}
for short. This definition is in complete agreement with the notion of discrete curvatures, see \cite{bobenko2010curvature}, defined for polyhedral surfaces using a discrete version of the classical Steiner formula. Indeed, Theorem~\ref{Thm:Rt_H=1} claims that the discrete mean curvature of the pair $(s,n)$ is constant.

As we show in Section~\ref{sec:Rt_discrete_s_isothermic_cmc_surfaces}, the role of the Gauss maps $n$ is played by a so called two-sphere Koebe net, a polyhedral surface with planar quadrilateral faces, all edges of which alternatively touch two concentric spheres $S^2_\pm$. This is a generalization of a classical Koebe polyhedron, all edges of which touch a common sphere $S^2$.
Koebe polyhedra are in one to one correspondence with orthogonal circle patterns on $S^2$. Using this identification,  Koebe \cite{Ko36} has shown that any 3-dimensional combinatorial convex polytope can be (essentially uniquely)
realized as a Koebe polyhedron. For generalizations and proofs of this theorem see \cite{BrSch93, schramm1992cage, Ri96, bobenko2004variational}, the latter also for a variational proof. In Section \ref{sec:Rt_two_spheres_Koebe_q_nets_and_spherical_orp} we show that two-sphere Koebe nets are in one to one correspondence to orthogonal ring patterns on a sphere. The latter were described analytically in \cite{bobenko2024rings}, including a variational description generalizing the description of orthogonal circle patterns. We present it in Section~\ref{sec:analytic_orp}. The variational principle allows us to construct particular orthogonal ring patterns determined by their combinatorics and also by some geometric boundary conditions. 

This leads to a construction method
for discrete S-isothermic cmc surfaces from orthogonal ring patterns.
Our general method to construct discrete cmc surfaces is
schematically shown in the following diagram, see also
Section~\ref{sec:Rt_constructing_discrete_s_isothermic_cmc_surfaces}:
\begin{center}
  continuous cmc surface \\
  {\large$\Downarrow$} \\
  image of curvature lines under Gauss map\\
  {\large$\Downarrow$} \\
  cell decomposition of (a branched cover of) the sphere\\
  {\large$\Downarrow$} \\
  spherical orthogonal ring pattern \\
  {\large$\Downarrow$} \\
  two-sphere Koebe Q-net\\
  {\large$\Downarrow$} \\
  discrete cmc surface
\end{center}

As usual in the theory of minimal and cmc surfaces \cite{HoKa97}, one starts
constructing such a surface with a rough idea of how it should look. To use
our method, one should understand the symmetries of its Gauss map and the {\em combinatorics}\/
of the curvature line pattern. The image of the curvature line pattern under
the Gauss map provides us with a cell decomposition of (a part of) $S^2$ or a
covering.  From these data we obtain a spherical orthogonal ring pattern with the prescribed combinatorics.  
Finally, a direct construction, see Theorem~\ref{Thm:Rt_Koebe_to_cmc}, recovers the pair $s, s^*$ of Christoffel dual cmc surfaces from their common Gauss map $n=s^*-s$.

Let us emphasize that our data, besides possibly boundary conditions, are
purely combinatorial -- the combinatorics of the curvature line pattern. All
faces are quadrilaterals and typical vertices have four edges. There may exist
distinguished vertices (corresponding to the ends or umbilic points of a
cmc surface) with a different number of edges.

The most nontrivial step in the above construction is the third one listed in
the diagram. It is based on the variational description of ring patterns. 
Under some constraints (see Sections~\ref{sec:Rt_constructing_discrete_s_isothermic_cmc_surfaces} and \ref{sec:Rt_discrete_cmc_surfaces_as_deformations_of_minimal_surfaces}) it implies the existence and 
uniqueness for the discrete cmc S-isothermic surface under
consideration, but not only this. This principle
provides us with a variational description of discrete cmc S-isothermic
surfaces and makes possible a solution of some Plateau problems as well.
It is also an effective tool for numerically constructing these surfaces.

In the limit of the zero mean curvature $H=0$, the two spheres coincide $S^2_\pm=S^2$, the ring patterns become circle patterns, and we recover the theory of discrete minimal surfaces developed in \cite{BHS_2006}. We describe this limit in Section~\ref{sec:Rt_discrete_cmc_surfaces_as_deformations_of_minimal_surfaces} and show that a generic discrete minimal surface possesses a one parameter deformation family of discrete cmc surfaces, parametrized by (small) $H$. This corresponds to a deformation of a circle pattern to ring patterns with the same combinatorial and boundary data and varying ``thickness'' parameter.  

In Section~\ref{sec:Rt_examples} we apply our method to construct some examples of periodic, reflectionally symmetric discrete cmc surfaces. In particular, we construct discrete cmc analogues of minimal surfaces of Schwarz and Schoen, shown in Fig.~\ref{Fig:Introduction}. The surfaces are composed of touching disks. The touching coins lemma (see \cite{BHS_2006}) ensures the existence of orthogonal spheres passing through their touching points.  
Without changes our method can be applied to construct cmc analogs of numerous discrete minimal surfaces found in \cite{BBuSe17}. For constructing one should simply replace the variational functional for circle patterns by the corresponding functional for ring patterns (see Section~\ref{sec:analytic_orp} and \cite{bobenko2004variational}), with the same convexity properties. Note that rotationally symmetric discrete cmc surfaces (discrete Delaunay surfaces) were previously constructed in \cite{bobenko2016s} by unrolling the billiard in an ellipse. The isometric deformation of discrete cmc surfaces, known as the associated family, was geometrically described in \cite{Ko18}. In particular the associated family of discrete Delaunay surfaces was constructed.  Discrete cmc surfaces have been applied in architectural geometry  for construction of doubly-curved building envelopes \cite{tellier2018discrete}.

The convergence of discrete minimal S-isothermic surfaces to smooth minimal surfaces was proven in \cite{BHS_2006}, see also a visualization \cite{BNeTe19}. The proof is based on Schramm's approximation result for circle patterns with the combinatorics of the square grid~\cite{schramm1997circle}. Later it was shown in \cite{lan2009c} that the convergence of minimal surfaces is in fact $C^\infty$, i.e., with all derivatives. Our numerical experiments with discrete cmc surfaces show an astonishingly good convergence, demonstrated in Fig.~\ref{Fig:Introduction}. It would be desirable to give a mathematical proof of this fact. The first step here is the approximation of (harmonic) Gauss maps by infinitesimal spherical orthogonal ring patterns shown in \cite{bobenko2024rings}. 

In Sections \ref{sec:Rto_discrete_s_isothermic_surfaces}-\ref{sec:Rto_discrete_cmc_surfaces_as_deformations_of_minimal_surfaces} we define and investigate spacelike discrete cmc surfaces in the Lorentz space $\R^{2,1}$. The considerations and results are quite similar to the Euclidean case. The main difference is that in this case the Gauss map $n$ connecting $s$ and $s^*$ forms a spacelike two spheres Koebe net, which can be identified with an orthogonal ring pattern in the hyperbolic plane. Unlike the spherical case, the corresponding variational functional is convex, see Section~\ref{sec:Rto_two_spheres_Koebe_q_nets_and_hyperbolic_orp} and \cite{bobenko2024rings}. This allows us to prove the existence and uniqueness of spacelike discrete cmc surfaces in $\R^{2,1}$ satisfying classical boundary conditions. Discrete maximal surfaces, which correspond to $H=0$ and to hyperbolic orthogonal circle patterns, are considered in Section~\ref{sec:Rto_discrete_cmc_surfaces_as_deformations_of_minimal_surfaces}.
Discrete spacelike maximal surfaces were recently linked to models in statistical mechanics \cite{ADMPS24a, ADMPS24b}.

It was shown in \cite{bobenko2024rings} that spherical and hyperbolic orthogonal ring patterns are described by the integrable Q4-equation \cite{AdSu04}. The latter is an equation with an elliptic spectral parameter, and is the master discrete integrable equation in the classification of \cite{AdBSu03}. This leads to an analytic description of discrete cmc surfaces in $\R^3$ and $\R^{2,1}$ in terms of loop groups. 
Known investigation methods of smooth cmc surfaces from an integrable point of view are mostly analytic. All integration methods, including the DPW factorization approach \cite{dorfmeister1998weierstrass}, algebro-geometric solutions \cite{Hi90, B91b}, are based on the loop group representations of the corresponding geometries. Although the DPW method was very productive in construction of new examples of cmc surfaces \cite{HeHeSch18}, precise mathematical proofs of their existence turned out to be a difficult issue. Recent progress here was achieved for n-noids and surfaces of high genus \cite{Tr20, HeHeTr23}. We do not explore the loop group description in the present paper and plan to address it in future publications. Here we expect a nice interplay of smooth and discrete theories enriching both.

Cmc-surfaces in $\R^3$ are isometric to minimal surfaces in $S^3$. This fact, known as the Lawson correspondence \cite{lawson1970complete}, can be lifted to a relation for the frames of the corresponding surfaces \cite{KGBPo97}. The Lawson correspondence is a powerful method to investigate embedded cmc-surfaces, by constructing the corresponding minimal surfaces in $S^3$, see for example \cite{GBKuSu03}. Analogously, cmc-surfaces in $\R^{2,1}$ are isometric to minimal surfaces in three-dimensional anti-de Sitter space $AdS_3$, which are popular models in the string theory.
It is a natural important problem to find a discrete Lawson correspondence, which should be a geometric construction of discrete minimal surfaces in $S^3$ and $AdS_3$ consisting of touching disks, from orthogonal ring patterns in the sphere and the hyperbolic plane.  

\section*{Acknowledgements} 
This research was supported by the DFG Collaborative Research Center TRR 109 ``Discretization in Geometry and Dynamics''.

\section{Discrete S-isothermic surfaces in $\Rt$}
\label{sec:Rt_discrete_s_isothermic_surfaces}

Motivated by the combinatorics of the curvature lines on smooth surfaces we consider quad graphs with interior vertices of even valence, typically four. We denote these graphs by $\G$ and their vertices, edges and faces by $V(\G), E(\G)$ and $F(\G)$ respectively. Recall that a quad graph is a quadrilateral cell decomposition of a surface, meaning all faces in $F(\G)$ are quadrilaterals. A special case is a cell complex $\G \subset \Z^2$ defined by a subset of elementary squares of the $\Z^2$ lattice in $\R^2$.

The edges of a graph $\G$ can be combinatorially divided into  'horizontal' and 'vertical' edges, which we consistently label by $i$ and $j$, see Figure \ref{Fig:graph_g}. We denote the vertices of an elementary quadrilateral of $\G$ by $v, v_i, v_{ij}$ and $v_j$, where $v$ is a vertex of $\G$, and $v_i$ and $v_j$ are its horizontal and vertical neighboring vertices, respectively. The vertex $v_{ij}$ is the fourth vertex of the elementary quadrilateral.

\begin{figure}[htbp]
	\centering
	\includegraphics[width=0.3\linewidth]{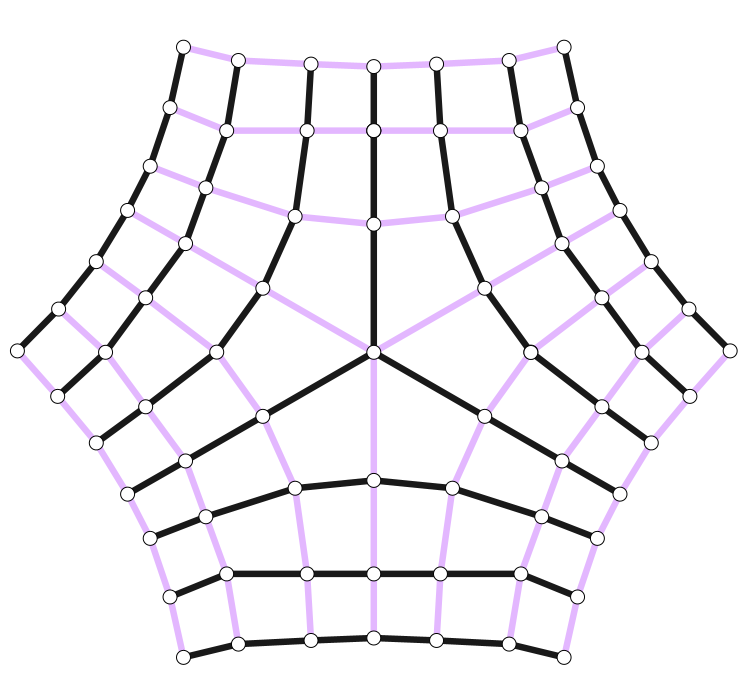}
	\caption{A quad graph $\G$ with interior vertices of even valency. The edges are divided into 'horizontal' $i$--edges (black) and 'vertical' $j$--edges (pink), such that in each quadrilateral the edges alternately change color.}
	\label{Fig:graph_g} 
\end{figure}
To proceed with the Möbius geometric characterization of S-isothermic nets, we recall some basic facts about Möbius geometry, see \cite{hertrich2003introduction} and \cite[Chapter 9]{bobenko2008discrete}.

Let $\R^{4, 1}$ be the five-dimensional Lorentz-Minkowski space equipped with the non-degenerate bilinear form 
\begin{align}
	\label{eq:Rt_msca}
\langle x, y \rangle_{4, 1} = x_1y_1 + x_2y_2 + x_3y_3 + x_4y_4 - x_5y_5.
\end{align}
 Let $e_1, ..., e_5$ denote the standard basis of $\R^{4,1}$ and let $e_0:=\frac{1}{2}(e_5-e_4)$ and $e_\infty:=\frac{1}{2}(e_4+e_5)$.

Points in $\R^3 \cup \{ \infty\}$ can be identified with points in the \emph{light cone} $$\hat{\mathbb{S}}_0 := \{\hat{x} \in \Rfo \ | \ \langle \hat{x}, \hat{x} \rangle_{4, 1} = 0\}$$ via the identification 
\begin{align}
\label{eq:Rt_point_lift}
\Rt \ni x \leftrightarrow & \ \hat{x} = x + e_0 + ||x||^2e_\infty \in \hat{\mathbb{S}}_0.
\end{align}
In this context, $x$ on the right hand side is understood as $x_1e_1 + x_2 e_2 + x_3 e_3 \in \Rfo$, and points of the form $\hat{x} = x + e_0 + ||x||^2e_\infty \in \hat{\mathbb{S}}_0$ are normalized such that $\langle \hat{x}, e_\infty \rangle_{4, 1} = - \frac{1}{2}$.
The point $\infty \in \Rt \cup \{ \infty\}$  is identified with $e_\infty$.

If $\Rfo$ is interpreted as the space of homogeneous coordinates of $\R P ^4$, points in $\R^3 \cup \{ \infty\}$  can be identified with the stereographic projection of points on
$$\mathcal{M} := \{ [\hat{x}] \in \R P ^4 | \langle \hat{x}, \hat{x} \rangle_{4, 1} = 0 \}, $$
which is called the \emph{M\"obius quadric}. A point of the form \eqref{eq:Rt_point_lift} in $\hat{\mathbb{S}}_0$ represents a special choice of homogeneous coordinates for the corresponding projective point in $\mathcal{M}$.
Similarly, points outside the Möbius quadric, i.e., 
$$[\hat{x}] \in \mathcal{M}_+ := \{ [\hat{x}] \in \R P ^4 | \langle \hat{x}, \hat{x} \rangle_{4, 1}  > 0 \}, $$
can be identified with non-oriented spheres in $\Rt$. Hyperplanes are considered as spheres with infinite radius. 
 A sphere with center $c$ and radius $d$ is represented by the projective point
 \begin{align*}
 	[\hat{s}] = [c + e_0 + (||c||^2-d^2)e_\infty] \in \mathcal{M}_+.
 \end{align*}
Its homogeneous coordinates can be normalized to
\begin{align*}
	%\label{eq:Rt_sphere_lift}
	%\Rt \supseteq  s_{c, d}  \leftrightarrow & \ 
	\hat{s} = \frac{1}{d} \left( c + e_0 + (||c||^2-d^2)e_\infty \right)
\end{align*} on the Lorentz unit sphere
	\begin{align}
		\label{eq:Rfo_unit_sphere}
		\hat{\mathbb{S}}_1 := \{\hat{x} \in \Rfo \ | \ \langle \hat{x}, \hat{x} \rangle_{4, 1} = 1\} \subset \Rfo.
	\end{align}
The sign of the radius $d$ encodes the orientation of the sphere.
Note that the spheres with different orientations $\pm d$ correspond to different points on $\hat{\mathbb{S}}_1$, but are represented by the same point in $\mathcal{M}_+$.

 General S-isothermic surfaces were introduced in \cite{bobenko2008discrete} as two-dimensional T-nets in $\Rfo$.

\begin{definition}
	\label{def:Rt_s_isothermic}
	A map
	$$s: V(\G) \rightarrow  \{\text{oriented spheres in } \Rt \}$$
	is called a \emph{discrete S-isothermic surface}
	if the corresponding map  ${\hat{s}: V(\G) \rightarrow \hat{\mathbb{S}}_1 \subset \Rfo}$, that maps oriented spheres to 
	$$\hat{s}=\frac{1}{d}\left( c + e_0 +(||c||^2-d^2)e_\infty\right),$$
	satisfies the discrete Moutard equation 
	\begin{align*}	%	\label{eq:Moutard_s_iso}
		\hat{s}_{v_{ij}} - \hat{s}_v  = a_{ij}(\hat{s}_{v_j} - \hat{s}_{v_i}).
	\end{align*}
	for some $a_{ij}: F(\G) \rightarrow \R$. 
\end{definition}

\begin{remark} Instead of considering the Lorentz unit sphere \eqref{eq:Rfo_unit_sphere} a sphere centered at the origin with an arbitrary radius 
$$\hat{\mathbb{S}}_\kappa := \{\hat{x} \in \Rfo \ | \ \langle \hat{x}, \hat{x} \rangle_{4, 1} = \kappa\}$$
can be used to identify its points with oriented spheres in $\Rt$. In this case the scaling factor of the homogeneous lift of a sphere $\hat{s}$ is given by  $\frac{\kappa}{r}$. In the limit $\kappa \rightarrow 0$ one obtains the original discrete isothermic surfaces introduced in \cite{bobenko1996discrete}. Since we do not consider this convergence in the present paper we restrict ourselves to the fixed value $\kappa=1$. \\
\end{remark}

Note that $\hat{s}$ is a \emph{Q-net}, i.e., the vertices $\hat{s}_v, \hat{s}_{v_i}, 
\hat{s}_{v_{ij}}, \hat{s}_{v_j}$ are coplanar. 
The elementary quadrilaterals of S-isothermic surfaces are called \emph{S-isothermic quadrilaterals}. 

\begin{corollary}
	\label{Cor:Rt_Q_congruences}
	Let $s_v, s_{v_i}, s_{v_{ij}}$ and $s_{v_j}$ be four the spheres of an S-isothermic quadrilateral. Then 
	\begin{equation}
		\label{eq:Rt_labelling_property}
		\begin{aligned} 
			\langle \hat{s}_v,  \hat{s}_{v_i} \rangle_{4, 1}  &= \langle \hat{s}_{v_j}, \hat{s}_{v_{ij}} \rangle_{4, 1} =:\alpha_i\\
			\langle \hat{s}_v, \hat{s}_{v_j} \rangle_{4, 1}  &= \langle \hat{s}_{v_i}, \hat{s}_{v_{ij}} \rangle_{4, 1} =:\alpha_j.
		\end{aligned}
	\end{equation}
Further they satisfy one of the three conditions shown in Figure \ref{Fig:Rt_Q_congruences}, namely
	\begin{enumerate}[(i)]
		\item They have a common orthogonal circle;
		\item They intersect in exactly two points;
		\item They intersect in exactly one point.
	\end{enumerate}
\end{corollary}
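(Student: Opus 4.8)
The statement has two parts: the labelling property \eqref{eq:Rt_labelling_property}, and the trichotomy (i)--(iii) on how the four spheres of a single S-isothermic quadrilateral relate to each other. For the labelling property, the plan is to exploit the Moutard equation together with the fact that $\hat{s}$ lies on the Lorentz unit sphere $\hat{\mathbb{S}}_1$. First I would compute $\langle \hat{s}_{v_i} - \hat{s}_v, \hat{s}_{v_{ij}} - \hat{s}_{v_j}\rangle_{4,1}$ and observe, using $\hat{s}_{v_{ij}} - \hat{s}_v = a_{ij}(\hat{s}_{v_j}-\hat{s}_{v_i})$ in the form $\hat{s}_{v_{ij}} - \hat{s}_{v_j} = \hat{s}_v - \hat{s}_{v_i} + (a_{ij}-1)(\hat{s}_{v_j}-\hat{s}_{v_i}) + \ldots$, that the pair of opposite ``edge vectors'' are proportional — indeed the Moutard equation says exactly that $\hat{s}_{v_{ij}}-\hat{s}_{v_i}$ and $\hat{s}_{v_j}-\hat{s}_v$ are parallel (both equal $\hat{s}_{v_j}-\hat{s}_v$ up to the combination encoded by $a_{ij}$), and similarly for the other pair. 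Then, since all four $\hat{s}$ have the same squared norm $1$, expanding $\langle \hat{s}_v - \hat{s}_{v_i}, \hat{s}_v - \hat{s}_{v_i}\rangle_{4,1}$ etc. and comparing the two expressions obtained from the parallel opposite edges yields $\langle \hat{s}_v, \hat{s}_{v_i}\rangle_{4,1} = \langle \hat{s}_{v_j}, \hat{s}_{v_{ij}}\rangle_{4,1}$, and symmetrically for the $j$-direction; this is the T-net (``labelling'') property and is the standard mechanism by which Moutard-type equations on a quadric produce edge labels.

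For the trichotomy, the key observation is that $\langle \hat{s}_v, \hat{s}_w\rangle_{4,1}$ for two oriented spheres, normalized to $\hat{\mathbb{S}}_1$, computes (a signed version of) their \emph{inversive distance}: writing $\hat{s} = \tfrac1d(c + e_0 + (\|c\|^2 - d^2)e_\infty)$ one gets, after a short computation with \eqref{eq:Rt_msca} and the definitions of $e_0, e_\infty$,
\begin{align*}
\langle \hat{s}_v, \hat{s}_w\rangle_{4,1} = \frac{\|c_v - c_w\|^2 - d_v^2 - d_w^2}{2\, d_v d_w}.
\end{align*}
Consequently $|\langle \hat{s}_v, \hat{s}_w\rangle_{4,1}| < 1$ means the two spheres intersect transversally in a circle, $= \pm 1$ means they are tangent, $> 1$ (or $< -1$) means they are disjoint and have a common orthogonal circle. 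The plan is therefore to show that within an S-isothermic quadrilateral all four pairwise ``distances'' that occur are the \emph{same type}: by the labelling property there are really only two scalars $\alpha_i, \alpha_j$ in play for the four ``edges'' of the quadrilateral, and the two ``diagonal'' pairings $\langle\hat{s}_v,\hat{s}_{v_{ij}}\rangle_{4,1}$ and $\langle\hat{s}_{v_i},\hat{s}_{v_j}\rangle_{4,1}$ must be controlled by $\alpha_i,\alpha_j$ and $a_{ij}$ via the Moutard equation — substituting $\hat{s}_{v_{ij}} = \hat{s}_v + a_{ij}(\hat{s}_{v_j}-\hat{s}_{v_i})$ into $\langle \hat{s}_v,\hat{s}_{v_{ij}}\rangle_{4,1}$ and $\langle\hat{s}_{v_i},\hat{s}_{v_{ij}}\rangle_{4,1}=\alpha_j$, $\langle\hat{s}_{v_j},\hat{s}_{v_{ij}}\rangle_{4,1}=\alpha_i$ gives two linear relations from which one can solve for $a_{ij}$ and for $\langle\hat{s}_{v_i},\hat{s}_{v_j}\rangle_{4,1}$ in terms of $\alpha_i,\alpha_j$. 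One then checks that the resulting configuration forces a single common orthogonal circle in case (i) (all $|\alpha|>1$-type data, the generic S-isothermic situation, where the four spheres share one orthogonal circle through their contact points), a common pair of intersection points in case (ii), or a single common point in case (iii), and that mixed types are incompatible with the linear relations. Drawing the figure referenced as Figure~\ref{Fig:Rt_Q_congruences} and matching the three branches to the sign/magnitude regimes of the $\alpha$'s is what pins down the statement.

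The main obstacle I expect is the last step: verifying that the three cases are genuinely exhaustive and mutually exclusive, i.e., that the linear constraints from the Moutard equation really do prevent, say, one opposite pair of spheres from intersecting in two points while another intersects in a circle that is not orthogonal-common. This is essentially a case analysis on the signature of the $2$-plane (or $3$-plane) spanned by the lifts $\hat{s}_v, \hat{s}_{v_i}, \hat{s}_{v_j}$ inside $\R^{4,1}$, combined with the coplanarity (Q-net) property already noted before the corollary. Concretely, the four lifts span at most a $3$-dimensional subspace $U$; the induced bilinear form on $U$ has a signature, and the three cases (i)--(iii) correspond to $U$ being Lorentzian with the common orthogonal circle being the conic $U^\perp \cap \mathcal{M}$, to $U^\perp$ being $1$-dimensional null (tangency along two points / one point after accounting for orientation), etc. Organizing this signature bookkeeping cleanly — rather than as a messy coordinate computation — is the part that needs care; everything else is a direct substitution into the bilinear form.
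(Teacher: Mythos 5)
Your closing paragraph is in fact the paper's proof of the trichotomy: the four lifts $\hat{s}_v,\hat{s}_{v_i},\hat{s}_{v_{ij}},\hat{s}_{v_j}$ span a three-dimensional subspace $U\subset\R^{4,1}$ (this is the Q-net/coplanarity property), and the three cases (i)--(iii) are exactly the classification of the induced metric on the two-dimensional complement $U^\perp$ as positive definite (a circle, represented by the spacelike plane $U^\perp$, orthogonal to all four spheres), of signature $(1,1)$ (two null lines in $U^\perp$, i.e.\ two points incident with all four spheres), or degenerate (one null line, one common point). Once you phrase it this way, the entire middle section of your proposal -- the inversive-distance formula, solving for $a_{ij}$, and the worry about excluding ``mixed types'' -- is unnecessary: the statement is not about pairwise relations between spheres but about the single subspace $U^\perp$ common to the whole quadrilateral, so exhaustiveness and exclusivity are automatic from the signature classification of a $2$-plane in a Lorentz space. (Minor slip: the points of the common orthogonal circle are the null lines in $U$, not in $U^\perp$; the circle itself is represented by the spacelike plane $U^\perp$.)

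There is, however, one step in your argument for the labelling property that would fail as written: opposite edge vectors of a Moutard quadrilateral are \emph{not} parallel. From $\hat{s}_{v_{ij}}-\hat{s}_{v}=a_{ij}(\hat{s}_{v_j}-\hat{s}_{v_i})$ one gets $\hat{s}_{v_{ij}}-\hat{s}_{v_j}=(\hat{s}_v-\hat{s}_{v_i})+(a_{ij}-1)(\hat{s}_{v_j}-\hat{s}_{v_i})$, which is proportional to $\hat{s}_v-\hat{s}_{v_i}$ only in degenerate situations; it is the \emph{diagonal} differences that are parallel. The correct (and standard) derivation, which is what the paper cites to \cite{bobenko2008discrete}, pairs the Moutard relation with the unit-norm condition on the diagonals: from $0=\lorsca{\hat s_{v_{ij}},\hat s_{v_{ij}}}_{4,1}-\lorsca{\hat s_{v},\hat s_{v}}_{4,1}=a_{ij}\lorsca{\hat s_{v_j}-\hat s_{v_i},\,\hat s_{v_{ij}}+\hat s_{v}}_{4,1}$ and $0=\lorsca{\hat s_{v_j},\hat s_{v_j}}_{4,1}-\lorsca{\hat s_{v_i},\hat s_{v_i}}_{4,1}=a_{ij}^{-1}\lorsca{\hat s_{v_{ij}}-\hat s_{v},\,\hat s_{v_j}+\hat s_{v_i}}_{4,1}$ one obtains two linear relations among the four products $\lorsca{\hat s_{v_j},\hat s_{v_{ij}}}_{4,1}$, $\lorsca{\hat s_{v_j},\hat s_{v}}_{4,1}$, $\lorsca{\hat s_{v_i},\hat s_{v_{ij}}}_{4,1}$, $\lorsca{\hat s_{v_i},\hat s_{v}}_{4,1}$, whose sum and difference give precisely \eqref{eq:Rt_labelling_property}. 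With that substitution your argument is complete and coincides with the paper's.
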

\begin{figure}
	\center
	\begin{overpic}[width=\linewidth]
		{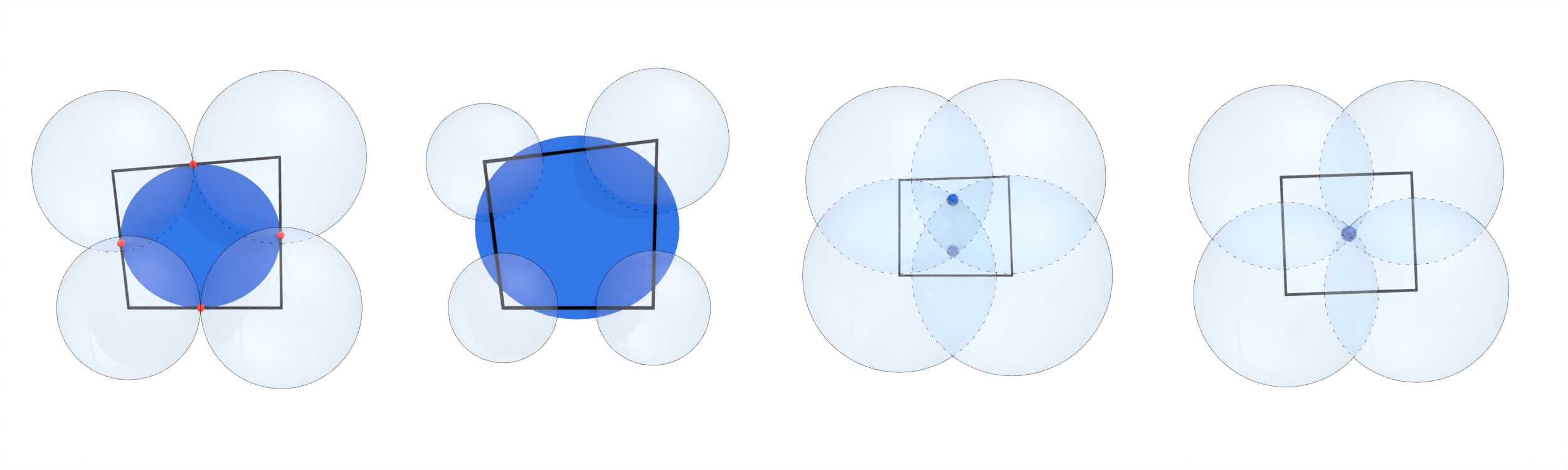}
		\put(5,2.5){\small touching case}
		\put(35.5,2.5){\small $(i)$}
		\put(59.5,2.5){\small $(ii)$}
		\put(84.5,2.5){\small $(iii)$}
	\end{overpic}
	\caption{Three types of S-isothermic quadrilaterals with vertex spheres $(i)$ possessing a common orthogonal circle, $(ii)$ intersecting in two points, or $(iii)$ intersecting in one point. The first image shows the special case of type $(i)$ where vertex spheres touch.}
	\label{Fig:Rt_Q_congruences}
\end{figure}
\begin{proof}
	As a Moutard net in the quadric $\hat{\mathbb{S}}_1$, the map $\hat{s}$ admits the labeling property  \eqref{eq:Rt_labelling_property} \cite{bobenko2008discrete}. The planarity of the faces of $\hat{s}$ corresponds to the spheres around a face in $\Rt$ satisfying one of the conditions $(i), (ii)$ or $(iii)$. Let us consider the three-dimensional linear subspaces in $\Rfo$ containing a face of $\hat{s}$. The induced metric on its orthogonal complements is either of signature $(2, 0)$, $(1, 1)$ or degenerate. These three cases correspond to the conditions $(i), (ii)$ and $(iii)$ respectively. For more details see \cite{bobenko2008discrete}. 
\end{proof}

\noindent S-isothermic surfaces are discrete K{\oe}nigs nets and invariant under Möbius transformations. 
The edge labels $\alpha_i$ and $\alpha_j$ have the meaning of cosines of the intersection angles of the neighboring spheres, resp. of their so-called inversive distance, if they do not intersect.

 If all quadrilaterals of an S-isothermic surface $s$ are of the same type, $s$ is said to be of type $(i)$, $(ii)$, or $(iii)$.
A special case of type $(i)$, introduced in \cite{bobenko1999discretization}, are S-isothermic surfaces where neighboring vertex spheres touch, called \emph{S$_1$-isothermic surfaces}, see Figure \ref{Fig:Rt_Q_congruences} (left).
In this case the orthogonal circles form inscribed circles of the quadrilaterals and neighboring circles touch in the same points as the spheres, see Figure  \ref{Fig:Rt_S_isothermic}. For the edge labels \eqref{eq:Rt_labelling_property} of an S$_1$-isothermic surface it is $\alpha_i, \alpha_j = \pm 1$. To ensure that  $\alpha_i \neq \alpha_j$ for each quadrilateral, one chooses an orientation of spheres such that neighboring spheres are in non-oriented contact along horizontal edges and in oriented contact along vertical edges, 
\begin{equation*}
%\label{eq:Rt_sphere_orientation}
\begin{aligned} 
\alpha_i &=\langle \hat{s}_v,  \hat{s}_{v_i} \rangle_{4, 1}  = \langle \hat{s}_{v_j}, \hat{s}_{v_{ij}} \rangle_{4, 1} =  -1\\
\alpha_j &= \langle \hat{s}_v, \hat{s}_{v_j} \rangle_{4, 1}  = \langle \hat{s}_{v_i}, \hat{s}_{v_{ij}} \rangle_{4, 1} =  1.
\end{aligned}
 \end{equation*}
Let $$c: V(\G) \rightarrow \Rt, \ v \mapsto c_v$$ denote the center net of the spheres of an S-isothermic net and let $d: V(\G) \rightarrow \R$ denote the corresponding signed radii.
Any S-isothermic surface possesses a dual S-isothermic surface \cite{bobenko2008discrete}.

\begin{figure}
	\centering
		\includegraphics[width=.6\linewidth]{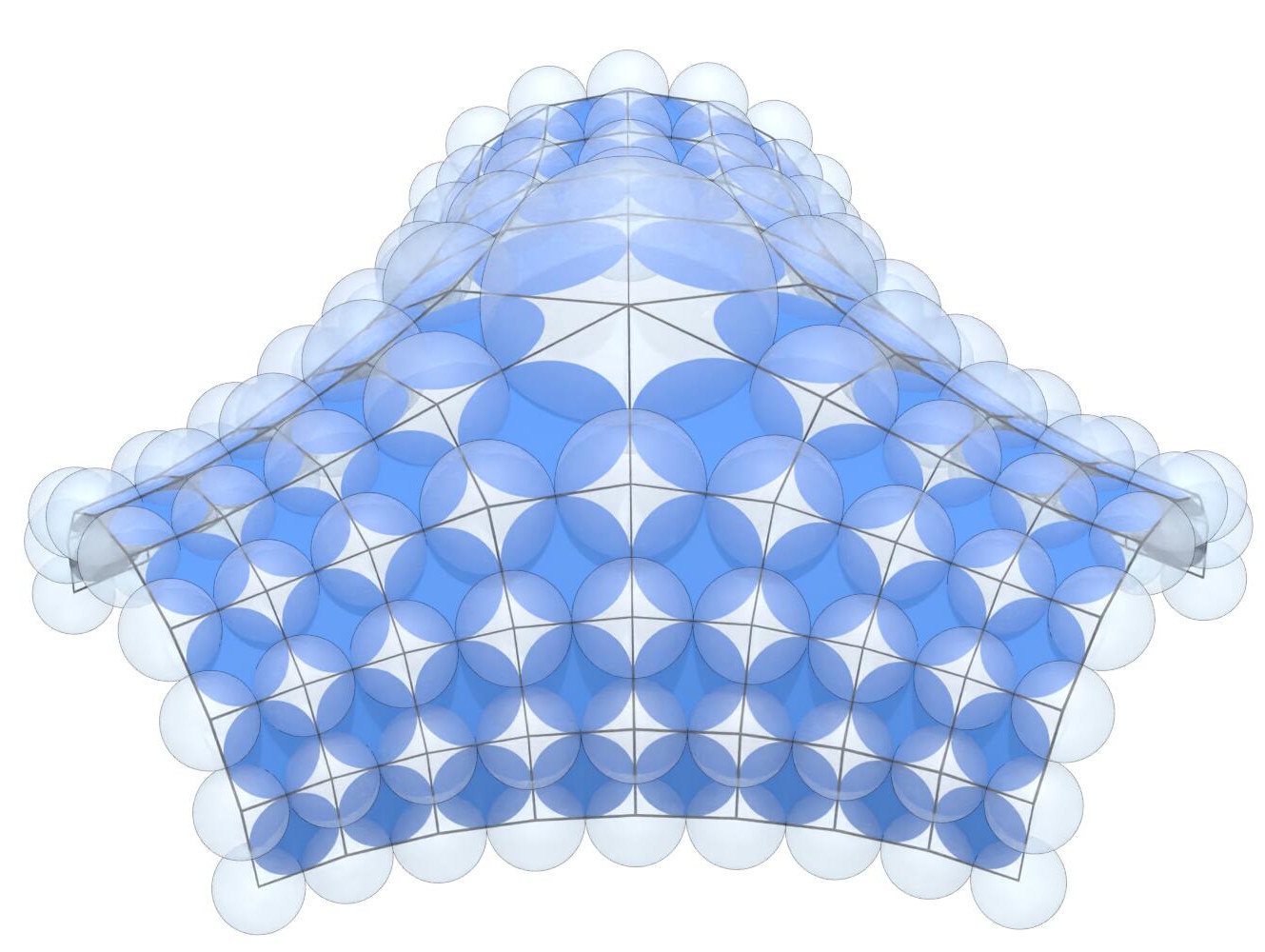}
	\caption{An S$_1$-isothermic surface. Each quad possesses an incircle that intersects adjacent spheres orthogonally and passes through  their touching points. The one large vertex sphere corresponds to an umbilic point, an inner vertex of $\mathcal{G}$ with valency greater than four. The figure shows a piece of the doubly periodic S$_1$-isothermic cmc surface $\psi(U_{2, 2})$  presented in Section \ref{sec:Rt_examples}.}
	\label{Fig:Rt_S_isothermic}
\end{figure}

\begin{lemma}
	\label{Def:Rt_Christoffel}
	Let $s$
	be an S-isothermic surface. Then the $\Rt$-valued discrete one-form $\partial c^*$ defined by
	\begin{align}
	\label{eq:Rt_one_form}
	\partial_{(v, v')}c^* = \frac{\partial_{(v, v')} c}{d_v d_{v'}},
	\end{align}
	with $(v, v')\in E(\G)$, is exact. Its integration defines (up to a translation) a function $$c^* : V(\G) \rightarrow \Rt, \ v \mapsto c^*_v.$$ 
	Define also $$d^* : V(\G) \rightarrow \R, \ v \mapsto d^*_v =\frac{1}{d_v}.$$ Then the map 
	\begin{align*}
	s^*: V(\G) \rightarrow \{\text{oriented spheres in } \Rt \}
	\end{align*} with spheres with 
	centers $c^*$ and radii $d^*$ is an S-isothermic surface, called \emph{Christoffel dual} to $s$.
\end{lemma}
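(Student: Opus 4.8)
The plan is to carry everything through the light-cone lift and to decompose the discrete Moutard equation of Definition~\ref{def:Rt_s_isothermic} along the splitting $\Rfo = \R^3 \oplus \R e_0 \oplus \R e_\infty$. Writing the sphere lift as $\hat s_v = \tfrac{c_v}{d_v} + \tfrac1{d_v}e_0 + \tfrac{||c_v||^2 - d_v^2}{d_v}e_\infty$, the equation $\hat s_{v_{ij}} - \hat s_v = a_{ij}(\hat s_{v_j} - \hat s_{v_i})$ on an elementary quadrilateral splits into one equation per summand. The $e_0$-part is $\tfrac1{d_{v_{ij}}} - \tfrac1{d_v} = a_{ij}\bigl(\tfrac1{d_{v_j}} - \tfrac1{d_{v_i}}\bigr)$, i.e. exactly the assertion that $d^*_v := 1/d_v$ satisfies the same discrete Moutard equation with the same $a_{ij}$; so $d^*$ is well defined with nothing to prove. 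The $\R^3$-part is $\tfrac{c_{v_{ij}}}{d_{v_{ij}}} - \tfrac{c_v}{d_v} = a_{ij}\bigl(\tfrac{c_{v_j}}{d_{v_j}} - \tfrac{c_{v_i}}{d_{v_i}}\bigr)$, and the $e_\infty$-part is the analogous equation with $c_w$ replaced by $||c_w||^2 - d_w^2$.

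Next I would establish exactness of $\partial c^*$ by verifying it is closed around every face, i.e. that $\partial_{(v,v')}c^* = \partial_{(v,v')}c/(d_v d_{v'})$, summed over the cycle $v\to v_i\to v_{ij}\to v_j\to v$, vanishes. Clearing the common denominator $d_v d_{v_i}d_{v_{ij}}d_{v_j}$ turns this into the single linear relation
$$ d_{v_{ij}}(d_{v_i}-d_{v_j})\,c_v + d_{v_j}(d_{v_{ij}}-d_v)\,c_{v_i} + d_v(d_{v_j}-d_{v_i})\,c_{v_{ij}} + d_{v_i}(d_v-d_{v_{ij}})\,c_{v_j} = 0, $$
and a direct elimination of $a_{ij}$ between the $e_0$- and $\R^3$-parts above yields precisely this identity (on the generic quads where $d_{v_i}\neq d_{v_j}$; the remaining ones follow by continuity). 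Hence $\partial c^*$ is closed and, on a simply connected $\G$, integrates to the vertex map $c^*$, unique up to a translation.

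It then remains to show that $s^*$, the spheres with centers $c^*$ and signed radii $d^*$, is again S-isothermic. Its lift $\hat s^*_v = \tfrac1{d^*_v}\bigl(c^*_v + e_0 + (||c^*_v||^2 - (d^*_v)^2)e_\infty\bigr)$ automatically lies on $\hat{\mathbb S}_1$: since $\langle e_0, e_\infty\rangle_{4,1} = -\tfrac12$ one gets $\langle \hat s^*_v, \hat s^*_v\rangle_{4,1} = 1$ identically in $c^*_v, d^*_v$, so $d^*$ really is the radius function. What is left is to check the discrete Moutard equation for $\hat s^*$; its $e_0$-part forces the coefficient $a^*_{ij} = (d_{v_{ij}}-d_v)/(d_{v_j}-d_{v_i})$, and with this value one has to confirm the $\R^3$- and $e_\infty$-parts using the definition of $c^*$ and the three component equations of the original net. (More conceptually: $\hat s$ is a discrete K{\oe}nigs net in $\hat{\mathbb S}_1$ satisfying the labeling property of Corollary~\ref{Cor:Rt_Q_congruences}, the Christoffel construction preserves this class \cite{bobenko2008discrete}, and it is an involution since rerunning it on $(c^*,1/d)$ gives back $\partial c^{**} = \partial c$ and $d^{**}=d$.) I expect the real obstacle to be the $e_\infty$-part of the dual Moutard equation -- the only nonlinear identity in this bookkeeping, as it couples $||c_w||^2$ and $||c^*_w||^2$. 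The cleanest way to close it is probably to not compute it head-on but to exploit the four quadric conditions $\langle \hat s^*_w, \hat s^*_w\rangle_{4,1}=1$ together with the labeling property, which pin down the $e_\infty$-coefficients once the $e_0$- and $\R^3$-parts are known.
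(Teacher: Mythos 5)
The paper does not actually prove this statement --- it is introduced as a Lemma and Definition preceded only by the remark ``Any S-isothermic surface possesses a dual S-isothermic surface'' with a citation to \cite{bobenko2008discrete} --- so your componentwise computation is already more than the paper offers. Your treatment of exactness is correct and complete: the identity you display is exactly what one gets by clearing denominators in the sum of \eqref{eq:Rt_one_form} around a face, and eliminating $a_{ij}$ between the $e_0$- and $\R^3$-parts of the Moutard equation does produce it. (The case $d_{v_i}=d_{v_j}$ needs no continuity argument: the $e_0$-part then forces $d_{v_{ij}}=d_v$, and every coefficient in your identity vanishes outright.) Together with simple connectivity of $\G$ this yields $c^*$ up to translation.

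The one step that is not closed is the one you flag yourself: the $e_\infty$-part of the dual Moutard equation. As stated, your proposed fix --- that the quadric conditions ``pin down the $e_\infty$-coefficients once the $e_0$- and $\R^3$-parts are known'' --- is not yet a proof: knowing each $e_\infty$-coefficient individually from $\langle\hat s^*_w,\hat s^*_w\rangle_{4,1}=1$ does not imply that the four of them satisfy the linear Moutard relation (indeed the $-1/d_w$ summands of these coefficients do \emph{not} satisfy it on their own; the cancellation genuinely couples them to the $\|c^*_w\|^2$ terms). The argument can, however, be closed along the lines you indicate, in two steps. First, a one-line computation from $\partial_{(v,v')}c^*=\partial_{(v,v')}c/(d_vd_{v'})$ and $d^*=1/d$ gives $\langle\hat s^*_v,\hat s^*_{v'}\rangle_{4,1}=\langle\hat s_v,\hat s_{v'}\rangle_{4,1}$ on every edge --- this is precisely the corollary following the lemma --- so $\hat s^*$ inherits the labeling property \eqref{eq:Rt_labelling_property} from $\hat s$. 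Second, by your $e_0$- and $\R^3$-computations the residual $R:=\hat s^*_{v_{ij}}-\hat s^*_v-a^*_{ij}(\hat s^*_{v_j}-\hat s^*_{v_i})$ equals $\rho\, e_\infty$ for some scalar $\rho$; pairing $R$ with $\hat s^*_v+\hat s^*_{v_{ij}}$ and using the quadric conditions together with the labeling property gives $\langle R,\hat s^*_v+\hat s^*_{v_{ij}}\rangle_{4,1}=0$, while $\langle e_\infty,\hat s^*_v+\hat s^*_{v_{ij}}\rangle_{4,1}=-\tfrac12(d_v+d_{v_{ij}})$, whence $\rho=0$ whenever $d_v+d_{v_{ij}}\neq 0$. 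With that supplement your proof is complete and, unlike the paper's, self-contained.
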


 With the chosen orientation of spheres, the definition of the one-form (\ref{eq:Rt_one_form}) implies that horizontal edges maintain their orientation, while vertical edges reverse their orientation in the Christoffel dual.

\begin{corollary}
	Christoffel duality preserves edge labels. In particular, it preserves the class of S$_1$-isothermic surfaces.
\end{corollary}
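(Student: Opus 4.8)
The plan is to verify the claim directly from the explicit definition of the Christoffel dual in Lemma~\ref{Def:Rt_Christoffel} and the characterization of edge labels via inversive distance. Recall from Corollary~\ref{Cor:Rt_Q_congruences} that the edge label of an edge $(v,v')$ is $\langle \hat{s}_v, \hat{s}_{v'}\rangle_{4,1}$, and a standard computation in Möbius geometry gives, for two oriented spheres with centers $c_v, c_{v'}$ and signed radii $d_v, d_{v'}$,
\begin{align*}
\langle \hat{s}_v, \hat{s}_{v'}\rangle_{4,1} = \frac{\|c_v - c_{v'}\|^2 - d_v^2 - d_{v'}^2}{2\, d_v\, d_{v'}}.
\end{align*}
So the first step is to record this formula (or cite the relevant place in \cite{bobenko2008discrete}), and then substitute the dual data $c^* , d^* = 1/d$ into it for an edge $(v,v')$.

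The second step is the substitution itself. Using $\partial_{(v,v')} c^* = (c_{v'} - c_v)/(d_v d_{v'})$ from \eqref{eq:Rt_one_form}, one gets $\|c^*_{v'} - c^*_v\|^2 = \|c_v - c_{v'}\|^2 / (d_v^2 d_{v'}^2)$, while $(d^*_v)^2 + (d^*_{v'})^2 = 1/d_v^2 + 1/d_{v'}^2$ and $2 d^*_v d^*_{v'} = 2/(d_v d_{v'})$. Plugging in,
\begin{align*}
\langle \hat{s}^*_v, \hat{s}^*_{v'}\rangle_{4,1}
= \frac{\frac{\|c_v - c_{v'}\|^2}{d_v^2 d_{v'}^2} - \frac{1}{d_v^2} - \frac{1}{d_{v'}^2}}{\frac{2}{d_v d_{v'}}}
= \frac{\|c_v - c_{v'}\|^2 - d_{v'}^2 - d_v^2}{2\, d_v\, d_{v'}},
\end{align*}
which is exactly $\langle \hat{s}_v, \hat{s}_{v'}\rangle_{4,1}$. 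Hence the edge label is unchanged by Christoffel duality, with the caveat that the orientation reversal on vertical edges noted after Lemma~\ref{Def:Rt_Christoffel} must be tracked: reversing the orientation of one sphere flips the sign of $d_{v'}$, hence flips the sign of the label, but the sign convention in \eqref{eq:Rt_one_form} is precisely what compensates, so that the numerical value $\alpha_i, \alpha_j$ (and in particular $\pm 1$) is preserved.

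The third step is the ``in particular'': if $s$ is S$_1$-isothermic, then $\alpha_i, \alpha_j \in \{\pm 1\}$, and by the preservation just established the dual surface $s^*$ again has all edge labels equal to $\pm 1$, so neighboring dual spheres are again in (oriented or non-oriented) contact; thus $s^*$ is S$_1$-isothermic. I would also remark that the orientation bookkeeping is consistent with the convention stated in the text, namely horizontal edges keep their contact type and vertical edges keep theirs after the combined orientation reversal, so that the requirement $\alpha_i \neq \alpha_j$ in every quadrilateral persists. The only mild obstacle is purely bookkeeping: getting the signs of the radii and the orientation-reversal convention to line up correctly, so that ``preserves edge labels'' is read with the orientations as fixed in the paragraph following Lemma~\ref{Def:Rt_Christoffel}; the algebraic identity itself is immediate.
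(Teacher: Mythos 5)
Your computation is correct and is exactly the verification the paper leaves implicit: the paper states this corollary without proof, relying on the fact that the edge label is the inversive distance, a rational expression in $c_v, c_{v'}, d_v, d_{v'}$ that is manifestly invariant under the substitution $c^*_{v'}-c^*_v = (c_{v'}-c_v)/(d_v d_{v'})$, $d^*=1/d$. Two small remarks. First, with the paper's conventions ($e_0=\tfrac12(e_5-e_4)$, $e_\infty=\tfrac12(e_4+e_5)$, so $\langle e_0,e_\infty\rangle_{4,1}=-\tfrac12$) the inversive distance comes out as
\begin{align*}
\langle \hat{s}_v, \hat{s}_{v'}\rangle_{4,1} \;=\; \frac{d_v^2+d_{v'}^2-\|c_v-c_{v'}\|^2}{2\,d_v d_{v'}},
\end{align*}
i.e.\ the negative of what you wrote (this is also the sign needed to make external tangency with equally signed radii give $\alpha_i=-1$ as in the text); since the overall sign is the same for $s$ and $s^*$, it cancels from the preservation argument and does not affect your conclusion. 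Second, your ``caveat'' about compensating sign flips is a red herring: the dual radii $d^*_v=1/d_v$ keep the sign of $d_v$, so no sphere orientation is reversed and no compensation is needed --- the identity $\langle \hat{s}^*_v,\hat{s}^*_{v'}\rangle_{4,1}=\langle \hat{s}_v,\hat{s}_{v'}\rangle_{4,1}$ holds on the nose for every edge. The remark after Lemma~\ref{Def:Rt_Christoffel} concerns the direction of the dual \emph{edge vector} (which reverses exactly when $d_vd_{v'}<0$, i.e.\ on vertical edges of an S$_1$-net), not the sphere orientations, and it is not needed for the label computation. With these points cleaned up, your argument, including the deduction that labels $\pm1$ persist and hence tangency of neighboring dual spheres persists, is complete.
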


When extending the combinatorics of an S$_1$-isothermic surface by introducing its face circle centers and its touching points as new vertices (referred to as the \emph{central extension}), one obtains a related discrete isothermic surface \cite{bobenko1999discretization}. In this extension, all faces of the isothermic surface form kites with one  pair of opposite right angles. Dualizing the isothermic surface yields a dual isothermic surface that is itself the central extension of the Christoffel dual of the S$_1$-isothermic surface. The following corollary is a direct consequence when considering the dual of the central extension.

\begin{corollary}
	The radii $d_f$ and $d_f^*$ of the orthogonal circles of an S$_1$-isothermic surface and its Christoffel dual are related by \begin{align*}
	d_f^* = \frac{1}{d_f}.
	\end{align*}
\end{corollary}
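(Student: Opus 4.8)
The plan is to deduce the identity from the behaviour of the Christoffel transformation on the central extension, as indicated in the paragraph preceding the statement. First I would record the geometry of the central extension $\tilde s$ of the S$_1$-isothermic surface $s$: its vertices are the sphere centers $c_v$, the face-circle centers $c_f$, and the touching points $t_e$ on the edges $e$ of $\G$, and its faces are the kites $(c_v,t_e,c_f,t_{e'})$, where $t_e,t_{e'}$ are the two touching points on the boundary of the face $f$ incident to $v$. Since $t_e$ lies on the sphere $s_v$ and on the incircle of $f$, the four edges of such a kite have lengths $|c_vt_e|=|c_vt_{e'}|=d_v$ and $|c_ft_e|=|c_ft_{e'}|=d_f$; moreover, because the incircle meets $s_v$ orthogonally at its touching points, the kite has right angles at $t_e$ and $t_{e'}$, and a short computation (or the known fact that central extensions of S$_1$-isothermic surfaces are discrete isothermic) shows that every such kite has cross-ratio $-1$.

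Next I would invoke two facts already available: (a) $\tilde s$ is a discrete isothermic surface whose Christoffel dual $\tilde s^{*}$ is the central extension of $s^{*}$; and (b) under the Christoffel transformation of a discrete isothermic surface the dual of an edge $e$ is parallel to $e$ with $|e^{*}|=\lambda_e/|e|$, where $\lambda_e$ is a common edge label of $e$ and its opposite edge in each face, and the cross-ratio of a face is the ratio of the two labels occurring in it. Applying this to a kite of $\tilde s$, the dual face is the kite $(c_v^{*},t_e^{*},c_f^{*},t_{e'}^{*})$ of $\tilde s^{*}$ with side lengths $\lambda_1/d_v,\ \lambda_2/d_f,\ \lambda_1/d_f,\ \lambda_2/d_v$ in cyclic order, where $\lambda_1,\lambda_2$ are the two labels of the kite. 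Since $\tilde s^{*}$ is again a central extension its faces are kites, which forces $|\lambda_1|=|\lambda_2|=:\lambda$ (consistent with cross-ratio $-1$); thus the dual kite has sides $\lambda/d_v$ and $\lambda/d_f$.

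Finally I would pin down $\lambda=1$ from what is already known about the vertex spheres: by Lemma \ref{Def:Rt_Christoffel} the vertex spheres of $s^{*}$ have radii $d_v^{*}=1/d_v$, and these are exactly the distances from $c_v^{*}$ to the touching points of $\tilde s^{*}$, i.e.\ the $\lambda/d_v$-sides of the dual kite. Hence $\lambda/d_v=1/d_v$, so $\lambda=1$, and therefore the remaining side $\lambda/d_f$ of the dual kite — which is the distance from $c_f^{*}$ to the touching points of $\tilde s^{*}$, hence the radius $d_f^{*}$ of the orthogonal circle of $s^{*}$ at $f$ — equals $1/d_f$.

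I expect the only genuine work to be the bookkeeping in the middle step: matching the cyclic order of a kite's edges with their labels under the Christoffel rule and checking the sign conventions so that a single factor $\lambda$ is really forced. Everything else is either the geometry of the central extension recalled above or has been established earlier in the section; in particular, once one is willing to quote that the cross-ratio $(-1)$ Christoffel transform simply replaces each edge length $\ell$ by $1/\ell$, the argument collapses to "the dual of a kite with sides $d_v,d_f$ is a kite with sides $1/d_v,1/d_f$," of which $d_v^{*}=1/d_v$ and $d_f^{*}=1/d_f$ are the two readings.
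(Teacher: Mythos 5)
Your argument is exactly the one the paper intends: the corollary is stated as ``a direct consequence when considering the dual of the central extension,'' and your proposal fills in precisely that reasoning --- the central extension is a discrete isothermic net of cross-ratio $-1$ kites with side lengths $d_v$ and $d_f$, its Christoffel dual is the central extension of $s^*$, and the edge-length inversion of the Christoffel transform (normalized via $d_v^*=1/d_v$ from Lemma and Definition~\ref{Def:Rt_Christoffel}) yields $d_f^*=1/d_f$. The proposal is correct and follows the same route, just with the bookkeeping made explicit.
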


 Sometimes it is more convenient to define the Christoffel up to scaling (and translation). In general, there exists an arbitrary global constant $\lambda \in \mathbb{R}$ such that 
\begin{equation}
	\label{eq:scaling_christoffel}
	d_vd_v^* = d_f d_f^* = \lambda. 
\end{equation}
In Definition \ref{Def:Rt_Christoffel} we have fixed the scaling by choosing $\lambda = 1$.

\section{Discrete cmc surfaces in $\Rt$}
\label{sec:Rt_discrete_s_isothermic_cmc_surfaces}

S-isothermic nets belong to integrable discrete differential geometry \cite{bobenko2008discrete}. S-isothermic nets with underlying $\Z^2$ combinatorics can be extended consistently to the $\mathbb{Z}^n$ lattice so that all its two-dimensional coordinate subnets are S-isothermic. The generalization to the combinatorics of $\mathcal{G}$ is straight forward. This property can be interpreted
as a transformation of two-dimensional S-isothermic nets called \emph{Darboux transform} \mbox{\cite{hoffmann2010darboux, bobenko2008discrete}}. 

\begin{definition}
	\label{Def:Rt_Darboux}
	Two S-isothermic surfaces $s$ and $s^+$ are called a \emph{Darboux pair} if the corresponding  Moutard nets $\hat{s}, \hat{s}^+ : V(\G) \rightarrow \Rfo $ are related by a Moutard transformation. In particular the transformation faces $(\hat{s}_v, \hat{s}_{v'}, \hat{s}^+_{v'}, \hat{s}^+_{v})$ fulfill the Moutard equation 
	\begin{align*}
		\hat{s}^+_{v'}- \hat{s}_v  = a_+(\hat{s}^+_{v} - \hat{s}_{v'})
	\end{align*}
	with some $a_+: E(\G) \rightarrow \R$. 
In this case, one surface is called a \emph{Darboux transform} of the other.
\end{definition}
A Darboux transform $s^+$ of an S-isothermic net $s$ is uniquely determined by the choice of one of the spheres of $s^+$. A Darboux transform has an associated constant parameter $\alpha$, which arises from the edge labeling property of the Moutard equation:
\begin{align}
	\label{eq:alpha}
	-2\alpha  := -2\langle \hat{s}_v,  \hat{s}_{v}^+ \rangle_{4, 1} = ||c_v - c_v^+||^2 - \left( d_v^2 + {d_v^+}^2\right).
\end{align}

\begin{corollary}
	\label{Cor:Rt_Darboux}
	Darboux transformations preserve edge labels. 
	In particular, they preserve the class of S$_1$-isothermic surfaces.
\end{corollary}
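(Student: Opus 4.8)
The plan is to trace how the defining data of a Darboux transform interacts with the edge labeling data $\alpha_i,\alpha_j$ of the two S-isothermic surfaces, and in particular to show that the labels of a horizontal/vertical edge of $s$ coincide with the labels of the corresponding edge of $s^+$. Since $s$ and $s^+$ are related by a Moutard transformation, by Definition~\ref{Def:Rt_Darboux} all the elementary hexahedra (with vertices $\hat{s}_v,\hat{s}_{v_i},\hat{s}_{v_{ij}},\hat{s}_{v_j}$ on the ``bottom'' and $\hat{s}^+_v,\hat{s}^+_{v_i},\hat{s}^+_{v_{ij}},\hat{s}^+_{v_j}$ on the ``top'') have the property that each of their six faces is a planar quadrilateral satisfying a Moutard equation, so each such hexahedron is again (a piece of) a three-dimensional Moutard net in $\hat{\mathbb S}_1$. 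The labeling property \eqref{eq:Rt_labelling_property}, already invoked in the proof of Corollary~\ref{Cor:Rt_Q_congruences}, then applies not just to the two horizontal/vertical edge families of $\G$ but, consistently, to the ``transformation'' edge direction as well: opposite edges of every face of the hexahedron carry equal Möbius products.

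Concretely, first I would fix an elementary quadrilateral of $\G$ and its image hexahedron under the Darboux transformation. Second, I would apply the labeling property of the Moutard net $\hat{s}$ on the bottom face to get $\langle\hat{s}_v,\hat{s}_{v_i}\rangle_{4,1}=\langle\hat{s}_{v_j},\hat{s}_{v_{ij}}\rangle_{4,1}=\alpha_i$ and likewise for $\alpha_j$, and the same for the top face $\hat{s}^+$ with labels $\alpha_i^+,\alpha_j^+$. Third — the crucial step — I would use the labeling property on the four side faces, e.g. the face $(\hat{s}_v,\hat{s}_{v_i},\hat{s}^+_{v_i},\hat{s}^+_v)$: its pair of edges in the $\G$-direction gives $\langle\hat{s}_v,\hat{s}_{v_i}\rangle_{4,1}=\langle\hat{s}^+_v,\hat{s}^+_{v_i}\rangle_{4,1}$, i.e. $\alpha_i=\alpha_i^+$; and the analogous side face in the $j$-direction gives $\alpha_j=\alpha_j^+$. (Equivalently, one can argue directly from the Moutard equation $\hat{s}^+_{v'}-\hat{s}_v=a_+(\hat{s}^+_v-\hat{s}_{v'})$ together with $\langle\hat{s}_v,\hat{s}_v\rangle_{4,1}=\langle\hat{s}^+_{v'},\hat{s}^+_{v'}\rangle_{4,1}=1$: taking the inner product of the equation with $\hat{s}_v+\hat{s}^+_{v'}$ and separately with $\hat{s}_{v'}+\hat{s}^+_v$ forces $\langle\hat{s}_v,\hat{s}_{v'}\rangle_{4,1}=\langle\hat{s}^+_v,\hat{s}^+_{v'}\rangle_{4,1}$, which is the statement that the $\G$-edge label is preserved along the transformation.) This establishes that Darboux transformations preserve the edge labels $\alpha_i$ and $\alpha_j$.

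For the second sentence of the corollary, I would recall that the class of S$_1$-isothermic surfaces is characterized, among S-isothermic surfaces, purely by the values of the edge labels, namely $\alpha_i=-1$ and $\alpha_j=1$ on every quadrilateral (together with the orientation convention of neighbouring spheres being in non-oriented contact along $i$-edges and oriented contact along $j$-edges, as fixed in Section~\ref{sec:Rt_discrete_s_isothermic_surfaces}). Since $\alpha_i=\alpha_i^+$ and $\alpha_j=\alpha_j^+$, if $s$ is S$_1$-isothermic then so is $s^+$, giving the claim.

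I expect the only genuine obstacle to be bookkeeping: making sure that the ``$\G$-direction'' edge of a side face of the hexahedron really is an edge whose two endpoints are $\hat{s}_v$ and $\hat{s}_{v_i}$ (so that its label is exactly $\alpha_i$), and that the Moutard/labeling formalism of \cite{bobenko2008discrete} has indeed been set up so that a Darboux transform of an S-isothermic net produces a bona fide three-dimensional Moutard net to which the labeling property applies face-by-face. Both points are part of the integrable-systems framework already cited in the excerpt (the consistent extension of S-isothermic nets to $\Z^n$ and the interpretation of the extra direction as a Darboux transform), so once that is invoked the argument is short; the direct computation via the Moutard equation and $\langle\hat{s},\hat{s}\rangle_{4,1}=1$ can be given as a self-contained alternative if one prefers not to appeal to the general theory.
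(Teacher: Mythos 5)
Your proposal is correct and follows exactly the route the paper intends: the corollary is stated without proof as a consequence of the edge labeling property of the Moutard equation applied to the transformation faces, which is precisely your third step, and the characterization of S$_1$-isothermic surfaces by $\alpha_i=-1$, $\alpha_j=1$ then gives the second claim. Your self-contained computation is also valid — pairing the Moutard equation with $\hat{s}_v+\hat{s}^+_{v'}$ and with $\hat{s}_{v'}+\hat{s}^+_v$ and \emph{adding} the two resulting identities yields $\langle\hat{s}_v,\hat{s}_{v'}\rangle_{4,1}=\langle\hat{s}^+_v,\hat{s}^+_{v'}\rangle_{4,1}$ (their difference gives the constancy of the Darboux parameter $\alpha$).
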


Smooth cmc surfaces can be characterized through their Darboux and Christoffel transformations \cite{hertrich1997remarks}. 
\begin{theorem}
	\label{Thm:Hetrich_Pedit}
	A smooth isothermic surface $f$ is  a surface of constant mean curvature $H\neq 0$ if and only if the (correctly scaled and positioned) Christoffel transform $f^*$ is also a Darboux transform. In this case, $f^*$ is the parallel surface at distance $\frac{1}{H}$ .
\end{theorem}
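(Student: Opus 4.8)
The plan is to work in conformal curvature-line coordinates $(x,y)$ on the isothermic immersion $f$, so that the first fundamental form is $I = e^{2u}(dx^2+dy^2)$, the second fundamental form is diagonal, $I\!I = e^{2u}(\kappa_1\,dx^2+\kappa_2\,dy^2)$, and $H=\tfrac12(\kappa_1+\kappa_2)$. In these coordinates the Christoffel dual is the surface $f^{*}$ determined by $df^{*}=e^{-2u}(f_x\,dx-f_y\,dy)$, and Rodrigues' equations $n_x=-\kappa_1 f_x$, $n_y=-\kappa_2 f_y$ give, for the parallel surface $f^{t}:=f+t\,n$,
\[
df^{t}=(1-t\kappa_1)\,f_x\,dx+(1-t\kappa_2)\,f_y\,dy .
\]
The statement splits into two halves: (a) for a cmc surface the Christoffel dual coincides, up to a global scaling and a translation, with a parallel surface $f^{t}$, and $t=\tfrac1H$; (b) among isothermic surfaces this coincidence together with the Darboux property characterizes cmc. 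I would establish (a) first and use the resulting chain of equivalences for both directions of (b).

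For (a): comparing the two one-forms above, $f^{t}$ is a pointwise scalar multiple $\lambda(p)\,df^{*}$ exactly when $1-t\kappa_1=\lambda e^{-2u}=-(1-t\kappa_2)$, i.e.\ when $t(\kappa_1+\kappa_2)=2$ and $\lambda=\tfrac12\,t\,e^{2u}(\kappa_2-\kappa_1)$. Writing this with $t=1/H$ (a priori a function), the surface $f+\tfrac1H n$ always shares with $f^{*}$ its tangent planes and its curvature-line net; it agrees with $f^{*}$ up to a \emph{global} affine scaling precisely when $\lambda=-\tfrac1H\cdot\tfrac12 e^{2u}(\kappa_1-\kappa_2)$ is constant. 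Now recall that the Hopf differential of $f$, which in conformal curvature-line coordinates is the real-valued function $Q=\tfrac14 e^{2u}(\kappa_1-\kappa_2)$, is holomorphic iff $H$ is constant; but a real holomorphic function on a connected domain is constant, so $H$ constant forces $Q$ constant, hence $\lambda=-2Q/H$ constant and $t=1/H$ constant. This already gives half of (a) and the last sentence of the theorem, and the equivalences run backwards for the converse.

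For the Darboux property in the forward direction, set $f^{+}:=f+\tfrac1H n$ (the copy of $f^{*}$ rescaled by $\lambda$ and translated as in (a)). I would first exhibit the enveloped sphere congruence: the spheres centered on the middle surface $f+\tfrac1{2H}n$ with radius $\tfrac1{2H}$. One checks that each such sphere passes through $f(p)$ with tangent plane $T_{f(p)}$ and through $f^{+}(p)$ with tangent plane $T_{f^{+}(p)}$ (both are $\operatorname{span}\{f_x,f_y\}$), and that the congruence is Ribaucour, its curvature lines coinciding with the coordinate lines of both $f$ and $f^{+}$ — immediate from $n_x\parallel f_x$, $n_y\parallel f_y$. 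It then remains to verify the extra compatibility that upgrades a Ribaucour pair to a Darboux pair: that $v:=f^{+}-f=\tfrac1H n$ solves the defining first-order (Riccati/Moutard-type) system of the Darboux transformation with a \emph{constant} parameter. Expanding $dv=\tfrac1H\,dn$ (using $dH=0$) against $df$ and $df^{*}$ and invoking the Codazzi equations $(\kappa_1)_y=(\kappa_2-\kappa_1)u_y$, $(\kappa_2)_x=(\kappa_1-\kappa_2)u_x$ reduces this to an identity, and the Darboux parameter is read off as a constant expressible through $H$ and $\lambda$. The converse is the mirror image: if the correctly scaled and positioned $f^{*}$ is a Darboux transform of $f$, then the Ribaucour part together with the Christoffel equations forces $f^{*}$ to be, pointwise, a parallel surface $f+t\,n$, whence $t(\kappa_1+\kappa_2)=2$ and $\lambda=\tfrac12 t\,e^{2u}(\kappa_2-\kappa_1)$ as in (a); the \emph{full} Darboux system (constancy of its parameter — i.e.\ the Moutard structure, not merely $\lambda$ being constant) then forces $t$ itself to be constant, so $H=2/t$ is constant and nonzero, and $f^{*}$ is the parallel surface at distance $1/H$.

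The main obstacle is the translation of ``Darboux transform'' into a concrete, usable first-order system for $v=f^{+}-f$ and the verification, in both directions, that it holds \emph{exactly} when $H$ is constant. The Ribaucour (sphere-congruence) part is routine; the delicate point is the additional conformal/Moutard compatibility that singles out Darboux transforms among Ribaucour transforms, together with the careful bookkeeping of the free scaling constant $\lambda$ and of the orientation of the normal (the sign relating $n^{*}$ to $n$), which has to be tracked precisely so that the parameter of the Darboux transformation comes out constant and the parallel distance comes out to be exactly $1/H$.
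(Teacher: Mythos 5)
First, a point of reference: the paper does not prove this statement at all --- it is quoted as a known result of Hertrich-Jeromin and Pedit with a citation to \cite{hertrich1997remarks}, so there is no in-paper proof to compare against. Your proposal is therefore measured against the classical argument, and its overall architecture (conformal curvature-line coordinates, Rodrigues' equations, comparison of $df^{t}$ with $df^{*}$, constancy of the Hopf differential) is exactly the standard one. Part (a) is essentially complete and correct; in particular the observation that a real holomorphic function is constant, so that $H$ constant forces $Q$ and hence $\lambda$ constant, is the right mechanism. The converse is also essentially right, and is in fact easier than you make it: once $f^{*}=f+t\,n$ with $df^{*}$ a constant multiple of the (tangential) Christoffel form, the vanishing of the normal component of $df^{*}$ already gives $dt=0$ directly, without appealing to constancy of the Darboux parameter. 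Note the arithmetic slip there: $t(\kappa_1+\kappa_2)=2$ together with $H=\tfrac12(\kappa_1+\kappa_2)$ gives $H=1/t$, not $H=2/t$; as written your converse contradicts the final sentence of the theorem.

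The genuine gap is the one you flag yourself: the forward verification that $f^{+}=f+\tfrac1H n$ is a \emph{Darboux} transform is asserted (``reduces to an identity'') but never carried out, and the proposal never commits to a working definition of the Darboux transformation against which to check it. Exhibiting the enveloped sphere congruence and the Ribaucour property is, as you say, routine; but Ribaucour is strictly weaker than Darboux, and the proof stands or falls on the extra condition. The cleanest way to close this with what you already have on the page: a Ribaucour congruence is a Darboux congruence precisely when the induced correspondence between the envelopes is conformal, and your own formula $df^{+}=(1-\kappa_1/H)f_x\,dx+(1-\kappa_2/H)f_y\,dy$ together with $\kappa_1+\kappa_2=2H$ gives $df^{+}=\tfrac{\kappa_2-\kappa_1}{2H}\,(f_x\,dx-f_y\,dy)$, hence $I^{+}=\bigl(\tfrac{\kappa_2-\kappa_1}{2H}\bigr)^{2}e^{2u}(dx^2+dy^2)$, so $(x,y)$ are conformal curvature-line coordinates for $f^{+}$ as well and the correspondence is conformal. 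Until either that criterion (with a reference or proof) or the explicit Riccati/Moutard system you allude to is written down and checked, the forward implication is a plan rather than a proof.
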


This characteristic property was used to define discrete isothermic surfaces of constant mean curvature \cite{hertrich1999discrete} and S-isothermic cmc surfaces \cite{hoffmann2010darboux}.

\begin{definition}
	\label{Def:Rt_cmc}
	An S-isothermic surface $s$ is an \emph{S-isothermic surface of constant mean curvature}, or \emph{S-cmc surface} for short, if its Christoffel dual $s^*$ is simultaneously a Darboux transform (after appropriate scaling and translation).
\end{definition}

The pair $s, s^*$ of an S-cmc surface and its Christoffel dual is called an \emph{S-cmc pair}.

\subsection{The Gauss map of S$_1$-cmc surfaces}
\label{sec:Gauss_map}

We now restrict our considerations of S-cmc surfaces to the case of touching spheres, i.e., to S$_1$-cmc surfaces.

The Gauss map of a smooth surface is a map to the unit sphere $S^2$. Natural discretizations of Gauss maps  are polyhedra
 with vertices on $S^2$, faces tangent to $S^2$, or edges tangent to $S^2$.
 These discretizations lead to circular surfaces \cite{bobenko2010curvature}, conical surfaces \cite{liu2006geometric, pottmann2008focal}, and surfaces of Koebe type \cite{BHS_2006}, respectively. Polyhedra with all edges touching $S^2$ are called \emph{Koebe Polyhedra}, or \emph{Koebe nets} when considering only local pieces. They are used to describe the Gauss map of S$_1$-minimal surfaces \cite{BHS_2006}. A generalization of Koebe polyhedra to the case when all edges touch a general convex body was considered in \cite{schramm1992cage}. 
 
 For our purposes, we propose another generalization of Koebe nets by introducing two spheres concentric with the unit sphere and by considering nets that are alternately tangent to these two spheres, see Figure \ref{Fig:Rt_Koebe}.

\begin{definition}
	\label{def:Rt_two-sphere_Koebe}
	A Q-net $k:V(\G) \rightarrow \Rt$ is called a \emph{two-sphere Koebe net} if its edges alternately touch two spheres $S_+^2$ and $S_-^2$ (concentric with the unit sphere $S^2$) whose radii satisfy the relation $r_+r_- = 1$.
\end{definition}

The latter condition ensures that the two spheres $S_+^2$ and $S_-^2$ and the discrete net $k$ remain close to the unit sphere. Indeed, $S_-^2$ can be obtained by a sphere inversion of $S_+^2$ in $S^2$, and vice versa.

\begin{theorem}
	\label{Rt:Thm_s_cmc_koebe}
	Let $s$ and $s^*$ be a (suitably scaled) S$_1$-cmc pair.
	The Gauss map  \begin{align}
		\label{eq:Rt_vertex_normals}
		n: V(\G) \rightarrow \Rt,\ v \mapsto n_v := c^*_v - c_v, 
	\end{align}
between the sphere centers of $s$ and $s^*$, forms a two-sphere Koebe net.
\end{theorem}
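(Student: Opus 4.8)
The plan is to establish two things: first, that consecutive edge vectors of the net $n$ are related by the appropriate duality/Darboux geometry so that each edge of $n$ is tangent to one of two concentric spheres; and second, that the two radii satisfy $r_+r_-=1$. The starting point is the S$_1$-cmc condition: after correct scaling, $s^*$ is simultaneously the Christoffel dual and a Darboux transform of $s$. I would first record what this means at the level of centers and radii. From Lemma~\ref{Def:Rt_Christoffel} (with scaling constant $\lambda$ as in \eqref{eq:scaling_christoffel}), the Christoffel dual has $d^*_v=\lambda/d_v$ and $\partial_{(v,v')}c^* = \lambda\,\partial_{(v,v')}c/(d_vd_{v'})$. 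The Darboux condition \eqref{eq:alpha} applied to the pair $(s,s^*)$ gives, for every vertex $v$,
\[
\|c_v-c^*_v\|^2 = d_v^2 + {d^*_v}^2 + 2\alpha = d_v^2 + \frac{\lambda^2}{d_v^2} + 2\alpha
\]
for a single constant $\alpha$ independent of $v$. Writing $n_v=c^*_v-c_v$, this says $\|n_v\|^2 = d_v^2+\lambda^2/d_v^2+2\alpha$; the key is that by choosing the scaling $\lambda$ (and checking the Darboux parameter is then forced), the right-hand side becomes a perfect-square-type expression, e.g. $\|n_v\|^2=(d_v\pm\lambda/d_v)^2$ when $2\alpha=\pm2\lambda$, so that $\|n_v\|$ takes one of two values $|d_v+\lambda/d_v|$ or $|d_v-\lambda/d_v|$ depending on parity — but actually we want the tangency statement, not the vertices on spheres, so the square-length of $n_v$ is only auxiliary.

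The heart of the argument is the tangency of edges. Fix an edge $(v,v')$ of $\G$. The four spheres $s_v,s_{v'},s^*_{v},s^*_{v'}$ form a face of the three-dimensional S-isothermic net (the Darboux transform being the ``third direction''), so they satisfy a Moutard/Q-net relation; geometrically, by Corollary~\ref{Cor:Rt_Q_congruences} applied in that direction, either they have a common orthogonal circle or they meet. Since $s$ is S$_1$-isothermic, neighboring spheres of $s$ touch; the touching point $p_{(v,v')}$ of $s_v$ and $s_{v'}$ lies on the segment $c_vc_{v'}$, dividing it in ratio $d_v:d_{v'}$. I claim the corresponding edge $(c^*_v,c^*_{v'})$ of $s^*$, whose direction is $\partial c/(d_vd_{v'})\parallel \partial c$, carries the touching point of $s^*_v$ and $s^*_{v'}$ dividing $c^*_vc^*_{v'}$ in ratio $d^*_v:d^*_{v'}=d_{v'}:d_v$ (reversed). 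Now consider the edge $(n_v,n_{v'})$ of the Gauss map: $\partial_{(v,v')}n = \partial c^* - \partial c = \big(\tfrac{\lambda}{d_vd_{v'}}-1\big)\partial_{(v,v')}c$, so it is again parallel to $\partial c$. The point where the line through $n_v,n_{v'}$ comes closest to the origin — equivalently, the foot of the perpendicular — can be computed: it is $n_v + t\,\partial n$ with $t=-\langle n_v,\partial n\rangle/\|\partial n\|^2$, and the squared distance to the origin is $\|n_v\|^2 - \langle n_v,\partial n\rangle^2/\|\partial n\|^2$. Using $\|n_v\|^2$, $\|n_{v'}\|^2$ from the Darboux relation above, and $\langle c_v-c^*_v, c_{v'}-c^*_{v'}\rangle = -2\alpha'$ (the same Darboux constant read on the other pair of a transformation face, which equals $\alpha$ up to the edge-dependent sign pattern), one gets that this squared distance is a constant depending only on whether $(v,v')$ is an $i$-edge or a $j$-edge. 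These two constants are $r_+^2$ and $r_-^2$.

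Finally I would verify $r_+r_-=1$. Having expressed $r_\pm^2$ in terms of $\lambda$ and $\alpha$ from the closest-point computation, the two values multiply to $1$ precisely because the Christoffel scaling $\lambda$ and the Darboux parameter $\alpha$ are linked by the S$_1$-cmc compatibility (the ``correctly scaled'' hypothesis): one finds $r_\pm^2 = \lambda\big(1 \mp \text{something}\big)/\text{something}$ with $r_+^2 r_-^2 = \lambda^2 \cdot(\cdots) = 1$ after substituting the forced value of $\alpha$. Concretely, the natural normalization is $2\alpha = -(\lambda+1/\lambda)\cdot(\text{sign})$, giving $r_+ = $ (one branch) and $r_- = 1/r_+$. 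I would also check the orientation bookkeeping: since horizontal ($i$) edges keep their orientation and vertical ($j$) edges reverse it under Christoffel duality (the remark after Lemma~\ref{Def:Rt_Christoffel}), the factor $\tfrac{\lambda}{d_vd_{v'}}-1$ has a consistent sign behavior along each edge class, which is what guarantees that all $i$-edges are tangent to one sphere and all $j$-edges to the other, rather than a random mixture. The Q-net (planarity) property of $n$ is inherited for free: $n=c^*-c$ is a linear combination of the two Q-nets $c^*$ and $c$ built on parallel edge directions, and one checks directly that its faces are planar using that $\partial c^*\parallel\partial c$ on every edge — so $n$ is a genuine two-sphere Koebe net.

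\medskip

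The step I expect to be the main obstacle is pinning down the exact scaling and sign conventions so that the two closest-point distances come out as reciprocals: the computation of $r_\pm^2$ from $\|n_v\|^2$ and the Darboux inner product is routine linear algebra, but matching the ``appropriate scaling and translation'' in Definition~\ref{Def:Rt_cmc} with the constant $\lambda$ of \eqref{eq:scaling_christoffel} and the Darboux constant $\alpha$ of \eqref{eq:alpha}, and tracking the $i/j$ orientation flip, is where the real content (and the possibility of sign errors) lies. Everything else — planarity of the faces of $n$, parallelism of edges, the touching-point locations for an S$_1$-isothermic net — follows directly from the structures recalled in Section~\ref{sec:Rt_discrete_s_isothermic_surfaces}.
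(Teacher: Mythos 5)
Your high-level strategy matches the paper's: show the edges of $n$ are parallel to those of $c$ and $c^*$, that each edge line has constant distance to a common center with one of two values according to the horizontal/vertical edge class, and then rescale so the two radii are reciprocal; planarity of the faces of $n$ is inherited from the parallelism. However, the central step is not actually carried out, and the two shortcuts you propose for it are both incorrect.

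First, your foot-of-perpendicular computation needs the diagonal inner product $\langle n_v, n_{v'}\rangle$, which you propose to read off as ``the same Darboux constant on the other pair of the transformation face.'' This is not available: the Moutard labeling property constrains only the \emph{opposite edges} of the transformation quadrilateral $(\hat{s}_v,\hat{s}_{v'},\hat{s}^*_{v'},\hat{s}^*_v)$ --- namely $\langle\hat{s}_v,\hat{s}_{v'}\rangle=\langle\hat{s}^*_v,\hat{s}^*_{v'}\rangle$ and $\langle\hat{s}_v,\hat{s}^*_v\rangle=\langle\hat{s}_{v'},\hat{s}^*_{v'}\rangle=\alpha$ --- and says nothing about the diagonals $\langle\hat{s}_v,\hat{s}^*_{v'}\rangle$. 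So the quantity you need is not determined by the data you invoke, and the computation of $r_\pm^2$ cannot be completed as set up. The paper avoids this entirely by working with the concrete tangent point: since each transformation face is a trapezoid with an inscribed orthogonal circle tangent to the two parallel sides at the sphere-touching points $t_{(v,v')}$ and $t^*_{(v,v')}$, the vector $l_{(v,v')}=t^*_{(v,v')}-t_{(v,v')}$ is automatically orthogonal to the edge direction, lies on the corresponding edge of $n$, and its squared length is given by the common-tangent-length formula
\begin{equation*}
\|l_{(v,v')}\|^2=\|c_v-c^*_v\|^2-(d_v\mp d^*_v)^2=-2\alpha\pm 2\lambda,
\end{equation*}
using only $\eqref{eq:alpha}$ and $d_vd_v^*=\lambda$. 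This is the missing idea in your write-up.

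Second, the relation $r_+r_-=1$ does \emph{not} come from a forced compatibility between $\alpha$ and $\lambda$ such as $2\alpha=-(\lambda+1/\lambda)$; no such constraint exists. Under a global rescaling of the pair by $\mu$ one has $\lambda\mapsto\mu^2\lambda$ and $\alpha\mapsto\mu^2\alpha$, hence $\Delta_\pm^2=-2\alpha\pm2\lambda\mapsto\mu^2\Delta_\pm^2$, and one simply chooses $\mu$ so that $\Delta_+\Delta_-=1$. That free choice is exactly what ``suitably scaled'' means in the statement; your attempt to derive a forced value of $\alpha$ would lead you nowhere. With these two points repaired (and your correct observation about the $i/j$ orientation flip distinguishing embedded from non-embedded transformation faces), the argument closes.
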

\begin{figure}[bp]
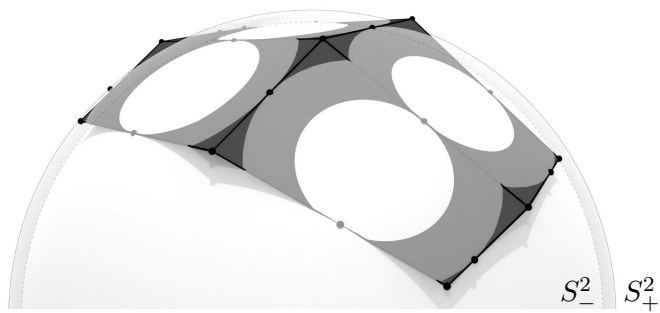

	\centering
	
	\begin{overpic}[scale=.13]
		{figures/Rt_Koebe}
		
		\put(92,1){ $S_+^2$}
		\put(83,1){ $S_-^2$}
	\end{overpic}
	\caption{A two-sphere Koebe net. Its edges alternately touch the larger, transparent gray sphere  $S_+^2$ and the smaller, white sphere $S_-^2$.}
	\label{Fig:Rt_Koebe}
\end{figure}
Before proving this theorem, we will investigate the properties of edge and face normals of an S$_1$-cmc pair. To that end, let $t, t^*: E(\G) \rightarrow \Rt$ denote the points of contact between neighboring spheres on primal and dual edges, as illustrated in the Figures \ref{Fig:Rt_side_faces} and \ref{Fig:Rt_fundamental_hex}.

\begin{proposition}
	\label{Prop:Rt_side_faces}
	The \emph{edge normals}
	\begin{align}
	\label{eq:edge_normals}
	l: E(\G) \rightarrow \Rt,\ (v, v') \mapsto l_{(v, v')} := t^*_{(v, v')} - t_{(v, v')}, 
	\end{align}
	which connect the points of contact on primal and corresponding Christoffel dual edges, are orthogonal to both primal and dual edges,
	\begin{equation}
		\label{eq:edge_normals_orthogonality}
	\begin{aligned}
	l_{(v, v')} \perp c_{v'}-c_v \quad 
	\text{ and } \quad 
	l_{(v, v')} \perp c^*_{v'}-c^*_v. \\
	\end{aligned}
	\end{equation}
	Additionally, the edge normals have constant length:
	\begin{align}
		\label{eq:edge_normals_length}
		||l_{(v, v')}||^2 &= 
		\begin{cases}
			-2\alpha +2\lambda  & \text{ for embedded faces}\\
			-2\alpha -2\lambda & \text{ for non-embedded faces}.
		\end{cases}
	\end{align} Here $\alpha$ denotes the parameter of the Darboux transform \eqref{eq:alpha} and $\lambda$ represents the global constant \eqref{eq:scaling_christoffel}.
\end{proposition}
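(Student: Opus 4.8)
The plan is to compute the two contact points explicitly and then reduce both assertions to a short algebraic identity that uses only the Christoffel one-form \eqref{eq:Rt_one_form} and the constancy of the Darboux parameter $\alpha$ from \eqref{eq:alpha}; no further Möbius-geometric input is needed.

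\textbf{Step 1: the contact points.} Fix an edge $(v,v')$, put $\delta:=c_{v'}-c_v$, $\delta^*:=c^*_{v'}-c^*_v$, and let $\epsilon:=\langle\hat{s}_v,\hat{s}_{v'}\rangle_{4,1}\in\{\pm1\}$ be its edge label; the values $\epsilon=-1$ and $\epsilon=+1$ will produce the two cases (``embedded'' resp.\ ``non-embedded'') of \eqref{eq:edge_normals_length}. Since $s$ and $s^*$ are S$_1$-isothermic, $s_v$ touches $s_{v'}$; the contact point lies on the line through the centres, and a one-line computation gives $||\delta||^2=(d_v-\epsilon d_{v'})^2$ and $t_{(v,v')}=c_v+\tfrac{d_v}{d_v-\epsilon d_{v'}}\,\delta$, together with the analogous formula for $t^*_{(v,v')}$ in the dual data. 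Substituting $d^*_v=\lambda/d_v$, $d^*_{v'}=\lambda/d_{v'}$ and $\delta^*=\tfrac{\lambda}{d_vd_{v'}}\delta$ and abbreviating $n_v:=c^*_v-c_v$, one brings the edge normal into the form
\begin{align*}
 l_{(v,v')}\;=\;t^*_{(v,v')}-t_{(v,v')}\;=\;n_v+\kappa\,\delta,\qquad
 \kappa\;=\;-\,\frac{\epsilon\lambda+d_v^2}{d_v\,(d_v-\epsilon d_{v'})}.
\end{align*}

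\textbf{Step 2: orthogonality.} As $\delta^*$ is a scalar multiple of $\delta$, the two claims in \eqref{eq:edge_normals_orthogonality} are equivalent, so it is enough to prove $\langle l_{(v,v')},\delta\rangle=0$, i.e.\ $\langle n_v,\delta\rangle=-\kappa\,||\delta||^2$. The point is that $\langle n_v,\delta\rangle$ is actually determined by the data: the four centres $c_v,c_{v'},c^*_v,c^*_{v'}$ form a trapezoid whose parallel sides $\delta,\delta^*$ satisfy the Christoffel ratio and whose legs have prescribed length, since the S-cmc property (Definitions \ref{Def:Rt_Darboux}, \ref{Def:Rt_cmc}) together with \eqref{eq:alpha} forces $||n_v||^2=d_v^2+\lambda^2/d_v^2-2\alpha$ at every vertex. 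Concretely, from $n_{v'}=n_v+(\delta^*-\delta)=n_v+\tfrac{\lambda-d_vd_{v'}}{d_vd_{v'}}\,\delta$, squaring and solving the resulting \emph{linear} equation for $\langle n_v,\delta\rangle$ (the $-2\alpha$ terms cancel) gives $\langle n_v,\delta\rangle=\tfrac{(d_v^2+\epsilon\lambda)(d_v-\epsilon d_{v'})}{d_v}=-\kappa\,||\delta||^2$, which is \eqref{eq:edge_normals_orthogonality}.

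\textbf{Step 3: the length, and the main obstacle.} Using $\langle l_{(v,v')},\delta\rangle=0$ one has $||l_{(v,v')}||^2=\langle l_{(v,v')},n_v+\kappa\delta\rangle=\langle l_{(v,v')},n_v\rangle=||n_v||^2+\kappa\langle n_v,\delta\rangle=||n_v||^2-\kappa^2||\delta||^2$; substituting $||n_v||^2=d_v^2+\lambda^2/d_v^2-2\alpha$, $||\delta||^2=(d_v-\epsilon d_{v'})^2$ and the value of $\kappa$, all terms depending on $d_v,d_{v'}$ cancel, leaving $||l_{(v,v')}||^2=-2\alpha-2\epsilon\lambda$, which is exactly \eqref{eq:edge_normals_length}. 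The only genuinely non-routine step is the assertion in Step 2 that $\langle n_v,\delta\rangle$ is pinned down at all; this succeeds because an S-cmc pair carries two independent rigidities simultaneously — the Christoffel parallelism of neighbouring centre differences and the Darboux normalisation \eqref{eq:alpha} of the segments $n_v$ — and it is the sign bookkeeping for $\epsilon=+1$ edges (opposite-sign neighbouring radii, so that $\delta^*$ is a negative multiple of $\delta$) that produces the $\pm2\lambda$ distinguishing embedded from non-embedded faces.
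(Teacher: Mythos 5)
Your argument is correct (I checked the identity in Step~2 and the cancellation in Step~3, including the sign bookkeeping that produces $\pm 2\lambda$), but it takes a genuinely different route from the paper. The paper argues M\"obius-geometrically: it uses the fact that the transformation faces $(s_v,s_{v'},s^*_{v'},s^*_v)$ of an S$_1$-cmc pair are S-isothermic \emph{trapezoids} carrying a circle orthogonal to the four vertex spheres, with the two parallel edges tangent to that circle precisely at $t_{(v,v')}$ and $t^*_{(v,v')}$. Orthogonality is then immediate (the chord joining the tangent points of two parallel tangent lines is a diameter), and the length is the common-tangent-segment formula $\|c_v-c^*_v\|^2-(d_v\mp d^*_v)^2$ compared with \eqref{eq:alpha}. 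You bypass the orthogonal circle entirely: you coordinatize the touching points, reduce both claims to the single scalar $\langle n_v,\delta\rangle$, and pin it down using only the tangency relation $\|\delta\|^2=(d_v-\epsilon d_{v'})^2$ and the vertexwise constancy $\|n_v\|^2-(d_v^2+d_v^{*2})=-2\alpha$ from \eqref{eq:alpha}. This buys a self-contained, purely Euclidean computation that never appeals to the classification of S-isothermic quadrilaterals or to the trapezoid structure of Darboux faces, at the cost of more algebra.

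One caveat you should address: the linear equation you solve in Step~2 has leading coefficient $2(\lambda-d_vd_{v'})/(d_vd_{v'})$, so it determines $\langle n_v,\delta\rangle$ only when $d_vd_{v'}\neq\lambda$, i.e.\ $d_{v'}\neq d_v^*$. In the excluded case the transformation face is a parallelogram ($\delta^*=\delta$, $n_{v'}=n_v$), both of your constraints become identities, and yet $\kappa$ is generically nonzero, so orthogonality does not follow from the data you use. Closing this requires one further consequence of the Moutard equation on the transformation face (equivalently, the planarity of the lifted face in $\Rfo$, which constrains the remaining inner products $\langle\hat{s}_v,\hat{s}^*_{v'}\rangle_{4,1}$ and $\langle\hat{s}_{v'},\hat{s}^*_v\rangle_{4,1}$ and is exactly the structure the paper's trapezoid argument exploits), or an explicit exclusion of this degenerate configuration. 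With that one addition the proof is complete.
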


\begin{figure}
	\centering
	\begin{minipage}{.383\linewidth}
		\centering
		\begin{overpic}[width=\linewidth]
			{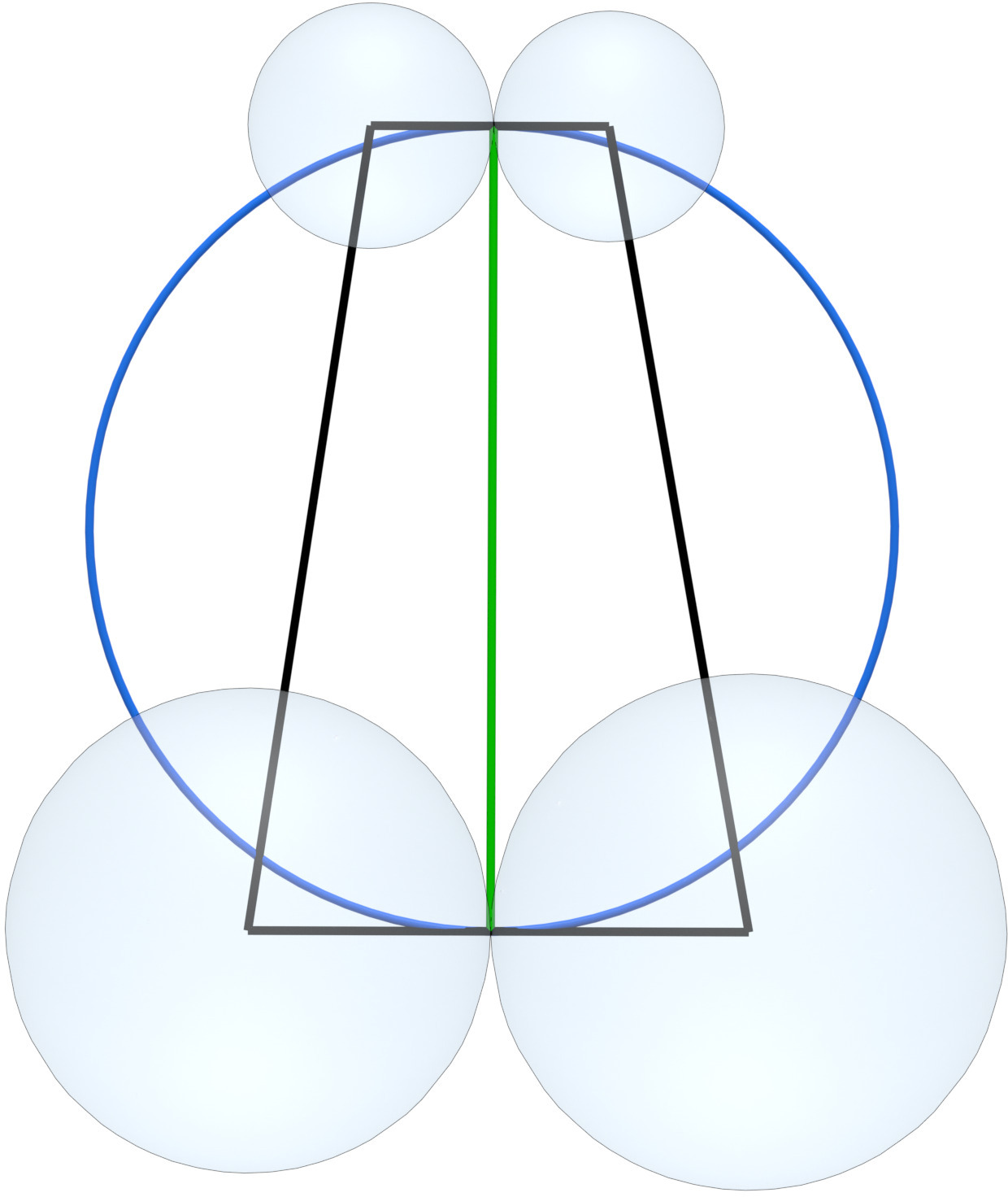}
			
			\put(25,91){$c_v^*$}
			\put(52,91){$c_{v_i}^*$}
			\put(37,93){$t_{(v, v_i)}^*$}
			
			\put(19,65){$n_v$}
			\put(57,65){$n_{v_i}$}
			\put(42,46){$l_{(v, v_i)}$}
			
			\put(17,17){$c_v$}
			\put(64,17){$c_{v_i}$}
			\put(37,17){$t_{(v, v_i)}$}
		\end{overpic}
	\end{minipage}
	\hspace{1.5cm}
	\begin{minipage}{.4\linewidth}
		\centering
		\begin{overpic}[width=\linewidth]
			{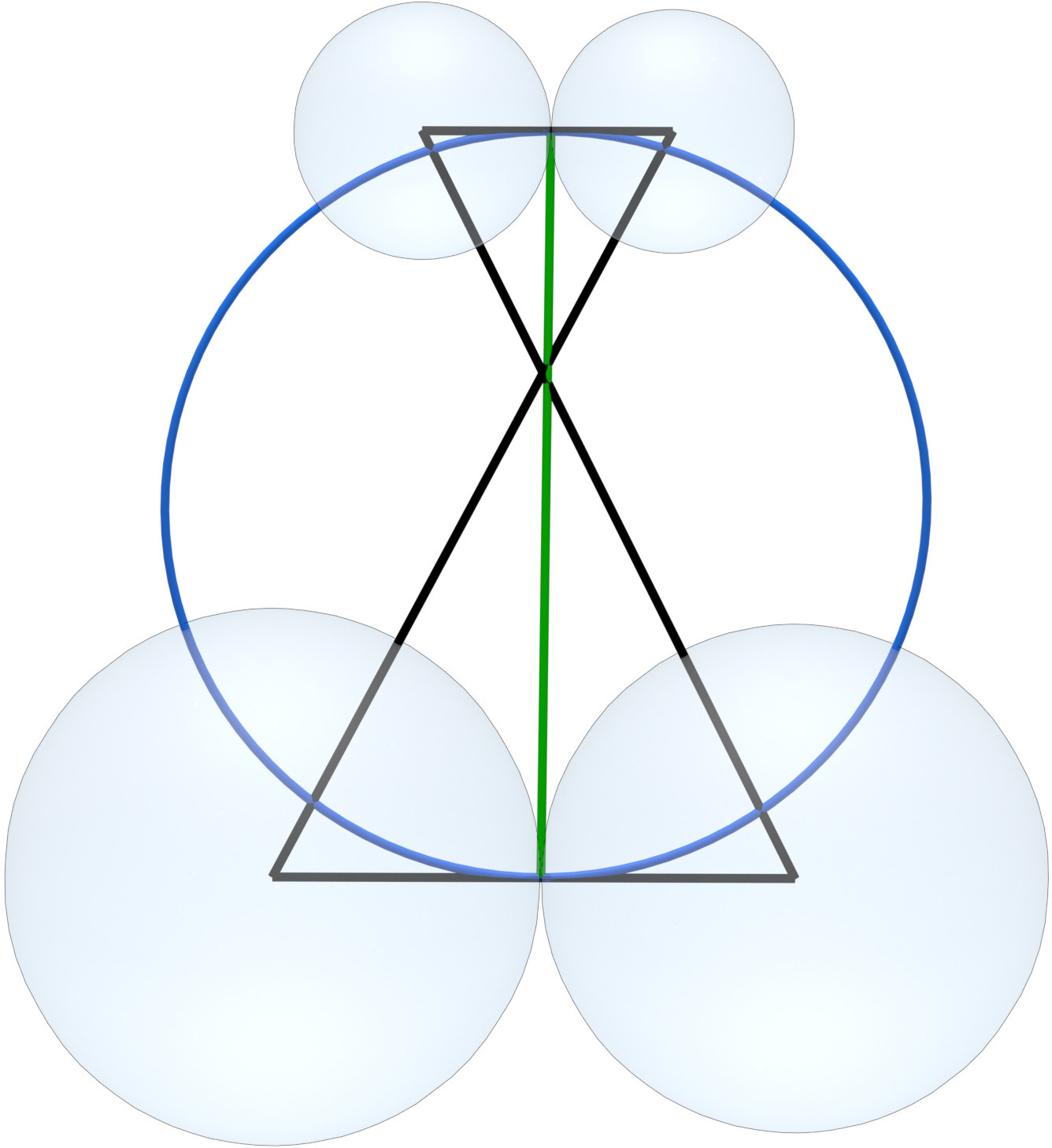}
			
			\put(30,91){$c_{v_j}^*$}
			\put(60,90){$c_v^*$}
			\put(42,93){$t_{(v, v_j)}^*$}
			
			\put(34,59){$n_v$}
			\put(54,59){$n_{v_j}$}
			\put(45,44){$l_{(v, v_j)}$}
			
			\put(19,19){$c_v$}
			\put(70,19){$c_{v_j}$}
			\put(42,19){$t_{(v, v_j)}$}
		\end{overpic}
	\end{minipage}
	\caption{The two types of transformation faces of an S$_1$-cmc pair together with vertex normals (black), edge normals (green) and the circle orthogonal to the four adjacent vertex spheres (blue). Embedded faces correspond to horizontal edges, non-embedded faces correspond to vertical edges.}
	\label{Fig:Rt_side_faces}
\end{figure}

\begin{proof}
	The transformation faces of an S$_1$-cmc pair form S-isothermic trapezoids: S-isothermic quadrilaterals with an orthogonal circle and one pair of parallel edges, see Figure \ref{Fig:Rt_side_faces}.
	The parallel edges, $c_{v'}-c_v$ and $c^*_{v'}-c^*_v$, are tangent to the orthogonal circle, with the points of tangency at $t_{(v, v')}$ and $t^*_{(v, v')}$. This implies \eqref{eq:edge_normals_orthogonality}.
	
	To prove the second part of the proposition, we observe that the edge normals are also tangent to both the primal and dual spheres at the points $t_{(v, v')}$ and $t^*_{(v, v')}$. Using the orthogonality \eqref{eq:edge_normals_orthogonality}, we can compute the squared length of the edge normals as follows:
	\begin{align*}
		&||c_{v_i}-c_{v_i}^*||^2 - \left( d_{v_i} - {d^{*}_{v_i}} \right)^2 = ||c_{v_i}-c_{v_i}^*||^2 - \left(d_{v_i}^2 + {d_{v_i}^{*2}} \right) + 2\lambda, \\
		&||c_{v_j}-c_{v_j}^*||^2  - \left(d_{v_j} + {d^{*}_{v_j}}\right)^2 = ||c_{v_j}-c_{v_j}^*||^2 - \left(d_{v_j}^2 + {d_{v_j}^{*2}}\right) - 2\lambda,
	\end{align*}
	for embedded and non-embedded transformation faces, respectively.
	A comparison with \eqref{eq:alpha} leads to \eqref{eq:edge_normals_length}.
\end{proof}

\begin{figure}
	\begin{overpic}[width=.35\linewidth,]
		{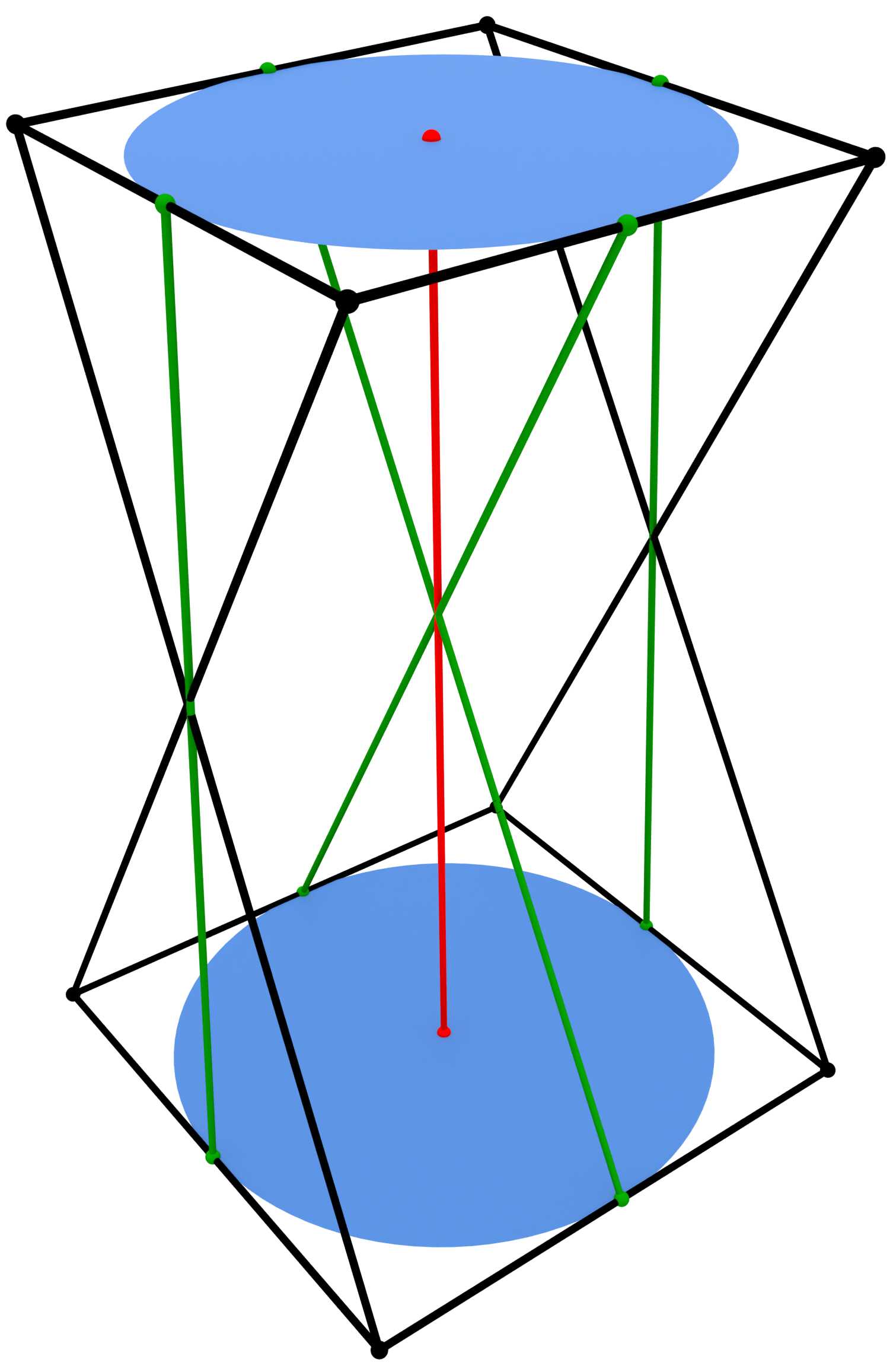}
		\put(22,0){$c_v$}
		\put(61.5,22){$c_{v_i}$}
		\put(-2,28){$c_{v_j}$}
		\put(38,40){$c_{v_{ij}}$}
		\put(33,22){$c_{f}$}
		
		\put(-4,91){$c^*_v$}
		\put(35,100){$c^*_{v_i}$}
		\put(18,76){$c^*_{v_j}$}
		\put(65,88){$c^*_{v_{ij}}$}
		\put(13, 98.5){$t^*_{(v, v_i)}$}
		\put(13, 86.5){$t^*_{(v, v_j)}$}
		\put(33,88){$c^*_{f}$}

		\put(46, 09){$t_{(v, v_i)}$}
		\put(03, 15){$t_{(v, v_j)}$}
	\end{overpic}
	\caption{A fundamental hexahedron of an S$_1$-cmc pair, formed by a pair of primal and Christoffel dual faces and corresponding vertex normals (black). The orthogonal face circles, shown in blue, touch the faces at the points $t_{(v, v')}$ and $t^*_{(v, v')}$. The edge normals, shown in green, connect these points on corresponding primal and dual edges. The face normal (red) connects the centers of primal and dual orthogonal circles and is orthogonal to both faces.}
	\label{Fig:Rt_fundamental_hex}
\end{figure}

\begin{proposition}
	The axes of primal and Christoffel dual orthogonal circles coincide.
	\label{Prop:Rt_face_normals}
	The axes are given by the \emph{face normals}
	\begin{align}
	\label{eq:face_normals}
	m: F(\G) \rightarrow \Rt,\ f \mapsto m_f := c_f^*-c_f, 
	\end{align}
	which connect centers of primal and Christoffel dual orthogonal circles, as shown in \mbox{Figure \ref{Fig:Rt_fundamental_hex}}.
\end{proposition}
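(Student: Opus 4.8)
The plan is to reduce the statement to two facts and then combine them. The two facts are: (i) the primal center face $(c_v,c_{v_i},c_{v_{ij}},c_{v_j})$ and its Christoffel dual $(c^*_v,c^*_{v_i},c^*_{v_{ij}},c^*_{v_j})$ lie in parallel planes, with a common unit normal $N$; and (ii) the vector $m_f=c^*_f-c_f$ is parallel to $N$. Once both are available the conclusion is immediate: the axis of the primal orthogonal circle is the line $c_f+\R N$, the axis of the dual circle is $c^*_f+\R N=c_f+m_f+\R N=c_f+\R N$ because $m_f\in\R N$, so the two axes coincide; and (ii) is precisely the assertion that this common line --- which visibly carries the segment $m_f$ from $c_f$ to $c^*_f$ --- is orthogonal to both faces.

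First I would establish (i). Lemma~\ref{Def:Rt_Christoffel} gives $c^*_{v'}-c^*_v=\tfrac{1}{d_vd_{v'}}(c_{v'}-c_v)$ for every $(v,v')\in E(\G)$, so corresponding primal and dual edges are parallel (anti-parallel when $d_vd_{v'}<0$, which is immaterial below). Since $s$ and $s^*$ are S$_1$-isothermic, each center face carries an inscribed orthogonal circle and is therefore planar; two planar quadrilaterals with pairwise parallel edges lie in parallel planes, which is (i). Fix a unit normal $N$ to this common plane direction.

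The substance is (ii), and here I would use the edge normals of Proposition~\ref{Prop:Rt_side_faces}. For an edge $e=(v,v')$ of $f$, the primal orthogonal face circle is inscribed in the center face and tangent to the side $c_vc_{v'}$ at the contact point $t_e$ of the spheres $s_v$ and $s_{v'}$; hence the radius $c_f-t_e$ lies in the primal plane and is perpendicular to $c_{v'}-c_v$, and likewise $c^*_f-t^*_e$ lies in the dual plane and is perpendicular to $c^*_{v'}-c^*_v$, so also to $c_{v'}-c_v$. The key move is the telescoping identity
\[
m_f=c^*_f-c_f=(c^*_f-t^*_e)+l_e-(c_f-t_e),\qquad l_e=t^*_e-t_e ,
\]
paired with the edge vector $c_{v'}-c_v$: the outer two summands vanish by the tangencies just noted and the middle one by the orthogonality \eqref{eq:edge_normals_orthogonality} of the edge normal. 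Running this for the two edges $(v,v_i)$ and $(v,v_j)$ issuing from $v$, whose directions span the primal plane, yields $m_f\perp(c_{v_i}-c_v)$ and $m_f\perp(c_{v_j}-c_v)$, i.e. $m_f\parallel N$, which is (ii).

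I do not expect a genuine obstacle here; the only care needed is bookkeeping --- the possible sign change in $d_vd_{v'}$ (harmless, since anti-parallel edges still determine parallel planes and leave the perpendicularity statements intact) and the fact, recalled above for S$_1$-isothermic surfaces, that the orthogonal face circle is the incircle touching the center-face edges exactly at the sphere contact points (compare Figure~\ref{Fig:Rt_fundamental_hex}). If anything is delicate it is organizing these telescoping cancellations cleanly rather than any real difficulty.
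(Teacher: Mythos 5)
Your proof is correct, but it takes a genuinely different route from the paper. The paper argues in the Möbius-geometric lift: the eight vertex spheres of a fundamental cmc hexahedron lift to a four-dimensional linear subspace of $\Rfo$, whose one-dimensional orthogonal complement is a sphere orthogonal to all eight vertex spheres and hence containing both orthogonal circles; two circles lying in parallel planes on a common sphere automatically share their axis. You instead work entirely in $\Rt$: parallelism of primal and dual center faces from the Christoffel one-form, plus the telescoping identity $m_f=(c^*_f-t^*_e)+l_e-(c_f-t_e)$, where each summand is killed against the edge direction --- the outer two by the incircle tangencies of an S$_1$-isothermic face, the middle one by the edge-normal orthogonality \eqref{eq:edge_normals_orthogonality} of Proposition~\ref{Prop:Rt_side_faces}. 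Both arguments are sound and both correctly locate where the cmc (Darboux) positioning enters: in the paper it is the four-dimensionality of the hexahedron's span, in yours it is the use of \eqref{eq:edge_normals_orthogonality}. The paper's proof additionally delivers the stronger Möbius-invariant fact that the two circles lie on a common sphere orthogonal to all eight vertex spheres, while yours is more elementary, avoids the $\Rfo$ machinery entirely, and makes the bookkeeping fully explicit; the only implicit hypotheses are the harmless non-degeneracy assumptions that a face's edge directions span a plane.
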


\begin{proof}
	The circles lie in parallel planes and they lie on a common sphere. This follows form the fact that the linear subspace containing the lift of a fundamental cmc hexahedron (as illustrated in Figure \ref{Fig:Rt_fundamental_hex}) to $\Rfo $ is four-dimensional.  Its orthogonal complement represents a sphere that is orthogonal to all eight vertex spheres and, in particular, contains the two circles.
	Consequently, their circle axes coincide and are given by $m_f$.
\end{proof}

We use the geometric observations to prove Theorem \ref{Rt:Thm_s_cmc_koebe}.

\begin{proof}[Proof of Theorem \ref{Rt:Thm_s_cmc_koebe}]	The Gauss map \eqref{eq:Rt_vertex_normals} and the center nets $c$ and $c^*$ have parallel edges 
	\begin{equation}
		\label{eq:Rt_parallel_edges}
		c_v - c_{v'} \parallel n_v - n_{v'} \parallel c^*_v - c^*_{v'}.
\end{equation}
	In particular, the Gauss map shares the same face normals \eqref{eq:face_normals},
	\begin{align*}
		m_f \perp
		[n_v, n_{v_i}, n_{v_{ij}}, n_{v_j}],
	\end{align*}
	and therefore is a Q-net.
	Using the squared length of the edge normals \eqref{eq:edge_normals_length}, we define two global constants:
	\begin{align*}
		\Delta_+^2 : =  -2\alpha +2\lambda, \quad  \Delta_-^2 : =  -2\alpha -2\lambda.
	\end{align*}
	The Gauss images of the edge normals \eqref{eq:edge_normals} lie on two concentric spheres $S_\pm^2$ of radii $\Delta_\pm$, as illustrated in Figure \ref{Fig:Rt_fundamental_hex_gauss}. A combination of \eqref{eq:edge_normals_orthogonality}  and \eqref{eq:Rt_parallel_edges} yields
	\begin{align*}
		l_{(v, v')} \perp n_v - n_{v'}.
	\end{align*}
	Thus, the edges $n_v - n_{v'}$ are alternately tangent to $S_+^2$ and $S_-^2$ at the points $l_{(v, v')}$.
	By a suitable scaling of the initial S$_1$-cmc pair, we can adjust $\Delta_+$ and $\Delta_-$ to new radii $r_+$ and $r_-$ that satisfy the relation $r_+r_-=1$. Consequently, the Gauss map \eqref{eq:Rt_vertex_normals} indeed forms a two-sphere Koebe net.
\end{proof}
\begin{figure}
	\begin{overpic}[scale=.13]
		{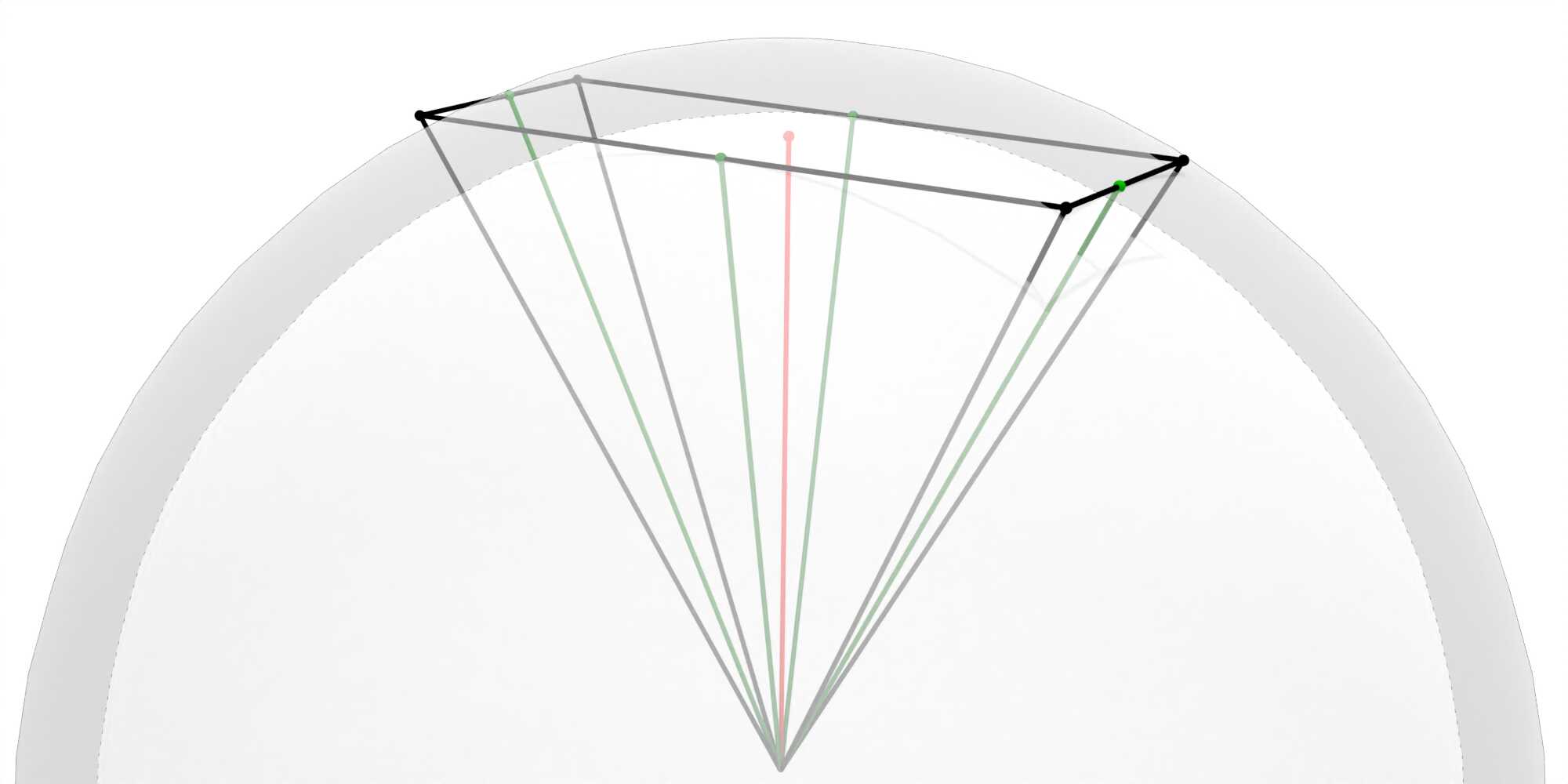}
		
		\put(22,42){$n_v$}
		\put(36,46){$n_{v_i}$}
		\put(61,35){$n_{v_j}$}
		\put(75,41){$n_{v_{ij}}$}
		
		\put(51,40.5){$m_f$}
		
		\put(41,37){$l_{(v, v_j)}$}
		
		\put(25,46){$l_{(v, v_i)}$}
		
		\put(99,1){$S_+^2$}
		\put(87,1){$S_-^2$}
	\end{overpic}
	\caption{The Gauss image of the fundamental S$_1$-cmc hexahedron of Figure \ref{Fig:Rt_fundamental_hex}, which forms a face of a two-sphere Koebe net. It alternately touches the two spheres $S^2_+$ and $S^2_-$, with the points of tangency given by the edge normal vectors $l_{(v, v')} \in S_\pm^2$ (green). Horizontal edges always touch $S^2_+$ and vertical edges always touch $S^2_-$. The vector $m_f$ (red) is orthogonal to the planar face. }
	\label{Fig:Rt_fundamental_hex_gauss}
\end{figure}

The edges of the Gauss map may change direction relative to the edges of $c$. From Figures \ref{Fig:Rt_side_faces} and \ref{Fig:Rt_fundamental_hex}, we can observe that the change in orientation can only occur along horizontal edges.

\subsection{Discrete curvatures}
\label{sec:Rt_discrete_curvatures}
We have seen that the center nets $c$ and $c^*$ of two S$_1$-isothermic surfaces $s$ and $s^*$ and the Gauss map $n$ are Q-nets with parallel edges. We interpret the pair $(c, n)$ as a Q-net with a parallel Gauss map $n$. Discrete curvatures for such pairs were defined in \cite{bobenko2010curvature} using a discrete version of the classical Steiner formula. Consider the one parameter family of parallel surfaces \mbox{$c_t := c + tn$} for $t \in \R$. Let $c(f)$ and $n(f)$ be the quadrilaterals (with parallel edges) corresponding to the same combinatorial face $f$. The area of the quadrilateral $c_t(f)$ is a quadratic polynomial of $t$
\begin{align*}
	A(c_t(f)) & = \left( 1-2 H_f t+ K_f t^2\right) A(c(f)), 
\end{align*}
where 
\begin{align}
	\label{eq:Rt_discrete_mean_curvature_local}
	H_f = -\frac{A(c(f), n(f))}{A(c(f))} \text{ and } K_f = \frac{A(n(f))}{A(c(f))}, 
\end{align}
define \emph{discrete mean curvature} and \emph{discrete Gaussian curvature} of the face $f$, respectively.
Recall that the area form of a planar polygon
$A(P)$ is a quadratic from on the space of planar polygons. The corresponding symmetric bilinear form $$A(P, Q) = \frac{1}{2}(A(P+Q)-A(P)-A(Q))$$ is the \emph{mixed area} of two planar polygons $P$ and $Q$ with parallel edges. 

In a slight misnomer we will now consider the mean curvature of a pair $(s, n)$ of an S$_1$-isothermic surface and its Gauss map, where we in fact consider the mean curvature of the pair $(c, n)$ as discussed above.

\begin{theorem}
	\label{Thm:Rt_H=1}
	Let $s$ and $s^*$ be an S$_1$-cmc pair such that the Gauss map $n$, given by \eqref{eq:Rt_vertex_normals}, 
	forms a two-sphere Koebe net.
	Then the pair $(s, n)$ has constant discrete mean curvature $H=1$.
	The Christoffel dual is a constant vertex offset surface in normal direction at distance $1$:
	\begin{align}
		\label{eq:Rt_constant_offset}
		c^* = c + n.
	\end{align}
\end{theorem}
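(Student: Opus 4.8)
The statement has two parts: the offset relation $c^* = c + n$ and the curvature claim $H=1$. The first part is essentially a definition unwinding. By the definition of the Gauss map in \eqref{eq:Rt_vertex_normals}, $n_v = c^*_v - c_v$, so $c^*_v = c_v + n_v$ holds tautologically at every vertex; the only content is to observe that after the scaling of the S$_1$-cmc pair performed in the proof of Theorem~\ref{Rt:Thm_s_cmc_koebe} (which rescaled $\Delta_\pm$ to $r_\pm$ with $r_+ r_- = 1$), the relation $n = c^* - c$ is preserved, since rescaling $s$ and $s^*$ by a common factor rescales $c$, $c^*$ and hence $n$ consistently. So $c^* = c + n$ is \eqref{eq:Rt_constant_offset}, and this is exactly the parallel surface $c_t$ at parameter $t = 1$.

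For the curvature part, the plan is to apply the Steiner-formula definition \eqref{eq:Rt_discrete_mean_curvature_local} together with the fact that $c^* = c_1 = c + 1 \cdot n$ is again an S$_1$-isothermic surface, in particular a \emph{Christoffel dual}, and to use the characterization of discrete cmc via Christoffel duality. Concretely, I would argue as follows. First, $(c,n)$ is a Q-net with parallel Gauss map (established in Section~\ref{sec:Rt_discrete_curvatures} and in the proof of Theorem~\ref{Rt:Thm_s_cmc_koebe}), so the Steiner formula applies face by face. The key input is a local identity relating the mixed areas $A(c(f), n(f))$, $A(c(f))$ and $A(n(f))$ to each other, coming from the Christoffel (dual) relationship between $c$ and $c^*$: since $\partial c^* = \partial c /(d_v d_{v'})$ and $c^* = c + n$, one gets $\partial n = \partial c^* - \partial c$, and dualizing the defining one-form shows that within each face the quadrilateral $n(f)$ is itself Christoffel-dual-like to $c(f)$. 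The decisive fact is that Christoffel-dual planar quadrilaterals with parallel edges have \emph{vanishing mixed area}: $A(c(f), c^*(f)) = 0$. This is the discrete analogue of the classical statement that the Christoffel transform has the opposite orientation / is isothermic, and it is a standard fact about discrete isothermic (Q-)nets.

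Granting $A(c(f), c^*(f)) = 0$, the computation is immediate: from $c^* = c + n$ and bilinearity,
\begin{align*}
0 = A(c(f), c^*(f)) = A(c(f), c(f)) + A(c(f), n(f)) = A(c(f)) + A(c(f), n(f)),
\end{align*}
hence $A(c(f), n(f)) = -A(c(f))$ and therefore $H_f = -A(c(f),n(f))/A(c(f)) = 1$ for every face $f$. Since this holds on all faces, $H \equiv 1$. The main obstacle is pinning down and justifying the vanishing-mixed-area statement $A(c(f), c^*(f)) = 0$ for the Christoffel-dual quadrilaterals of an S$_1$-isothermic surface: one must check that the dual defined via the one-form \eqref{eq:Rt_one_form} on the center net (rather than on a primal isothermic net) indeed produces faces with vanishing mixed area, presumably by passing to the central extension of the S$_1$-isothermic surface, where the faces are genuine discrete isothermic kites, and invoking the known property there; alternatively, one can verify directly from $\partial_{(v,v')} c^* = \partial_{(v,v')} c/(d_v d_{v'})$ that opposite edges of $c^*(f)$ are related to those of $c(f)$ by the appropriate reciprocal factors so that the mixed-area quadratic form evaluates to zero. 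Once that lemma is in hand, the theorem follows by the one-line argument above.
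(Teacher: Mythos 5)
Your proposal is correct and follows essentially the same route as the paper: the offset relation is the definition of $n$, and the curvature claim follows from the vanishing mixed area of Christoffel dual quadrilaterals, $A(c(f),c^*(f))=0$, combined with bilinearity applied to $c^*=c+n$. The paper simply cites this vanishing-mixed-area fact from \cite{bobenko2010curvature, bobenko2008discrete} rather than re-deriving it, so your additional discussion of how to justify it (via the central extension or directly from the one-form \eqref{eq:Rt_one_form}) is not needed but does no harm.
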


\begin{proof}
	Identity \eqref{eq:Rt_constant_offset} is the definition of $n$.
	The mixed area vanishes for pairs of Christoffel dual quadrilaterals \cite{bobenko2010curvature, bobenko2008discrete}. Thus 
	\begin{align*}
		%\label{eq:Rt_Steiner_1}
		A(c, c^*)=0,
	\end{align*}
	which is equivalent to  
	\begin{align*}
		 H = -\frac{A(c, n)}{A(c)} = 1
	\end{align*}
	if $c^* = c+ n$. 
\end{proof}

We restrict our considerations to discrete $H=1$ cmc surfaces.
An S$_1$-cmc surface pair may be scaled, $s_\mu = \mu s, s^*_\mu = \mu s^*$, by a global factor $\mu$ while maintaining the same Gauss map $n$, to obtain a pair $(s_\mu, n)$ with $H_\mu = \frac{1}{\mu}$. The center net $c^*_\mu$ is a normal offset surface of $c_\mu$ at distance $\frac{1}{H_\mu}$:
\begin{align}
	\label{eq:Rt_constant_offset_scaled}
		c_\mu^* = c_\mu + \frac{1}{H_\mu} n, 
	\end{align}
analogous to the smooth case.

\section{Two-sphere Koebe nets and orthogonal ring patterns in the sphere}
\label{sec:Rt_two_spheres_Koebe_q_nets_and_spherical_orp}
In this section we will consider the correspondence between two-sphere Koebe nets and spherical orthogonal ring patterns detached from S$_1$-cmc surfaces. Classical Koebe polyhedra are known to be in one to one correspondence with spherical orthogonal circle patterns. Each vertex of a Koebe polyhedron can be associated with a sphere centered at the vertex, which intersects the unit sphere $S^2$ orthogonally. The intersection of all such spheres with $S^2$ yields a circle packing. A second circle packing is obtained by intersecting the faces of the Koebe polyhedron with $S^2$. These two families of circles together form an orthogonal circle pattern. Conversely, by lifting the circle centers of a circle pattern off the unit sphere, one can obtain a pair of combinatorially dual Koebe polyhedra \cite{bobenko2004variational}.

Rather than treating primal and dual combinatorics separately, we will now consider the more general combinatorics of S-quad graphs.
\begin{figure}
	\centering
	\begin{minipage}{.4\linewidth}
		\centering
			\includegraphics[width=\linewidth]{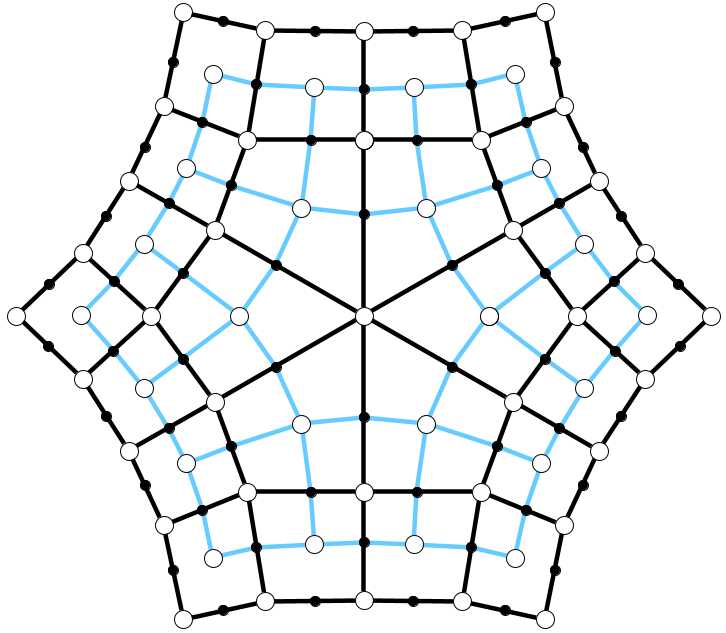}
	\end{minipage}
	\begin{minipage}{.4\linewidth}
		\centering
					\begin{overpic}[ width=\linewidth,]{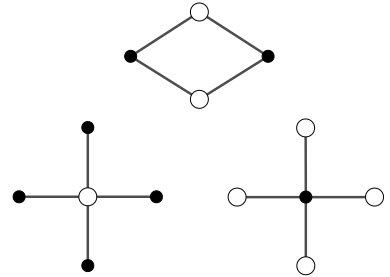}
				\put(25,0){$b_1$}
				\put(41,23){$b_2$}
				\put(24,40){$b_3$}
				\put(-1.5,23){$b_4$}
				\put(24,23){$v$}
				
				\put(81,0){$v_1$}
				\put(77.3,1.6){\small s}
				\put(97,23){$v_2$}
				\put(94.8,19.3){\small c}
				\put(77.3,37){\small s}
				\put(59.3,19.3){\small c}
				\put(80,40){$v_3$}
				\put(53,23){$v_4$}
				\put(80,23){$b$}
				
				\put(49.7,44.3){\small c}
				\put(54,42){$v_c$}
				\put(49.85,67){\small s}
				\put(54,70){$v_s$}
				\put(71,56){$b_1$}
				\put(25.5,56){$b_2$}

			\end{overpic}
	\end{minipage}
\hspace{0cm}
	\caption{Left: The two graphs $\G$ (black, consisting of white \textcircled{s}-vertices) and  $\G^*$ (blue, consisting of white \textcircled{c}-vertices) associated to an S-quad graph $\mathcal{S}$.
	From a graph $\mathcal{G}$ one obtains the S-quad graph via central extension, i.e., by introducing a new white vertex per face and a new black vertex per edge.
	Right: Labeling of vertices.}
	\label{Fig:Rt_S_Quad_graph_diagonal}
\end{figure}

\begin{definition}
	\label{def:S_quad}
	An \emph{S-quad-graph} is a quad graph $\mathcal{S}$ with interior vertices of even degree and the following additional properties:
	\begin{enumerate}[(i)]
		\item The 1-skeleton of $\mathcal{S}$ is bipartite and the vertices are bi-colored black and white
		\begin{align*}
			V(\S) = V_w(\S) \ \dot{\cup} \  V_b(\S).
		\end{align*}
		Then each quadrilateral has two black vertices and two white vertices.
		\item The white vertices may be labeled \emph{\textcircled{c}} and \emph{\textcircled{s}},
		\begin{align*}
			V_w(\S) = V_{\text{\normalfont \textcircled{s}}}(\S) \ \dot{\cup} \  V_{\text{\normalfont \textcircled{c}}}(\S), 
		\end{align*} in such a way that each quadrilateral has one white vertex labeled \emph{\textcircled{c}} and one white vertex labeled \emph{\textcircled{s}}.
		\item Interior black vertices and interior \emph{\textcircled{c}}-vertices have degree 4.
	\end{enumerate}
\end{definition}

Recall that for S-isothermic surfaces, we considered quad graphs with interior vertices of even degree. Any graph $\G$ of this type can be extended to an S-quad graph via \emph{central extension}, see Figure \ref{Fig:Rt_S_Quad_graph_diagonal}. Conversely, from any S-quad graph $\mathcal{S}$, one can obtain two graphs: a quad graph $\G$ with interior vertices of even degree and its dual graph $\G^*$. The edge coloring of the graph $\G$ into horizontal and vertical edges in Figure \ref{Fig:graph_g} can be consistently extended to $\S$ so that all faces of $\S$ have alternating horizontal and vertical edges.

Our definition of S-quad graphs is slightly more restrictive than the definition in \cite{BHS_2006}, where neither of the graphs $\G$ and $\G^*$ is restricted to be a quad graph.

The Definition \ref{def:Rt_two-sphere_Koebe} of two spheres Koebe nets with underlying quad graph $\G$ can be extended to graphs of the combinatorics of $\G^*$ in a straightforward manner.

\begin{definition}
	\label{def:Rt_dual_two-sphere_Koebe}
	Let  $\G$ and  $\G^*$ be the two graphs associated with an S-quad graph $\S$.
	We call the two-sphere Koebe nets $k^s:V(\G) \rightarrow \R^3$ and $k^c:V(\G^ *) \rightarrow \R^3$ \emph{a pair of dual two-sphere Koebe nets} if they satisfy the following conditions:
	\begin{enumerate}
		\item The nets $k^s$ and $k^c$ touch the same two spheres $S_+^2$ and $S_-^2$.
		\item Vertex vectors of $k^s$  form normals of the corresponding faces of $k^c$ and vice versa.
		\item Dual edges of $k^s$ and $k^c$ are orthogonal. Their points of tangency lie on different spheres and are projected to the same point on the unit sphere.
	\end{enumerate}
\end{definition}

Pairs of dual two-sphere Koebe nets are examples of principle binets, which are pairs of combinatorially dual, conjugate nets with orthogonal dual edges. Principle binets were introduced in \cite{affolter2024principalbinets} as a discretization of a curvature line parametrization.

Let $k^s$ and $k^c$ be a pair of dual two-sphere Koebe nets. For our purposes, we assume that the points of tangency adjacent to a vertex of $k^c$ or $k^s$,  corresponding to the points $b_1, b_2, b_3, b_4$ in Figure \ref{Fig:Rt_S_Quad_graph_diagonal} (right),  are cyclically ordered, either in clockwise or in counterclockwise order. We call the corresponding two-sphere Koebe nets \emph{regular}. Note that the two-sphere Koebe nets obtained as the Gauss map from S$_1$-cmc surfaces are regular. We assume that the coloring of the edges of $\G$ is chosen so that the horizontal edges touch $S^2_+$ and the vertical edges touch $S^2_-$.

For a white vertex $v \in V_w(\S)$, let $k_v$ denote the corresponding vertex of either $k^s$ or $k^c$, and 
\begin{align}
	\label{eq:Rt_proj_verts}
	p_v := \frac{k_v}{||k_v||}
\end{align}
its projection onto $S^2$. For a black vertex $\black \in V_\black(\mathcal{S})$ there are two corresponding points of tangency, $t^+_\black$ and $t^-_\black$, one of which belongs to $k^s$, the other to $k^c$. By definition they lie on different spheres $S^2_+$ and $S^2_-$ and project to the same point on $S^2$:
\begin{align}
	\label{eq:Rt_proj_touching_pts}
	q_\black := \frac{t^+_\black}{r^+} =  \frac{t^-_\black}{r^-}, 
\end{align}
where $r_+$ and $r_-$ are the radii of $S^2_+$ and $S^2_-$, respectively. We will show that the projections \eqref{eq:Rt_proj_verts} and \eqref{eq:Rt_proj_touching_pts} give rise to spherical rings (pairs of concentric circles) which intersect orthogonally \cite{bobenko2024rings}. For an illustration see Figure \ref{Fig:Rt_Koebe_2d_vertex}.

A ring is given by a triple $(p, r, R)$ where $p \in S^2$ is the center of the ring, and $r$ and $R$ are the radii of the inner and the outer circles, respectively. We assign an orientation to the ring by allowing $r$ to be negative: positive radius corresponds to counterclockwise orientation and negative radius corresponds to clockwise orientation. The outer radius is always positive. The spherical radii $r$ and $R$ are not allowed to be greater than $\frac{\pi}{2}$.

\begin{figure}
	\centering
	\begin{minipage}{.45\linewidth}
		\begin{overpic}[width=\textwidth]
			{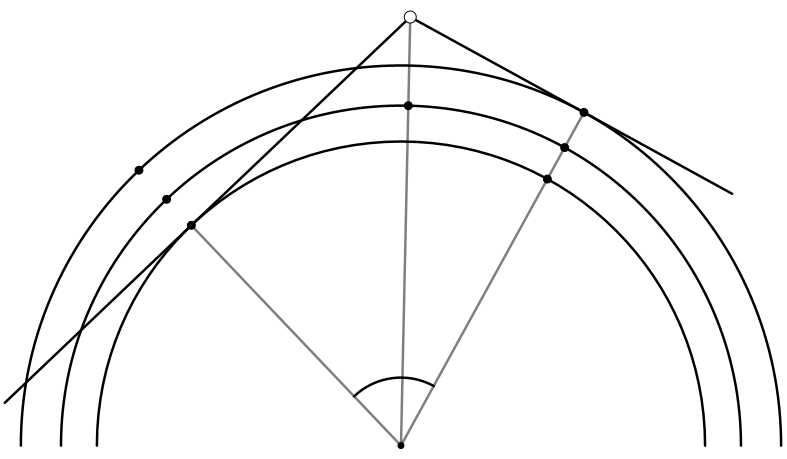}
			\put(27,28){$t^-_{b_1}$}
			\put(0.5, 31.5){$q_{b_1}$}
			\put(6, 33){\tikz \draw[dashed](0,0)--(0.9, 0);}
			
			\put(75,45){$t^+_{b_2}$}
			\put(73,39.7){$q_{b_2}$}
			
			\put(43,12){$R_v$}
			\put(52,12){$r_v$}
			
			\put(52,58){$k_v$}
			
			\put(53,46.5){$p_v$}
			
			\put(100.5,2){$S_+^2$}
			\put(82,2){$S_-^2$}
			\put(93,-3){$S^2$}
		\end{overpic}
	\end{minipage}
	\caption{A vertex $k_v$ of a two-sphere Koebe net, two adjacent edges with tangent points $t^-_{b_1}$ and $t^+_{b_2}$, and the projections $p_v, q_{b_1},$ and $q_{b_2}$ onto $S^2$. The corresponding ring $(p_v, r_v, R_v)$ in the projection has center $p_v$, passes through $q_{b_1}$ and $q_{b_2}$, and has spherical radii $r_v$ and $R_v$ given by the angles shown in the figure.}
	\label{Fig:Rt_Koebe_2d_vertex}	
\end{figure}

\begin{definition}
	\label{Def:Rt_orp}
	Let  $\mathcal{S}$ be an S-quad graph. 
	An \emph{orthogonal ring pattern} consists of rings associated with the white 
	vertices of $\mathcal{S}$ that satisfy the following properties:
	\begin{enumerate}
		\item
		The rings associated with the \emph{\textcircled{c}} and the \emph{\textcircled{s}} vertex of a quad, $(p_{v_1}, r_{v_1}, R_{v_1})$ and  $(p_{v_2}, r_{v_2}, R_{v_2})$, 
		\emph{intersect orthogonally}, i. e. the larger circle of one ring
		intersects the smaller circle of the other ring orthogonally, and vice versa.
		\item
		For four rings $(p_{v_1}, r_{v_1}, R_{v_1}),  (p_{v_2}, r_{v_2}, R_{v_2}), (p_{v_3}, r_{v_3}, R_{v_3})$ and $(p_{v_4}, r_{v_4}, R_{v_4})$ adjacent to a black vertex $b$ and ordered according to the combinatorics of the quad graph, see Figure \ref{Fig:Rt_S_Quad_graph_diagonal} (right), the inner circles $(p_{v_1}, r_{v_1})$ and $(p_{v_3}, r_{v_3})$ and
		the outer circles $(p_{v_2}, R_{v_2})$ and $(p_{v_4}, R_{v_4})$ pass through one point.
		(Then orthogonality implies that the two inner and the two outer circles
		touch at this point.)
		\item 
		The black vertices of the S-quad graph correspond to the touching points. For each ring, these points are ordered according to the orientation of the ring, i.e., the points corresponding to $b_1, b_2, b_3, b_4$ in Figure \ref{Fig:Rt_S_Quad_graph_diagonal} (right) are ordered counterclockwise if the ring corresponding to the white vertex $v$ is positively oriented, $r_v>0$, and clockwise if $r_v<0$.
	\end{enumerate}
\end{definition}

\begin{proposition}
	The projections $p_v$ and $q_b$, given in 
	\eqref{eq:Rt_proj_verts} and \eqref{eq:Rt_proj_touching_pts}, of vertices and tangent points of pairs of dual two-sphere Koebe nets define spherical rings $(p_v, r_v, R_v)$ that are centered at $p_v$ and pass through the projections $q_b$ of adjacent tangent points. The spherical radii $r_v$ and $R_v$ are given 
	(up to sign) by
	\begin{align}
		\label{eq:Rt_cos}
		\cos(r_v) = \frac{r_+}{||k_v||}, \cos(R_v) = \frac{r_-}{||k_v||}.
	\end{align}
	For regular two-sphere Koebe nets, the rings form a spherical orthogonal ring pattern, see Figure \ref{Fig:Rt_Koebe_and_sorp}. 
\end{proposition}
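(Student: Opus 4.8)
The plan is to verify the three defining properties of an orthogonal ring pattern (Definition~\ref{Def:Rt_orp}) directly from the geometry of the pair of dual two-sphere Koebe nets, after first establishing the formula \eqref{eq:Rt_cos} for the spherical radii. For the radii, fix a white vertex $v$ and one adjacent tangent point, say $t^+_b$ on $S^2_+$. By definition of a two-sphere Koebe net, the edge of $k^s$ (or $k^c$) through $k_v$ is tangent to $S^2_+$ at $t^+_b$, which means $t^+_b - k_v \perp t^+_b$, i.e. the segment from the vertex to the tangent point is perpendicular to the radius of $S^2_+$ at that point. Projecting to $S^2$, the point $p_v = k_v/\|k_v\|$, the tangent point projection $q_b = t^+_b/r_+$, and the origin form a right triangle with the right angle at $q_b$; hence the spherical distance from $p_v$ to $q_b$ has cosine equal to the ratio of the adjacent side to the hypotenuse, namely $r_+/\|k_v\|$. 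This is exactly the first equation of \eqref{eq:Rt_cos}; the second follows identically using a vertical edge tangent to $S^2_-$. In particular all tangent points adjacent to $v$ lying on $S^2_+$ project to points at the fixed spherical distance $r_v$ from $p_v$ (the inner circle), and those on $S^2_-$ to the fixed distance $R_v$ (the outer circle), so the projections genuinely trace out a ring centered at $p_v$; the sign conventions on $r_v$ are fixed by the assumed cyclic (clockwise/counterclockwise) ordering, which is precisely where regularity enters.

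Next I would establish the orthogonality property (1). Consider a quadrilateral of $\S$ with white vertices $v_1$ (say \textcircled{c}) and $v_2$ (say \textcircled{s}) and black vertices $b, b'$; then $v_1$ and $v_2$ belong to $k^c$ and $k^s$ respectively, and $t^{\pm}_b$, $t^{\pm}_{b'}$ are the tangent points on the two edges of the quadrilateral. Property (3) of Definition~\ref{def:Rt_dual_two-sphere_Koebe} says that dual edges of $k^s$ and $k^c$ are orthogonal. Translating this orthogonality through the projection to $S^2$: the inner circle of the ring of $v_1$ and the outer circle of the ring of $v_2$ both pass through $q_b$, and the tangent directions of these two circles at $q_b$ are the projections of the two mutually orthogonal dual edges (the edge of $k^c$ at $v_1$ and the edge of $k^s$ at $v_2$, both tangent to $S^2$-concentric spheres at the respective tangent point). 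Since the projection from the tangent plane configuration to $S^2$ is conformal at $q_b$ (it is the restriction of a linear map composed with normalization, and more to the point the tangent lines already lie in $T_{q_b}S^2$ after the right-angle argument above), orthogonality is preserved, giving the orthogonal intersection of the larger circle of one ring with the smaller circle of the other. The roles of inner/outer are exchanged at $b'$, yielding the full statement of property (1).

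For the touching property (2) I would argue at a black vertex $b$ with its four adjacent white vertices $v_1, v_2, v_3, v_4$ cyclically ordered. There are exactly two tangent points over $b$, namely $t^+_b \in S^2_+$ and $t^-_b \in S^2_-$; the edge of one Koebe net through $t^+_b$ joins two of the four vertices and the edge of the other net through $t^-_b$ joins the other two. By the radii formula, $t^+_b$ projects to $q_b$, which lies on the inner circles of the two rings whose edges pass through $t^+_b$ and on the outer circles of the two rings whose edges pass through $t^-_b$ — and the combinatorics of the S-quad graph arranges these as $v_1, v_3$ (inner) and $v_2, v_4$ (outer), exactly as required. So all four circles pass through the single point $q_b$, and orthogonality from property (1) then forces the two inner circles to be tangent there and the two outer circles to be tangent there. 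Property (3) of Definition~\ref{Def:Rt_orp} — the cyclic order of the tangent points around each ring matching the orientation — is immediate from the regularity assumption, since that is precisely the hypothesis that the tangent points $b_1, \dots, b_4$ around each white vertex are cyclically ordered, consistently with the sign of $r_v$.

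I expect the main obstacle to be the orthogonality step (2): one must carefully check that the orthogonality of the dual edges in $\R^3$ (a statement about two lines in three-space, each tangent to a sphere) descends to orthogonality of the corresponding circles on $S^2$ at their common projected point, keeping track of which circle is ``inner'' for which ring and of the orientations. Everything else is either a one-line right-triangle computation (the radii formula), a combinatorial bookkeeping argument using the definition of the S-quad graph (the concurrency in property (2)), or an immediate consequence of the regularity hypothesis (property (3)). Once the projection is understood to act conformally on the relevant tangent configuration, the orthogonality transfers cleanly, but making that precise — rather than hand-waving ``projection is conformal'' — is the part that needs genuine care, and I would phrase it by noting that after the right-angle lemma the relevant tangent lines already lie in $T_{q_b}S^2$, so the normalization map acts on them by a positive scalar and an orthogonal map fixing $q_b$, hence preserves the angle.
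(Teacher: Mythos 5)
Your proposal is correct and follows essentially the same route as the paper's proof: equidistance of the adjacent tangent points on each of $S^2_\pm$ yields the two concentric circles, the right angle at the point of tangency gives \eqref{eq:Rt_cos}, orthogonality of dual edges descends to orthogonality of the circles at $q_b$, concurrency of the four circles at a black vertex gives the touching property, and regularity fixes the orientations. One small slip that does not affect the conclusion: the tangent at $q_b$ to the circle of the ring centered at $p_{v_1}$ is perpendicular to the projection of the edge through $k_{v_1}$ (that projection is the meridian direction toward $p_{v_1}$), hence parallel to the dual edge through $k_{v_2}$ --- the two circle tangents are the two edge directions swapped, so orthogonality still transfers.
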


\begin{figure}[b]
	\centering

	\begin{minipage}{.28\linewidth}
		\centering
		\begin{overpic}[width=\linewidth]
			{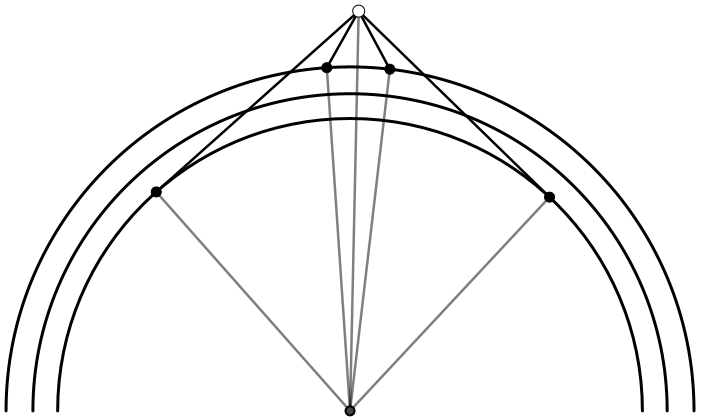}
			\put(50, 60){$k_v$}
		\end{overpic}
		\vspace{.2cm}
	\end{minipage}
	\hfill
	\begin{minipage}{.28\linewidth}
		\centering
		\begin{overpic}[width=\linewidth]
			{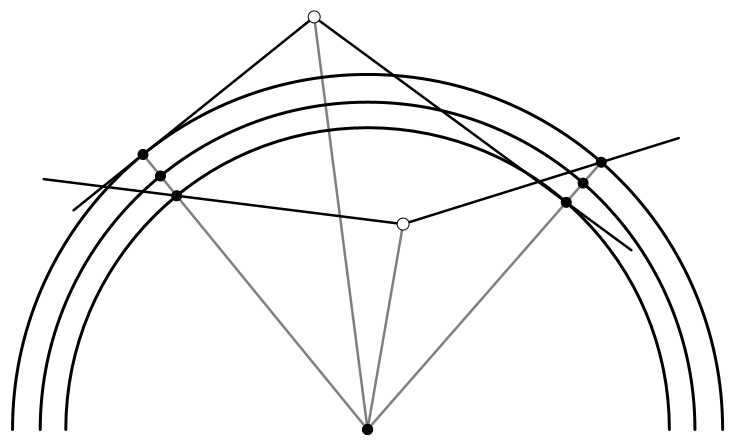}
			\put(40, 61){$k_{v_c}$}
			\put(49, 33.5){$k_{v_s}$}
		\end{overpic}
		\vspace{.2cm}
	\end{minipage}
	\hfill
	\begin{minipage}{.28\linewidth}
		\centering
		\begin{overpic}[width=\linewidth]
			{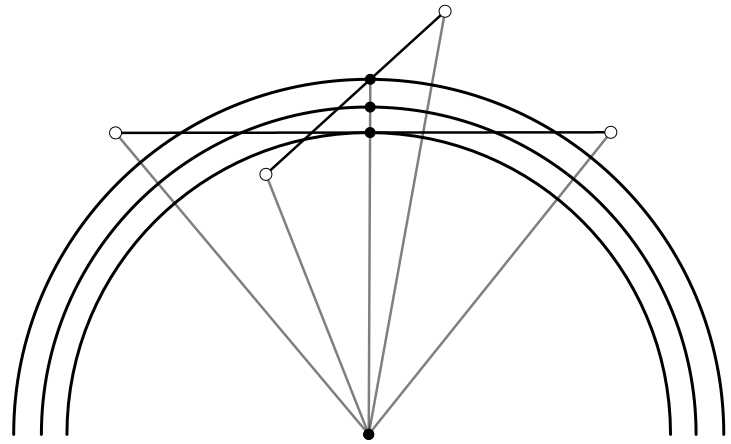}
			\put(60, 61.5){$k_{v_{3}}$}
			\put(39.5, 33){$k_{v_1}$}
			
			\put(10, 47){$k_{v_4}$}
			\put(81, 47){$k_{v_{2}}$}
		\end{overpic}
		\vspace{.2cm}
	\end{minipage}
	
	\begin{minipage}{.28\linewidth}
		\centering
		
		\begin{overpic}[trim={6cm 3cm 6cm 3cm}, clip,   scale=.3]
			{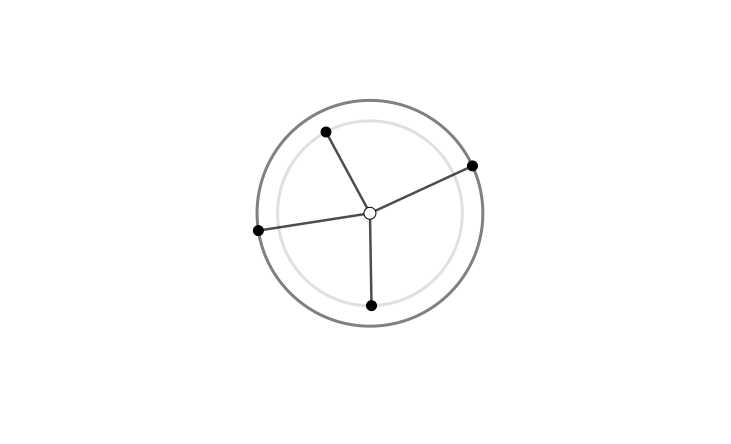}
			\put(55, 33){$p_v$}
		\end{overpic}
		\vspace{.4cm}
		\captionsetup{width=.1\textwidth} 
		\caption*{(1)}
	\end{minipage}
	\hfill
	\begin{minipage}{.28\linewidth}
		\centering
		\begin{overpic}[trim={5cm 0 5cm 0}, clip,   scale=1]
			{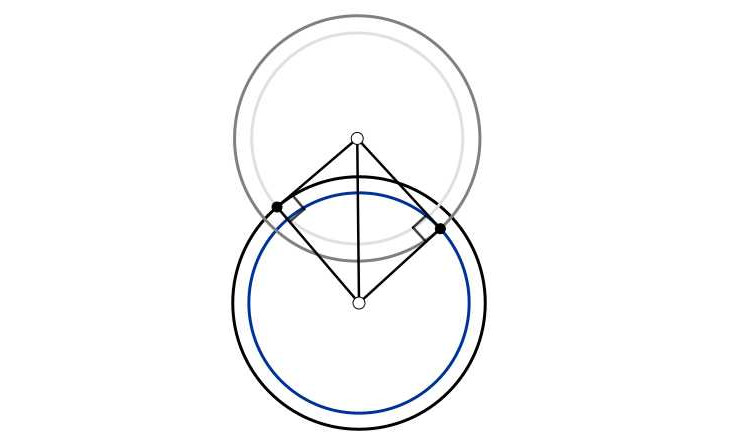}
			\put(-55, 22){$p_{v_s}$}
			\put(-55, 74){$p_{v_c}$}
		\end{overpic}
		\captionsetup{width=.1\textwidth} 
		\caption*{(2)}
	\end{minipage}
	\hfill
	\begin{minipage}{.28\linewidth}
		\begin{overpic}[trim={3.5cm 0 5cm 0}, clip,   scale=.25]
			{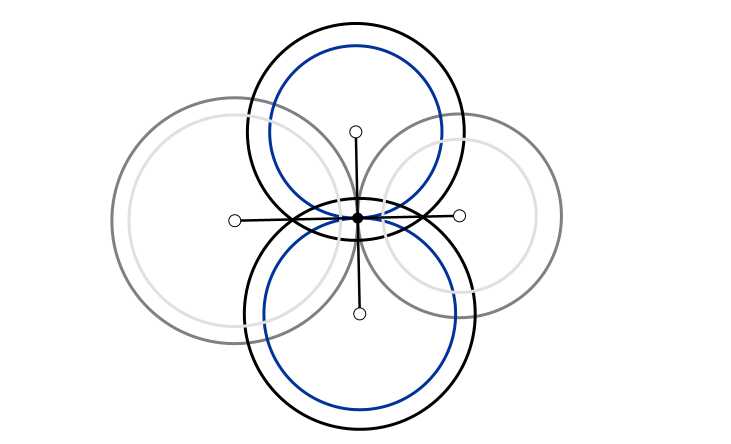}
			\put(53, 19.5){$p_{v_1}$}
			\put(51, 68){$p_{v_{3}}$}
			
			\put(21, 49){$p_{v_4}$}
			\put(75, 50){$p_{v_{2}}$}
		\end{overpic}
		\captionsetup{width=.01\textwidth} 
		\caption*{(3)}
	\end{minipage}
	\caption{Projecting a pair of dual two-sphere Koebe nets to $S^2$. (1) Vertices and adjacent tangent points project to spherical rings, (2) neighboring rings intersect orthogonally, (3) rings of white vertices adjacent to a common black vertex pass through a common point.}
	\label{Fig:Rt_Koebe_to_sorp}
\end{figure} 
\begin{proof}
	All tangent points on $S^2_+$ adjacent to a vertex vertex $k_v$ have the same distance to $k_v$ and are therefore are projected onto a circle in $S^2$ centered at $p_v$. The same holds for the tangent points on $S^2_-$, see Figure \ref{Fig:Rt_Koebe_to_sorp} (1). The two circles form a spherical ring $(p_v, r_v, R_v)$, where for the radii $r_v$ and $R_v$ we have \eqref{eq:Rt_cos}, since they coincide with the angles shown in Figure \ref{Fig:Rt_Koebe_2d_vertex}.
	
	Now let $v_c, b_1, v_s, b_2$ be the vertices of a face of $\S$, see Figure \ref{Fig:Rt_S_Quad_graph_diagonal} (right). The two corresponding rings in the projection,  $(p_{v_s}, r_{v_s}, R_{v_s})$ and 
	$(p_{v_c}, r_{v_c}, R_{v_c})$, intersect orthogonally. To see this, we first note that tangent points on $S^2_+$ always project onto inner circles and tangent points on $S^2_-$ always project onto outer circles. The orthogonality of the rings then follows from the orthogonality of pairs of dual edges of the two-sphere Koebe nets, see Figure \ref{Fig:Rt_Koebe_to_sorp} (2).
	
	With these observations we also find, that four rings corresponding to four vertices $v_1, v_2, v_3, v_4$ adjacent to a common black vertex $b$, see Figure \ref{Fig:Rt_S_Quad_graph_diagonal} (right), pass through a common point $q_b$, see Figure \ref{Fig:Rt_Koebe_to_sorp} (3). 
	Finally, the regularity assumption allows to choose signs of the inner radii of the rings such that the rings are oriented according to the order of the adjacent touching points, and indeed form a spherical orthogonal ring pattern.
\end{proof}
\begin{figure}
\centering
\begin{minipage}{.49\linewidth}
	\centering
	\includegraphics[width=.85\linewidth]{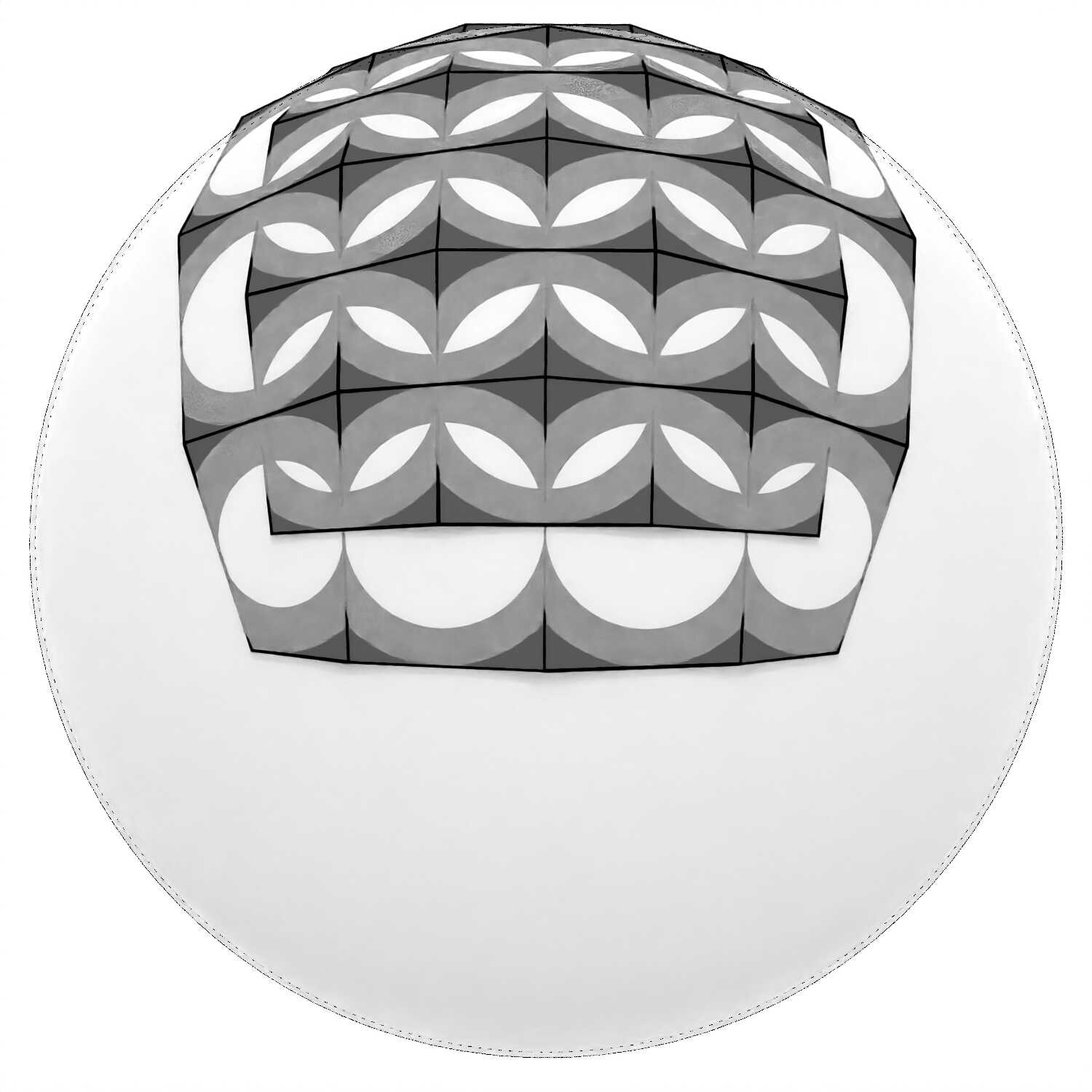}
\end{minipage}
\begin{minipage}{.49\linewidth}\centering
	\includegraphics[width=.85\linewidth]{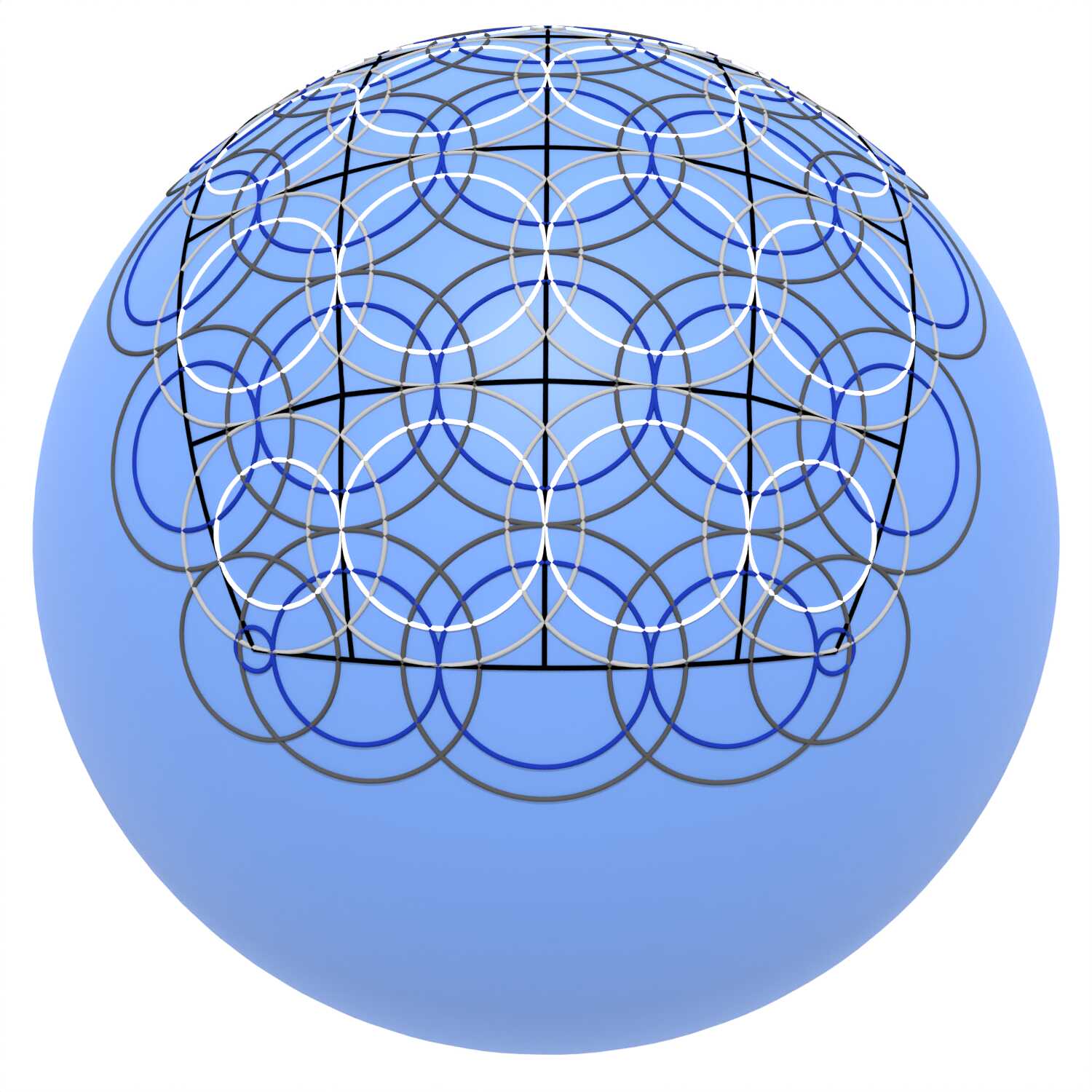}
\end{minipage}
\caption{A pair of dual two-sphere Koebe nets (left) and the corresponding spherical orthogonal ring pattern (shown with the combinatorics of $\G$, right). Vertices of the two-sphere Koebe nets correspond to centers of spherical rings and points of tangency correspond to the touching points of rings.}
\label{Fig:Rt_Koebe_and_sorp}
\end{figure}

If two spherical rings, $(p_{v}, r_{v}, R_{v})$ and $(p_{v'}, r_{v'}, R_{v'})$, intersect orthogonally, then applying the spherical Pythagorean theorem, we obtain
\begin{align*}
	\frac{\cos(R_{v})}{\cos(r_{v})} = \frac{\cos(R_{v'})}{\cos(r_{v'})}.
\end{align*}
For spherical orthogonal ring patterns, this implies that there 
exists a global constant $q < 1$ such that the radii of the inner and outer circles of all rings are related by 
\begin{align}
	\label{eq:Rt_q}
	q  \cos r_v  =  \cos R_v.
\end{align}  

There are two distinct directions in an orthogonal ring pattern, corresponding to horizontal and vertical edges of $\S$. Along horizontal edges, the inner circles of next neighboring rings touch, along vertical edges the outer circles of next neighboring rings touch. The rings form two families, corresponding to the coloring of white vertices $V_w(\S)$ into white \textcircled{s}-vertices and white \textcircled{c}-vertices. Touching rings always belong to the same family. Recall that the radius of inner circles can be negative, which can cause the ring pattern to flip and reverse direction along horizontal edges, see Figure \ref{Fig:Introduction} for an example.

A pair of dual two-sphere Koebe nets can be uniquely recovered from the corresponding spherical orthogonal ring pattern.
\begin{proposition}
	Consider a spherical orthogonal ring pattern with rings 
	$(p_v, r_v, R_v)$ and global parameter \mbox{$q< 1$}. 
	Define the vertices
	\begin{align}
		\label{eq:Rt_sorp_lift_p}
		k_v := \frac{\sqrt{q}}{\cos(R_v)}\ p_v = \frac{1}{\sqrt{q}\cos(r_v)}\ p_v
	\end{align}
	and the points
	\begin{align}
		\label{eq:Rt_sorp_lift_t}
		t_b^+ := \frac{1}{\sqrt{q}}\ q_b,\quad t_b^- := \sqrt{q} \  q_b,
	\end{align}
	which scale the centers $p_v$ and the touching points $q_b$  off the unit sphere.
	By restricting the map $$k: V_w(\mathcal{S}) \rightarrow \Rt, \ v \mapsto k_v$$  to the graphs $\G$ and  $\G^*$ of $\mathcal{S}$, one obtains
	a pair of regular, dual two-sphere Koebe nets, which are alternately tangent to the two spheres  $S^2_+$ and $S^2_-$, with radii $\frac{1}{\sqrt{q}}$ and $\sqrt{q}$, at the points \eqref{eq:Rt_sorp_lift_t}.
\end{proposition}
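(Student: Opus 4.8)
The plan is to verify directly that the points $k_v$, $v\in V_w(\S)$, and $t_b^\pm$, $b\in V_\black(\S)$, defined by \eqref{eq:Rt_sorp_lift_p} and \eqref{eq:Rt_sorp_lift_t} satisfy all the conditions of Definitions~\ref{def:Rt_two-sphere_Koebe} and~\ref{def:Rt_dual_two-sphere_Koebe}; the construction simply inverts the projection of the preceding proposition. First one checks that the two expressions for $k_v$ in \eqref{eq:Rt_sorp_lift_p} agree, because $\frac{\sqrt q}{\cos R_v}=\frac{1}{\sqrt q\cos r_v}$ is exactly the relation \eqref{eq:Rt_q}. Since $\|q_b\|=1$ we have $\|t_b^+\|=1/\sqrt q$ and $\|t_b^-\|=\sqrt q$, so $S^2_+$ and $S^2_-$ are concentric with $S^2$, of radii $r_+=1/\sqrt q$ and $r_-=\sqrt q$, hence $r_+r_-=1$; and $\|k_v\|=\sqrt q/\cos R_v=1/(\sqrt q\cos r_v)$ is consistent with \eqref{eq:Rt_cos}.

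\emph{Edges touch the two spheres.} Let $b$ be a black vertex adjacent to a white vertex $v$. Then $q_b$ lies on the inner circle $(p_v,r_v)$ or on the outer circle $(p_v,R_v)$ of the ring of $v$, so $\langle p_v,q_b\rangle$ equals $\cos r_v$ or $\cos R_v$ respectively (the sign of $r_v$ is immaterial, as cosine is even). In the first case $\langle k_v,t_b^+\rangle=\frac{1}{\sqrt q\cos r_v}\cdot\frac{1}{\sqrt q}\langle p_v,q_b\rangle=\frac1q=\|t_b^+\|^2$, so $k_v$ lies in the tangent plane of $S^2_+$ at $t_b^+$; in the second case $\langle k_v,t_b^-\rangle=\frac{\sqrt q}{\cos R_v}\cdot\sqrt q\,\langle p_v,q_b\rangle=q=\|t_b^-\|^2$, so $k_v$ lies in the tangent plane of $S^2_-$ at $t_b^-$. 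For an edge $(v,v')$ of $\G$ (or of $\G^*$) with associated black vertex $b$, both $k_v$ and $k_{v'}$ lie in this common tangent plane; moreover $k_v-t_b^{\pm}$ is a positive multiple of $p_v-\langle p_v,q_b\rangle q_b$, the tangent vector at $q_b$ of the geodesic from $q_b$ to $p_v$, and likewise for $v'$, so the tangency of the two circles of $v$ and $v'$ at $q_b$ (item~(2) of Definition~\ref{Def:Rt_orp}) forces these two directions to be collinear, whence $t_b^\pm$ lies on the line $k_vk_{v'}$: the edge is tangent to $S^2_\pm$ at $t_b^\pm$. Since horizontal and vertical edges alternate at every vertex and carry inner, resp.\ outer, touching points, the edges of both $k^s$ and $k^c$ alternately touch $S^2_+$ and $S^2_-$.

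\emph{Planarity, face normals and orthogonal dual edges.} The computation hinges on the identity $\langle k_v,k_w\rangle=1$ whenever an \textcircled{s}-vertex $v$ and a \textcircled{c}-vertex $w$ lie in a common quad of $\S$: their rings meet orthogonally, so the spherical Pythagorean theorem applied at an intersection point gives $\langle p_v,p_w\rangle=\cos R_v\cos r_w$, and substituting the two forms of \eqref{eq:Rt_sorp_lift_p} yields $\langle k_v,k_w\rangle=\frac{\sqrt q}{\cos R_v}\cdot\frac{1}{\sqrt q\cos r_w}\langle p_v,p_w\rangle=1$. Hence the \textcircled{s}-vertices surrounding a \textcircled{c}-vertex $v_c$ (i.e.\ a face of $\G$, hence of $k^s$) all satisfy $\langle\,\cdot\,,k_{v_c}\rangle=1$, so that face is planar with normal $k_{v_c}$; symmetrically the faces of $k^c$ are planar with normals equal to the vertex vectors of $k^s$. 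This gives at once the Q-net property of $k^s,k^c$ and condition~(2) of Definition~\ref{def:Rt_dual_two-sphere_Koebe}. For condition~(3), consider a primal edge of $\G$ and its dual edge of $\G^*$ crossing at a black vertex $b$: both are tangent to a sphere $S^2_\pm$ at the corresponding point, hence both are orthogonal to $q_b$ and therefore lie in $T_{q_b}S^2$; within that plane the primal edge is orthogonal to the common tangent at $q_b$ of the touching \textcircled{s}-circles, and the dual edge to that of the touching \textcircled{c}-circles, and these two tangents are perpendicular because the \textcircled{s}-ring and the \textcircled{c}-ring through $q_b$ intersect orthogonally; hence the primal and dual edges are perpendicular. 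Their tangent points $t_b^+\in S^2_+$ and $t_b^-\in S^2_-$ project to $q_b$ by construction, so conditions~(1) and~(3) hold as well.

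\emph{The main obstacle} is the orientation and sign bookkeeping. The inner-product computations above are blind to the orientations of the rings, so one must separately check that each touching point $t_b^\pm$ lies in the relative interior of its edge (upgrading ``tangent line'' to ``touches'') and that the resulting nets are regular, i.e.\ that the touching points about each vertex are cyclically ordered (item~(3) of Definition~\ref{Def:Rt_orp}); this requires carefully tracking possibly negative inner radii $r_v$ and the admissible flipping of the ring pattern along horizontal edges (which makes the edges of $k^s,k^c$ reverse direction relative to a center net $c$), all consistently with the convention that horizontal edges touch $S^2_+$ and vertical edges touch $S^2_-$.
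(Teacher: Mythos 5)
Your proof is correct and, at the top level, does the same thing as the paper: it verifies directly that the lifted points satisfy Definitions~\ref{def:Rt_two-sphere_Koebe} and~\ref{def:Rt_dual_two-sphere_Koebe}, inverting the projection of the preceding proposition. The internal organization, however, is genuinely different. The paper works purely with orthogonality relations: it first records that $t_b^\pm$ lies on the corresponding edge with $q_b$ orthogonal to it, deduces $[k_{v_1},k_{v_3}]\perp[k_{v_2},k_{v_4}]$ from the orthogonality and tangency of the rings, and then obtains the face-normal property by combining $[k_{v_{s_1}},k_{v_{s_2}}]\perp q_b$ with $[k_{v_{s_1}},k_{v_{s_2}}]\perp(k_{v_c}-t_b^-)$ to conclude $[k_{v_{s_1}},k_{v_{s_2}}]\perp k_{v_c}$. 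You instead make two metric identities do the work: the support-plane computation $\langle k_v,t_b^\pm\rangle=\|t_b^\pm\|^2$, which exhibits each edge as lying in the tangent plane of $S^2_\pm$ at $t_b^\pm$, and especially $\langle k_v,k_w\rangle=1$ for diagonally opposite \textcircled{s}- and \textcircled{c}-vertices, obtained from the spherical Pythagorean theorem. The latter is an elegant shortcut: it yields planarity of every face (of arbitrary valence), identifies the face normal, and --- being symmetric in $v$ and $w$ --- gives condition~(2) of Definition~\ref{def:Rt_dual_two-sphere_Koebe} in both directions at once, where the paper has to argue separately and remark that the same holds for higher-valence faces of $\G^*$. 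The price is that your inner-product computations are, as you note, blind to orientation, so the regularity of the resulting nets and the location of $t_b^\pm$ on the actual edge segment (rather than merely on the line) are deferred to sign bookkeeping; the paper is no more explicit on this point (it simply writes $t_b^+\in[k_{v_1},k_{v_3}]$ and asserts regularity), so this is not a gap relative to the published argument, but it is the one place where both proofs lean on item~(3) of Definition~\ref{Def:Rt_orp} without spelling out the case analysis for negative inner radii.
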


\begin{proof}
	 Consider four vertices $v_1, v_2, v_3, v_4$ adjacent to a common black vertex $b$ in $\S$, see Figure \ref{Fig:Rt_S_Quad_graph_diagonal} (right). 
	The corresponding spherical rings $(p_{v_1}, r_{v_1}, R_{v_1})$, $(p_{v_{2}}, r_{v_{2}}, R_{v_{2}})$, $(p_{v_3}, r_{v_3}, R_{v_3})$ and $(p_{v_{4}}, r_{v_{4}}, R_{v_{4}})$ pass though a common point $q_b$. Suppose that the inner circles of $(p_{v_1}, r_{v_1}, R_{v_1})$  and $(p_{v_3}, r_{v_3}, R_{v_3})$ and the outer circles of $(p_{v_{2}}, r_{v_{2}}, R_{v_{2}})$ and $(p_{v_{4}}, r_{v_{4}}, R_{v_{4}})$ touch in $q_b$, see Figure \ref{Fig:Rt_Koebe_to_sorp} (3). Scaling the centers and the touching points by \eqref{eq:Rt_sorp_lift_p} and \eqref{eq:Rt_sorp_lift_t} we obtain the edges $[k_{v_1}, k_{v_{3}}]$ and $[k_{v_2}, k_{v_{4}}]$ such that 
	\begin{align}
		\label{eq:Rt_orth_0}
		t^+_b \in [k_{v_1}, k_{v_{3}}] \
        ,\ q_b \perp [k_{v_1}, k_{v_{3}}] \ , \
		t^-_b \in [k_{v_2}, k_{v_{4}}] \ , \
		 q_b \perp [k_{v_2}, k_{v_{4}}].
	\end{align}
	
	The orthogonality of neighboring rings and the tangency of next neighboring rings implies the orthogonality 
	\begin{align}
		\label{eq:Rt_orth_1}
		[k_{v_1}, k_{v_{3}}]  \perp [k_{v_2}, k_{v_{4}}] .
	\end{align}
	
	It remains to show that for the two resulting polyhedral surfaces the vertex vectors of one surface form the face normal vectors of the other. To this end consider a face of $\mathcal{G}$ with white \textcircled{s}-vertices $v_{s_1}, v_{s_2}, v_{s_3}, v_{s_4}$ dual to a vertex $v_c$.
	The vector $k_{v_c}$ is orthogonal to all four edges of the quadrilateral
    $[k_{v_{s_1}}, k_{v_{s_2}}, k_{v_{s_3}},k_{v_{s_4}}]$, which in particular implies in particular the planarity of this quadrilateral. To show, for example the orthogonality \begin{align}
    	\label{eq:Rt_orth_2}
    	[k_{v_{s_1}}, k_{v_{s_2}}] \perp k_{v_{c}}
    \end{align}
 we observe that
     \begin{align}
     	\label{eq:Rt_orth_3}
     	[k_{v_{s_1}}, k_{v_{s_2}}] \perp [t^-_b, k_{v_{c}}].
     \end{align}
     The latter orthogonality follows from \eqref{eq:Rt_orth_1} and the fact that $t^-_b \in [k_{v_c}, k_{v_{c'}}]$. A combination of \eqref{eq:Rt_orth_0} and \eqref{eq:Rt_orth_3} implies \eqref{eq:Rt_orth_2}.
     The same also holds for faces of $\mathcal{G}^*$ with valency greater than four.
\end{proof}
We have established the following correspondence, shown in  Figure \ref{Fig:Rt_Koebe_and_sorp}.

\begin{theorem}
	\label{Thm:Rt_Koebe_and_orp}
	Pairs of regular, dual two-sphere Koebe nets touching the spheres $S^2_+$ and $S^2_-$, with radii $r_+$ and $r_-$, are in one to one correspondence with orthogonal ring patterns in $S^2$ with global parameter $q = \frac{r_-}{r_+}$. Vertices of the Koebe nets correspond to centers of spherical rings, while points of tangency of the Koebe nets correspond to touching points of the rings.
\end{theorem}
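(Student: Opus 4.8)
The plan is to read off the two directions of the correspondence from the two propositions just proved and then verify that the resulting maps are mutual inverses. The first of those propositions already assigns to a pair of regular, dual two-sphere Koebe nets $(k^s,k^c)$ touching $S^2_\pm$ with radii $r_\pm$ the spherical orthogonal ring pattern with centers $p_v=k_v/\|k_v\|$ and radii determined by \eqref{eq:Rt_cos}; call this assignment $\Phi$. The second proposition assigns to a spherical orthogonal ring pattern with global parameter $q<1$, via the explicit lifts \eqref{eq:Rt_sorp_lift_p} and \eqref{eq:Rt_sorp_lift_t}, a pair of regular, dual two-sphere Koebe nets touching the spheres of radii $1/\sqrt q$ and $\sqrt q$; call this assignment $\Psi$. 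The key preliminary observation is that the normalization $r_+r_-=1$ built into Definition~\ref{def:Rt_two-sphere_Koebe} leaves no residual scaling freedom: together with $q=r_-/r_+<1$ it forces $r_+=1/\sqrt q$ and $r_-=\sqrt q$, so the two concentric spheres occurring in $\Phi$ and in $\Psi$ genuinely coincide.

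Next I would check $\Phi\circ\Psi=\mathrm{id}$. Starting from a ring pattern, \eqref{eq:Rt_sorp_lift_p} gives $k_v=\tfrac{\sqrt q}{\cos R_v}\,p_v$ with $p_v\in S^2$ and $\cos R_v>0$ since $R_v<\tfrac{\pi}{2}$, hence $\|k_v\|=\sqrt q/\cos R_v$ and $k_v/\|k_v\|=p_v$; plugging into \eqref{eq:Rt_cos} with $r_-=\sqrt q$ and $r_+=1/\sqrt q$ recovers $\cos R_v$ and yields $\cos r_v=q^{-1}\cos R_v$, i.e.\ precisely the relation \eqref{eq:Rt_q} of the original pattern, while the sign of $r_v$ is pinned down by the cyclic order of the adjacent touching points, which is exactly what the regularity hypothesis supplies. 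Similarly $t_b^{\pm}/r_\pm=q_b$ by \eqref{eq:Rt_sorp_lift_t}. For $\Psi\circ\Phi=\mathrm{id}$, projecting a given net yields $p_v=k_v/\|k_v\|$ and, by \eqref{eq:Rt_cos}, $\cos R_v=r_-/\|k_v\|$, so \eqref{eq:Rt_sorp_lift_p} returns $\tfrac{\sqrt q}{\cos R_v}\,p_v=\tfrac{\sqrt q}{r_-}\,k_v=k_v$, using $q=r_-^2$ (from $r_+r_-=1$); the same arithmetic with $r_+$ and \eqref{eq:Rt_sorp_lift_t} recovers the tangent points.

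Once both compositions are the identity the bijection is established, and by construction it carries vertices of the Koebe nets to ring centers, tangency points to touching points, and has global parameter $q=r_-/r_+$, as claimed. I do not expect a genuine obstacle here: the substantive geometric work — that the projected circles are concentric, that neighbouring rings meet orthogonally, that four rings around a black vertex pass through a common point, and that the lifted quadrilaterals are planar with the correct normals — has already been absorbed into the two propositions. What remains is bookkeeping, and the only points that need care are that the two constructions are inverse \emph{exactly} rather than merely up to a global scale (handled by $r_+r_-=1$), that the orientations encoded in the signs of the inner radii match up (handled by the regularity assumption, equivalently the consistent cyclic ordering of the touching points around each white vertex), and that the convention $q<1$ corresponds to $S^2_-$ being the smaller of the two spheres.
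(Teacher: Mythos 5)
Your proposal is correct and matches the paper's treatment: the paper gives no separate proof of Theorem~\ref{Thm:Rt_Koebe_and_orp}, simply declaring the correspondence established by the two preceding propositions, which are exactly your maps $\Phi$ and $\Psi$. Your added bookkeeping — that $r_+r_-=1$ together with $q=r_-/r_+$ pins $r_\pm=q^{\mp1/2}$ so the compositions are genuinely the identity, and that regularity fixes the signs of the inner radii — is the (correct) implicit content the paper leaves to the reader.
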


\begin{remark}
	Classical Koebe nets are specific examples of S$_1$-isothermic surfaces. In this case, the vertex spheres are given by the spheres that intersect $S^2$ orthogonally, as discussed earlier in this section. However, for two-sphere Koebe nets, each vertex has a pair of concentric vertex spheres: the smaller sphere intersects $S^2_+$ orthogonally and the larger sphere intersects $S^2_-$ orthogonally. The smaller spheres touch along horizontal edges, while the larger spheres touch along vertical edges. For both families of vertex spheres, orthogonal face circles exist, formed by the intersection of the two-sphere Koebe net with the spheres $S^2_+$ and $S^2_-$, respectively. Unlike classical Koebe nets, two-sphere Koebe nets do not belong to the class of S$_1$-isothermic surfaces.
\end{remark}

\section{Analytic description of orthogonal ring patterns in the sphere}
\label{sec:analytic_orp}
This section presents the basics of the analytic description, including the variational description, of spherical orthogonal ring patterns. For proofs and further details we refer to \cite{bobenko2024rings}.

We consider an S-quad graph $\Sz$ that is defined by a simply connected subset of squares of the $\mathbb{Z}^2$ lattice in $\R^2$.
Our main example is a combinatorial rectangle
	\begin{align}
		\label{eq:Z2_rectangle}
		\Sz = \{(i, j) \in \Z^2 | 1 \leq i \leq I, 1 \leq j \leq J\}.
\end{align}
Let us now consider an orthogonal ring pattern with global parameter $q\leq 1$ and underlying S-quad graph $\Sz$. There is a ring associated with each white vertex, with inner radius $r_v$ and outer radius $R_v \leq \frac{\pi}{2}$. Due to \eqref{eq:Rt_q}, these radii can be uniformized in terms of Jacobi elliptic functions, 
\begin{equation}
	\label{eq:Rt_jef_uniformization}
	\cos r_v=\sn (\beta_v,q) ,\ \sin r_v=\cn (\beta_v,q), \ \sin R_v=\dn (\beta_v, q),
\end{equation}
associating a variable $\beta_v \in [0, 2K]$ to each ring.
Here $K$ denotes the real quarter period of Jacobi elliptic functions \cite{nist}. 

To characterize rings forming orthogonal ring patterns, let us introduce the function 
\begin{equation}
	\label{Rt_g}
	g(x):=\frac{\pi}{2}-\arg \sn\left(\frac{x+iK'}{2}\right).
\end{equation}
The uniformizing variables $\beta:V_w(\Sz)\rightarrow [0, 2K]$ determine an orthogonal ring pattern with $R\leq \frac{\pi}{2}$ if and only if for all internal white vertices $v$ they satisfy
\begin{equation*}
	\label{Rt_interiod_vertex_ring_angles_g}
	\sum_{v' \sim v} g(\beta_v+\beta_{v'})-g(\beta_v-\beta_{v'})=2\pi, 
\end{equation*}
where the sum is taken over all neighboring rings of $(p_v, r_v, R_v)$ that intersect it orthogonally. Combinatorially, neighboring vertices  ${v' \sim v}$ are the two white vertices of an elementary quad of the S-quad graph $\Sz$, see Figure \ref{Fig:Rt_S_Quad_graph_diagonal} (right).
With prescribed nominal angles $\Theta$, a similar condition must hold for white boundary vertices:
\begin{eqnarray*}
	\label{eq:angle_boundary_beta}
	\pi \deg(v)+\sum_{v' \sim v} g(\beta_v-\beta_{v'})-g(\beta_v+\beta_{v'})=\Theta_v, \quad \text{if}\ \ r_v>0, \\
	\sum_{v' \sim v} g(\beta_v-\beta_{v'})-g(\beta_v+\beta_{v'})=\Theta_v, \quad \text{if}\ \ r_v<0. \nonumber
\end{eqnarray*}

Using the anti-derivative of (\ref{Rt_g}), 
 \begin{eqnarray}
	\label{eq:F(x)}
	& & F(x)=
	\int_0^x \frac{\pi}{2} -\arg\sn\frac{u+iK'}{2} du=\int_0^x \arctan \frac{(1+q)\sn\frac{u}{2}}{\cn\frac{u}{2}\dn\frac{u}{2}}du,
\end{eqnarray}
one defines the functional
\begin{equation}
	\label{Rt_eq:functional_spherical}
	S_{sph}(\beta):=\sum_{(v, v')} \left( F(\beta_{v}-\beta_{v'})-F(\beta_{v}+\beta_{v'})\right) +\sum_{v}\Phi_{v}\beta_{v},
\end{equation}
where the first sum is taken over all pairs of white vertices corresponding to neighboring rings and the second sum over all white vertices $v \in  V_w(\Sz)$. $\Phi_{v}$ are some prescribed parameters at the vertices.
\begin{figure}	
	\begin{minipage}{.49\linewidth}
		\centering
		\includegraphics[width=.7\linewidth]{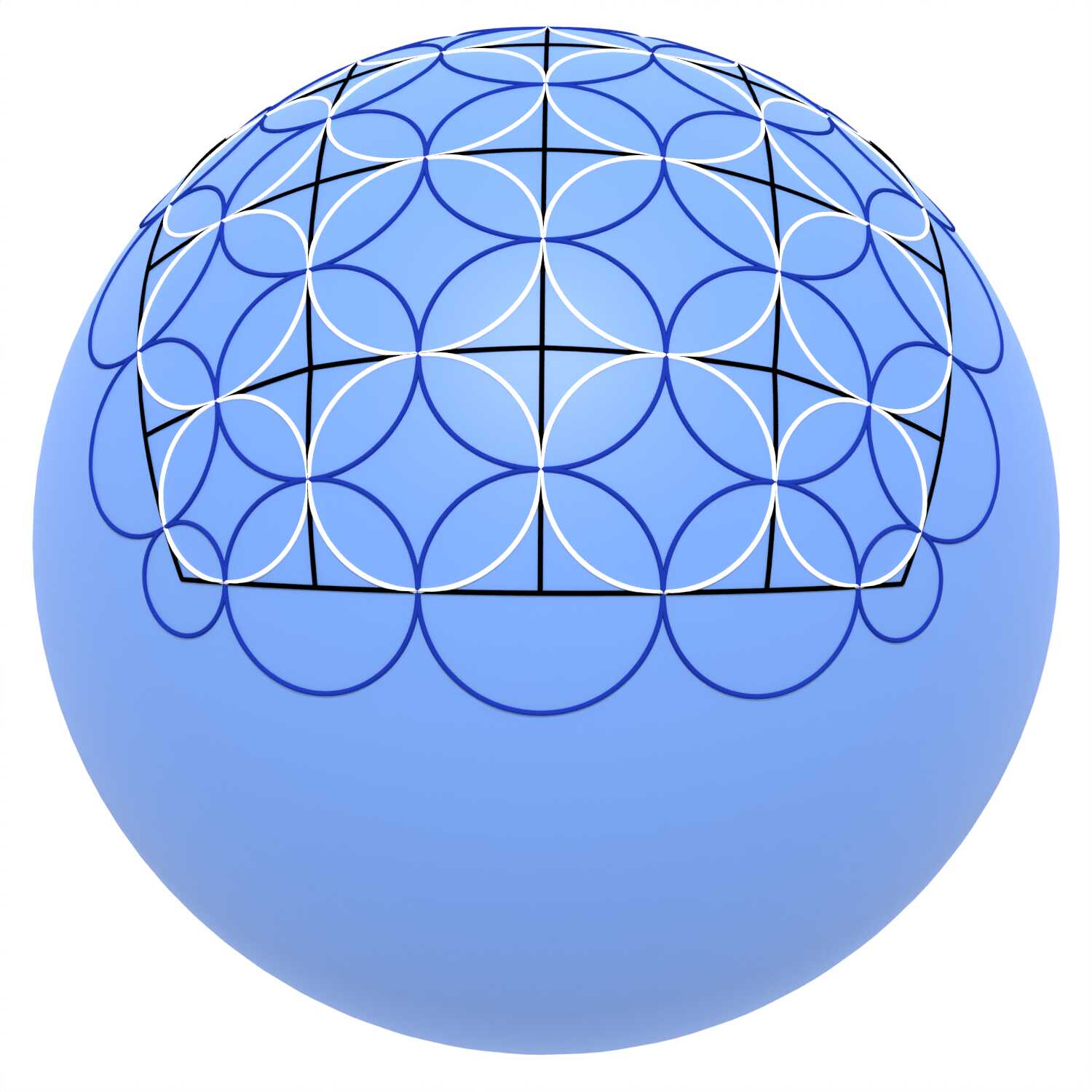}
	\end{minipage}
	\begin{minipage}{.49\linewidth}
		\centering
		\includegraphics[width=.7\linewidth]{figures/4x4_deformation_to_ring}
	\end{minipage}
	\caption{A local patch of a spherical orthogonal circle pattern with q=.9999999 (left) and its deformation to a spherical orthogonal ring pattern with q=0.99 (right). All corner angles are of size $\frac{2\pi}{3}$.}
	\label{Fig:Rt_orp_deformation}
\end{figure}
\begin{theorem}
	\label{thm:variational_spherical}
	The critical points $\beta$ of the functional $S_{sph}$ with all \mbox{$\beta_{v}\in [0,2K]$} and 
	\begin{eqnarray*}
		&\Phi_{v}=2\pi & \text{for  inner rings}, \nonumber \\
		&\Phi_{v}=\pi \deg({v})-\Theta_v &  \text{for positively oriented boundary rings},    \\
		&\Phi_{v}=-\Theta_v &  \text{for negatively oriented boundary rings}, 
	\end{eqnarray*}
	correspond to spherical orthogonal ring patterns with all $R_{v}\le\frac{\pi}{2}$.
	Here $\Theta_v$ is the nominal angle covered by the neighboring rings respectively. 
\end{theorem}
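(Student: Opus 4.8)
The plan is to recognize that the Euler--Lagrange equations of $S_{sph}$ are precisely the vertex angle conditions stated just before \eqref{Rt_eq:functional_spherical}, and then to quote the characterization of those conditions (established in \cite{bobenko2024rings}) to obtain the correspondence. The one preliminary fact I need is that $g$ is odd: from the second expression in \eqref{eq:F(x)} we have $F'=g$, and $g(x)$ equals $\arctan$ of a function that is odd in $x$ (since $\sn$ is odd while $\cn$ and $\dn$ are even), so $g$ is odd and $F$ is even. Therefore each summand $F(\beta_v-\beta_{v'})-F(\beta_v+\beta_{v'})$ of \eqref{Rt_eq:functional_spherical} is symmetric under the interchange $v\leftrightarrow v'$, so the first sum is well defined over unordered pairs of white vertices carrying orthogonally intersecting rings, and differentiation will be unambiguous.

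Next I would compute the partial derivatives of \eqref{Rt_eq:functional_spherical}. Fixing a white vertex $v$, only the summands containing $\beta_v$ contribute, and using $F'=g$ one gets
\[
  \frac{\partial S_{sph}}{\partial\beta_v}=\sum_{v'\sim v}\bigl(g(\beta_v-\beta_{v'})-g(\beta_v+\beta_{v'})\bigr)+\Phi_v,
\]
where the sum runs over the white vertices $v'$ sharing a quad of $\Sz$ with $v$, i.e.\ over the rings intersecting the ring at $v$ orthogonally. Setting this equal to zero and inserting the three prescribed values of $\Phi_v$ reproduces exactly the three conditions listed before \eqref{Rt_eq:functional_spherical}: for an interior ring $\Phi_v=2\pi$ gives $\sum_{v'\sim v}g(\beta_v+\beta_{v'})-g(\beta_v-\beta_{v'})=2\pi$; for a positively oriented boundary ring $\Phi_v=\pi\deg(v)-\Theta_v$ gives $\pi\deg(v)+\sum_{v'\sim v}g(\beta_v-\beta_{v'})-g(\beta_v+\beta_{v'})=\Theta_v$; and for a negatively oriented boundary ring $\Phi_v=-\Theta_v$ gives $\sum_{v'\sim v}g(\beta_v-\beta_{v'})-g(\beta_v+\beta_{v'})=\Theta_v$. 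Hence a configuration $\beta$ with all $\beta_v\in[0,2K]$ is a critical point of $S_{sph}$ precisely when all of these vertex relations hold.

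Finally I would close the loop using the analytic characterization of \cite{bobenko2024rings}. For $\beta\colon V_w(\Sz)\to[0,2K]$ define the ring radii by the uniformization \eqref{eq:Rt_jef_uniformization}, with the convention that $r_v>0$ when $\beta_v\in(0,K)$ and $r_v<0$ when $\beta_v\in(K,2K)$ (the sign of $\cn(\beta_v,q)=\sin r_v$); this sign is exactly what selects whether the $r_v>0$ or the $r_v<0$ branch of the boundary condition applies, i.e.\ which value $\Phi_v$ is imposed at a boundary ring. By \cite{bobenko2024rings} these rings assemble into a spherical orthogonal ring pattern in the sense of Definition \ref{Def:Rt_orp} with all $R_v\le\tfrac{\pi}{2}$ if and only if the vertex relations above hold, and the uniformization is a bijection between the admissible $\beta_v$ and the admissible oriented ring radii. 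Combining this with the previous paragraph, the equivalence between $\beta$ being a critical point of $S_{sph}$ and all vertex relations holding becomes the asserted one-to-one correspondence, the orientations of the boundary rings being precisely those encoded by the chosen $\Phi_v$.

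The variational computation is routine once the oddness of $g$ (hence the symmetry of the summands and $F'=g$) is in place; the only subtlety on the variational side is matching the sign of $r_v$ with the half-period in which $\beta_v$ lies, so that the correct one of the two boundary relations, and correspondingly the correct $\Phi_v$, is enforced. The genuinely substantive input is the equivalence between the derived angle-sum relations and the defining properties of Definition \ref{Def:Rt_orp} --- orthogonality of paired rings, the coincidence of the two inner and the two outer circles through each black vertex, and the cyclic ordering of touching points compatible with the orientation. This is where the real work lies, and it is precisely the content of the characterization borrowed from \cite{bobenko2024rings}; I would treat it as a black box rather than reprove it here.
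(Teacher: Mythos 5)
Your proposal is correct and follows exactly the route the paper intends: the paper itself defers the proof to \cite{bobenko2024rings}, having already stated (from that reference) the vertex angle relations in terms of $g$ that characterize orthogonal ring patterns, so the only content to supply is the routine verification that $\partial S_{sph}/\partial\beta_v=0$ reproduces those relations with the prescribed $\Phi_v$ — which you do correctly, including the needed oddness of $g$ (hence evenness of $F$ and well-definedness of the sum over unordered pairs) and the matching of the sign of $r_v=\cn(\beta_v,q)$-branch with the two boundary cases. Treating the equivalence between the angle relations and Definition~\ref{Def:Rt_orp} as a black box from \cite{bobenko2024rings} is precisely what the paper does.
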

A proof of the theorem is provided in \cite{bobenko2024rings}. The functional \eqref{Rt_eq:functional_spherical} is not convex, its second derivative 
 \begin{eqnarray*}
	D^2 S_{sph}= &\frac{1}{2}\sum_{(v, v')} (\dn(\beta_v-\beta_{v'})+q\cn(\beta_v-\beta_{v'}))(d\beta_v-d\beta_{v'})^2-\\
	&(\dn(\beta_v+\beta_{v'})+q\cn(\beta_v+\beta_{v'}))(d\beta_v+d\beta_{v'})^2\nonumber
\end{eqnarray*}
is negative for the tangent vector $u=\sum_v \partial/\partial\beta_v$, the
index is therefore at least $1$. We define a reduced functional
$\widetilde{S}_{sph}(\beta)$ by maximizing in the direction $u$:
\begin{equation}
	\label{eq:Rt_Ssph_reduced}
	\widetilde{S}_{sph}(\beta)=\max_t S_{sph}(\beta+t u).
\end{equation}
Obviously, $\widetilde{S}_{sph}(\beta)$ is invariant under translations in the
direction $u$. Now the idea is to minimize $\widetilde{S}_{sph}(\beta)$ restricted
to $\sum_v\beta_v=0$. This method has proven to be amazingly
powerful for our computations. In particular, it was used to produce branched orthogonal ring patterns
in the sphere and the examples in Section \ref{sec:Rt_examples}.

For $q=1$ the two radii, $R_v$ and $r_v$, of a ring coincide and we obtain spherical orthogonal circle patterns. The spherical radii of circles can also be expressed using $\beta$-variables, which we denote by $\beta^0$,  related in the following way 
\begin{equation*}
	\tanh \beta^0_v = \cos R_v, \ \frac{1}{\cosh \beta^0_v}=\sin R_v.
\end{equation*} 
The corresponding variation principle for orthogonal circle patterns was suggested in 
\cite{springborn2003variational} and used in \cite{BHS_2006}. 
The global existence and uniqueness of spherical orthogonal circle pattern for a given polytopal cell decomposition of the sphere is ensured by Koebe's Theorem, see e.g. \cite{BHS_2006}.

Factorizing a reflectionally symmetric cell decomposition by its symmetry groups one obtains existence and uniqueness results for circle patterns on the corresponding factor. Further, solutions $\beta^0$ of a Dirichlet or Neumann boundary value problem for circle patterns (corresponding to $q=1$) allow small deformations to orthogonal ring patterns with the same combinatorial and boundary data (corresponding to $q=1-\epsilon$ for small $\epsilon$). In \cite{bobenko2024rings} this fact was proven for rigid circle patterns, i.e., the ones with a non-degenerate Hessian 
\begin{equation}
	\label{eq:Rt_hessian}
	\det \left( \frac{\partial^2 S_{sph}}{\partial\beta^0_{v}\partial\beta^0_{v'}}\right) \neq 0. 
\end{equation}
An example is shown in Figure \ref{Fig:Rt_orp_deformation}.

We will use this deformation of circle patterns to ring patterns in Section \ref{sec:Rt_discrete_cmc_surfaces_as_deformations_of_minimal_surfaces} to construct discrete cmc surfaces with small mean curvature as deformations of discrete minimal surfaces.

\section{Constructing cmc surfaces in $\Rt$}
\label{sec:Rt_constructing_discrete_s_isothermic_cmc_surfaces}

How does one construct a discrete S$_1$-cmc surface analogous to a particular continuous cmc surface under investigation? In this section we outline the general method for
doing so. An analogous construction scheme for discrete S$_1$-minimal surfaces is described in \cite{BHS_2006}. Figure \ref{Fig:Rt_constructing} illustrates the construction of a discrete Schwarz P cmc surface. The difficult part here is to construct the ring
pattern corresponding to the combinatorics of curvature lines and boundary conditions, obtained from the smooth surface (paragraphs 1 and 2 below). The remaining steps, constructing the two-sphere Koebe net and the corresponding pair of S$_1$-cmc surfaces (paragraphs 3 and 4) are rather direct. Note that we only consider simply connected domains.\\
\begin{figure}[t]
	\setstretch{.7} 
	\centering
	
	\begin{minipage}{.3\linewidth}
		\includegraphics[width=\linewidth]{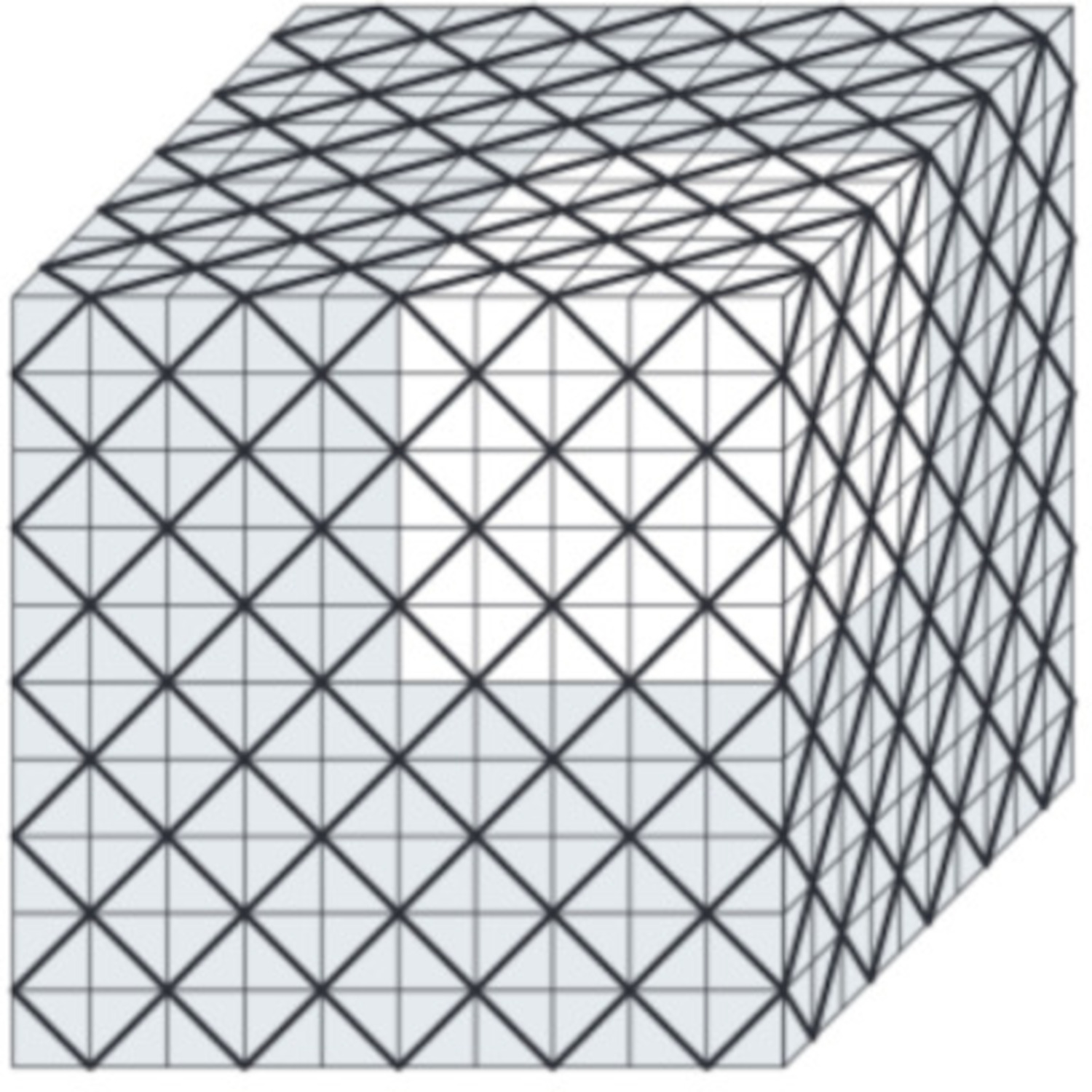} 
		
		\centering{\tiny(1) Combinatorics of the  Gauss image of curvature lines. The fundamental piece is highlighted.}
	\end{minipage}{\Large $ \ \rightarrow$}
	\begin{minipage}{.3\linewidth}
	%	\vspace{.3cm}
		\includegraphics[width=1\linewidth]{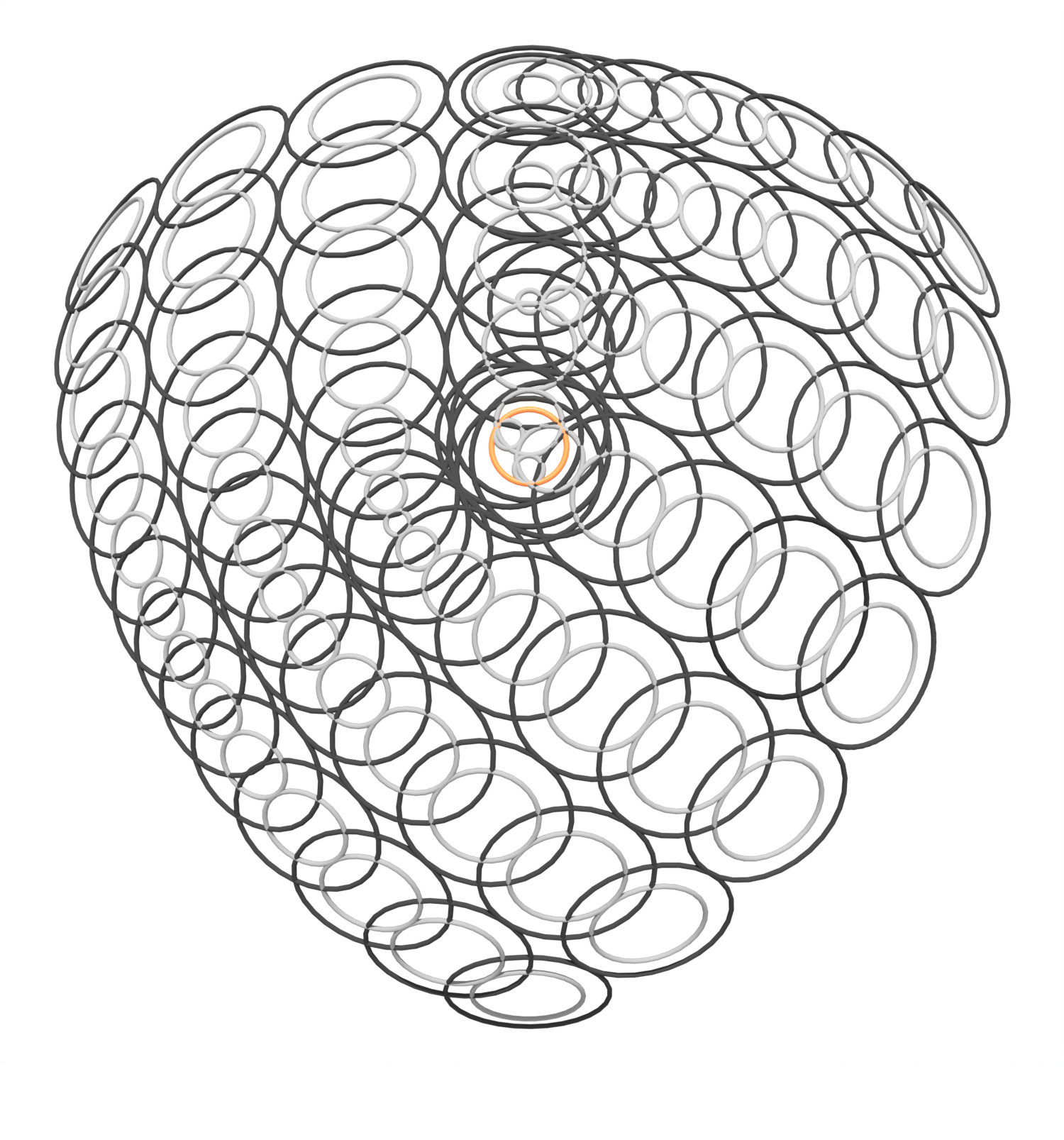}
	%	\vspace{.3cm}
		\centering
		 { \tiny  (2) Ring pattern (one family) of a fundamental piece.}
	\end{minipage}\\
\hspace{-.6cm}
{\Large $ \ \rightarrow$}
\begin{minipage}{.3\linewidth}
	\includegraphics[width=\linewidth]{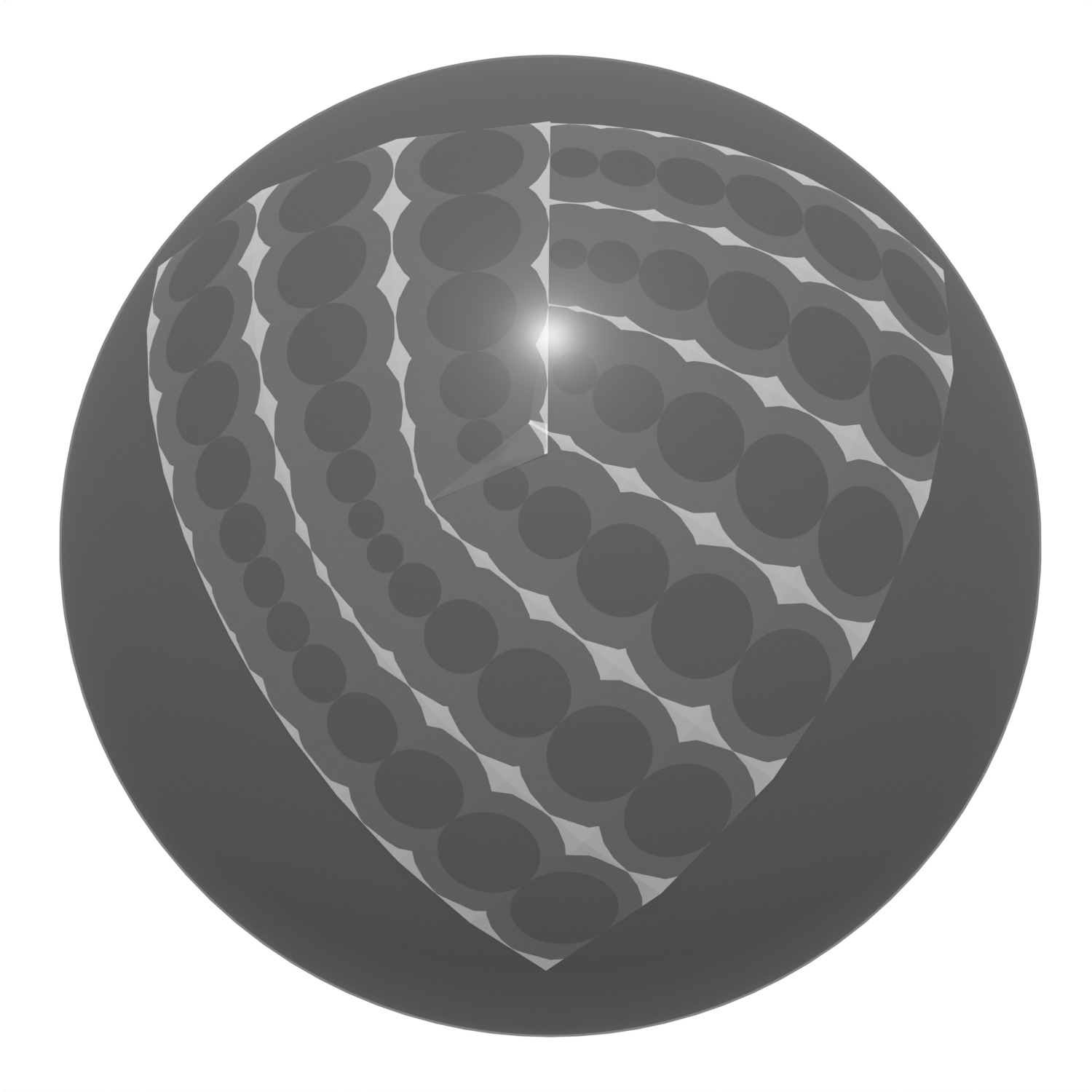}
	\centering
	{\tiny \centering (3) Two-sphere Koebe Q-net. }
\end{minipage}{\Large $ \ \rightarrow$}
\begin{minipage}{.3\linewidth}
	\includegraphics[width=\linewidth]{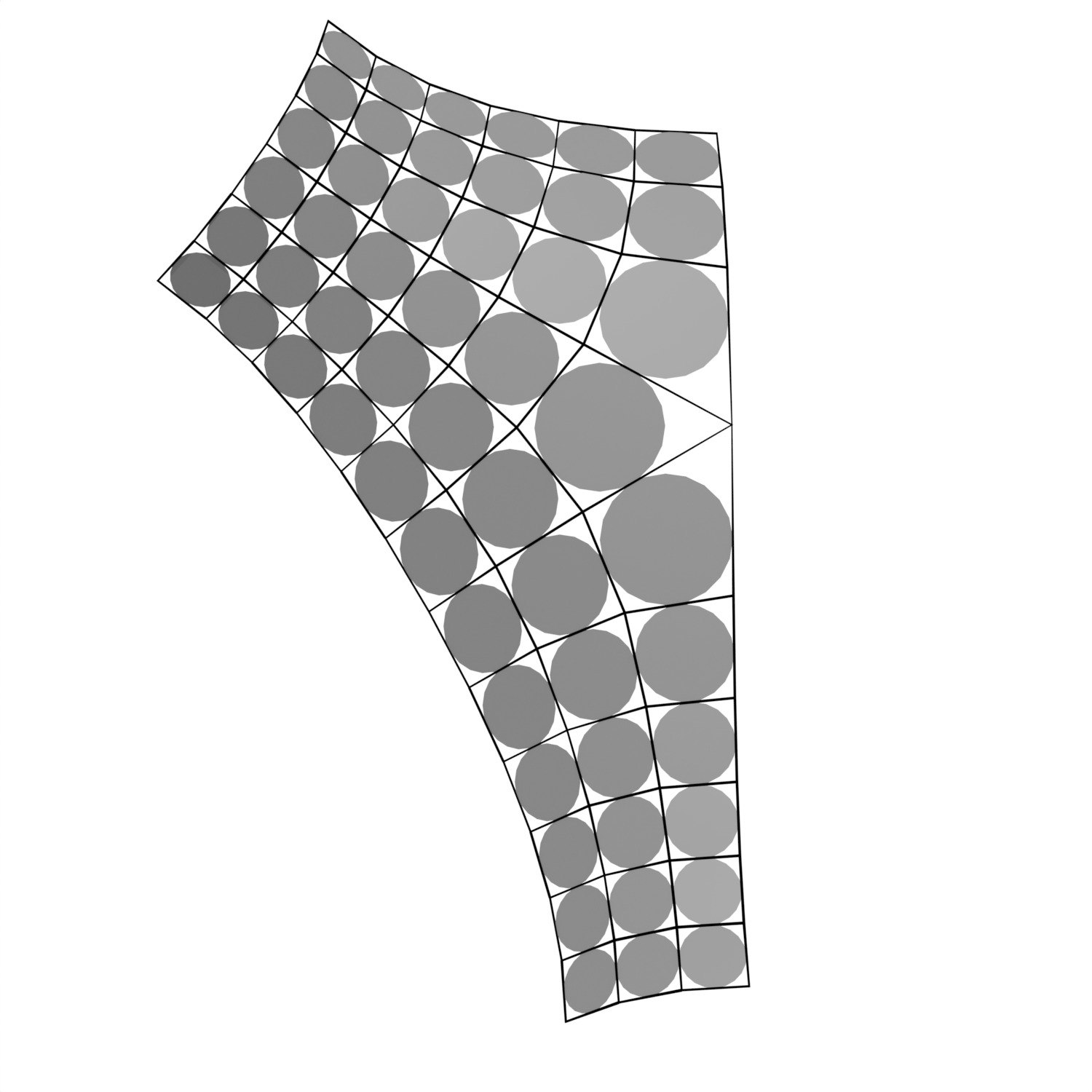}
	\centering
	{\tiny \centering (4) S$_1$-cmc surface. }
\end{minipage}
	\caption{Construction of a discrete S$_1$-cmc Schwarz P surface.}
	\label{Fig:Rt_constructing}
\end{figure}
\noindent \textbf{1. Investigate the Gauss image of the curvature lines.\\} The Gauss map
of the continuous cmc surface maps its curvature lines onto the sphere. We start with
a qualitative picture of this image, which is a quad graph immersed in the sphere. Here the choice of number of curvature lines corresponds to a choice of different levels of refinement of the discrete surface. Umbilics must be white vertices. Generically, the interior vertices are of degree four.
Exceptional vertices correspond to ends and umbilic points of the continuous
cmc surface. For example, in Figure \ref{Fig:Rt_constructing} (1), the corners of the cube are exceptional.\\
\textbf{
2. Construct the ring pattern.}\\ Next, from the quad graph, a parameter $q$, and boundary conditions, we construct a ring pattern by solving the corresponding boundary value problem. For suitable initial data, minimizing the reduced functional \eqref{eq:Rt_Ssph_reduced} determines the corresponding ring pattern. White vertices correspond to centers of rings, black vertices to touching points. Figure \ref{Fig:Rt_constructing} (2) shows the ring pattern around an umbilic point. Note that in this example, the Gauss map forms a double covering of the sphere, of which only one layer is shown in the figure.\\
\textbf{
3. Construct the dual two-sphere Koebe nets.}\\ Now we construct
the pair of dual two-sphere Koebe nets from the ring pattern. The white vertices labeled by \textcircled{s} and \textcircled{c} correspond to the vertices of a pair of dual two-sphere Koebe nets  $k^s$ and $k^c$. These vertices correspond to the spheres and the orthogonal circles of the S$_1$-cmc surface, respectively.\\
\textbf{
4. Construct the discrete S$_1$-cmc surface from its Gauss map.}\\
In Theorem \ref{Rt:Thm_s_cmc_koebe} we described how an S$_1$-cmc pair generates a two-sphere Koebe net as its Gauss map. Here we reverse this construction and show how to recover an S$_1$-cmc pair from its Gauss map. The essential part of this construction is to determine the radii of vertex spheres and face circles for the surfaces in terms of data from the ring pattern, or the Koebe nets. To determine these radii we will consider the central extension of a two-sphere Koebe net $k^s$, i.e., the map $k_\boxplus: V(\mathcal{S}) \rightarrow \Rt$,  such that 
	\begin{equation}
	\label{eq:central_extension_koebe}
	\begin{aligned}	
	 V_{\text{\textcircled{s}}}  \ni v_s &\mapsto \text{vertices } k_{v_s} \text{ of } k^s;\\
	V_{\text{\textcircled{c}}} \ni v_c  &\mapsto \text{face centers } \tilde{k}_{v_c} \text{ of }k^s;\\
	V_{b} \ni b &\mapsto \text{tangent points } t_b^\pm \text{ of } k^s; 
	\end{aligned}
	\end{equation}
see Figure \ref{Fig:Rt_fundamental_kite} (left).
The center $\tilde{k}_{v_c}$ of a face  $[k_{v_{s_1}}, k_{v_{s_2}}, k_{v_{s_3}}, k_{v_{s_4}}]$ is the \emph{properly scaled} dual vector, i.e.
\begin{align}
	\label{eq:face_center_koebe}
	\tilde{k}_{v_c} \perp [k_{v_{s_1}}, k_{v_{s_2}}, k_{v_{s_3}}, k_{v_{s_4}}], \quad \tilde{k}_{v_c} \in [k_{v_{s_1}}, k_{v_{s_2}}, k_{v_{s_3}}, k_{v_{s_4}}].
\end{align}

Consider a fundamental piece $[k_{v_s}, t_b^-, \tilde{k}_{v_c}, t_b^+]$ of $k_\boxplus$, consisting of one vertex, one face center and two touching points. In this section we consider the case shown in Figure \ref{Fig:Rt_fundamental_kite} (left), when this quadrilateral is embedded. Its edge lengths can be expressed in terms of the radii of the corresponding orthogonal ring pattern, see Figure \ref{Fig:Rt_Koebe_2d_vertex},
\begin{equation}
	\label{eq:Rt_koebe_length}
	\begin{aligned}	
		||t_b^- - k_{v_s}|| &=  \sqrt{q} \tan(R_{v_s}), \ \
		||\tilde{k}_{v_c} - t_b^-|| = \sqrt{q} \sin(r_{v_c}), \\
		||t_b^+ - \tilde{k}_{v_c}|| &=  \frac{1}{\sqrt{q}} \sin(R_{v_c}), \ \
		||k_{v_s} - t_b^+|| = \frac{1}{\sqrt{q}} \tan(r_{v_s}).
	\end{aligned}
\end{equation}
Note that the embededness of the quadrilateral corresponds to  $r_{v_c} >0$ and  $r_{v_s} > 0$.
If the two-sphere Koebe net is the Gauss map of an S$_1$-cmc pair then the lengths \eqref{eq:Rt_koebe_length} are related to the vertex sphere radii $d_{v_s}, d_{v_s}^*$ and the face circle radii $d_{v_c}, d_{v_c}^*$ of the  S$_1$-cmc pair such that
\begin{equation}
	\label{eq:Rt_koebe_length_2}
	\begin{aligned}	
		||t_b^- - k_{v_s}|| &= d_{v_s} + d_{v_s}^*, \ \
		||\tilde{k}_{v_c} - t_b^- || = d_{v_c} - d_{v_c}^*,\\
		||t_b^+ - \tilde{k}_{v_c}|| &=  d_{v_c} + d_{v_c}^*, \ \
		||k_{v_s} - t_b^+|| = d_{v_s} - d_{v_s}^*,\\
	\end{aligned}
\end{equation}
see Figure  \ref{Fig:Rt_fundamental_kite} (right). Here we use the parallelity of the primal and dual edges. A combination of \eqref{eq:Rt_koebe_length} and \eqref{eq:Rt_koebe_length_2} uniquely determines the radii of vertex spheres and face circles:
	\begin{equation}
	\label{eq:Rt_sphere_radii_from_koebe}
	\begin{aligned} 
		d_{v_s} &= \frac{1}{2}
		(\sqrt{q} \tan{R_{v_s}} 
		+ \frac{1}{\sqrt{q}}\tan{r_{v_s}}), \ \ d_{v_c} = \frac{1}{2}
		(\frac{1}{\sqrt{q}} \sin{R_{v_c}} 
		+ \sqrt{q} \sin{r_{v_c}}) \\
		d_{v_s}^* &= \frac{1}{2}
		(\sqrt{q} \tan{R_{v_s}} 
		- \frac{1}{\sqrt{q}}\tan{r_{v_s}}), \ \ d_{v_c}^* =  \frac{1}{2}
		(\frac{1}{\sqrt{q}} \sin{R_{v_c}} 
		- \sqrt{q} \sin{r_{v_c}}).
	\end{aligned}
\end{equation} 
Equivalently, in terms of the $\beta$-variables \eqref{eq:Rt_jef_uniformization} the identities \eqref{eq:Rt_koebe_length} can be expressed as 
\begin{equation}
	\label{eq:Rt_koebe_length_jef}
	\begin{aligned}	
		||t_b^- - k_{v_s}|| &=  \frac{1}{\sqrt{q}} \frac{\dn \beta_{v_s}}{\sn \beta_{v_s}}, \ \
		||\tilde{k}_{v_c} - t_b^-|| = \sqrt{q} \cn \beta_{v_c}, \\
		||t_b^+ - \tilde{k}_{v_c}|| &=  \frac{1}{\sqrt{q}} \dn \beta_{v_c}, \ \
		||k_{v_s} - t_b^+|| = \frac{1}{\sqrt{q}} \frac{\cn \beta_{v_s}}{\sn \beta_{v_s}}.
	\end{aligned}
\end{equation}
The radii \eqref{eq:Rt_sphere_radii_from_koebe} become
\begin{equation}
	\begin{aligned}
	\label{eq:Rt_radii}
	d_{v_s}&= 
		\frac{1}{2}\frac{1}{\sqrt{q}}
		\left(
		\frac{\dn \beta_{v_s} + \cn \beta_{v_s}}
		{\sn \beta_{v_s}}
		\right),  \ \
		d_{v_c} = 
		\frac{1}{2}\frac{1}{\sqrt{q}}
		\left(\dn \beta_{v_c} + q \cn \beta_{v_c}
		\right),
		\\
			d^*_{v_s}&= 
		\frac{1}{2}\frac{1}{\sqrt{q}}
		\left(
		\frac{\dn \beta_{v_s} - \cn \beta_{v_s}}
		{\sn \beta_{v_s}}
		\right),  \ \ 
		d^*_{v_c} = \frac{1}{2}\frac{1}{\sqrt{q}}
		\left(\dn \beta_{v_c} - q \cn \beta_{v_c}
		\right).
\end{aligned}
\end{equation}

We denote these radii by $d_v$ and $d_v^*$, where the choice $v \in V_{\text{\textcircled{s}}}$ or $v \in V_{\text{\textcircled{c}}}$ determines whether
we are considering the vertex sphere radii or the face circle radii.

The identities $\eqref{eq:Rt_radii}$ determine the length of all edges of the (central extension of the) S$_1$-cmc pair. The directions are determined by the edge parallelism to the two-sphere Koebe net. The following theorem ensures that the resulting surfaces with these edge lengths and directions exist.

\begin{figure}
	\centering	
	
	\begin{minipage}{.49\linewidth}
		\centering
		\begin{overpic}[width=.85\linewidth, ]{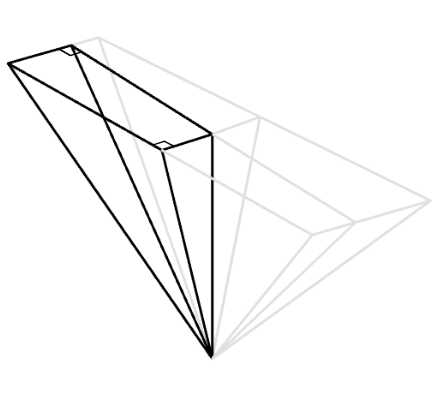}
			\put(-7, 78){$k_{v_s}$}
			\put(14, 87){$t_b^+$}
			\put(30, 55){$t_b^-$}
			\put(49, 63){$\tilde{k}_{v_c}$}
		\end{overpic}
	\end{minipage}
	\hfill
	\begin{minipage}{.49\linewidth}
		\centering
		\begin{overpic}[width=.95\linewidth]{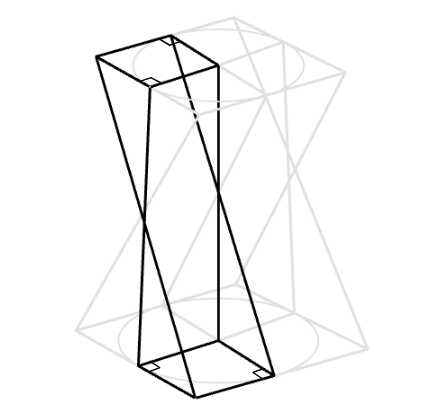}
			\put(32, 3){$d_{v_s}$}
			\put(53, 1){$d_{v_s}$}
			
			\put(53.5, 14){$d_{v_c}$}
			\put(41, 17){$d_{v_c}$}
			
			\put(25, 72){$d_{v_s}^*$}
			\put(23, 85.2){$d_{v_s}^*$}
			
			\put(36, 70){$d_{v_c}^*$}
			\put(44, 83){$d_{v_c}^*$}
			
		\end{overpic}
	\end{minipage}
	\caption{Left: A fundamental piece of a two-sphere Koebe net with a pair of opposite right angles. Right: Corresponding primal and dual fundamental kites and their normal vectors in a fundamental S$_1$-cmc hexahedron. The labels denote the radii of the vertex spheres, $d_{v_s}$ and $d^*_{v_s}$, and the radii of the face circles, $d_{v_c}$ and $d^*_{v_c}$.}
	\label{Fig:Rt_fundamental_kite}
\end{figure}

\begin{theorem}
	\label{Thm:Rt_Koebe_to_cmc}
	Let $k_\boxplus: V(\mathcal{S}) \rightarrow \Rt$
	be the central extension \eqref{eq:central_extension_koebe} of a two-sphere Koebe net. Then the $\Rt$-valued discrete one-form $\partial c_\boxplus$  defined by
	\begin{align}
		\label{Rt_one_form_1}
		\partial_{(v, b)}  c_\boxplus = d_v  \ \frac{\partial_{(v, b)}   k_\boxplus }{||\partial_{(v, b)}   k_\boxplus||},
	\end{align}
	and the the $\Rt$-valued discrete one-form $\partial c^*_\boxplus$ defined by 
	\begin{align}
		\label{Rt_one_form_2}
		\partial_{(v, b)}  c^*_\boxplus =  \pm \ d^*_v \ \frac{\partial_{(v, b)}   k_\boxplus }{||\partial_{(v, b)}   k_\boxplus||},
	\end{align} 
	with $d_v, d_v^*$ given by \eqref{eq:Rt_radii}, are exact. The signs $(+)$ and $(-)$ in \eqref{Rt_one_form_2} are chosen for horizontal and vertical edges, respectively.
	The integration of \eqref{Rt_one_form_1} and \eqref{Rt_one_form_2} defines the central extension of two surfaces $c$ and $c^*$, which, when appropriately placed, form the center nets of an S$_1$-cmc pair $s, s^*$ with the Gauss map $k_\boxplus$. The vertex sphere radii and face circle radii of $s$ and $s^*$ are given  by \eqref{eq:Rt_radii}. The S$_1$-cmc pair with the Gauss map $k_\boxplus$ is unique up to translation. 
\end{theorem}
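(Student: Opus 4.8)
The plan is to reduce the whole statement to the exactness of the two discrete one-forms \eqref{Rt_one_form_1}, \eqref{Rt_one_form_2}, to prove that exactness one face of $\mathcal{S}$ at a time, and then to read off the remaining assertions (edge directions, radii, the duality, and uniqueness) from the construction. Since $\mathcal{S}$ is simply connected, a discrete one-form on $\mathcal{S}$ is exact iff its period around the boundary of every elementary face vanishes, and every face of an $S$-quad graph is a quadrilateral. Under the central extension \eqref{eq:central_extension_koebe} each face of $\mathcal{S}$ is realized as the planar \emph{fundamental kite} $[\,k_{v_s},\,t_b^+,\,\tilde k_{v_c},\,t_b^-\,]$, carrying right angles at the two tangent points: the edge $[k_{v_s},t_b^\pm]$ runs along an edge of the Koebe net and is tangent there to the orthogonal face circle of $v_c$, whose centre is $\tilde k_{v_c}$ by \eqref{eq:face_center_koebe}, hence is perpendicular to the radius $[t_b^\pm,\tilde k_{v_c}]$; see Figure \ref{Fig:Rt_fundamental_kite} (left). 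Such a kite is inscribed in the circle with diameter $[k_{v_s},\tilde k_{v_c}]$; introducing the inscribed angles $\psi_\pm=\angle(t_b^\pm\,k_{v_s}\,\tilde k_{v_c})$ one writes its four consecutive unit edge vectors $\hat e_1,\dots,\hat e_4$ explicitly in the plane of the kite, and the corresponding edge lengths are $2\rho(\cos\psi_+,\sin\psi_+,\sin\psi_-,\cos\psi_-)$, where $2\rho$ is the diameter, in agreement with \eqref{eq:Rt_koebe_length}.

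\textbf{The closing identity.} The form $\partial c_\boxplus$ keeps these directions and replaces the lengths by $(d_{v_s},d_{v_c},d_{v_c},d_{v_s})$, so $\oint\partial c_\boxplus=0$ amounts to $d_{v_s}(\hat e_1+\hat e_4)+d_{v_c}(\hat e_2+\hat e_3)=0$. By \eqref{eq:Rt_sphere_radii_from_koebe} one has $d_{v_s}=\rho(\cos\psi_++\cos\psi_-)$ and $d_{v_c}=\rho(\sin\psi_++\sin\psi_-)$; substituting the explicit $\hat e_i$, the two vector sums $\hat e_1+\hat e_4$ and $\hat e_2+\hat e_3$ turn out to be parallel, and the prefactors $d_{v_s},d_{v_c}$ are exactly those annihilating the combination, so that the identity collapses, via the addition theorems, to $\cos^2\psi_\pm+\sin^2\psi_\pm=1$. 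For $\partial c^*_\boxplus$ the computation is the same: now the rescaled lengths are $d^*_{v_s}=\rho(\cos\psi_--\cos\psi_+)$, $d^*_{v_c}=\rho(\sin\psi_+-\sin\psi_-)$ from \eqref{eq:Rt_sphere_radii_from_koebe}, and the signs $(+)/(-)$ of \eqref{Rt_one_form_2} are distributed according to the horizontal/vertical colouring of $\mathcal{S}$, which around a kite alternates so that the two half-edges at $t_b^+$ carry one sign and those at $t_b^-$ the other; one obtains $d^*_{v_s}(\hat e_1-\hat e_4)+d^*_{v_c}(\hat e_3-\hat e_2)=0$, again an instance of $\cos^2+\sin^2=1$. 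The non-embedded case (some $r_v<0$) is identical, since \eqref{eq:Rt_koebe_length}--\eqref{eq:Rt_sphere_radii_from_koebe} are signed identities. Hence both one-forms are exact.

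\textbf{Recovering the S$_1$-cmc pair.} Integration yields $c_\boxplus$ and $c^*_\boxplus$, each up to a translation. Comparing \eqref{eq:Rt_koebe_length} with \eqref{eq:Rt_koebe_length_2} gives $\|\partial_{(v,b)}k_\boxplus\|=d_v\mp d^*_v$ with precisely the sign occurring in \eqref{Rt_one_form_2}, so $\partial_{(v,b)}(c_\boxplus-c^*_\boxplus)=\partial_{(v,b)}k_\boxplus$ on every edge; fixing the relative translation we obtain $c_\boxplus-c^*_\boxplus=k_\boxplus$, i.e. the Gauss map $n=c^*-c$ is $k_\boxplus$ up to a global translation and the orientation of $k_\boxplus$. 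Restricting $c_\boxplus,c^*_\boxplus$ to the vertices of $\mathcal{G}$ and of $\mathcal{G}^*$ produces the centre nets $c,c^*$ and the face-circle centres of two surfaces $s,s^*$ with sphere/circle radii \eqref{eq:Rt_radii}. Since $c$ has the same edge directions as $k^s$ and along each edge of $\mathcal{G}$ two collinear half-edges of lengths $d_v,d_{v'}$ are concatenated, neighbouring spheres of $s$ are in contact, with $\langle\hat s_v,\hat s_{v'}\rangle_{4,1}=\mp1$ (non-oriented contact along horizontal, oriented along vertical edges); together with the fundamental-kite structure and Corollary \ref{Cor:Rt_Q_congruences} this exhibits $s$ — and likewise $s^*$ — as an S$_1$-isothermic surface. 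A direct computation with \eqref{eq:Rt_jef_uniformization} and \eqref{eq:Rt_radii} gives $d_v d^*_v=\tfrac{1-q^2}{4q}$ at every white vertex, so $s^*$ is a fixed rescaling of the Christoffel dual of $s$ in the sense of Lemma \ref{Def:Rt_Christoffel} and \eqref{eq:scaling_christoffel}; and, reading Propositions \ref{Prop:Rt_side_faces}--\ref{Prop:Rt_face_normals} backwards, the transformation quadrilaterals $(\hat s_v,\hat s_{v'},\hat s^*_{v'},\hat s^*_v)$ are planar with edge normals of a single global length, so $s^*$ is also a Darboux transform of $s$ (Definition \ref{Def:Rt_Darboux}). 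Thus $(s,s^*)$ is an S$_1$-cmc pair with Gauss map $k_\boxplus$, and by Theorem \ref{Thm:Rt_H=1} the pair $(s,n)$ has $H=1$. Uniqueness up to translation is then clear: \eqref{eq:Rt_radii} fixes all radii, \eqref{Rt_one_form_1}--\eqref{Rt_one_form_2} fix all edges of $c_\boxplus$ and $c^*_\boxplus$, and the relation $n=c^*-c$ fixes the two integration constants against each other.

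\textbf{Main obstacle.} The only genuine computation is the kite closing identity, and it is short; what takes care is the bookkeeping — which tangent point sits on which of $S^2_\pm$, which edges of $\mathcal{S}$ are horizontal versus vertical, and the resulting orientations of the edge forms around a kite — together with the (routine, but not automatic) step of checking that the two Combescure transforms $c,c^*$ of the Koebe net genuinely satisfy the full S$_1$-isothermic, Christoffel and Darboux axioms rather than merely having parallel edges.
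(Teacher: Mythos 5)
Your argument is correct and its core is the same as the paper's: exactness is reduced to a closing identity around each fundamental kite $[k_{v_s},t_b^+,\tilde k_{v_c},t_b^-]$, using that this quadrilateral has opposite right angles at the tangent points and is hence inscribed in the circle with diameter $[k_{v_s},\tilde k_{v_c}]$. The paper executes this by reflecting the kite in the angle bisector at $k_{v_s}$ to get a second closed quadrilateral with the same edge directions and the lengths swapped $1\leftrightarrow4$, $2\leftrightarrow3$ (its identities \eqref{eq:Rt_quad} and \eqref{eq:Rt_reflected_quad}), and then takes the half-sum and half-difference; your inscribed-angle parametrization $\psi_\pm$ proves exactly the same two identities by direct computation (I checked both: $d_{v_s}(\hat e_1+\hat e_4)+d_{v_c}(\hat e_2+\hat e_3)=0$ and $d^{*}_{v_s}(\hat e_1-\hat e_4)+d^{*}_{v_c}(\hat e_3-\hat e_2)=0$ hold, and the reflection is precisely the symmetry $\psi_+\leftrightarrow\psi_-$ of your coordinates), so this is the same proof in different clothing. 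One bookkeeping slip in your prose: you say the two half-edges at $t_b^+$ carry one sign and those at $t_b^-$ the other; in fact the horizontal/vertical colouring \emph{alternates} around every face of $\mathcal{S}$, so the two half-edges meeting at each tangent point carry \emph{opposite} signs, giving the alternating pattern $+,-,+,-$ around the kite — which is exactly the combination $d^{*}_{v_s}(\hat e_1-\hat e_4)+d^{*}_{v_c}(\hat e_3-\hat e_2)$ you actually display and which vanishes (the same-sign-per-tangent-point combination $d^{*}_{v_s}(\hat e_1-\hat e_4)+d^{*}_{v_c}(\hat e_2-\hat e_3)$ does \emph{not} vanish, so the distinction matters). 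Beyond that, your write-up supplies more than the paper's proof records: the paper stops at closedness and leaves the S$_1$-cmc verification implicit, whereas you check $c_\boxplus-c^{*}_\boxplus=k_\boxplus$ from $\|\partial k_\boxplus\|=d_v\mp d_v^{*}$, compute $d_vd_v^{*}=\tfrac{1-q^2}{4q}$ at every white vertex (correct, via $\dn^2-\cn^2=(1-q^2)\sn^2$ and $\dn^2-q^2\cn^2=1-q^2$), and sketch the Christoffel and Darboux properties by reversing Propositions \ref{Prop:Rt_side_faces} and \ref{Prop:Rt_face_normals}; those last steps are asserted rather than proved in detail, but they are the right reductions and go beyond what the paper itself writes down.
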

\begin{proof} We give a proof for the case of embedded quadrilaterals, see Figure \ref{Fig:Rt_fundamental_kite} (left). The cases of non-embedded quadrilaterals, when $r_{v_c} < 0$ or $r_{v_s} < 0$, can be considered similarly, formulas \eqref{eq:Rt_radii}, \eqref{Rt_one_form_1}, \eqref{Rt_one_form_2} hold in these cases as well. 
	
With  \eqref{eq:Rt_koebe_length_jef} and by defining unit normal vectors $u_1, u_2, u_3, u_4$ in counterclockwise order along the edges of a fundamental piece of the Koebe net, the closeness of $\partial_{(v, b)}   k_\boxplus$ can be expressed as
	\begin{align}
		\label{eq:Rt_quad}
		\frac{1}{\sqrt{q}} \frac{\dn \beta_{v_s}}{\sn \beta_{v_s}} 
		u_1 + 
		\sqrt{q} \cn \beta_{v_c}
		u_2 + 
		\frac{1}{\sqrt{q}} \dn \beta_{v_c}
		u_3 + 
		\frac{1}{\sqrt{q}} \frac{\cn \beta_{v_s}}{\sn \beta_{v_s}}
		u_4 =0
	\end{align}
	Reflecting the fundamental piece in the angle bisector between $u_1$ and $u_4$ at $k_{v_s}$ defines a quadrilateral with edges parallel to the fundamental piece, satisfying
	\begin{align}
		\label{eq:Rt_reflected_quad}
		\frac{1}{\sqrt{q}} \frac{\cn \beta_{v_s}}{\sn \beta_{v_s}}
		u_1 + 
		\frac{1}{\sqrt{q}} \dn \beta_{v_c}
		u_2 + 
		\sqrt{q} \cn \beta_{v_c}
		u_3 + 
		\frac{1}{\sqrt{q}} \frac{\dn \beta_{v_s}}{\sn \beta_{v_s}} 
		u_4 =0.
	\end{align}
	The half sum and difference of the identities \eqref{eq:Rt_quad}  and \eqref{eq:Rt_reflected_quad} yield two additional closed quadrilaterals with parallel edges. In particular, with \eqref{eq:Rt_radii} it is
	\begin{equation*}
		\begin{aligned}	
			\frac{1}{2} \left( \eqref{eq:Rt_quad}  + \eqref{eq:Rt_reflected_quad} \right) &= 
			d_{v_s}
			u_1 + 
			d_{v_c}
			u_2 + 
			d_{v_c}
			u_3 + 
			d_{v_s}
			u_4 =0, \\ \ \ 
			\frac{1}{2} \left( \eqref{eq:Rt_quad}  - \eqref{eq:Rt_reflected_quad} \right) &= 
			d^*_{v_s}
			u_1 -
			d^*_{v_c}r
			u_2 + 
			d^*_{v_c}
			u_3 -
			d^*_{v_s}
			u_4 =0.
		\end{aligned}
	\end{equation*}
	This proves that the one-forms \eqref{Rt_one_form_1} and 
	\eqref{Rt_one_form_2} are closed, and their exactness follows. The minus signs in the second equation represent the change of orientation for vertical edges in the Christoffel dual. 
\end{proof}

\begin{figure}[h]
	\centering	
	\begin{minipage}{.3\linewidth}
		\centering
		\includegraphics[width=\linewidth]{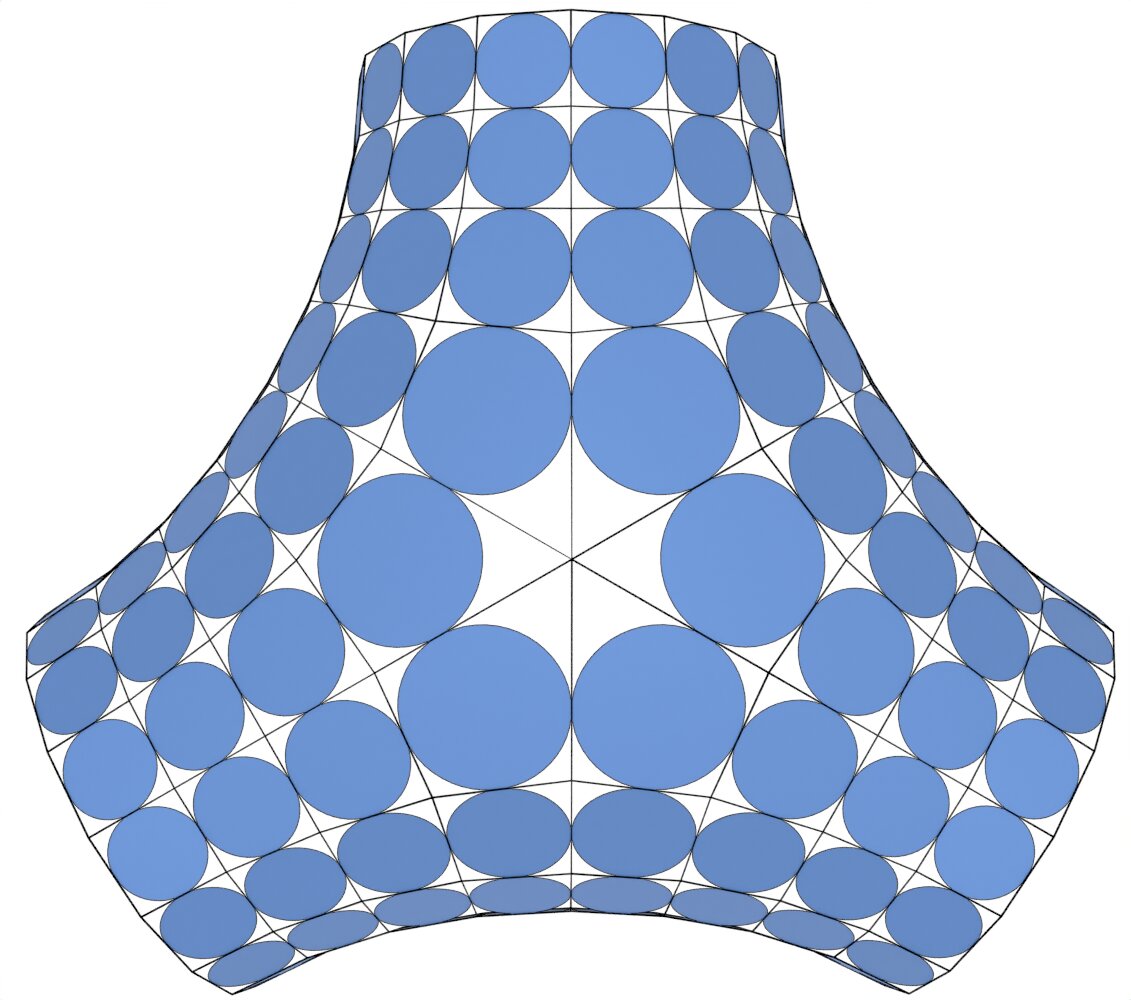}
	\end{minipage}
	\hspace{1cm}
	\begin{minipage}{.3\linewidth}
		\centering
		\vspace{.35cm}
		\includegraphics[width=\linewidth]{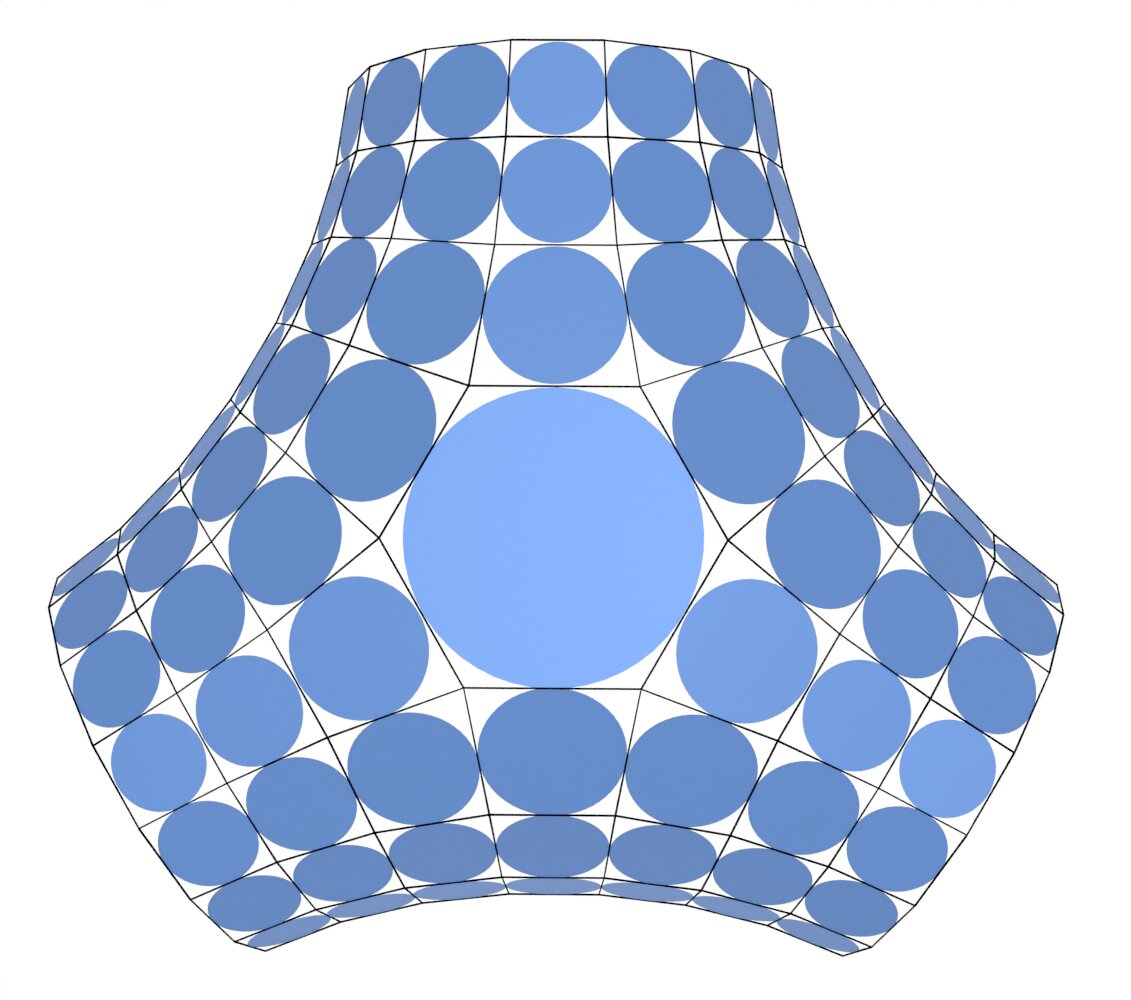}
	\end{minipage}
	\caption{A pair of combinatorially dual S$_1$-isothermic surfaces constructed using the method presented in Section \ref{sec:Rt_constructing_discrete_s_isothermic_cmc_surfaces}. Corresponding dual edges are orthogonal. The umbilic point is represented by a vertex of even valence greater than four (left), or by a face of even valence greater than four (right).}
	\label{Fig:Rt_umbilcs}
\end{figure}

By interchanging the roles of $V_\text{\textcircled{c}}$ and $V_\text{\textcircled{s}}$ one obtains another, closely related, S$_1$-cmc surface, see Figure \ref{Fig:Rt_umbilcs}. Circles of such S$_1$-isothermic surfaces correspond to spheres of the other. The resulting surface pair is a principle binet in the sense of \cite{affolter2024principalbinets}.

\section{Minimal surfaces in the limit}
\label{sec:Rt_discrete_cmc_surfaces_as_deformations_of_minimal_surfaces}
Let $s$ and $s^*$ be an S$_1$-cmc pair with Gauss map $n = c^* - c$ and constant mean curvature $H=1$. 
Recall that a scaling of the surface pair, $s_\mu := \mu s, s^*_\mu := \mu s^*$, by a global factor $\mu$ changes the mean curvature to $H_\mu := \frac{1}{\mu}$ and the scaled center net $c^*_\mu$ becomes a normal offset surface of $c_\mu$ at distance $\frac{1}{H_\mu}$, 
\begin{align*}
	c_\mu^* = c_\mu + \frac{1}{H_\mu} n.
\end{align*}
For $\mu \rightarrow \infty$ the mean curvature $H_\mu$ goes to zero, $H_\mu \rightarrow 0$, and the distance $||c_\mu^*- c_\mu||=\mu$ goes to infinity. Let us investigate this limit in more detail.

In the limit $q\rightarrow 1$ a ring pattern deforms to a circle pattern. Any rigid circle pattern can be deformed to a one-parameter $\epsilon$-family of ring patterns, corresponding to $q= 1-\epsilon$ with small $\epsilon$ with the same combinatorics and boundary data, see Section \ref{sec:analytic_orp} and \cite{bobenko2024rings}. Let us consider the surfaces $s_\mu$ and $s_\mu^*$ with $\mu =\frac{1}{\epsilon}$ in the limit $\epsilon \rightarrow 0$. We will show that one of the surfaces converges to an  S$_1$-minimal surface while the dual surface converges to a Koebe net tangent to the unit sphere.

\begin{theorem}
	\label{Thm:Rt_limit}
	Consider a rigid spherical orthogonal circle pattern, i.e., with non-degenerate Hessian \eqref{eq:Rt_hessian}, and fixed boundary conditions, as well as its deformation to a spherical orthogonal ring pattern with $q=1-\epsilon$ and small $\epsilon$. Then there exists an $\epsilon$-family of associated S$_1$-cmc pairs, $\frac{1}{\epsilon}s$ and $\frac{1}{\epsilon}s^*$ with mean curvature $H = \epsilon$. In the limit $\epsilon \rightarrow 0$, $s$ converges to a classical Koebe net with the edges touching the unit sphere, and $\frac{1}{\epsilon}s^*$ converges to its dual S$_1$-minimal surface.
\end{theorem}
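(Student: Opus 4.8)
The plan is to track the relevant geometric quantities as $q = 1-\epsilon \to 1$, using the explicit formulas \eqref{eq:Rt_radii} for the vertex sphere and face circle radii together with the uniformization \eqref{eq:Rt_jef_uniformization}. First I would recall that rigidity of the circle pattern (non-degeneracy of the Hessian \eqref{eq:Rt_hessian}) gives, via the implicit function theorem applied to the variational equations of Section~\ref{sec:analytic_orp}, a smooth $\epsilon$-family of solutions $\beta^{(\epsilon)}$ of the ring-pattern boundary value problem with $\beta^{(\epsilon)} \to \beta^{0}$ as $\epsilon \to 0$; this is exactly the deformation statement already established in \cite{bobenko2024rings} and quoted above. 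Thus the spherical orthogonal ring pattern, its associated pair of dual two-sphere Koebe nets (Theorem~\ref{Thm:Rt_Koebe_and_orp}), and the S$_1$-cmc pair $(s,s^*)$ reconstructed from it via Theorem~\ref{Thm:Rt_Koebe_to_cmc} all depend continuously (indeed smoothly) on $\epsilon$ near $\epsilon = 0$.

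Next I would analyze the two limits separately. For $s$: as $q \to 1$ one has $r_v \to R_v$, and the two concentric spheres $S^2_\pm$ of radii $1/\sqrt{q}$ and $\sqrt{q}$ both collapse to the unit sphere $S^2$, so the two-sphere Koebe net $k^s$ degenerates to a net whose edges all touch $S^2$ — i.e. a classical Koebe net. Because $s$ is built (Theorem~\ref{Thm:Rt_Koebe_to_cmc}) by integrating the one-form $\partial_{(v,b)} c_\boxplus = d_v\,\partial_{(v,b)} k_\boxplus / \|\partial_{(v,b)} k_\boxplus\|$ with $d_v$ given by \eqref{eq:Rt_radii}, and these radii stay bounded and converge (with $\sqrt{q}\to 1$) to the corresponding radii of the classical Koebe picture, the center net $c$ and the radii converge to those of the S$_1$-isothermic surface associated to the limiting Koebe net. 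For $\frac{1}{\epsilon}s^*$: the dual radii $d^*_{v_s}, d^*_{v_c}$ in \eqref{eq:Rt_radii} are \emph{differences} of the corresponding quantities and vanish to first order in $\epsilon$ — explicitly $d^*_{v_c} = \tfrac{1}{2\sqrt q}(\dn\beta_{v_c} - q\cn\beta_{v_c})$, which as $q\to1$ behaves like $\tfrac12(1-q)\cn\beta^0_{v_c} + O((1-q)^2) = O(\epsilon)$, and similarly $d^*_{v_s} = \tfrac{1}{2\sqrt q}(\dn\beta_{v_s}-\cn\beta_{v_s})/\sn\beta_{v_s}$ which is $O(\epsilon)$ since $\dn\beta - \cn\beta$ vanishes at $q=1$. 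Hence $\frac{1}{\epsilon}d^*_v$ has a finite nonzero limit, and integrating $\frac{1}{\epsilon}\partial c^*_\boxplus$ (with the alternating signs from \eqref{Rt_one_form_2}) yields a well-defined limiting net. I would then identify this limit with the Christoffel dual of the limiting Koebe net by comparing one-forms: the limiting $\frac{1}{\epsilon}d^*_v$ should agree with $1/d_v$ up to the global scaling constant of \eqref{eq:scaling_christoffel}, reproducing the dual construction of \cite{BHS_2006} for S$_1$-minimal surfaces. The characterization via Definition~\ref{Def:Rt_cmc} and Theorem~\ref{Thm:Rt_H=1} then shows the limiting dual is S$_1$-minimal (its Gauss map is the classical Koebe net, $H=0$ in the rescaled limit).

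The main obstacle I anticipate is controlling the \emph{rescaled} dual surface: one must show that the first-order (in $\epsilon$) behavior of the dual radii and of the Koebe-net edge directions combines to give a \emph{closed, nondegenerate} limiting one-form, rather than something that collapses or blows up unevenly across the net. Concretely this requires (i) a uniform-in-$\epsilon$ expansion $\beta^{(\epsilon)}_v = \beta^0_v + \epsilon\,\dot\beta_v + O(\epsilon^2)$ with bounded remainder — supplied by rigidity — and (ii) checking that the $O(\epsilon)$-coefficients of the dual closure conditions are precisely the linearized closure conditions whose solution is the Christoffel dual of the limit circle pattern. A second, more technical point is the edge-orientation bookkeeping: the signs in \eqref{Rt_one_form_2} and the possible flips along horizontal edges noted after Theorem~\ref{Rt:Thm_s_cmc_koebe} must be tracked so that the limit matches the orientation conventions of the discrete minimal surface in \cite{BHS_2006}. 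Once the $\epsilon$-expansion is in hand, the convergence of $s$ is essentially immediate and the convergence of $\frac{1}{\epsilon}s^*$ follows from dividing the expansion of $\partial c^*_\boxplus$ by $\epsilon$ and passing to the limit term by term.
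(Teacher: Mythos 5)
Your proposal follows essentially the same route as the paper: invoke the rigidity/deformation result of \cite{bobenko2024rings} to get the $\epsilon$-family of ring patterns, then expand the radii \eqref{eq:Rt_radii} in $\epsilon$ via the degeneration of the Jacobi elliptic functions, concluding that the primal radii of $s$ stay finite (so $s$ tends to a classical Koebe net as $S^2_\pm$ collapse to $S^2$) while the dual radii are $O(\epsilon)$ (so $\frac{1}{\epsilon}s^*$ tends to the dual S$_1$-minimal surface). One computational slip worth fixing: the first-order coefficient of $d^*_{v_c}$ is not $\tfrac12\cn\beta^0_{v_c}=\tfrac{1}{2\cosh\beta^0_{v_c}}$ but $\tfrac12\cosh\beta^0_{v_c}$, because $\dn-\cn$ also contributes at order $\epsilon$ (use $(\dn-q\cn)(\dn+q\cn)=1-q^2$, which is how the paper extracts $a_{\dn}-a_{\cn}$); with your coefficient the consistency check $\tfrac1\epsilon d^*_v\sim\lambda/d_v$ you propose would fail, whereas the corrected one gives the constant $\lambda=\tfrac12$ uniformly over vertices and faces.
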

\begin{proof}
	To analyze the behavior of the surfaces, we examine how the vertex sphere radii \eqref{eq:Rt_radii} evolve in the limit $\epsilon \rightarrow 0$.
	We want to emphasize that in the following  the $\beta$-variables depend on the value of $q$ but for simplicity we will write $\beta_v$ instead of $\beta_v(q)$.
For $q \rightarrow 1$ the Jacobi elliptic functions degenerate into hyperbolic trigonometric \mbox{functions \cite{nist}}:
	\begin{align}
		\label{eq:Rt_jacobi_limit}
		\sn(z, q) \rightarrow \tanh(z), \quad \cn(z, q) \rightarrow \frac{1}{\cosh(z)}, \quad \dn(z, q) \rightarrow \frac{1}{\cosh(z)}.
	\end{align}

	\noindent {\scshape Primal radii}: For the scaled primal radii it is
	\begin{align*} 
		%\label{eq:Rt_primal_radii_limit}
		&\frac{1}{\epsilon}
		\lim_{q\rightarrow 1} 
		\frac{1}{2\sqrt{q}}
		\left(
		\frac{\dn(\beta_v, q)+ \cn(\beta_v, q)}
		{\sn(\beta_v, q)}
		\right)
		= 
		\frac{1}{\epsilon}\frac{1}{\sinh(\beta_v)}.
	\end{align*}
	We see that the primal radii go to infinity. However they remain finite for the normalized surface \mbox{$s = \epsilon \frac{1}{\epsilon}s$}.
	
	\noindent {\scshape Dual radii}: 
	To investigate the limit 
	\begin{equation}
		\label{eq:Rt_limit}
		\begin{aligned} 
			&\lim_{q\rightarrow 1}  
			\frac{1}{1-q}
			\frac{1}{2\sqrt{q}}
			\left(
			\frac{\dn(\beta_v, q)- \cn(\beta_v, q)}
			{\sn(\beta_v, q)}
			\right)
			= 
			\frac{1}{2\tanh(\beta_v)} \lim_{q\rightarrow 1}  
			\frac{\dn(\beta_v, q) - \cn(\beta_v, q)}{1-q}
		\end{aligned}
	\end{equation}
	we approximate $\cn$ and $\dn$  as follows
	\begin{equation}
		\label{eq:Rt_cn_dn_approx}
		\begin{aligned} 
		&\dn(\beta_v, 1-\epsilon) = \frac{1}{\cosh(\beta_v)} + \epsilon \ a_{\dn} + o(\epsilon),\\
		&\cn(\beta_v, 1-\epsilon) = \frac{1}{\cosh(\beta_v)} + \epsilon \ a_{\cn} +  o(\epsilon), \epsilon \rightarrow 0, 
			\end{aligned}
\end{equation}
with $a_{\cn}:= -\frac{\partial}{ \partial q}\cn(\beta_v, 1)$ and $a_{\dn}:= -\frac{\partial}{ \partial q}\dn(\beta_v, 1)$.
	Using the elementary identity $\cn^2 + \sn^2 = q^2\sn^2 + \dn^2$
	and thus
	\begin{align*}
		(\dn - \cn)(\dn + \cn)= \epsilon(q+1)\sn^2
	\end{align*}
	we find by comparing the sides in order $\epsilon$: 
	\begin{align*}
		a_{\dn}- a_{\cn} =\frac{\sinh^2(\beta_v)}{\cosh(\beta_v)}.
	\end{align*}
	Substitution into \eqref{eq:Rt_limit} yields
	\begin{align*}
		\frac{1}{2\tanh(\beta_v)}
		\lim_{q\rightarrow 1}  
		\left(
		\frac{\dn(\beta_v, q)- \cn \beta_v, q)}
		{1-q}
		\right)
		= 
		\frac{1}{2}
		\sinh(\beta_v).
	\end{align*}

	During the limiting process the pair of surfaces may only be scaled simultaneously in order to keep the cmc properties of the pair. 
	Afterwards the two surfaces can be treated separately. A global scaling of the primal surface in the limit allows to find a finite surface with vertex spheres of radii 
	$\frac{1}{\sinh(\beta_v)}$. This scaling preserves the Christoffel duality.

	In conclusion in the limit we obtain two S$_1$-isothermic surfaces, $s$ and $\frac{1}{\epsilon} s^*$, that are Christoffel dual with vertex sphere radii 
	\begin{align*}
		d_v &= \frac{1}{\sinh(\beta_v)} \quad
		\text{ and } \quad d_v^* =\frac{1}{2} \sinh(\beta_v).
	\end{align*}

	In the limit $q\rightarrow 1$ the spheres $S^2_\pm$ of the two-sphere Koebe net $n$ converge to the unit sphere, and $n$ becomes a classical Koebe net with all edges touching the unit sphere. Further, 
	\begin{align*}
		s = n + s^* \xrightarrow[\epsilon \  \rightarrow \ 0]{} n
	\end{align*}
	since $\frac{1}{\epsilon}s^*$ stays finite. Thus the surfaces $s$ and $\frac{1}{\epsilon}s^*$ indeed form a Koebe net and an S$_1$-minimal surface, and in the limit $\epsilon \rightarrow 0 $ we recovered the description of S$_1$-minimal surfaces from \cite{BHS_2006}.
\end{proof}

\section{Examples of cmc surfaces in $\Rt$}
\label{sec:Rt_examples}

We apply the method described in Section \ref{sec:Rt_constructing_discrete_s_isothermic_cmc_surfaces} to construct concrete examples of S-cmc surfaces in $\Rt$. 
\begin{figure}[h]
	\centering
	\includegraphics[width=.35\linewidth]{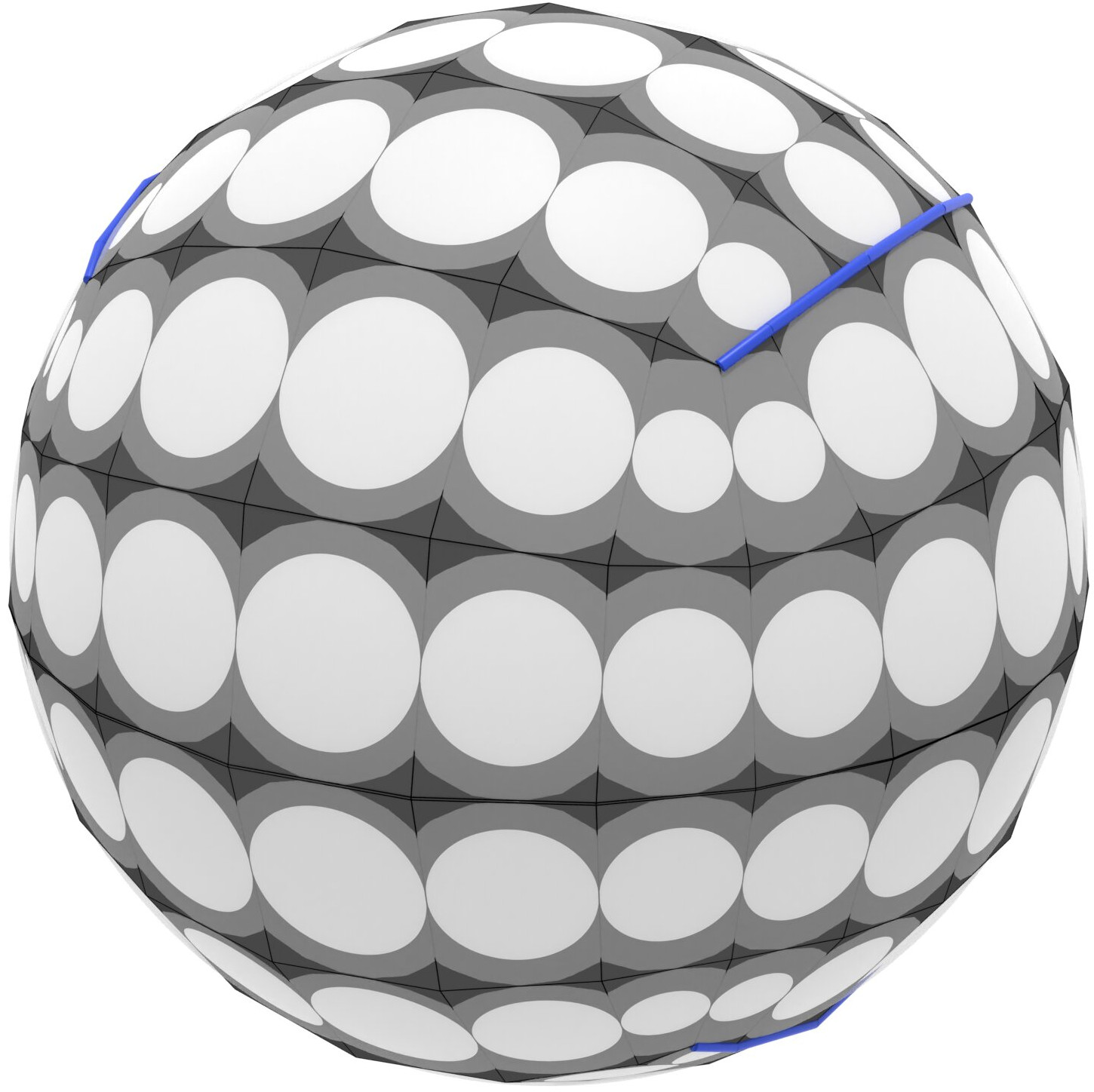}
	\caption{A two-sphere Koebe net forming a double cover of the sphere, one layer of the covering is shown. In the figure exceptional vertices are adjacent to three copies of a fundamental piece. The cuts of the covering, shown in blue, connect two exceptional vertices.}
	\label{Fig:Rt_Koebe_polyhedron}	
\end{figure}
We restrict the underlying S-quad graphs to combinatorial rectangles. Under suitable boundary conditions these combinatorial rectangles  will lead to fundamental pieces of reflectionally symmetric cmc surfaces. Umbilic points, or vertices with valence greater than four, can appear at the corners of these fundamental pieces after reflection.

\begin{figure}[t]
	\begin{minipage}{.325\linewidth}
		\includegraphics[width=\linewidth]{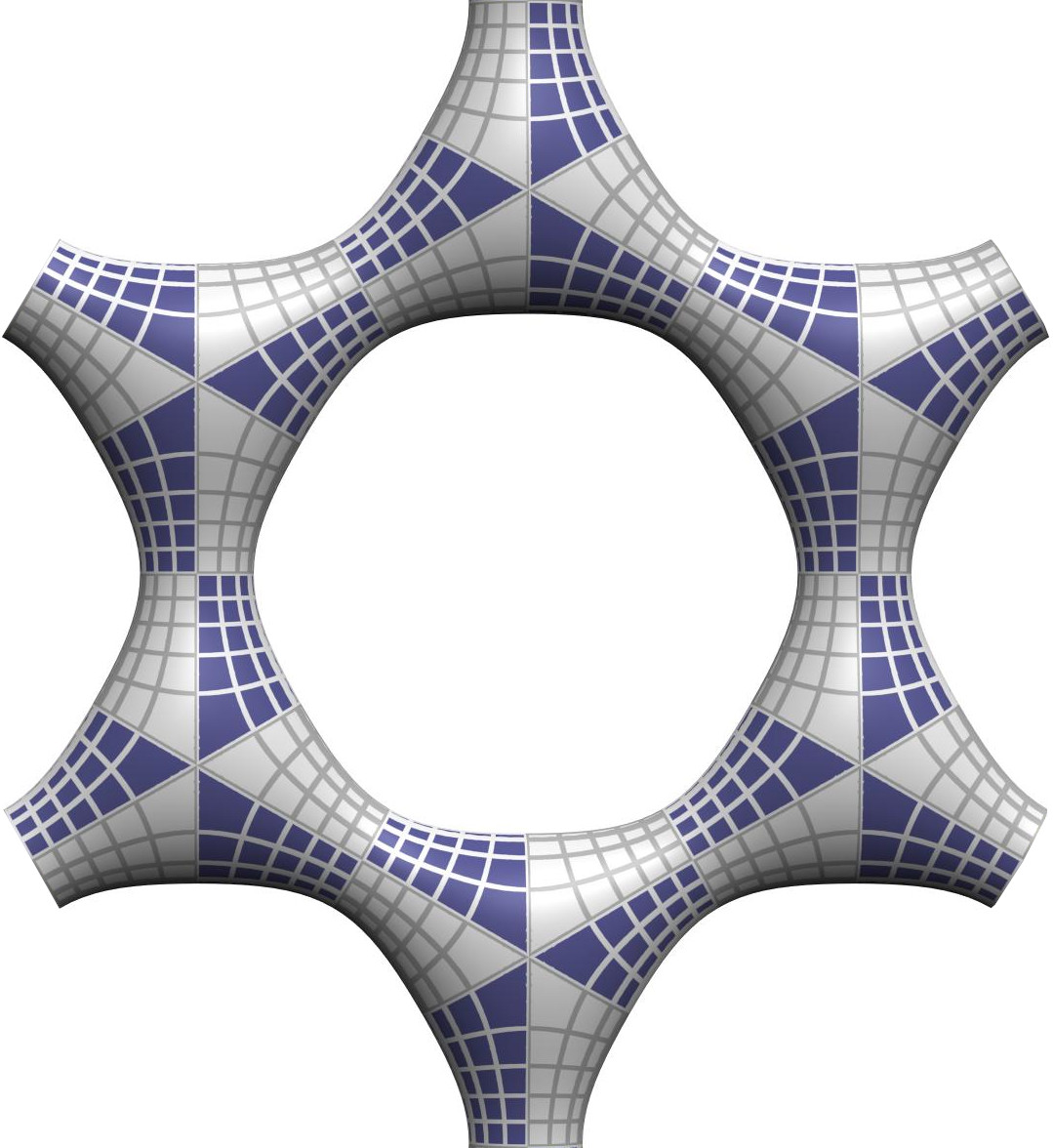}
	\end{minipage}
	\begin{minipage}{.325\linewidth}
		\includegraphics[width=\linewidth]{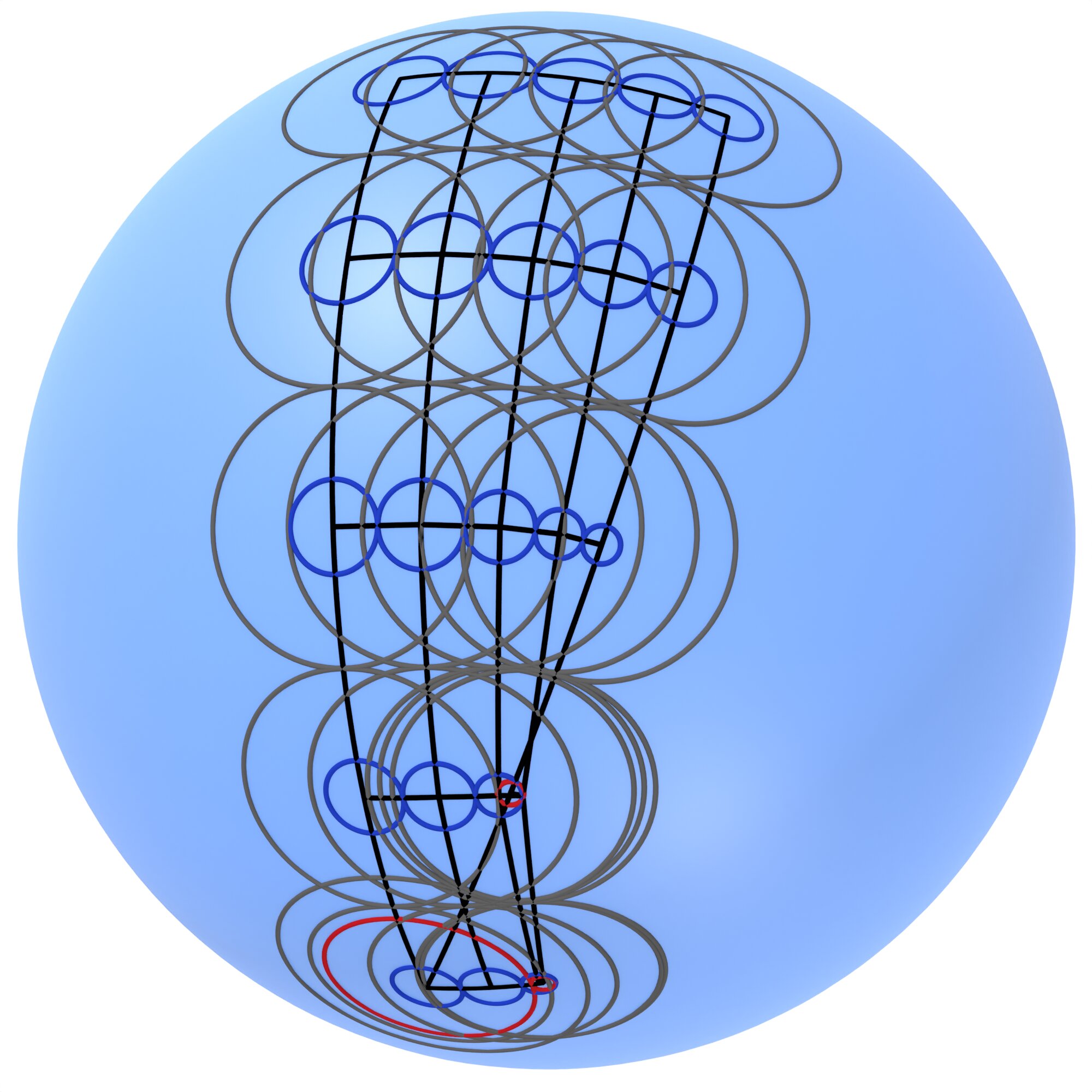}
	\end{minipage}
	\begin{minipage}{.325\linewidth}
		\includegraphics[width=\linewidth]{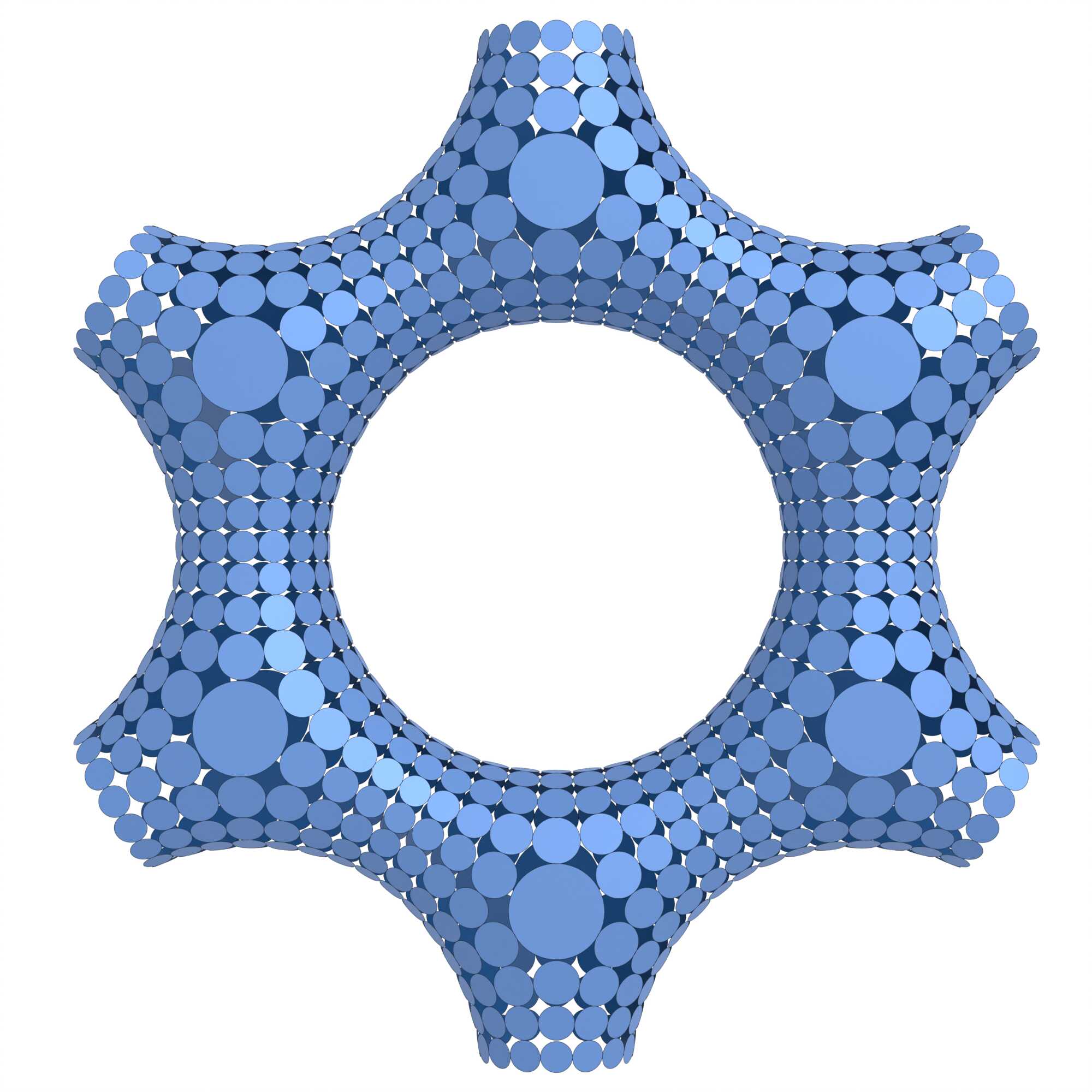}
	\end{minipage}
	
	\vspace{.5cm}
	
	\begin{minipage}{.325\linewidth}
		\includegraphics[width=\linewidth]{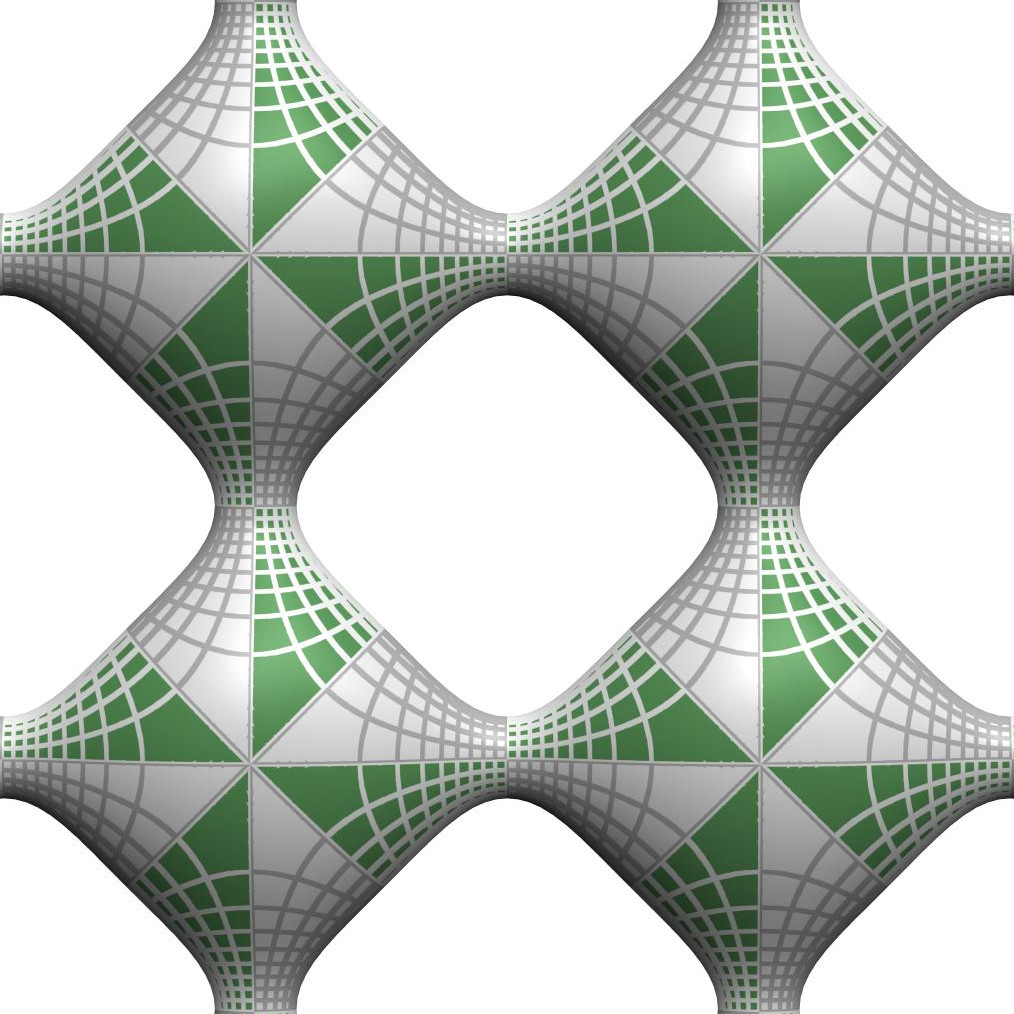}
	\end{minipage}
	\begin{minipage}{.325\linewidth}
		\includegraphics[width=\linewidth]{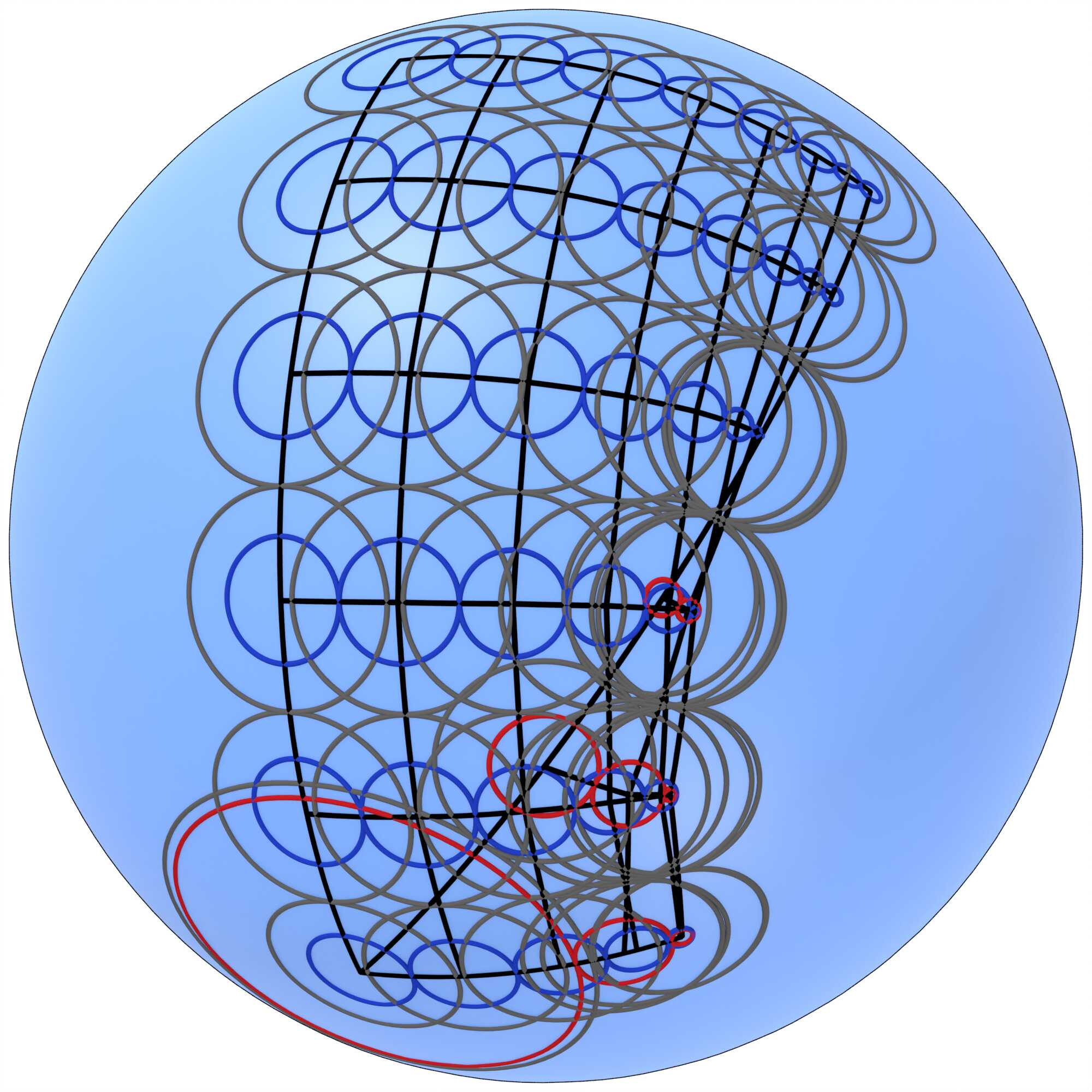}
	\end{minipage}
	\begin{minipage}{.325\linewidth}
		\includegraphics[width=\linewidth]{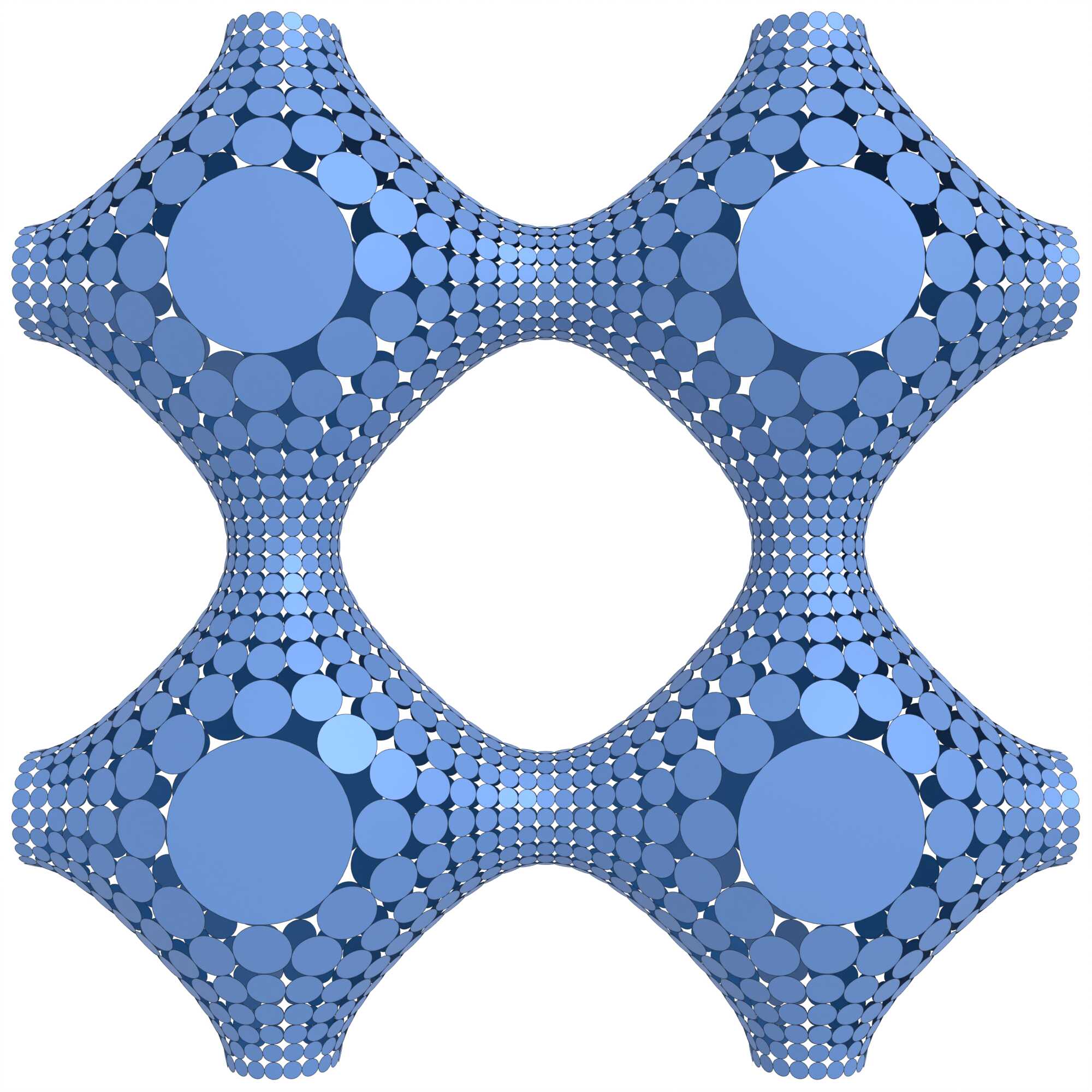}
	\end{minipage}
	\caption{The doubly periodic cmc surfaces $\psi(U_{2, 2})$ (top) and $\psi(U_{3, 3})$ (bottom) with hexagonal and quadrilateral lattice symmetry, respectively. The rows show the smooth cmc surfaces, the orthogonal ring patterns (only half of the rings are shown), and the discrete cmc surfaces. The shape of the orthogonal ring patterns is determined by the nominal angles at the corners, $\frac{\pi}{2}, \frac{2\pi}{3}, \frac{\pi}{2}, \frac{\pi}{2}$  and $\frac{\pi}{2}, \frac{\pi}{4}, \frac{\pi}{2}, \frac{\pi}{2}$, and the global parameter,  $q \approx 0.982889$ and $q \approx 0.991636$, respectively.}
	\label{Fig:Rt_Examples_doubly}
\end{figure}

Let us consider the case when a fundamental piece of a reflectionally symmetric smooth cmc surface is bounded by four curvature lines, each lying in a (reflection) plane, see \cite{bobenko2021constant}. The Gauss map along these curvature lines forms a spherical quadrilateral whose edges are segments of great circles. At corners that are not exceptional, the curvature lines and their Gauss image intersect at an angle $\frac{\pi}{2}$. At exceptional vertices, the curvature lines of the surface intersect at an angle $\frac{\pi}{n}$, their Gauss images at an angle $\pi - \frac{\pi}{n}$. These boundary angles and a sampling of curvature lines provide initial data for constructing a fundamental piece of an orthogonal ring pattern using the variational principle.

We choose the parameter $q$ so that the orthogonal ring pattern is bounded by the same spherical quadrilateral as the smooth Gauss map, ensuring that the ring pattern retains the same reflection symmetries. In this case we can construct closed examples of (reflectionally symmetric) two-sphere Koebe nets, see Figure \ref{Fig:Rt_Koebe_polyhedron}. Self intersections and double covers occur.

After constructing the fundamental piece, the corresponding reflections determine the complete discrete periodic cmc surface.

\subsection{Doubly periodic S$_1$-cmc surfaces}
\label{sec:Rt_Examples_doubly}
Cmc surfaces in  Euclidean three-dimensional space  are isometric to minimal surfaces in the three-dimensional sphere $S^3$.	This fact is known as the classical Lawson correspondence \cite{lawson1970complete}.  Lawson first used this approach to show the existence of two reflectionally symmetric, doubly periodic cmc surfaces in $\Rt$, denoted by $\psi(U_{2, 2})$ and $\psi(U_{3, 3})$, where $\psi$ denotes the immersion of the conjugate surfaces into $\Rt$. The isometric minimal surfaces in $S^3$ are the classical Lawson surfaces  $\xi_{2, 2}$ and $\xi_{3, 3}$, while $U_{2, 2}$ and $U_{3, 3}$ denote their universal Riemannian coverings. The integers refer to the symmetry group of the surface, and the fundamental pieces of $U_{2, 2}$ and $U_{3, 3}$ are obtained by solving the corresponding Plateau problem in $S^3$.

 The smooth cmc surfaces $\psi(U_{2, 2})$ and $\psi(U_{3, 3})$ have recently been constructed in \cite{bobenko2021constant} using the DPW method \cite{dorfmeister1998weierstrass}. They are shown, together with their discrete analogues and corresponding orthogonal ring patterns, in Figure \ref{Fig:Rt_Examples_doubly}. In these doubly periodic examples, the spherical quadrilateral bounding the fundamental piece of the Gauss map degenerates into a triangle.
 
We want to emphasize that the smooth surfaces on the left in Figure \ref{Fig:Rt_Examples_doubly} and their discrete counterparts on the right are constructed using entirely different methods.

\begin{figure}[t]
	\includegraphics[width=\linewidth]{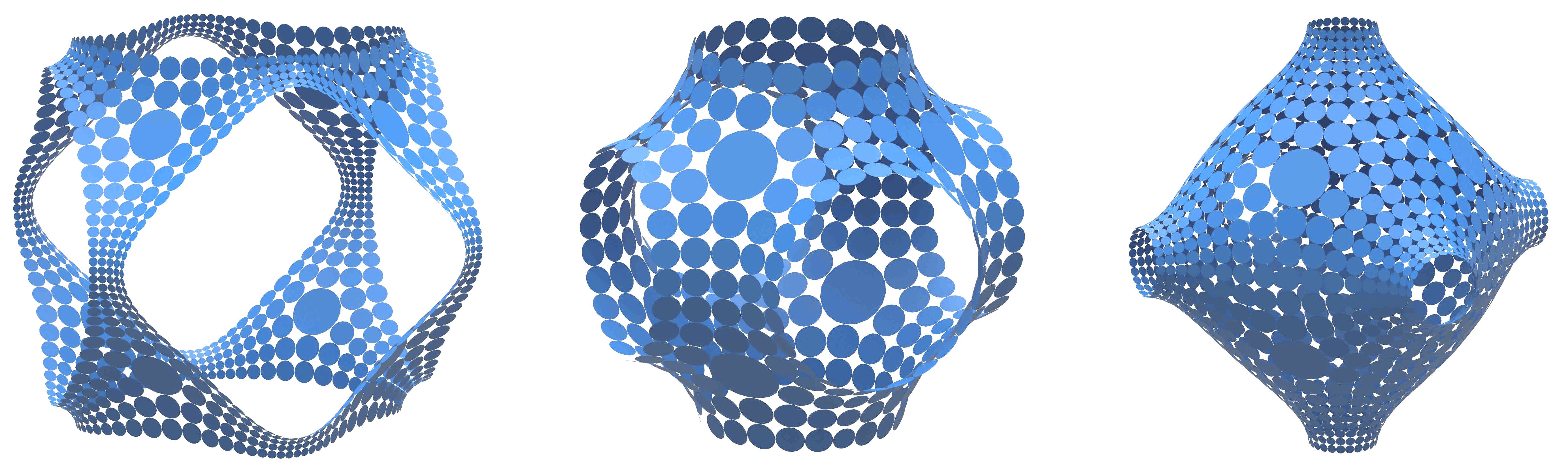}
	\caption{
		S$_1$-isothermic discrete Schwarz P minimal surface (center) and Schwarz P cmc surfaces, with negative mean curvature (left) and positive mean curvature (right). 
		The fundamental pieces of the underlying S-quad graph correspond to combinatorial rectangles with dimensions $(n, m), (m, m)$ and $(m, n)$, respectively, where $n>m$. The Gauss map for all three surfaces is bounded by the same spherical quadrilateral, indicating that the surfaces share the same reflectional symmetries.}
	\label{Fig:Rt_Schwarz-P_discrete}
\end{figure}

\subsection{Triply periodic S$_1$-cmc surfaces}
\label{sec:Rt_Examples_triply}

In the 1880s, Schwarz discovered several triply periodic minimal surfaces \cite{schwarz1972gesammelte}, which were later extended by Schoen \cite{schoen1970infinite}. Two examples are the Schwarz P surface and Schoen's I-WP surface. The surfaces admit reflective symmetries and allow deformations into triply periodic surfaces of constant mean curvature \cite{karcher1989triply}, which retain the same reflective symmetries as their minimal counterparts. The existence of the cmc deformations is proven by solving Plateau problems in $S^3$. The Lawson correspondence is used to generate the corresponding cmc surfaces in $\Rt$ \cite{karcher1989triply}. Discrete Schwarz P surfaces with mean curvature $H<0$, $H=0$ and $H>0$ are shown in Figure \ref{Fig:Rt_Schwarz-P_discrete}.

In \cite{bobenko2021constant} a smooth Schwarz P cmc surface and Schoen's I-WP cmc surface have been constructed using the DPW method \cite{dorfmeister1998weierstrass}. They are shown, together with their discretizations and orthogonal ring patterns, in Figure \ref{Fig:Introduction}. The shape of the orthogonal ring pattern for the Schwarz P surface is determined by the nominal angles $\frac{\pi}{2}, \frac{2\pi}{3}, \frac{\pi}{2}, \frac{\pi}{2}$ at the corners and the parameter $q \approx 0.995798$. Similarly, the orthogonal ring pattern for Schoen's I-WP surface has cone angles $\frac{\pi}{2}, \frac{2\pi}{3}, \frac{3\pi}{4}, \frac{\pi}{2}$ at the corners and parameter $q \approx 0.994351$. Note that the fundamental piece of the Schwarz P surface has the same boundary cone angles as the doubly periodic surface  $\psi(U_{2, 2})$ of Section \ref{sec:Rt_Examples_doubly}, only the global parameter $q$ is different.

Our numerical experiments with discrete cmc surfaces show an astonishingly good convergence. It would be desirable to give a mathematical proof of this fact. For discrete minimal surfaces the convergence is proven to be $C^\infty$ \cite{lan2009c}.

\section{Discrete S-isothermic surfaces in $\Rto$}
\label{sec:Rto_discrete_s_isothermic_surfaces}

In the remaining sections of this paper, we will translate the geometric constructions from the first part from $\Rt$ to the Lorentz space $\Rto$, with the goal of constructing discrete spacelike cmc surfaces in $\Rto$. 
We will show that, with a method analogous to the construction scheme of discrete cmc surfaces in $\Rt$, one can construct discrete spacelike cmc surfaces in $\Rto$, see e.g. \cite{milnor1983harmonic, inoguchi1997surfaces} for a differential geometric treatment of (spacelike cmc) surfaces in $\Rto$. Furthermore, we will show that the discrete Gauss map of the discrete spacelike cmc surfaces corresponds to spacelike two-sphere Koebe nets and hyperbolic orthogonal ring patterns \cite{bobenko2024rings}. 

To define spacelike S-isothermic nets in $\Rto$, we first introduce some basic geometric facts and Möbius geometry of $\Rto$. Note that in literature the conformal compactification of $\Rto$ is also denoted as the Einstein universe \cite{barbot2007primer21einsteinuniverse} and the geometry as pseudo-conformal geometry. We stick to the naming conventions \emph{Möbius quadric} and \emph{Möbius geometry} as our treatment is close to the Euclidean case.

The Lorentz space $\Rto$ is equipped with the non-degenerate bilinear form 
\begin{align}
	\label{Rto_eq:lor_bilinear_form}
	\lorsca{x, y}  = x_1y_1 + x_2y_2 - x_3y_3
\end{align} 
of signature $\texttt{(++-)}$. 
Lines and planes in $\Rto$ are called \emph{spacelike}, \emph{isotropic} or \emph{timelike} depending on the signature of the induced metric on the subspaces. For example, planes can have signatures $\texttt{(++)}$, $\texttt{(+0)}$ or $\texttt{(+-)}$ and corresponding normal vectors of signature $\texttt{(-)}$, $\texttt{(0)}$ or $\texttt{(+)}$, respectively. The three types of planes and the \emph{light cone}, consisting of all isotropic lines, are shown in Figure \ref{Fig:Rto_Lorentz} (left).

A spacelike surface surface in $\Rto$ is a surface such that the induced metric of the surface is positive definite (i.e. a Riemannian metric). Its unit normal field consists of timelike vectors, the normal vectors  are orthogonal to the spacelike tangent planes.

\begin{figure}[tbp]
	\centering
	\begin{minipage}{.325\textwidth}
		\includegraphics[width=\textwidth]{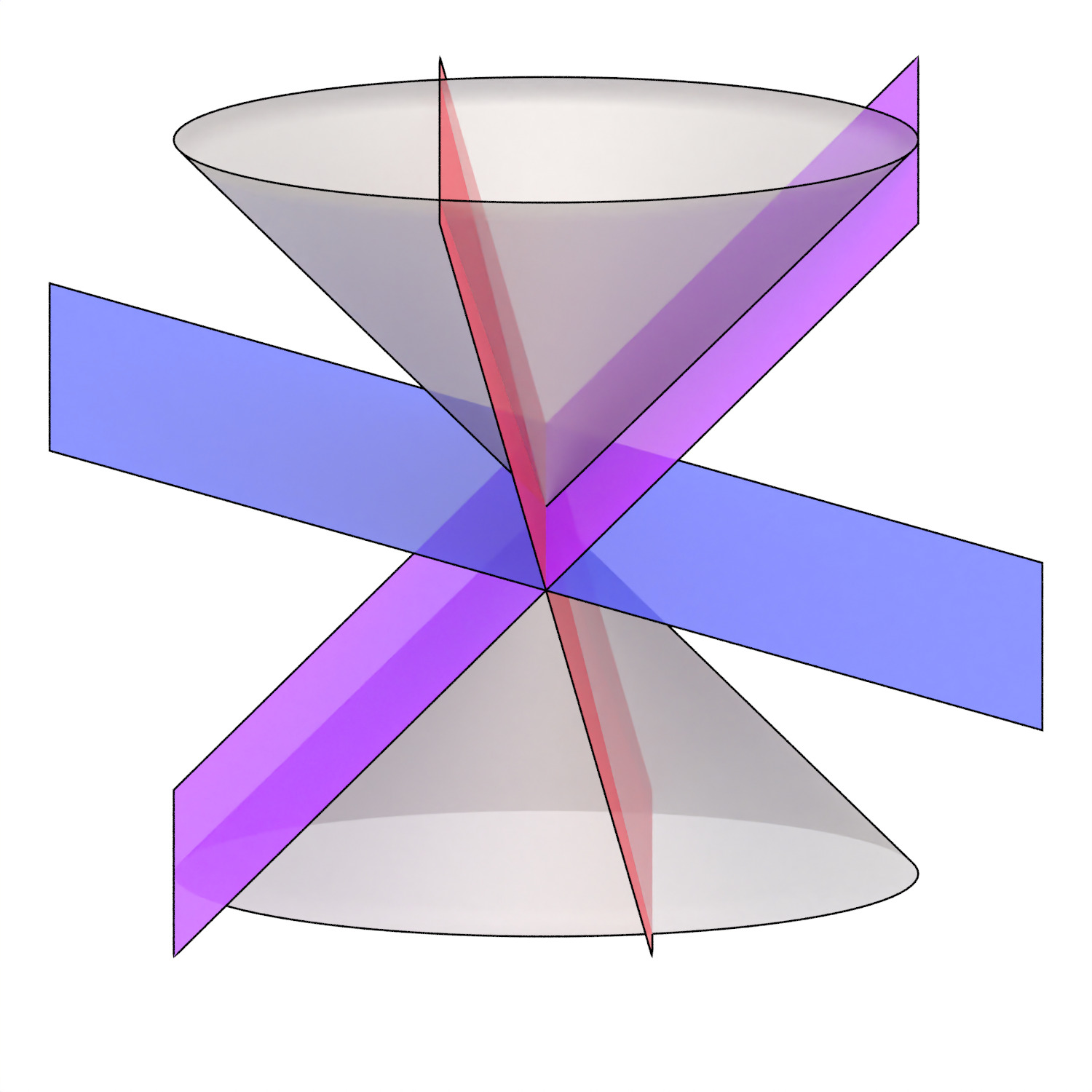}
	\end{minipage}
	\begin{minipage}{.325\textwidth}
		\includegraphics[width=\textwidth]{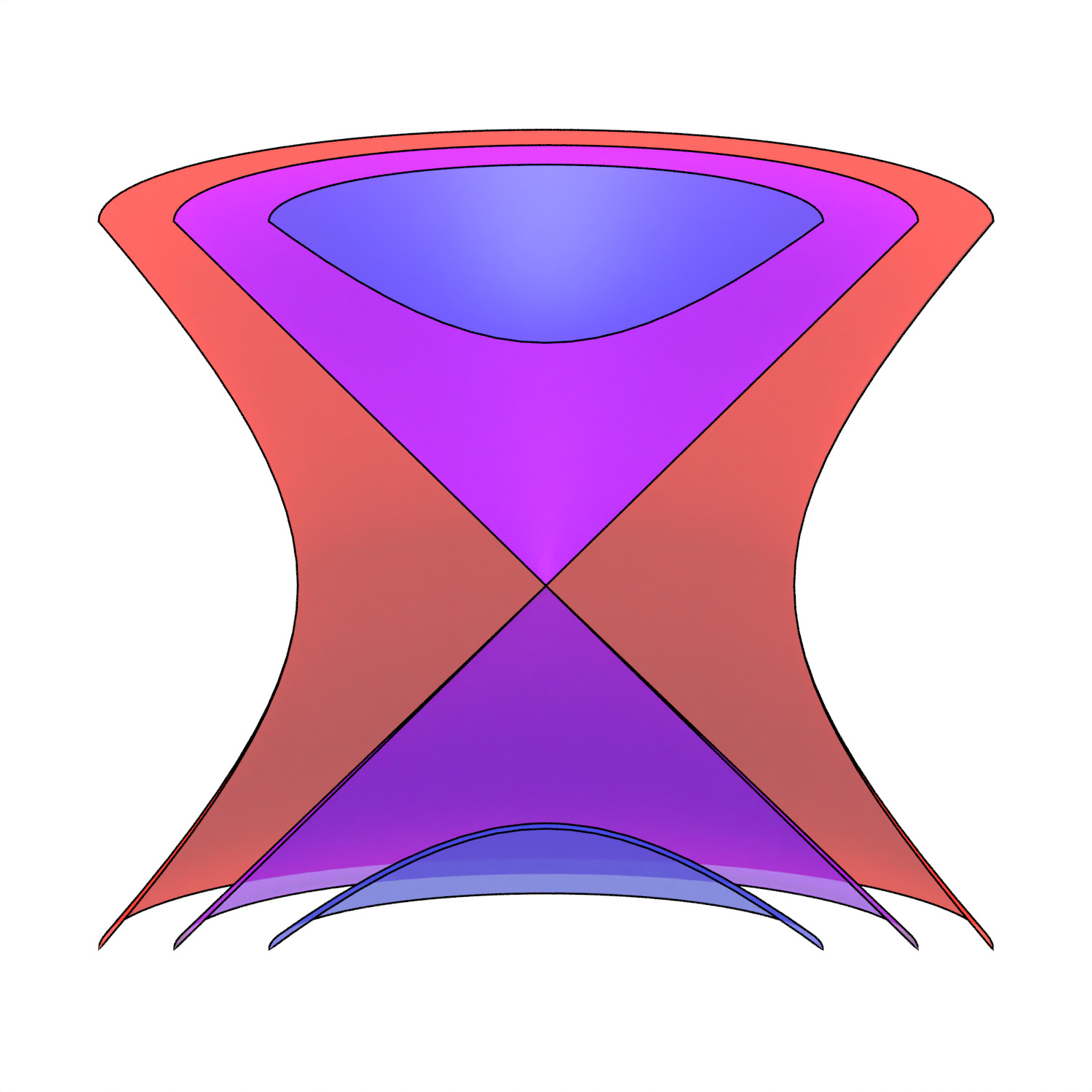}
	\end{minipage}
	\begin{minipage}{.325\textwidth}
		\includegraphics[width=1\textwidth]{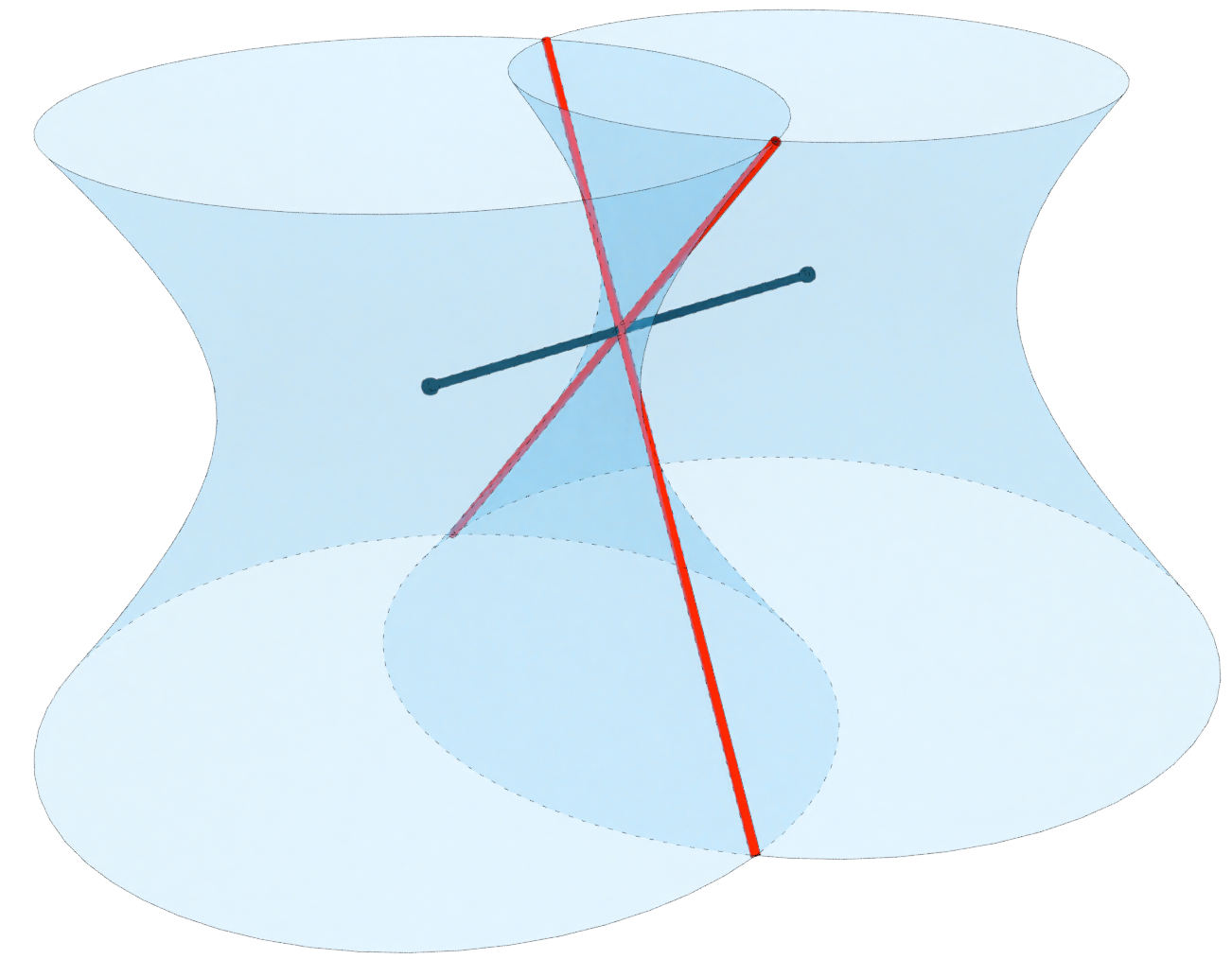}
	\end{minipage}
	\caption{Left: A spacelike plane (blue), an isotropic plane (violet), a timelike plane (red), and the light cone (gray) in $\Rto$. Center: A spacelike sphere (blue), a null sphere (violet), and a timelike sphere (red).
		Right: Two touching timelike spheres.}
	\label{Fig:Rto_Lorentz}
\end{figure}

A sphere in Lorentzian space with center $c$ and squared radius $d^2$ is given by the point set
\begin{equation*}
	s_{c, d^2} := \{x \in \Rto \ | \lorsca{x-c, x-c} = d^2\}.
\end{equation*} 
The squared radius $d^2$ can be positive, zero, or negative.
The three types of spheres are called \emph{spacelike}, \emph{null} and \emph{timelike} spheres and correspond to the cases $d^2< 0 , d^2 = 0$ and $d^2>0$, respectively. From an Euclidean point of view they correspond to two-sheeted hyperboloids, two-dimensional cones and one-sheeted hyperboloids, see Figure \ref{Fig:Rto_Lorentz} (center). 

Two timelike spheres \emph{touch} if they share a common point $t$ and have the same tangent plane at that point. This touching condition implies that the two spheres share two generators that intersect at $t$, see Figure \ref{Fig:Rto_Lorentz} (right). The line  connecting the centers of the spheres is spacelike, passes through $t$, and is orthogonal to the common timelike tangent plane.

Non-empty planar sections of spheres are \emph{spacelike, null} or \emph{timelike Lorentz circles}. Our primary examples we will be spacelike circles contained in spacelike planes, which correspond to ellipses from a Euclidean point of view. In addition, we will encounter spacelike circles contained in timelike planes which correspond to hyperbolas with a timelike axis.  Note that timelike planes can contain not only spacelike circles, but also null circles (such as the one-dimensional cones in the common tangent plane of two touching timelike spheres) and timelike circles (which correspond to hyperbolas with a spacelike axis).

To study M\"obius geometry of $\Rto$ we are using $\Rttwo$, which is  equipped with the non-degenerate bilinear from  
\begin{align}
	\label{eq:Rto_lmsca}
	\lmsca{x, y} = x_1y_1 + x_2y_2 + x_3y_3 - x_4y_4 - x_5y_5.
\end{align}
For the standard basis $e_1, ..., e_5$ of $\Rttwo$ we introduce new basis vectors $e_0:=\frac{1}{2}(e_5-e_3)$ and $e_\infty:=\frac{1}{2}(e_3+e_5)$.
Let
$$\hat{\mathbb{S}}_0 := \{\hat{x} \in \Rttwo \ | \ \lmsca{\hat{x}, \hat{x}} = 0\}$$ 
be the light cone in $\Rttwo$.
Points of $\Rto\cup \{\infty\}$ can be identified with points of $\hat{\mathbb{S}}_0$ via the identification
\begin{align}
	\label{eq:Rto_point_lift}
	\Rto \ni x  \leftrightarrow & \ \hat{x} = x+ e_0 +||x||^2e_\infty \in \hat{\mathbb{S}}_0.
\end{align}

In this context, $x$ on the right hand side is understood as $x_1e_1 + x_2 e_2 + x_3 e_4 \in \Rttwo$, and points of the form $\hat{x} = x + e_0 + ||x||^2e_\infty \in \hat{\mathbb{S}}_0$ are normalized so that $\langle \hat{x}, e_\infty \rangle_{4, 1} = - \frac{1}{2}$.
The point $\infty \in \Rt \cup \{ \infty\}$  is identified with $e_\infty$.
If we interpret $\Rttwo$ as the space of homogeneous coordinates of $\RP ^4$, points on
$$\mathcal{M} := \{ [\hat{x}] \in \R P ^4| \langle \hat{x}, \hat{x} \rangle_{3,2} = 0 \}, $$
which is called the \emph{M\"obius quadric} of $\Rto$, can be identified with null spheres in $\Rto$. A point of the form \eqref{eq:Rto_point_lift} in $\hat{\mathbb{S}}_0$ represents a special choice of homogeneous coordinates for the corresponding projective point in $\mathcal{M}$, and it represents the center of the null sphere.
Points inside the Möbius quadric,
$$[\hat{x}] \in \mathcal{M}_- := \{ [\hat{x}] \in \R P ^4 | \langle \hat{x}, \hat{x} \rangle_{3, 2} < 0 \}, $$
can be identified with non-oriented spacelike spheres in $\Rto$, 
points outside the Möbius quadric,
$$[\hat{x}] \in \mathcal{M}_+ := \{ [\hat{x}] \in \R P ^4 | \langle \hat{x}, \hat{x} \rangle_{3, 2} > 0 \}, $$ 
can be identified with non-oriented timelike spheres in $\Rt$. Hyperplanes are considered as (spacelike, isotropic, timelike) spheres with infinite radius. 

We will further consider M\"obius geometry of timelike spheres in more detail.
A timelike sphere with center $c$ and radius $d^2$ is represented by the projective point
	\begin{align}
		\label{eq:Rto_sphere_lift_projective}
		[\hat{s}] = [c + e_0 + (||c||^2-d^2)e_\infty] \in \mathcal{M}_+.
	\end{align}
Its homogeneous coordinates can be normalized to
	\begin{align}
		\label{eq:Rto_sphere_lift}
		%\Rt \supseteq  s_{c, d}  \leftrightarrow & \ 
		\hat{s} = \frac{1}{d} \left( c + e_0 + (||c||^2-d^2)e_\infty \right),
	\end{align} which is a point on the Lorentz unit sphere
	\begin{align*}
		%\label{eq:Rttwo_unit_sphere}
		\hat{\mathbb{S}}_1 := \{\hat{x} \in \Rttwo \ | \ \langle \hat{x}, \hat{x} \rangle_{3, 2} = 1\} \subset \Rttwo.
	\end{align*}
Recall that for the squared radius of timelike spheres  $d^2>0$, so its square root $d$ is always real. One can associate a sign to $d$ that encodes the orientation of the sphere. The two points \eqref{eq:Rto_sphere_lift} with $\pm d$ correspond to the same point \eqref{eq:Rto_sphere_lift_projective} in $\mathcal{M}_+$.

To study spacelike S-isothermic surfaces, let us recall the combinatorics of $\G$, a quad graph with interior vertices of even valence and edges divided into 'horizontal' and 'vertical' edges, as shown in Figure \ref{Fig:graph_g}.

\begin{definition}
	\label{def:Rto_S-isothermic}
	A map
	$$s: V(\G) \rightarrow \{\text{oriented timelike spheres in } \Rto \}$$
	is called a \emph{discrete spacelike S-isothermic surface}
	if the following conditions hold:
	\begin{enumerate}[(i)]
		\item The centers of four oriented spheres associated with a face of $\G$ lie on a spacelike plane.
		\item The corresponding map  ${\hat{s}: V(\G) \rightarrow \hat{\mathbb{S}}_1 \subset \Rttwo}$, that maps oriented timelike spheres to 
		\begin{align*}
			%\label{eq:Rto_S_iso_lift}
			\hat{s}=\frac{1}{d}\left( c + e_0 +(||c||^2 - d^2)e_\infty\right),
		\end{align*} satisfies the discrete Moutard equation 
		\begin{align}
			\label{eq:Rto_Moutard_s_iso}
			\hat{s}_{v_{ij}} - \hat{s}_v  = a_{ij}(\hat{s}_{v_j} - \hat{s}_{v_i})
		\end{align}
	
	for some $a_{ij}: F(\G) \rightarrow \R$. 
	\end{enumerate}
\end{definition}

The vertex spheres of a spacelike discrete S-isothermic surface are timelike. Their positive squared radius corresponds to the positive metric of the surface.
The labeling property (cf. \eqref{eq:Rt_labelling_property}) also holds for spacelike S-isothermic surfaces
	\begin{equation*}
	\begin{aligned} 
		\langle \hat{s}_v,  \hat{s}_{v_i} \rangle_{3, 2}  &= \langle \hat{s}_{v_j}, \hat{s}_{v_{ij}} \rangle_{3, 2}  =:\alpha_i\\
		\langle \hat{s}_v, \hat{s}_{v_j} \rangle_{3, 2}  &= \langle \hat{s}_{v_i}, \hat{s}_{v_{ij}} \rangle_{3, 2}  =:\alpha_j.
	\end{aligned}
\end{equation*}
The elementary quadrilaterals of a spacelike S-isothermic surface are called \emph{spacelike S-isothermic quadrilaterals}. 
There are three types of spacelike S-isothermic quadrilaterals, analogous to the three types in $\Rt$ presented in Figure \ref{Fig:Rt_Q_congruences}, but with different geometric properties.
A special case are those with touching vertex spheres, called \emph{spacelike S$_1$-isothermic quadrilaterals}.

\begin{figure}
	\centering
	\begin{minipage}{.4\linewidth}
				\begin{overpic}[width=\linewidth]{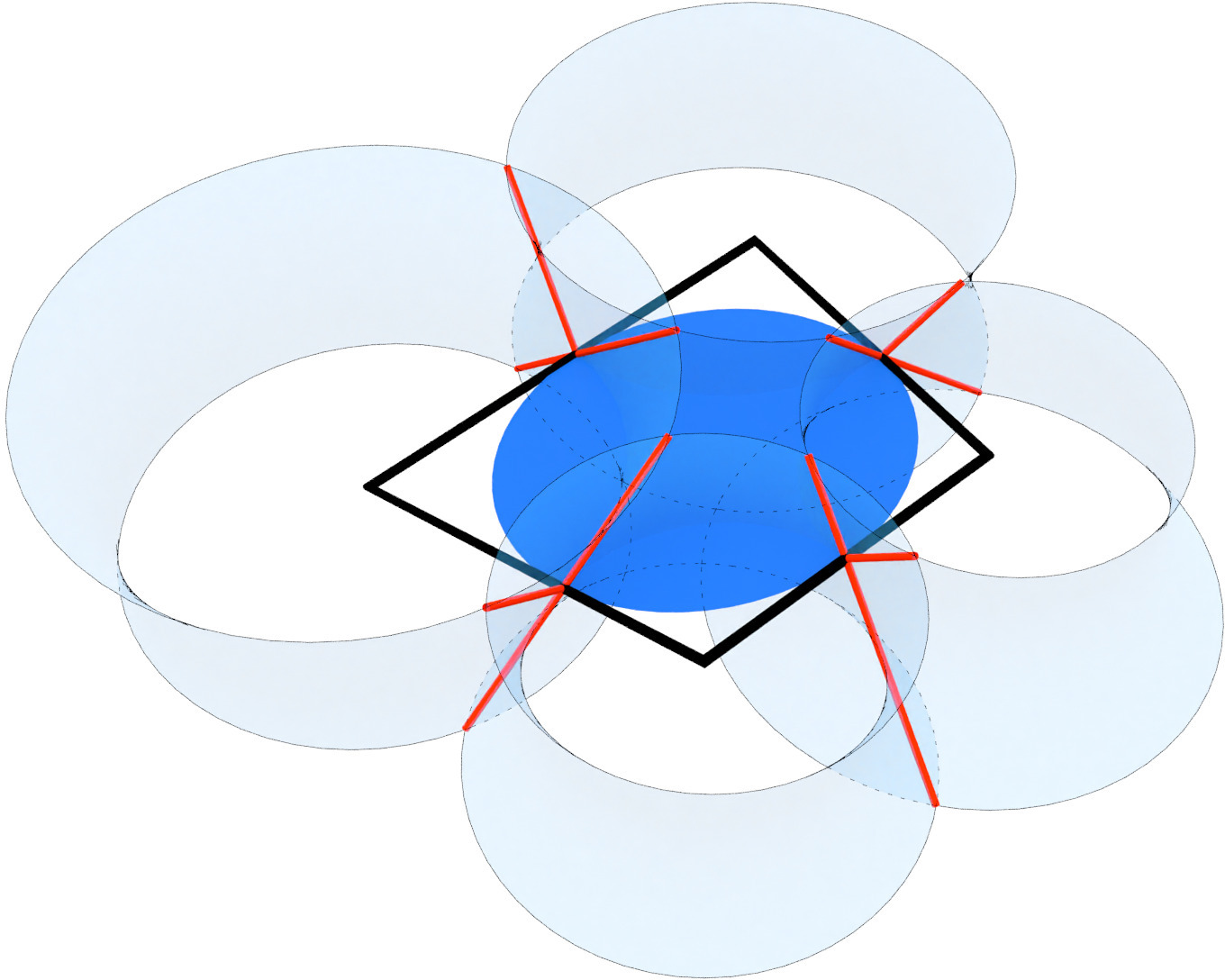}
			\put(22, 39){$c_{v}$}
			\put(56, 20){$c_{v_i}$}
			
			\put(82, 42){$c_{v_{ij}}$}
			\put(57, 63){$c_{v_{j}}$}
		\end{overpic}
	\end{minipage}
\hspace{2cm}
	\begin{minipage}{.3\linewidth}
			\begin{overpic}[width=\linewidth]{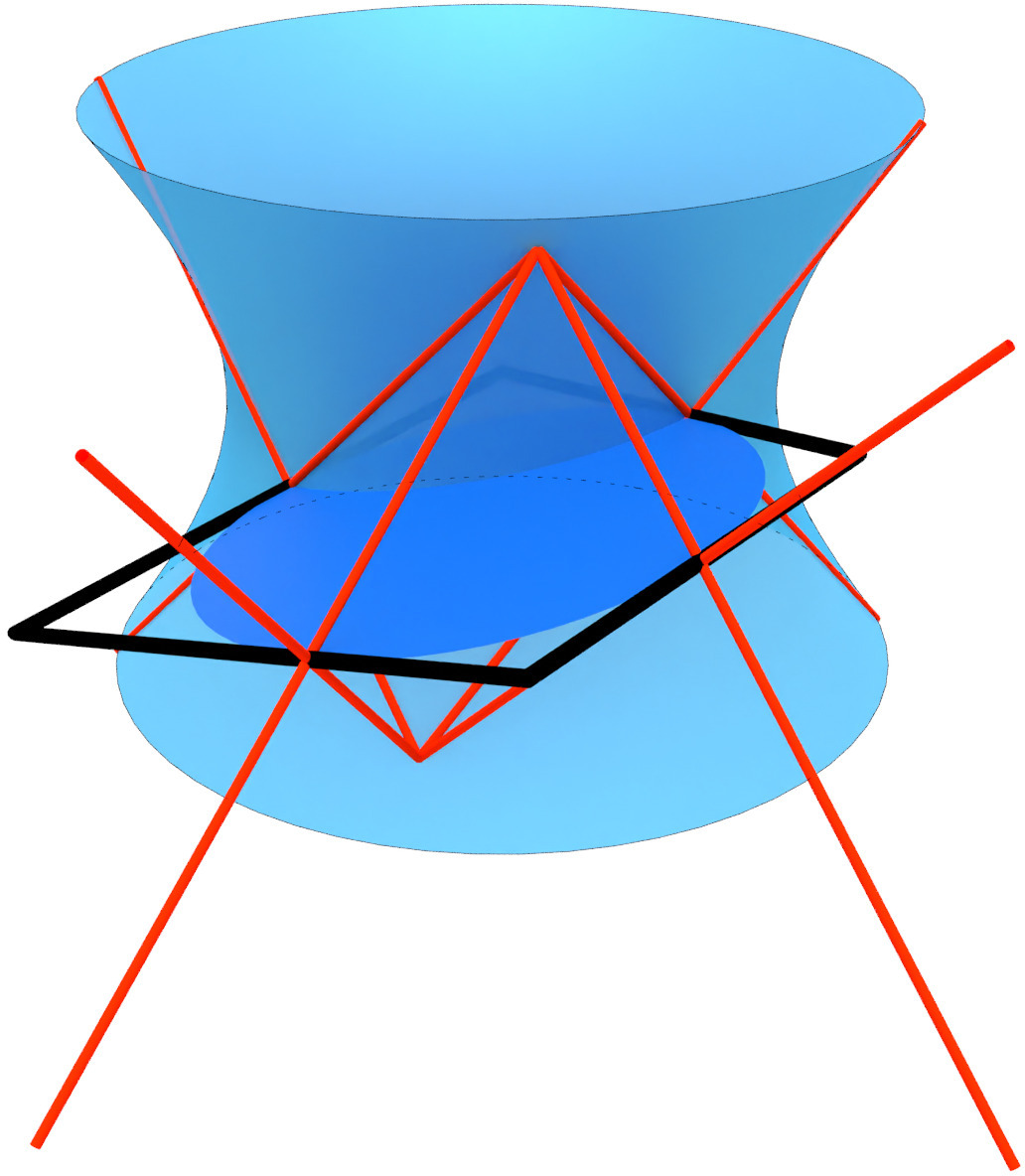}
			\put(79, 95){$s_{v_{j}}$}
		\end{overpic}
	\end{minipage}
\hspace{.5cm}
	\caption{A spacelike S$_1$-isothermic quadrilateral. Neighboring spheres touch and have a common orthogonal circle (left). The four spheres intersect in two points that form the centers of two null spheres that contain the orthogonal circle (right).}
	\label{Fig:Rto_Q_congruences}
\end{figure}

\begin{proposition}
	Spacelike S$_1$-isothermic quadrilaterals have spacelike incircles that intersect the four adjacent spheres orthogonally. The four timelike vertex spheres intersect at two points which are the centers of the two null spheres containing the orthogonal circle, see Figure \ref{Fig:Rto_Q_congruences}. 
\end{proposition}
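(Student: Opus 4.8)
The plan is to transfer the Euclidean picture of Corollary~\ref{Cor:Rt_Q_congruences}(i) to the Lorentz setting, keeping careful track of the signature changes. First I would work in the M\"obius quadric model $\Rttwo$ introduced above. By Definition~\ref{def:Rto_S-isothermic}, the four lifted points $\hat{s}_v, \hat{s}_{v_i}, \hat{s}_{v_{ij}}, \hat{s}_{v_j}$ lie on a common plane in $\Rttwo$ (the Moutard equation \eqref{eq:Rto_Moutard_s_iso} forces coplanarity in the same way as in the Euclidean case), so they span a linear subspace $U$ of dimension at most $3$. For an S$_1$-isothermic quadrilateral the edge labels are $\alpha_i,\alpha_j=\pm 1$, and since neighboring timelike spheres touch, the Gram matrix of the four lifts with respect to $\lmsca{\cdot,\cdot}$ is explicit; computing its rank and signature shows that $U$ is three-dimensional with induced metric of signature $\texttt{(+,+,-)}$ when the spheres genuinely touch (the degenerate cases being excluded by the touching hypothesis). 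Hence the orthogonal complement $U^\perp$ in $\Rttwo$ is two-dimensional with induced metric of signature $\texttt{(+,-)}$; this is the Lorentz analogue of the signature $\texttt{(2,0)}$ computation in the proof of Corollary~\ref{Cor:Rt_Q_congruences}.

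Next I would interpret $U^\perp$ geometrically. A two-dimensional subspace of signature $\texttt{(+,-)}$ meets the M\"obius quadric $\mathcal{M}$ in exactly two projective points, both of which lie on the null cone $\hat{\mathbb{S}}_0$; by the identification \eqref{eq:Rto_point_lift} these correspond to two points of $\Rto\cup\{\infty\}$, which are precisely the two common intersection points of the four vertex spheres. (One checks that genericity of the touching configuration keeps these two points finite, so they really are the two centers of null spheres through the configuration, as in Figure~\ref{Fig:Rto_Q_congruences} right.) Dually, the pencil of spheres orthogonal to all four vertex spheres is $U$ itself intersected with $\mathcal{M}_+\cup\mathcal{M}_-$, and the common orthogonal sphere family degenerates onto a single circle: this circle is the spacelike incircle. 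That it is spacelike, and that it is the inscribed circle passing through the four touching points $t_{(v,v')}$, follows from the touching-coins type argument --- the common tangent line of two touching timelike spheres at their contact point $t$ is spacelike and orthogonal to the spacelike center line, so the circle through the four contact points lies in the spacelike plane spanned by the centers and meets each vertex sphere orthogonally at $t_{(v,v')}$.

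Finally, I would record the statement that the two intersection points are the centers of null spheres containing the orthogonal circle. Since a null sphere is, in the model, a point of $\mathcal{M}$ and simultaneously (via \eqref{eq:Rto_point_lift}) its own center, the two points of $U^\perp\cap\mathcal{M}$ found above are exactly these null-sphere centers; and the orthogonal circle lies on each of them because the circle's lift lies in $U$, which is $\perp U^\perp$, so every point of $U^\perp\cap\mathcal{M}$ is incident (in the M\"obius-incidence sense) with every point of the circle --- i.e.\ the circle lies on the corresponding null sphere.

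The main obstacle I anticipate is the signature bookkeeping in the first paragraph: one must verify that, under the S$_1$ touching hypothesis with the chosen orientations $\alpha_i=-1$, $\alpha_j=1$ (so $\alpha_i\neq\alpha_j$), the relevant $4\times 4$ Gram matrix really has rank $3$ and the right signature on $U$, ruling out the isotropic/degenerate alternatives that would occur in $\Rto$ but not in $\Rt$. Equivalently, one must confirm that the ``type $(i)$'' case is the only one compatible with touching timelike spheres and that it produces a spacelike (not timelike) orthogonal circle; this is where the Lorentzian geometry differs most from the Euclidean model and deserves the most care. The remaining steps are then direct translations of the Euclidean arguments of \cite{bobenko2008discrete} and of Corollary~\ref{Cor:Rt_Q_congruences}.
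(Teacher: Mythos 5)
Your argument follows essentially the same route as the paper's proof: pass to the lifts in $\Rttwo$, observe that the $3$-dimensional subspace $U$ spanned by a face has orthogonal complement of signature $(1,1)$, read off the two isotropic directions of $U^\perp$ as the two null spheres whose centers are the common intersection points, and identify the spacelike orthogonal circle with this complement's dual data — the paper is in fact terser, simply asserting the signature and deferring details to a reference, whereas you at least name the Gram-matrix verification (without carrying it out) as the remaining obstacle. One small slip: the pencil of spheres orthogonal to all four vertex spheres is $U^\perp$, not ``$U$ itself'' — though your final paragraph (placing the circle's points in $U$ and the null spheres in $U^\perp$) shows you have the duality right.
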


\begin{proof}
	The faces of the net $\hat{s}$ in $\Rttwo$ are planar. Similar to the proof of Corollary \ref{Cor:Rt_Q_congruences}, we consider the linear subspace containing a face of $\hat{s}$. The orthogonal complement of the linear subspace is of signature $(1, 1)$. Thus it represents (in contrast to the Euclidean case) a circle that intersects the four spheres associated to the face orthogonally and that is, by definition, spacelike.
	Furthermore, the orthogonal complement having signature $(1, 1)$ also implies that the four spheres in $\Rto$ are orthogonal to two null spheres and thus in particular intersect in their centers. For more details we refer to \cite{ADMPS24a}. \end{proof}

The \emph{Christoffel dual}  $s^*$ of a spacelike S$_1$-isothermic surface $s$ can be defined analogously to the $\Rt$ case by the exact one-form
 $\partial c^*$ defined by
	\begin{align*}
		\partial_{(v, v')}c^* = \frac{\partial_{(v, v')} c}{d_v d_{v'}}.
	\end{align*}
Recall that $$c: V(\G) \rightarrow \Rto, \ v \mapsto c_v$$ denotes the center net of $s$ and $d: V(\G) \rightarrow \R, \ v \mapsto d_v$ the corresponding signed radii. For spacelike S$_1$-isothermic surfaces $s$ and $s^*$ let $c_f, c^*_f$ and $d_f, d_f^*$ denote the centers and radii of the spacelike orthogonal face circles.

A spacelike S$_1$-isothermic surface and its Christoffel dual have the following properties:
\begin{enumerate}[(i)]
	\item The Christoffel dual of a spacelike S$_1$-isothermic net is a spacelike S$_1$-isothermic net.
	\item The radii of the vertex spheres are related by $d_v^* = \frac{1}{d_v}$.
	\item Orthogonal circle radii are related by $d_f^* = \frac{1}{d_f}$.
\end{enumerate}
In general, the Christoffel dual is defined up to global scaling, thus
\begin{equation}
	\label{eq:Rto_scaling_christoffel}
	d_vd_v^* = d_f d_f^* = \lambda
\end{equation}
for a $\lambda \in \R$.
\section{Discrete cmc surfaces in $\Rto$}
\label{sec:Rto_discrete_s_isothermic_cmc_surfaces}

\begin{definition}
	\label{Def:Rto_Darboux}
	Two S-isothermic surfaces $s$ and $s^+$ are called a \emph{timelike Darboux pair} if the corresponding Moutard nets $\hat{s}, \hat{s}^+ : V(\G) \rightarrow \Rttwo $ are related by a Moutard transformation,
	\begin{align*}
		\hat{s}^+_{v'}- \hat{s}_v  = a_+(\hat{s}^+_{v} - \hat{s}_{v'})
	\end{align*}
	with some $a_+: E(\G) \rightarrow \R$,
	 and the normal vectors connecting corresponding vertex sphere centers,
	\begin{align}
		\label{eq:Rto_normal_vectors}
		n_v := c_v^* - c_v,
	\end{align}
	are timelike.
	In this case, one surface is called a \emph{timelike Darboux transform} of the other.
\end{definition}
The normal vectors of smooth spacelike surfaces in $\Rto$ are timelike. The vectors \eqref{eq:Rto_normal_vectors} will serve as the discrete Gauss map for the following definition of S-cmc surfaces and are therefore restricted to be timelike. Timelike Darboux transforms preserve the edge labels $\alpha_i$ and $\alpha_j$ given in \eqref{eq:Rto_Moutard_s_iso}. Associated with a timelike Darboux transform is a constant parameter $\alpha$ given by 
\begin{align}
	\label{eq:Rto_alpha}
	-2\alpha  := -2\langle \hat{s}_v,  \hat{s}_{v}^+ \rangle_{3, 2} = ||c_v - c_v^+||^2 - \left( d_v^2 + {d_v^+}^2\right).
\end{align}

\begin{definition}
	A spacelike S-isothermic surface $s$ is a \emph{spacelike S-cmc surface}, if its Christoffel dual simultaneously is a timelike Darboux transform (after appropriate scaling and translation).
\end{definition} 
We now restrict ourselves to the case of touching spheres, i.e., spacelike S$_1$-cmc surfaces.
A fundamental hexahedron of a Lorentz S$_1$-cmc surface pair has the same properties as in the Euclidean case, see Figure \ref{Fig:Rt_fundamental_hex}, also for notation. In particular, the normal vectors have the following properties.
The \emph{vertex normals}
\begin{align}
	\label{eq:Rto_vertex_normals}
	n: V(\G) \rightarrow \Rto,\ v \mapsto n_v := c^*_v - c_v, 
\end{align}
connect centers of primal and dual spheres. They are timelike per definition.

The \emph{edge normals}
\begin{align}
	\label{eq:Rto_edge_normals}
	l: E(\G) \rightarrow \Rto,\ (v, v') \mapsto l_{(v, v')} := t^*_{(v, v')} - t_{(v, v')}
\end{align}
connect touching points of vertex spheres. They are orthogonal to the parallel pair of primal and dual edge and they lie in timelike planes containing the pair of parallel edges, and are therefore timelike. They are of squared lengths
 \begin{align}
	\label{eq:Rto_Delta}
	\Delta^2_i := -2\alpha +2\lambda \text{ \ or  \ } 
	\Delta^2_j := -2\alpha -2\lambda,
\end{align}
depending on the label of the corresponding pair of parallel edges, see Figure 	\ref{Fig:Rto_side_faces}. Here $\alpha$ denotes the parameter of the Darboux transform \eqref{eq:Rto_alpha} and $\lambda$ denotes the global constant \eqref{eq:Rto_scaling_christoffel}. Note that \mbox{$ \Delta_j^2  < \Delta_i^2 <0$}.

The \emph{face normals}
\begin{align}
	\label{eq:Rto_face_normals}
	m: F(\G) \rightarrow \Rto,\ f \mapsto m_f := c_f^*-c_f
\end{align}
connect centers of primal and dual orthogonal circles. The circles are coaxial and the face normals are orthogonal to the parallel spacelike faces.

\begin{figure}
	\centering
	\begin{minipage}{.48\linewidth}
		
		\begin{overpic}[width=.8\linewidth]
			{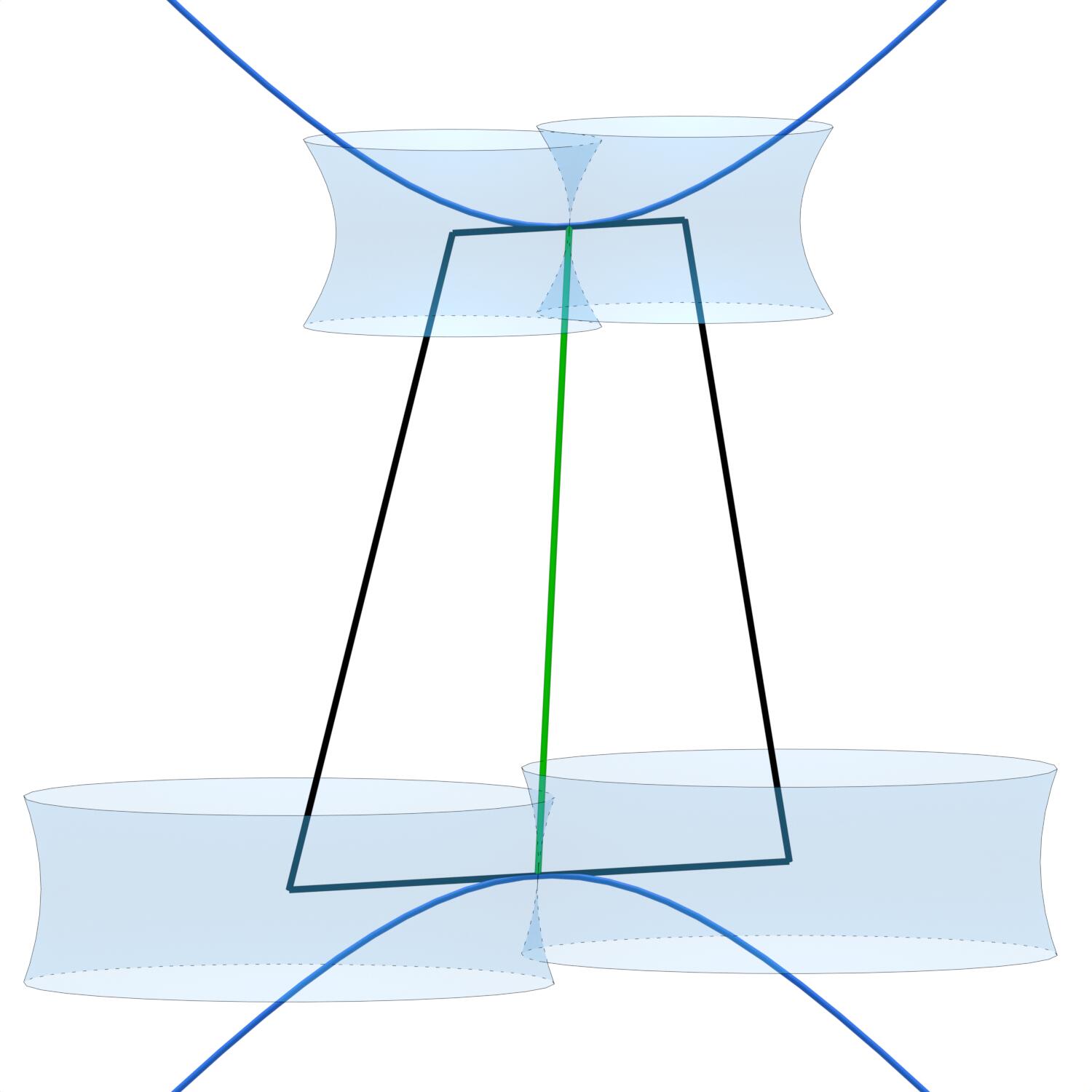}
			
			\put(33,76){$c_v^*$}
			\put(65,77){$c_{v_i}^*$}
			
			\put(26,55){$n_v$}
			\put(68,54){$n_{v_i}$}
			\put(51,38){$l_{(v, v_i)}$}
			
			\put(19,16){$c_v$}
			\put(73.5,19){$c_{v_i}$}
		\end{overpic}
	\end{minipage}
\hspace{-1cm}
	\begin{minipage}{.48\linewidth}
		
		\begin{overpic}[width=.85\linewidth]
			{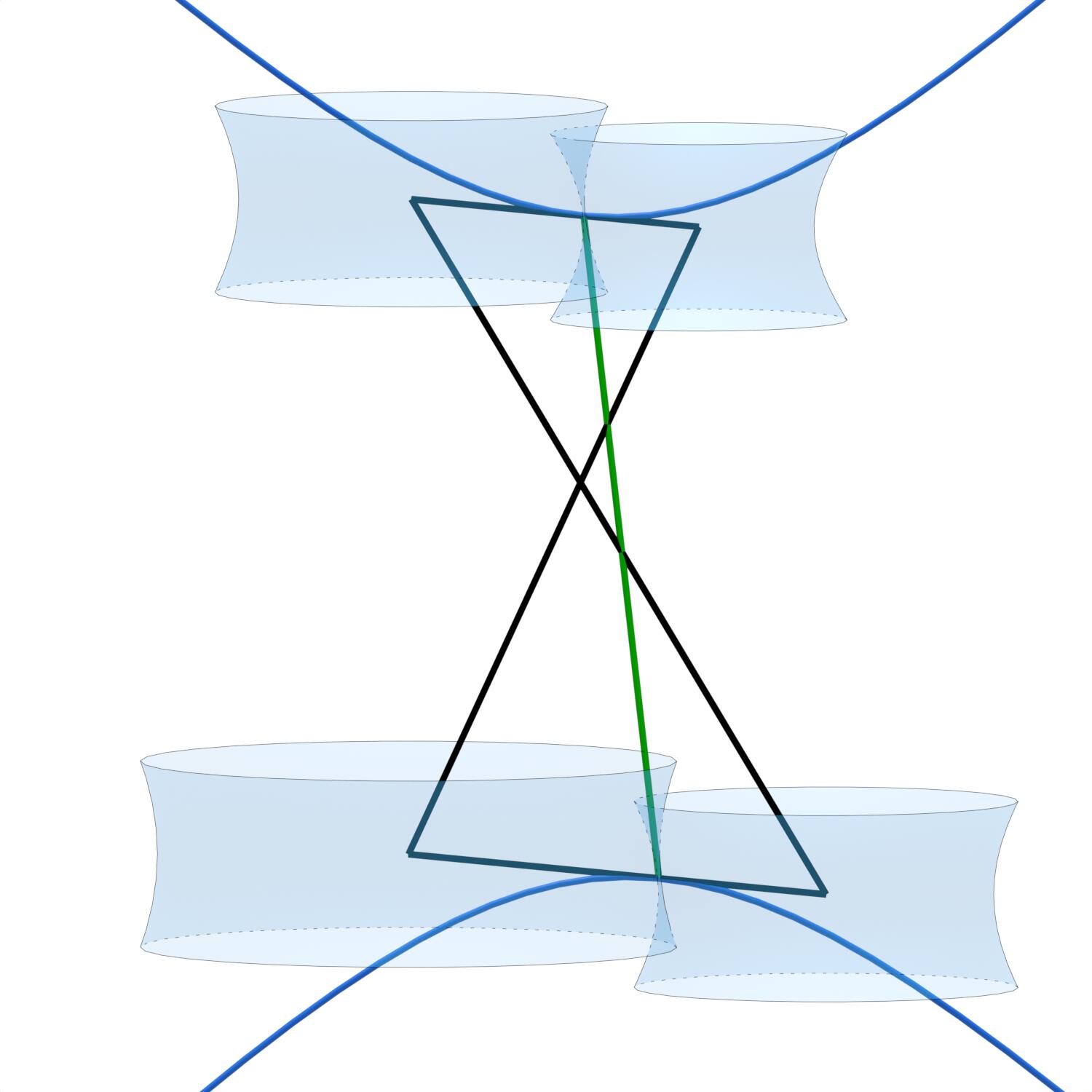}
			
			\put(30,80){$c_{v_j}^*$}
			\put(65,76){$c_v^*$}
			
			\put(36,38){$n_v$}
			\put(64,38){$n_{v_j}$}
			\put(57,54){$l_{(v, v_j)}$}
			
			\put(31,20.5){$c_v$}
			\put(76.5,17){$c_{v_j}$}
		\end{overpic}
	\end{minipage}
	\caption{The two types of transformation faces of a spacelike S$_1$-cmc pair together with vertex normals (black) and edge normals (green). The circle orthogonal to the four adjacent vertex spheres (blue) is spacelike. Embedded faces correspond to horizontal edges, non-embedded faces correspond to vertical edges.}
	
	\label{Fig:Rto_side_faces}
\end{figure}
\begin{definition}
\label{def:Rto_two-sphere_Koebe}
A Q-net $k:V(\G) \rightarrow \Rto$ is called a \emph{spacelike two-sphere Koebe net} if its edges alternately touch two spacelike spheres $S_+^2$ and $S_-^2$ (concentric with the spacelike unit sphere) whose squared radii satisfy the relation $r^2_+r^2_- = 1$.
\end{definition}
Recall that in Section \ref{sec:Rt_discrete_curvatures} we introduced the discrete mean curvature $H$ for pairs of discrete surfaces with parallel edges in $\Rt$. Since we restrict our considerations to discrete spacelike surfaces, that is discrete surfaces with positive metric, the considerations for $\Rt$ can be applied directly and analogously to the case of spacelike discrete surfaces with parallel edges in $\Rto$.

The following theorem can be proved analogously to Theorems \ref{Rt:Thm_s_cmc_koebe} and \ref{Thm:Rt_H=1}.

\begin{theorem}
	\label{Rto:Thm_s_cmc_koebe}
	Let $s$ and $s^*$ be a (suitably scaled) spacelike S$_1$-cmc pair.
	The Gauss map  \begin{align}
		\label{eq:Rto_vertex_normals2}
		n: V(\G) \rightarrow \Rto,\ v \mapsto n_v := c^*_v - c_v, 
	\end{align}
	between the sphere centers of $s$ and $s^*$, forms a spacelike two-sphere Koebe net. Furthermore, the pair $(s, n)$ of an S$_1$-cmc surface and its Gauss map has constant discrete mean curvature $H=1$.
\end{theorem}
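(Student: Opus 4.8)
The plan is to transplant the proofs of Theorems~\ref{Rt:Thm_s_cmc_koebe} and~\ref{Thm:Rt_H=1} to $\Rto$, using the properties of the vertex, edge and face normals of a spacelike S$_1$-cmc pair recorded around \eqref{eq:Rto_vertex_normals}--\eqref{eq:Rto_face_normals} and keeping track of the Lorentzian signatures.

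First I would note that the Christoffel one-form $\partial_{(v,v')}c^*=\partial_{(v,v')}c/(d_vd_{v'})$ forces the center nets $c$ and $c^*$ to have parallel edges, so that, because $n=c^*-c$, one has $c_v-c_{v'}\parallel n_v-n_{v'}\parallel c^*_v-c^*_{v'}$. Hence $n$ inherits the common face normals $m_f$ of $c$ and $c^*$; since $m_f$ is orthogonal to the spacelike face plane and is therefore timelike, every quadrilateral $[n_v,n_{v_i},n_{v_{ij}},n_{v_j}]$ is planar and spacelike, i.e.\ $n$ is a Q-net with spacelike faces. Next I would show that the edges of $n$ are alternately tangent to two concentric spacelike spheres. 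Combining the orthogonality $l_{(v,v')}\perp c_v-c_{v'}$ (and $\perp c^*_v-c^*_{v'}$) with the parallelism above gives $l_{(v,v')}\perp n_v-n_{v'}$; writing the touching points $t_{(v,v')}$ and $t^*_{(v,v')}$ as affine combinations of the endpoints of the primal and of the dual edge, and using once more that these two edges are parallel, one checks that $l_{(v,v')}=t^*_{(v,v')}-t_{(v,v')}$ is itself an affine combination of $n_v$ and $n_{v'}$, hence lies on the line through them. By \eqref{eq:Rto_Delta} the edge normals are timelike with $||l_{(v,v')}||^2$ equal to $\Delta_i^2<0$ on horizontal edges and to $\Delta_j^2<0$ on vertical edges, so the points $l_{(v,v')}$ lie on two spacelike spheres centered at the origin of squared radii $\Delta_i^2$ and $\Delta_j^2$; together with $l_{(v,v')}\perp n_v-n_{v'}$ this says precisely that the Gauss-map edge $[n_v,n_{v'}]$ is tangent to the corresponding sphere at $l_{(v,v')}$.

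To match Definition~\ref{def:Rto_two-sphere_Koebe} I would then rescale the pair, $s\mapsto\mu s$ and $s^*\mapsto\mu s^*$: this multiplies $\alpha$ from \eqref{eq:Rto_alpha} and $\lambda$ from \eqref{eq:Rto_scaling_christoffel}, hence also $\Delta_i^2$ and $\Delta_j^2$, by $\mu^2$; since $\Delta_i^2\Delta_j^2>0$, the choice $\mu=(\Delta_i^2\Delta_j^2)^{-1/4}$ brings the squared radii to values $r_+^2,r_-^2$ with $r_+^2r_-^2=1$, so $n$ is a spacelike two-sphere Koebe net. For the curvature claim, the discrete mean curvature of $(c,n)$ is the one defined through the Steiner formula of Section~\ref{sec:Rt_discrete_curvatures}, which applies verbatim because the faces of $c$ and of $n$ are spacelike and hence carry a positive-definite induced metric on which the mixed area $A(\cdot,\cdot)$ is defined. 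Since the mixed area of a pair of Christoffel-dual quadrilaterals vanishes --- an affine identity, independent of the ambient signature --- bilinearity together with $A(c,c)=A(c)$ gives $0=A(c,c^*)=A(c,c+n)=A(c)+A(c,n)$, so $H_f=-\frac{A(c(f),n(f))}{A(c(f))}=1$ on every face; moreover $c^*=c+n$ is by construction the constant normal offset of $c$ at distance $1$.

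The remaining steps are mechanical, so the only places that need genuine care are the signature ones: one must verify that the orthogonal complement of a spacelike face plane has signature $(1,1)$ --- this is what makes $\Delta_i^2,\Delta_j^2<0$, so that they define \emph{spacelike} spheres --- and that the product $\Delta_i^2\Delta_j^2$ is positive, so that the normalization $r_+^2r_-^2=1$ is attainable by a real rescaling; one should also confirm that the vanishing-mixed-area lemma and the Steiner-formula definition of $H$ carry over unchanged to spacelike quadrilaterals in $\Rttwo$, both of which were already asserted in the text. The small affine computation showing that $l_{(v,v')}$ lies on the Gauss-map edge is the one point where one actually uses, rather than merely quotes, the parallelism of the primal and dual edges.
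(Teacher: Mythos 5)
Your proposal is correct and follows exactly the route the paper intends: the text proves Theorem~\ref{Rto:Thm_s_cmc_koebe} only by declaring it ``analogous to Theorems~\ref{Rt:Thm_s_cmc_koebe} and~\ref{Thm:Rt_H=1}'', and you carry out precisely that transplantation, with the signature bookkeeping ($\Delta_j^2<\Delta_i^2<0$, hence $\Delta_i^2\Delta_j^2>0$ and a real normalizing factor $\mu$) done correctly. Your only addition beyond the paper's Euclidean argument is the explicit affine-combination check that $l_{(v,v')}$ lies on the line through $n_v$ and $n_{v'}$, a step the paper leaves implicit but which is indeed needed for the tangency claim.
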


\begin{figure}
	
	\begin{overpic}[scale=.2]
		{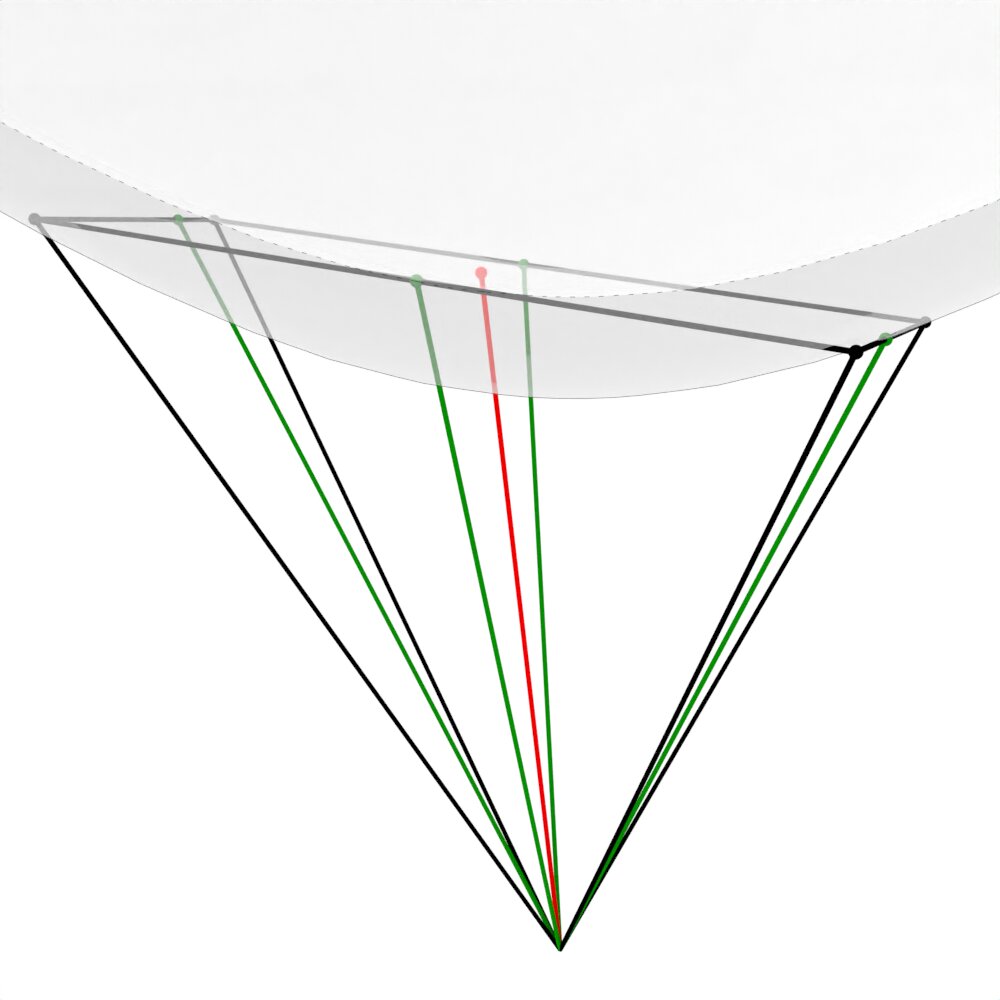}
		
		\put(-2,80){$n_v$}
		\put(21,79){$n_{v_i}$}
		\put(77,63){$n_{v_j}$}
		\put(90.5,69){$n_{v_{ij}}$}

		\put(41,74){$m_f$}
		
		\put(36,68){$l_{(v, v_j)}$}
		
		\put(07,81){$l_{(v, v_i)}$}
		
		\put(101,80){$S_+^2$}
		\put(101,70){$S_-^2$}
	\end{overpic}
	\caption{The Gauss image of a fundamental spacelike S$_1$-cmc hexahedron, which forms a face of a spacelike two-sphere Koebe net, alternately tangent to the two spacelike spheres $S^2_+$ and $S^2_-$.  The points of tangency are given by the edge normal vectors $l_{(v, v')} \in S_\pm^2$ (green). Horizontal edges always touch $S^2_-$ and vertical edges always touch $S^2_+$. The vector $m_f$ (red) is orthogonal to the planar face.} 
	\label{Fig:Rto_fundamental_hex_gauss}
\end{figure}

\section{Spacelike two-sphere Koebe nets and hyperbolic orthogonal ring patterns}
\label{sec:Rto_two_spheres_Koebe_q_nets_and_hyperbolic_orp}
In this section, we will study the correspondence between pairs of spacelike dual two-sphere Koebe nets and hyperbolic orthogonal ring patterns. Properties derived analogously to those in the correspondence between two-sphere Koebe nets in $\Rt$ and spherical orthogonal ring patterns remain without a proof. For a more detailed consideration we refer to Section \ref{sec:Rt_two_spheres_Koebe_q_nets_and_spherical_orp}.

Let $k^s$ and $k^c$ be a pair of regular, dual spacelike two-sphere Koebe nets (cf. Definition \ref{def:Rt_dual_two-sphere_Koebe}) with underlying S-quad graph $\S$ (cf. Definition \ref{def:S_quad}), which alternately touch the upper parts of two spacelike spheres $S^2_+$ and $S^2_-$. The spheres $S^2_+$ and $S^2_-$ with radii $r_+^2$ and $r_-^2$ are assumed to be concentric with the spacelike unit sphere \mbox{$S^2 := \{x \in \Rto \mid  \langle x, x \rangle = -1 \} \subset \Rto$}. The upper parts of $S^2_+$ and $S^2_-$ are given by 
\begin{equation*}
	S^2_\pm:=\{x \in \Rto \ \mid \lorsca{x, x} = r_\pm^2, \ x_3 >0 \}.
\end{equation*} 
The squared radii $r_+^2$ and $r_-^2$ are negative, by $|r_+|$ and $|r_-|$ we denote the absolute values of their imaginary square roots.
Analogously, we introduce the notation
\begin{align}
	\label{eq:Rto_normalizing}
	|x|: = |\sqrt{\lorsca{x, x}}| \  \ \text{ for } x \in \Rto.
\end{align}
Projecting timelike vectors onto $S^2$ corresponds to normalizing the vectors with \eqref{eq:Rto_normalizing}. In particular, for vertices $k_v$ and tangent points $t^\pm_\black$ of either $k^s$ or $k^c$, let 
\begin{align*}
	p_v : = \frac{k_v}{|k_v|}, \
	q_\black := \frac{t^+_\black}{|r^+|} =  \frac{t^-_\black}{|r^-|}
\end{align*}
denote their projection onto $S^2$.
The upper part of $S^2$ can be interpreted as the hyperboloid model of the hyperbolic plane $H^2$ embedded in $\Rto$. One can can associate hyperbolic rings (pairs of concentric hyperbolic circles) with the white vertices $v \in V_w(\S)$, centered at $p_v$ with hyperbolic radii (determined up to sign)
\begin{align*}
	%\label{eq:Rto_cos}
	\cosh(r_v) = \frac{|r_-|}{|k_v|}, \
	\cosh(R_v) = \frac{|r_+|}{|k_v|}.
\end{align*} 
Here $r_v$ always denotes the smaller radius (of the inner circle) and $R_v$ the larger radius (of the outer circle) of the ring.
Each ring passes through the projection of adjacent tangent points $q_b$. Two rings corresponding to white vertices of a face of $\S$, see Figure \ref{Fig:Rt_S_Quad_graph_diagonal} (right), intersect orthogonally, i.e., the inner circle of one ring intersects the outer circle of the other ring orthogonally, and vice versa. The rings form a hyperbolic orthogonal ring pattern \cite{bobenko2024rings}, see Figure   \ref{Fig:hyperbolic_orp}. 

\begin{figure}
	\centering
	\includegraphics[width=.6\linewidth]{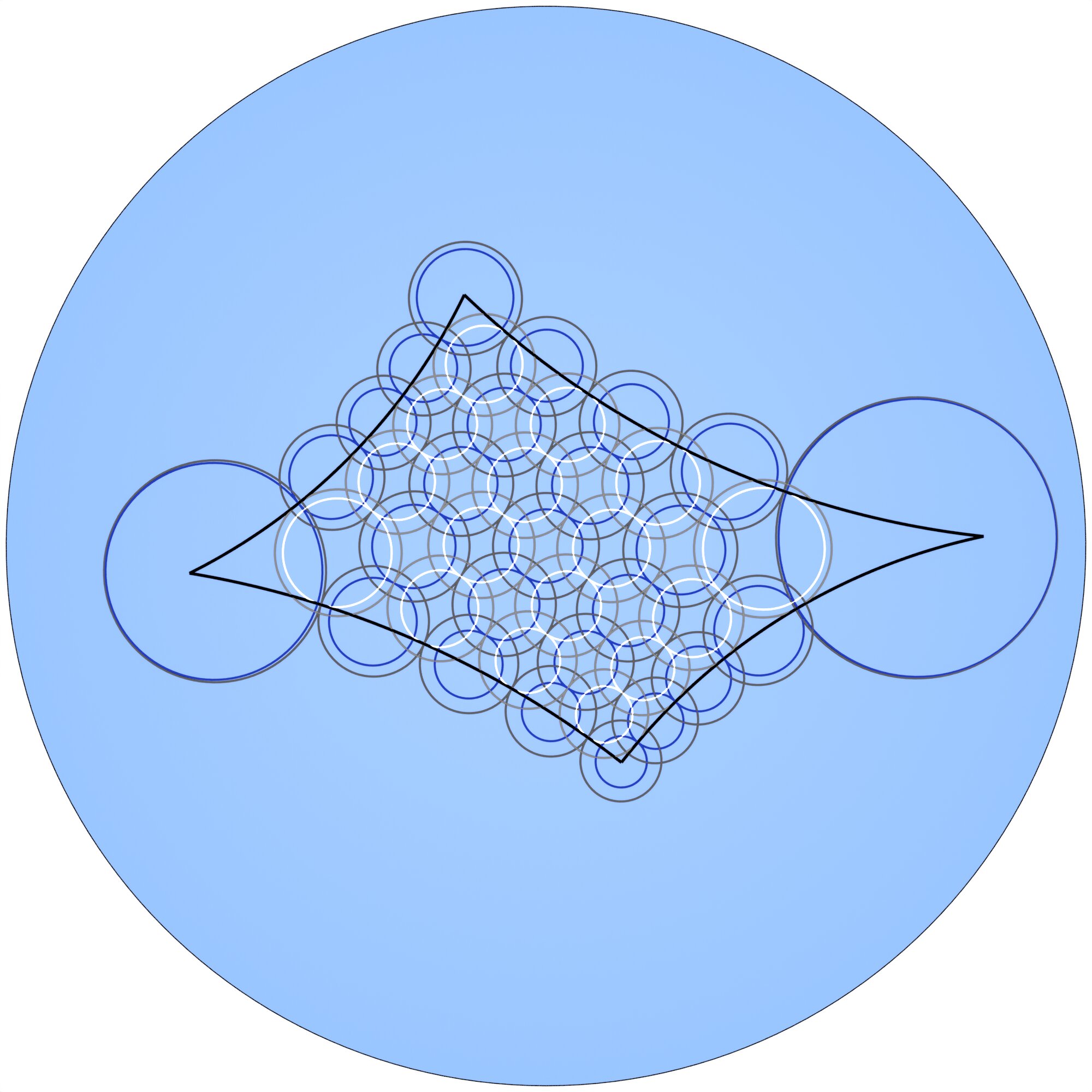}
	\caption{A hyperbolic orthogonal ring pattern, shown in the Poincaré disk model, with $q=0.99$ and angles 
		$\pi$ for the boundary vertices and  $\frac{\pi}{2}, \frac{\pi}{10}, \frac{2\pi}{5}, \frac{\pi}{5}$ for the four corner vertices. }%
	\label{Fig:hyperbolic_orp} 
\end{figure}

For hyperbolic orthogonal ring patterns there exists a global constant $q<1$ such that the radii of inner and outer circles of all rings are related by 
\begin{align}
	\label{eq:Rto_q}
	q  \cosh R_v  =  \cosh r_v.
\end{align}  

This follows from the orthogonality of the rings and the hyperbolic Pythagorean theorem. Conversely, lift the centers $p_v$ of the hyperbolic rings to 
\begin{align}
	\label{eq:Rto_sorp_lift_p}
	k_v := \frac{\sqrt{q}}{\cosh(r_v)}\ p_v = \frac{1}{\sqrt{q}\cosh(R_v)}\ p_v,
\end{align}
and the touching points $q_b$ to the two points 
\begin{align}
	\label{eq:Rto_sorp_lift_t}
	t_b^+ := \frac{1}{\sqrt{q}}\ q_b, \ t_b^- := \sqrt{q} \ q_b.
\end{align}

Restricting $\S$ to the two graphs $\G$ and $\G^*$, see Figure \ref{Fig:Rt_S_Quad_graph_diagonal} (left), we obtain two combinatorially dual, polyhedral surfaces with vertices \eqref{eq:Rto_sorp_lift_p}. They form a pair of dual spacelike two-sphere Koebe nets. They are alternately tangent to the upper part of the spacelike spheres $S^2_+$ and $S^2_-$ where the points of tangency are given by \eqref{eq:Rto_sorp_lift_t}. The spacelike spheres $S^2_+$ and $S^2_-$ are of squared radii $-\frac{1}{q}$ and $-q$, respectively. We obtain a correspondence analogous to the one between dual two-sphere Koebe nets and spherical orthogonal ring pattern (cf. Theorem \ref{Thm:Rt_Koebe_and_orp}).

\begin{theorem}
	\label{Thm:Rto_Koebe_and_orp}
	Pairs of regular, dual spacelike two-sphere Koebe nets touching the upper parts of the spacelike spheres $S^2_+$ and $S^2_-$, with radii of absolute values $|r_+|$ and $|r_-|$, are in one to one correspondence to hyperbolic orthogonal ring patterns in the hyperbolic plane with global parameter $q = \frac{|r_-|}{|r_+|}$. Vertices of the Koebe nets correspond to centers of hyperbolic rings, while points of tangency of the Koebe nets correspond to touching points of the rings.
\end{theorem}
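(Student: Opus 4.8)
The plan is to reproduce, in Lorentzian signature, the two-step argument underlying the spherical correspondence of Theorem~\ref{Thm:Rt_Koebe_and_orp}, replacing the round sphere $S^2\subset\Rt$ by the hyperboloid model $H^2=\{x\in\Rto\mid \langle x,x\rangle=-1,\ x_3>0\}$ and the Euclidean scalar product by the Lorentzian form \eqref{Rto_eq:lor_bilinear_form} together with the normalization \eqref{eq:Rto_normalizing}. First I would show that projecting a pair of regular, dual spacelike two-sphere Koebe nets $k^s,k^c$ onto $S^2$ yields a hyperbolic orthogonal ring pattern with global parameter $q=|r_-|/|r_+|$; then that the lifts \eqref{eq:Rto_sorp_lift_p}--\eqref{eq:Rto_sorp_lift_t} produce, out of any hyperbolic orthogonal ring pattern with $q<1$, a pair of dual spacelike two-sphere Koebe nets tangent to the upper sheets of the spacelike spheres of squared radii $-\tfrac1q$ and $-q$; finally one checks that the two assignments are mutually inverse, which gives the asserted bijection.

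For the forward direction one argues exactly as in the spherical case. The touching condition for timelike spheres forces all tangent points on $S^2_+$ adjacent to a given vertex $k_v$ of $k^s$ or $k^c$ to have the same Lorentzian distance to $k_v$; after normalizing by \eqref{eq:Rto_normalizing} they lie on a hyperbolic circle centered at $p_v=k_v/|k_v|$, and likewise the tangent points on $S^2_-$. The two concentric circles form a ring whose hyperbolic radii are read off from the right triangles with one leg along $[0,k_v]$, giving $\cosh r_v=|r_-|/|k_v|$ and $\cosh R_v=|r_+|/|k_v|$, hence \eqref{eq:Rto_q} with $q=|r_-|/|r_+|$. Orthogonality of the ring at a \textcircled{c}-vertex with the ring at the adjacent \textcircled{s}-vertex follows from the orthogonality of the corresponding pair of dual edges of $k^s,k^c$ (condition~(3) of Definition~\ref{def:Rt_dual_two-sphere_Koebe}), once one notes that tangent points on $S^2_+$ always project to inner circles and those on $S^2_-$ to outer circles; the rings around a common black vertex share the projected touching point $q_b$; and regularity lets one choose the signs of the inner radii so that the touching points appear in the cyclic order required by Definition~\ref{Def:Rt_orp}. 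Thus the projections form a hyperbolic orthogonal ring pattern.

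For the backward direction one proceeds as in the analogous spherical Proposition. Given a hyperbolic orthogonal ring pattern, the scalings \eqref{eq:Rto_sorp_lift_p}--\eqref{eq:Rto_sorp_lift_t} push the centers $p_v$ and touching points $q_b$ off $H^2$, and the Lorentzian analogues of \eqref{eq:Rt_orth_0} hold: $t^+_b\in[k_{v_1},k_{v_3}]$ with $q_b\perp[k_{v_1},k_{v_3}]$ and $t^-_b\in[k_{v_2},k_{v_4}]$ with $q_b\perp[k_{v_2},k_{v_4}]$, all orthogonalities now with respect to \eqref{Rto_eq:lor_bilinear_form}. The orthogonality of neighbouring rings and the tangency of next-neighbouring rings then give $[k_{v_1},k_{v_3}]\perp[k_{v_2},k_{v_4}]$, and a computation identical to the spherical one shows that the vertex vector of either surface is Lorentz-orthogonal to the four edges of the dual face, so these faces are planar and spacelike. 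The result is a pair of combinatorially dual Q-nets alternately tangent to the upper sheets of $S^2_\pm$ with $r_+^2r_-^2=1$, i.e.\ a pair of dual spacelike two-sphere Koebe nets in the sense of Definition~\ref{def:Rto_two-sphere_Koebe}; composing this construction with the projection of the forward direction returns the original data.

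I expect the one genuinely new point, compared with the spherical case, to be the signature bookkeeping rather than any new idea: one must verify at each step that the relevant subspaces have the expected causal type -- spacelike faces, timelike vertex, edge and face normals, and spacelike orthogonal circles -- so that the hyperbolic Pythagorean theorem applies and the radii $r_v,R_v$ stay real (this uses $r_\pm^2<0$), and that the lifted nets remain in the region $x_3>0$ where $H^2$ and the upper sheets of $S^2_\pm$ are defined. Once these causal-type checks are in place, the combinatorial and linear-algebraic core of the argument is word for word the one used for the spherical correspondence in Section~\ref{sec:Rt_two_spheres_Koebe_q_nets_and_spherical_orp}.
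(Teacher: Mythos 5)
Your overall strategy is exactly the paper's: Section~\ref{sec:Rto_two_spheres_Koebe_q_nets_and_hyperbolic_orp} gives no independent argument but simply transplants the spherical proof, recording the radii relations, the lifts \eqref{eq:Rto_sorp_lift_p}--\eqref{eq:Rto_sorp_lift_t}, and deferring the details to Section~\ref{sec:Rt_two_spheres_Koebe_q_nets_and_spherical_orp} and to \cite{bobenko2024rings}. So in structure your proposal matches the intended proof.

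However, the one place where you yourself locate the genuinely new content -- the signature bookkeeping -- contains a concrete error. You assert, copying the spherical case verbatim, that ``tangent points on $S^2_+$ always project to inner circles and those on $S^2_-$ to outer circles.'' In the Lorentzian setting this is reversed. Tangency of an edge to $S^2_-$ at $t^-_b$ gives $\lorsca{k_v - t^-_b, t^-_b}=0$, hence $\lorsca{p_v,q_b}=-|r_-|/|k_v|$, so the hyperbolic distance $\rho$ from $p_v$ to $q_b$ satisfies $\cosh\rho=|r_-|/|k_v|$; since $\cosh$ is \emph{increasing} (unlike $\cos$ on $[0,\pi/2]$) and $|r_-|<|r_+|$, this is the \emph{inner} radius $r_v$. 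Thus tangent points on $S^2_-$ project to inner circles and those on $S^2_+$ to outer circles -- which is also the only assignment consistent with the formulas $\cosh r_v=|r_-|/|k_v|$, $\cosh R_v=|r_+|/|k_v|$ that you correctly wrote two sentences earlier, and with the paper's convention (cf.\ Figure~\ref{Fig:Rto_fundamental_hex_gauss}: horizontal edges touch $S^2_-$, vertical edges touch $S^2_+$, the opposite of the Euclidean case). As stated, your forward direction is internally inconsistent, and the error would propagate into the wrong identification of which combinatorial direction carries touching inner circles versus touching outer circles, and hence into the lift formulas for the backward direction (note that \eqref{eq:Rto_sorp_lift_p} has $\cosh r_v$ and $\cosh R_v$ in the positions occupied by $\cos R_v$ and $\cos r_v$ in \eqref{eq:Rt_sorp_lift_p}). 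The fix is local, but since this swap is precisely the nontrivial part of the translation, it needs to be stated correctly.
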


Hyperbolic orthogonal ring patterns allow a variational description that can be derived in a way analogous to the spherical case.  We give a brief overview here and refer to Section \ref{sec:Rt_two_spheres_Koebe_q_nets_and_spherical_orp} and to \cite{bobenko2024rings} for further details proofs. 

Recall that by $\S_z$ we denote an S-quad graph defined by a simply connected subset of squares of the $\Z^2$ lattice in $\R^2$, and by $V_B$ the set of its white boundary vertices. Our main example is the combinatorial rectangle \eqref{eq:Z2_rectangle}.

Due to \eqref{eq:Rto_q} one can uniformize the hyperbolic ring radii $R_v$ and $r_v$ in terms of Jacobi elliptic functions:
\begin{equation}
	\label{eq:hyperbolic_ring_uniformization_gamma}
	\cosh R_v=\frac{1}{q\sn (\gamma_v,q)} ,\ \tanh R_v= \dn (\gamma_v,q), \ \sinh r_v= \frac{\cn (\gamma_v, q)}{\sn (\gamma_v,q)}, 
\end{equation}
where $\gamma_v\in [0, 2K]$ for $(r_v,R_v)\in [-\infty, \infty] \times [R_0,\infty]$, and $\cosh R_0= 1/q$. 
Using the function $F$ from \eqref{eq:F(x)} one defines the functional

\begin{equation}
	\label{eq:Rto_functional_hyperbolic}
	S_{hyp}(\gamma):=\sum_{(v, v')} \left( F(\gamma_{v}-\gamma_{v'})+F(\gamma_{v}+\gamma_{v'})\right) +\sum_{v }\Phi_{v}\gamma_{v},
\end{equation}
where the first sum is taken over all pairs of white vertices corresponding to neighboring rings and the second sum is taken over all white vertices $v \in  V_w(\Sz)$.
Critical points $\gamma$ of this functional with appropriately chosen $\Phi_{v}$ correspond to the radii of the rings of hyperbolic orthogonal ring patterns. Remarkably, the functional \eqref{eq:Rto_functional_hyperbolic} is convex. Its minimization allows to construct hyperbolic orthogonal ring patterns from suitable boundary data, and one can prove their existence and uniqueness \cite{bobenko2024rings}.

\begin{theorem} 
	\label{Thm:Rto_bdy_problem}
	Let $\Sz$  be a combinatorial rectangle \eqref{eq:Z2_rectangle} with white corner vertices and white boundary vertices $V_B$. Further let $q\leq 1$.
	Hyperbolic orthogonal ring patterns can be obtained as solutions of the following boundary value problems:
	\begin{itemize}
		\item (Dirichlet boundary conditions)
		For any choice of prescribed radii $\gamma:V_B\to [0,2K]$ of boundary rings there exists a unique hyperbolic orthogonal 
		ring pattern with these boundary radii.
		\item (Neumann boundary conditions) 
		Let $\Theta:V_B\to (-2\pi, 2\pi)$ be prescribed boundary cone angles such that $|\Theta_v|<\pi$ at the corners.
		Then  there exists a unique hyperbolic orthogonal ring pattern $\mathcal R$ with these boundary cone angles.  
	\end{itemize}
	Moreover, the solution of the Neumann boundary value problem is given by the unique minimizer of the functional \eqref{eq:Rto_functional_hyperbolic} with $\Phi_{v}= - 2\pi$ for all inner vertices, $\Phi_{v} = -\Theta_v$ for all positively oriented boundary rings, $\Phi_{v} = -\Theta_v- \pi$ for negatively oriented corners, and $\Phi_{v} = -\Theta_v - 2 \pi$ for other negatively oriented boundary rings.
%		Moreover, the solution of the Neumann boundary value problem is given by the unique minimizer of the functional \eqref{eq:Rto_functional_hyperbolic} with $\Phi_{v}= - 2\pi$ for all inner vertices, $\Phi_{v} = \Theta_v$ for all negatively oriented boundary rings, $\Phi_{v} = \Theta_v- \pi$ for all positively oriented corners, and $\Phi_{v} = \Theta_v - 2 \pi$ for other positively oriented boundary rings.
\end{theorem}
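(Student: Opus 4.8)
The plan is to derive both statements, as well as the final identification, from the strict convexity of the functional $S_{hyp}$ in \eqref{eq:Rto_functional_hyperbolic}, following the variational treatment of orthogonal circle patterns in \cite{springborn2003variational, bobenko2004variational} and of ring patterns in \cite{bobenko2024rings}. First I would write down the Euler--Lagrange equations. Since $F'=g$ with $g$ as in \eqref{Rt_g}, so that by \eqref{eq:F(x)} $g(x)=\arctan\frac{(1+q)\sn(x/2)}{\cn(x/2)\,\dn(x/2)}$, and since $g$ is odd (hence $F$ is even and $F(\gamma_v\pm\gamma_{v'})$ is well defined for unordered pairs of neighbouring rings), the condition $\partial S_{hyp}/\partial\gamma_v=0$ reads
\[
  \sum_{v'\sim v}\bigl(g(\gamma_v-\gamma_{v'})+g(\gamma_v+\gamma_{v'})\bigr)=-\Phi_v .
\]
Rewriting the left-hand side via the uniformization \eqref{eq:hyperbolic_ring_uniformization_gamma}, one checks that it is the total angle swept at $p_v$ by the rings intersecting $(p_v,r_v,R_v)$ orthogonally; this is the hyperbolic analogue of the spherical angle conditions preceding Theorem \ref{thm:variational_spherical}. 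Thus $\Phi_v=-2\pi$ at interior vertices encodes the closure condition, and $\Phi_v=-\Theta_v$ (resp. $-\Theta_v-\pi$) at positively (resp. negatively) oriented boundary rings encodes the prescribed cone angle $\Theta_v$, the $\pi$-shift bookkeeping the orientation; this correspondence between interior critical points of $S_{hyp}$ in the open box $\prod_v(0,2K)$ and hyperbolic orthogonal ring patterns is exactly what \cite{bobenko2024rings} establishes.

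Second, convexity. From the formula for $g$ the argument of $\arctan$ is strictly increasing on $(-2K,2K)$, hence $g$ is strictly increasing and $F''=g'>0$ there, so
\[
  D^2S_{hyp}=\sum_{(v,v')}\Bigl(g'(\gamma_v-\gamma_{v'})(d\gamma_v-d\gamma_{v'})^2+g'(\gamma_v+\gamma_{v'})(d\gamma_v+d\gamma_{v'})^2\Bigr)
\]
is positive definite whenever the quad graph is connected: making every summand vanish forces both $d\gamma_v=d\gamma_{v'}$ and $d\gamma_v=-d\gamma_{v'}$ across each edge, i.e. $d\gamma\equiv0$. (This is precisely what the spherical functional \eqref{Rt_eq:functional_spherical} lacks, because of its relative minus sign.) Strict convexity gives uniqueness of critical points, hence of the ring pattern, in both boundary value problems.

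Third, existence. For the Dirichlet problem, fix $\gamma|_{V_B}$ and minimize $S_{hyp}$ with $\Phi_v=-2\pi$ at interior vertices over the compact box $\prod_{v\,\text{interior}}[0,2K]$; a minimizer exists by compactness. The essential point is that it lies in the open box. Using $g(2K+y)+g(2K-y)=\pi$ and $g(-y)=-g(y)$ (both consequences of \eqref{Rt_g}), one computes that at a wall $\gamma_v=0$ the one-sided derivative $\partial S_{hyp}/\partial\gamma_v$ equals $\Phi_v<0$, and at $\gamma_v=2K$ it equals $\pi\deg(v)+\Phi_v>0$ for $\deg(v)\ge4$, so the gradient points strictly into the box near every wall and no wall point can be a minimizer. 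Hence the minimizer is an interior critical point, which by the bijection above is the desired ring pattern, unique by strict convexity. For the Neumann problem one argues identically, now minimizing the full functional \eqref{eq:Rto_functional_hyperbolic} with the stated $\Phi_v$ over $\prod_{v}[0,2K]$, boundary vertices included; the same wall computation shows that the conditions $|\Theta_v|<\pi$ at the corners and $\Theta_v\in(-2\pi,2\pi)$ at the other boundary vertices are exactly what keeps $\partial S_{hyp}/\partial\gamma_v$ of the correct sign on each wall. The last assertion of the theorem is then merely the identification of the Neumann solution with this minimizer.

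The main obstacle is the interior-minimum step: one must control $\nabla S_{hyp}$ uniformly near all faces of the box $[0,2K]^{V_w(\Sz)}$ using the exact boundary behaviour of $g$ and of the elliptic uniformization \eqref{eq:hyperbolic_ring_uniformization_gamma}, and verify the sharpness of the stated inequalities on $\Theta_v$ (in particular the strict corner bound $|\Theta_v|<\pi$). Once $g'>0$ and these asymptotics are in hand, the Euler--Lagrange computation, convexity, uniqueness, and the passage from the critical $\gamma$ back to an actual immersed ring pattern are routine and parallel the spherical case of Section~\ref{sec:analytic_orp}.
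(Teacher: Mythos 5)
The paper does not actually prove this theorem: it is imported from \cite{bobenko2024rings}, and the surrounding text only sketches the variational framework. Your strategy (Euler--Lagrange identification, convexity of $S_{hyp}$, minimization over the box $[0,2K]^{V_w(\Sz)}$ with a wall analysis) is exactly the intended one, and the Dirichlet half together with the uniqueness argument is sound. One small repair to the convexity step: the arguments $\gamma_v+\gamma_{v'}$ range over $[0,4K]$, not $(-2K,2K)$, and on $(2K,4K)$ the $\arctan$ expression changes branch; positivity of $g'$ is better read off from $g'(x)=\frac{1}{2}(\dn x+q\cn x)$ (visible in the paper's display for $D^2S_{sph}$) together with $\dn^2x-q^2\cn^2x=1-q^2>0$, which is periodic and positive everywhere for $q<1$.

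The genuine gap is in the Neumann wall analysis. At $\gamma_v=0$ the one-sided derivative is $\Phi_v$, and with $\Phi_v=-\Theta_v$ this is $\geq 0$ whenever $\Theta_v\leq 0$, which the theorem permits; so your claim that the stated ranges of $\Theta_v$ ``are exactly what keeps $\partial S_{hyp}/\partial\gamma_v$ of the correct sign on each wall'' fails as written. What the wall computation actually requires is $-\pi\deg(v)<\Phi_v<0$ (derivative $\Phi_v<0$ at $\gamma_v=0$ and $\pi\deg(v)+\Phi_v>0$ at $\gamma_v=2K$). The two orientation branches $\Phi_v=-\Theta_v$ and $\Phi_v=-\Theta_v-\pi\deg(v)$ cover exactly $0<\Theta_v<\pi\deg(v)$ and $-\pi\deg(v)<\Theta_v<0$, which is precisely where the corner condition $|\Theta_v|<\pi$ ($\deg=1$) and the condition $|\Theta_v|<2\pi$ at the other boundary vertices ($\deg=2$) come from. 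But this means the branch of $\Phi_v$ must be chosen \emph{before} minimizing, according to the sign of $\Theta_v$, and one must then prove that the minimizer really has the corresponding orientation ($\gamma_v<K$ versus $\gamma_v>K$), since the cone angle realized by a critical point is $-\Phi_v$ or $-\Phi_v-\pi\deg(v)$ depending on $\operatorname{sign}(r_v)$. This consistency step --- a monotonicity argument in $\Phi_v$ carried out in \cite{bobenko2024rings} --- is missing from your sketch; the identification of the Euler--Lagrange equations with the geometric closing conditions for hyperbolic rings is also only asserted, though there you explicitly, and legitimately, delegate to the same reference that the paper itself cites.
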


For $q=1$ one obtains a hyperbolic orthogonal circle pattern. The circle radii in terms of the $\gamma$-variables, which we denote by $\gamma^0$, are given by 
\begin{equation*}
	\cosh R= \coth \gamma^0, \ \sinh R= \frac{1}{\sinh \gamma^0}.
\end{equation*}
\section{Constructing cmc surfaces in $\Rto$}
\label{sec:Rto_constructing_discrete_s_isothermic_cmc_surfaces}
The construction scheme for cmc surfaces in $\Rto$ is analogous to the one presented in Section \ref{sec:Rt_constructing_discrete_s_isothermic_cmc_surfaces} for cmc surfaces in $\Rt$. A sampling of curvature lines, a parameter $q$, and boundary angles, determine a hyperbolic orthogonal ring pattern and a spacelike two-sphere Koebe net. Our goal now is to reverse the construction of Theorem \ref{Rto:Thm_s_cmc_koebe}, and show how to recover a spacelike S$_1$-cmc pair from its Gauss map.

The essential part of this construction is to determine the radii of vertex spheres and face circles for the surfaces in terms of data of the ring pattern, or the Koebe nets. We refer the reader to Figure \ref{Fig:Rt_fundamental_kite} that illustrates the idea of the construction for the $\Rt$ case. Note that in our case horizontal edges always touch $S^2_-$ and vertical edges always touch $S^2_+$, see Figure \ref{Fig:Rto_fundamental_hex_gauss}.

We consider a fundamental piece $[k_{v_s}, t_b^+, \tilde{k}_{v_c}, t_b^-]$ of the central extension $k_\boxplus$, defined in \eqref{eq:central_extension_koebe}, of a spacelike two-sphere Koebe net. The piece  consists of one vertex, one face center and two touching points, where the face center is determined by \eqref{eq:face_center_koebe}.

The edge lengths of the fundamental piece $[k_{v_s}, t_b^+, \tilde{k}_{v_c}, t_b^-]$
determine the radii of the vertex spheres and face circles, $d_{v_s}, d_{v_s}^*$ and $d_{v_c}, d_{v_c}^*$, cf. \eqref{eq:Rt_koebe_length} - \eqref{eq:Rt_sphere_radii_from_koebe}. The trigonometric functions in \eqref{eq:Rt_koebe_length} and \eqref{eq:Rt_sphere_radii_from_koebe} are replaced by the corresponding hyperbolic trigonometric functions. 

For the radii of the resulting S$_1$-cmc surface pair, expressed in terms of the $\gamma$-variables \eqref{eq:hyperbolic_ring_uniformization_gamma}, we obtain 
\begin{equation}
\begin{aligned}
	\label{eq:Rto_radii}
	d_{v_s}&= \frac{1}{2}\frac{1}{\sqrt{q}}
	\left(\dn \gamma_{v_s} + q \cn \gamma_{v_s}
	\right),  \ \
	d_{v_c} = 	\frac{1}{2}\frac{1}{\sqrt{q}}
	\left(
	\frac{\dn \gamma_{v_c}  + \cn \gamma_{v_c}}
	{\sn \gamma_{v_c}}
	\right) ,
	\\
	d^*_{v_s}&= \frac{1}{2}\frac{1}{\sqrt{q}}
	\left(\dn \gamma_{v_s} - q \cn \gamma_{v_s}
	\right),  \ \ 
	d^*_{v_c} = \frac{1}{2}\frac{1}{\sqrt{q}} \left(
	\frac{\dn \gamma_{v_c} - \cn \gamma_{v_c}}
	{\sn \gamma_{v_c}}
	\right).
\end{aligned}
\end{equation}

We denote these radii by $d_v$ and $d_v^*$, where the choice $v \in V_{\text{\textcircled{s}}}$ or $v \in V_{\text{\textcircled{c}}}$ determines whether we are considering vertex spheres or face circles.

The radii $d_v$ and $d_v^*$ determine the length of all edges of the (central extension of the) S$_1$-cmc pair. The directions are determined by the edge parallelism to the two-sphere Koebe net. The following theorem ensures that the resulting spacelike surfaces with given edge lengths and directions exist. The proof is analogous to the proof of Theorem \ref{Thm:Rt_Koebe_to_cmc} \mbox{for $\Rt$.}

\begin{theorem}
	\label{Thm:Rto_Koebe_to_cmc}
	Let $k_\boxplus: V(\mathcal{S}) \rightarrow \Rt$
	be the central extension \eqref{eq:central_extension_koebe} of a spacelike two-sphere Koebe net. Then the $\Rto$-valued discrete one-form $\partial c_\boxplus$  defined by
	\begin{align}
		\label{Rto_one_form_1}
		\partial_{(v, b)}  c_\boxplus = d_v  \ \frac{\partial_{(v, b)}   k_\boxplus }{|\partial_{(v, b)}   k_\boxplus|},
	\end{align}
	and the $\Rto$-valued discrete one-form $\partial c^*_\boxplus$ defined by 
	\begin{align}
		\label{Rto_one_form_2}
		\partial_{(v, b)}  c^*_\boxplus = \pm \ d^*_v \ \frac{\partial_{(v, b)}   k_\boxplus }{|\partial_{(v, b)}   k_\boxplus|},
	\end{align}
	with $d_v$ and $d_v^*$ given by 
	\eqref{eq:Rto_radii},
	are exact. The signs $(+)$ and $(-)$ in \eqref{Rt_one_form_2} are chosen for horizontal and vertical edges, respectively. The integration of \eqref{Rto_one_form_1} and \eqref{Rto_one_form_2} defines the central extension of two surfaces $c$ and $c^*$, which, when appropriately placed, form the center nets of a spacelike S$_1$-cmc pair $s, s^*$ with the Gauss map $k_\boxplus$. The vertex sphere radii and face circle radii of $s$ and $s^*$ are given by \eqref{eq:Rto_radii}. The S$_1$-cmc pair with the Gauss map $k_\boxplus$ is unique up to translation. 
\end{theorem}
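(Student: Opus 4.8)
The plan is to follow the proof of Theorem~\ref{Thm:Rt_Koebe_to_cmc} almost verbatim, with the circular trigonometric functions replaced by hyperbolic ones, the uniformizing variables $\beta$ of \eqref{eq:Rt_jef_uniformization} replaced by the $\gamma$-variables of \eqref{eq:hyperbolic_ring_uniformization_gamma}, and the radii \eqref{eq:Rt_radii} replaced by \eqref{eq:Rto_radii}. As in the Euclidean case it suffices to treat an embedded fundamental piece $[k_{v_s},t_b^+,\tilde k_{v_c},t_b^-]$ of the central extension \eqref{eq:central_extension_koebe}; the non-embedded cases ($r_{v_c}<0$ or $r_{v_s}<0$) are handled identically and the same formulas \eqref{eq:Rto_radii}, \eqref{Rto_one_form_1}, \eqref{Rto_one_form_2} hold.

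The observation that keeps the argument local, and in fact Euclidean, is that this fundamental quadrilateral lies in a single face plane of the spacelike two-sphere Koebe net $k^s$: the vertex $k_{v_s}$ belongs to that face, $t_b^+$ and $t_b^-$ are the two tangent points on the two edges of the face emanating from $k_{v_s}$, and $\tilde k_{v_c}$, by \eqref{eq:face_center_koebe}, is the point of that same plane closest to the origin, hence lies in it. Since the faces of a spacelike two-sphere Koebe net are spacelike, the metric induced on this plane is positive definite, so the four edge lengths, the pair of opposite right angles at $t_b^\pm$, and the reflection in the angle bisector at $k_{v_s}$ are all the ordinary Euclidean notions. I would read off the four edge lengths from the hyperbolic analogue of \eqref{eq:Rt_koebe_length_jef} -- they are $d_{v_s}\pm d_{v_s}^*$ and $d_{v_c}\pm d_{v_c}^*$ in cyclic order, with $d_v,d_v^*$ as in \eqref{eq:Rto_radii} and with the assignment dictated by the fact that here horizontal edges touch $S_-^2$ and vertical edges touch $S_+^2$ (cf.\ Figure~\ref{Fig:Rto_fundamental_hex_gauss}) -- and introduce unit spacelike edge vectors $u_1,\dots,u_4$. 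Closure of $\partial k_\boxplus$ around the piece is then exactly the relation \eqref{eq:Rt_quad} with the hyperbolic $\gamma$-expressions of \eqref{eq:hyperbolic_ring_uniformization_gamma} in place of the $\beta$-expressions; reflecting in the bisector at $k_{v_s}$ gives the companion relation \eqref{eq:Rt_reflected_quad} with the coefficient list reversed; and their half-sum and half-difference produce two further closed quadrilaterals with the same edge directions, namely $d_{v_s}u_1+d_{v_c}u_2+d_{v_c}u_3+d_{v_s}u_4=0$ and $d^*_{v_s}u_1-d^*_{v_c}u_2+d^*_{v_c}u_3-d^*_{v_s}u_4=0$. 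These say precisely that $\partial c_\boxplus$ of \eqref{Rto_one_form_1} and $\partial c^*_\boxplus$ of \eqref{Rto_one_form_2} are closed around every face of $\mathcal S$ -- the alternating signs in the second relation being the $(\pm)$ on horizontal/vertical edges -- and since $\mathcal S$ is simply connected, both one-forms are exact.

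Integrating them produces the central extensions of two nets $c$ and $c^*$ with edge lengths $d_v,d^*_v$ from \eqref{eq:Rto_radii} and every edge parallel to the corresponding edge of $k_\boxplus$; restricting to $\G$ and $\G^*$ gives $c,c^*$ with the prescribed vertex-sphere and face-circle radii, unique up to a common translation. It then remains to check that attaching to $c,c^*$ spheres of radii $d_v,d^*_v$ yields a genuine spacelike $S_1$-cmc pair: using the hyperbolic set-up of Section~\ref{sec:Rto_two_spheres_Koebe_q_nets_and_hyperbolic_orp} and the inequalities $\Delta_j^2<\Delta_i^2<0$ recorded after \eqref{eq:Rto_Delta}, one verifies that the vertex spheres are timelike and the face circles spacelike, so $s,s^*$ are spacelike $S_1$-isothermic in the sense of Definition~\ref{def:Rto_S-isothermic}; that $\partial_{(v,v')}c^*=\partial_{(v,v')}c/(d_vd_{v'})$, so $s^*$ is the Christoffel dual of $s$; that the offset $c^*-c$ agrees on $V_w(\mathcal S)$ with $k_\boxplus$ (up to the global translation, with the sign fixed by the orientation convention), hence is timelike and reproduces the spacelike two-sphere Koebe net; and finally, by the converse of Theorem~\ref{Rto:Thm_s_cmc_koebe}, that a Christoffel pair whose offset is a spacelike two-sphere Koebe net is simultaneously a timelike Darboux pair, i.e.\ an $S_1$-cmc pair with Gauss map $k_\boxplus$. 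The delicate part is the bookkeeping of this last step -- matching signs so that $c^*-c$ gives the prescribed Koebe net rather than its central inversion, and confirming the Lorentzian type of each constructed object -- rather than any new geometric input: the nontrivial geometry (the closure identity and the reconstruction of the Gauss-map directions) is already contained in the Euclidean Theorem~\ref{Thm:Rt_Koebe_to_cmc} and in Theorem~\ref{Rto:Thm_s_cmc_koebe}, and it transfers because the local picture is confined to a spacelike plane.
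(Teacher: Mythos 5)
Your proposal is correct and follows exactly the route the paper intends: the paper's own proof of Theorem~\ref{Thm:Rto_Koebe_to_cmc} consists of the single remark that it is analogous to Theorem~\ref{Thm:Rt_Koebe_to_cmc}, and you carry out precisely that analogy (closure relation, reflection in the bisector, half-sum and half-difference), with the correct swap of the $\sn$-denominators in \eqref{eq:Rto_radii} dictated by horizontal edges touching $S^2_-$. Your explicit observation that the fundamental piece lies in a spacelike face plane, so the induced metric is Euclidean and the reflection argument transfers verbatim, is a useful point the paper leaves implicit.
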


Let us consider the case of a combinatorial rectangle. The Dirichlet and Neumann boundary data on $V_B$ prescribe the metric or the cone angles on the boundary of the corresponding $S_1$-cmc surface pair, respectively. Using Theorem \ref{Thm:Rto_bdy_problem} we can formulate the following general existence and uniqueness result.

\begin{theorem}
	\label{Thm:Rto_boundary_to_cmc}
	For any Dirichlet or Neumann boundary data as in Theorem \ref{Thm:Rto_bdy_problem} there exists an unique spacelike S$_1$-cmc pair $s, s^*$ with these boundary metric or cone angles.
	\end{theorem}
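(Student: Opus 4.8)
The plan is to chain together the existence and uniqueness of hyperbolic orthogonal ring patterns (Theorem \ref{Thm:Rto_bdy_problem}) with the reconstruction of the spacelike S$_1$-cmc pair from its Gauss map (Theorem \ref{Thm:Rto_Koebe_to_cmc}), keeping careful track of what ``uniqueness'' means at each stage. First I would observe that the correspondences assembled in this section form an explicit dictionary: by Theorem \ref{Thm:Rto_bdy_problem}, for prescribed Dirichlet data ($\gamma|_{V_B}$) or Neumann data ($\Theta|_{V_B}$) there is a unique hyperbolic orthogonal ring pattern $\mathcal R$ on $\Sz$ with global parameter $q$; by Theorem \ref{Thm:Rto_Koebe_and_orp} this ring pattern corresponds bijectively to a pair of regular, dual spacelike two-sphere Koebe nets $k^s, k^c$ (via the lifts \eqref{eq:Rto_sorp_lift_p}, \eqref{eq:Rto_sorp_lift_t}); and finally, by Theorem \ref{Thm:Rto_Koebe_to_cmc}, integrating the one-forms \eqref{Rto_one_form_1}, \eqref{Rto_one_form_2} built from the radii \eqref{eq:Rto_radii} produces a spacelike S$_1$-cmc pair $s, s^*$ with Gauss map $k_\boxplus$, unique up to translation. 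Composing these three bijections gives existence.

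Next I would address how the boundary data of the ring pattern translates into boundary data of the S$_1$-cmc surface. The $\gamma$-variables at boundary white vertices determine, through \eqref{eq:Rto_radii}, the vertex sphere radii $d_v$ and the face circle radii at the boundary, hence the induced discrete metric (edge lengths) along $\partial \Sz$ of the center net $c$; this is the Dirichlet interpretation. The boundary cone angles $\Theta_v$, on the other hand, prescribe the angles between consecutive boundary edges of the ring pattern, which — because the two-sphere Koebe net and the center nets $c, c^*$ have pairwise parallel edges (as in \eqref{eq:Rt_parallel_edges}) — are exactly the cone angles of the reconstructed S$_1$-cmc surface at its boundary vertices; this is the Neumann interpretation. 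I would state these two identifications as short lemmas (or simply as remarks, since the formulas \eqref{eq:Rto_radii} are explicit and monotone in the relevant ranges), invoking the convexity-driven monotonicity already present in the variational setup.

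For uniqueness, I would argue in the reverse direction: given a spacelike S$_1$-cmc pair $s, s^*$ with the prescribed boundary metric or cone angles, Theorem \ref{Rto:Thm_s_cmc_koebe} produces its Gauss map as a spacelike two-sphere Koebe net, and after the normalization $r_+^2 r_-^2 = 1$ this is a regular pair of dual two-sphere Koebe nets; by Theorem \ref{Thm:Rto_Koebe_and_orp} it comes from a hyperbolic orthogonal ring pattern with the same boundary data, which by Theorem \ref{Thm:Rto_bdy_problem} is unique. Tracing back through the (bijective) reconstruction then pins down $s, s^*$ up to translation. The main obstacle I anticipate is not the logical chaining but the bookkeeping at the boundary: one must verify that the notion of ``boundary data'' is genuinely preserved under each arrow — in particular that the edge-length/cone-angle data of the reconstructed surface depend only on (and determine) the $\gamma$ or $\Theta$ boundary values, with no hidden dependence on interior choices or on the translational ambiguity — and that the orientation conventions (which edges touch $S^2_+$ versus $S^2_-$, and the signs in \eqref{Rto_one_form_2}) are consistent along the whole boundary, including at the corners where $|\Theta_v| < \pi$ is required. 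Everything else reduces to citing the already-established theorems of this section.
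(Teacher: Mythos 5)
Your proposal is correct and follows essentially the same route the paper intends: the paper states this theorem as a direct consequence of chaining Theorem~\ref{Thm:Rto_bdy_problem} (unique ring pattern from boundary data), Theorem~\ref{Thm:Rto_Koebe_and_orp} (bijection with dual spacelike two-sphere Koebe nets), and Theorem~\ref{Thm:Rto_Koebe_to_cmc} (reconstruction of the S$_1$-cmc pair, unique up to translation). Your additional care about how Dirichlet/Neumann data translate into boundary metric/cone angles of the surface is a point the paper leaves implicit, but it does not change the argument.
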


\section{Maximal surfaces in the limit}
\label{sec:Rto_discrete_cmc_surfaces_as_deformations_of_minimal_surfaces}
Maximal surfaces in $\Rto$ are the natural analogues of minimal surfaces in $\Rt$.
Following the geometric characterization of S$_1$-minimal surfaces as S$_1$-isothermic surfaces that are Christoffel dual to Koebe nets \cite{BHS_2006}, we will now introduce S$_1$-maximal surfaces in $\Rto$, see also \cite{ADMPS24b}. 

\begin{definition}
	An S$_1$-maximal surface is an S$_1$-isothermic surface whose Christoffel dual is a spacelike Koebe net.
\end{definition}

Analogous to the Euclidean case, we introduce a small parameter $\epsilon$ and consider an S$_1$-cmc pair $\frac{1}{\epsilon}s, \frac{1}{\epsilon} s^*$ with $q=1-\epsilon$ and mean curvature $H=\epsilon$. 
Recall that the existence of $s, s^*$, as stated in the following Theorem,  is a consequence of Theorem 
	\ref{Thm:Rto_boundary_to_cmc}.
\begin{theorem}
		Consider a one parameter family of hyperbolic orthogonal ring patterns with $q=1-\epsilon$, fixed boundary conditions, and a limiting hyperbolic orthogonal circle pattern as $q\rightarrow 1$. Then there exists an $\epsilon$-family of associated S$_1$-cmc pairs, $\frac{1}{\epsilon}s$ and $\frac{1}{\epsilon}s^*$, with mean curvature $H = \epsilon$. In the limit $\epsilon \rightarrow 0$, $s$ converges to a spacelike Koebe net with the edges touching the unit sphere, and $\frac{1}{\epsilon}s^*$ converges to its dual S$_1$-maximal surface.
\end{theorem}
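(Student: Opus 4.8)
The plan is to run the argument of Theorem~\ref{Thm:Rt_limit} in the Lorentzian setting, with the spherical trigonometric functions replaced by their hyperbolic counterparts and with the convexity of the functional~\eqref{eq:Rto_functional_hyperbolic} taking over the role that the rigidity assumption~\eqref{eq:Rt_hessian} played in the Euclidean case. First I would set up the $\epsilon$-family: for $q = 1-\epsilon$ with $\epsilon \geq 0$ small, Theorem~\ref{Thm:Rto_bdy_problem} produces a hyperbolic orthogonal ring pattern with the prescribed boundary data, the case $\epsilon = 0$ being the limiting hyperbolic orthogonal circle pattern. Since $S_{hyp}$ is strictly convex its Hessian at the minimizer is positive definite, so by the implicit function theorem the uniformizing variables $\gamma_v = \gamma_v(q)$ depend smoothly on $q$ near $q = 1$ and converge to the circle-pattern values as $\epsilon \to 0$; this is exactly where no analogue of the rigidity hypothesis~\eqref{eq:Rt_hessian} is needed. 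Feeding each ring pattern into Theorem~\ref{Thm:Rto_Koebe_to_cmc} yields a spacelike $S_1$-cmc pair $s, s^*$ with $H = 1$ whose common Gauss map is, by Theorem~\ref{Rto:Thm_s_cmc_koebe}, the spacelike two-sphere Koebe net $n = c^* - c$; rescaling gives the $\epsilon$-family $\tfrac1\epsilon s,\tfrac1\epsilon s^*$ with $H = \epsilon$.

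Next I would pass to the limit in the radii~\eqref{eq:Rto_radii}, using $\sn(z,q) \to \tanh z$ and $\cn(z,q),\dn(z,q) \to 1/\cosh z$ as $q \to 1$. The primal radii converge to finite values, $d_{v_s} \to 1/\cosh\gamma_{v_s}$ and $d_{v_c} \to 1/\sinh\gamma_{v_c}$, so $s$ itself has a finite limit, while the dual radii tend to $0$. To extract the rate I would proceed exactly as in the proof of Theorem~\ref{Thm:Rt_limit}: write $\dn(\gamma,1-\epsilon) = 1/\cosh\gamma + \epsilon\,a_{\dn} + o(\epsilon)$ and $\cn(\gamma,1-\epsilon) = 1/\cosh\gamma + \epsilon\,a_{\cn} + o(\epsilon)$, use $\cn^2 + \sn^2 = q^2\sn^2 + \dn^2$ in the form $(\dn - \cn)(\dn + \cn) = (1-q^2)\sn^2$ to obtain $a_{\dn} - a_{\cn} = \sinh^2\gamma/\cosh\gamma$, and conclude $\tfrac1\epsilon d^*_{v_s} \to \tfrac12\cosh\gamma_{v_s}$ and $\tfrac1\epsilon d^*_{v_c} \to \tfrac12\sinh\gamma_{v_c}$, both finite. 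The Christoffel normalization $d_v\,(\tfrac1\epsilon d_v^*) \to \tfrac12$ survives the limit, so after a single global rescaling the two limit surfaces remain Christoffel dual.

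Finally I would assemble the conclusion. The squared radii $-1/q$ and $-q$ of $S^2_\pm$ both tend to $-1$, so the spacelike two-sphere Koebe net $n$ degenerates to a classical spacelike Koebe net all of whose edges touch the spacelike unit sphere. Because $\tfrac1\epsilon s^*$ stays finite, the unscaled dual $s^* = \epsilon\cdot\tfrac1\epsilon s^*$ collapses, so $c = c^* - n$ converges to $-n$; together with the finite limiting primal radii this identifies the limit of $s$ with the spacelike Koebe net, its vertex spheres being precisely those meeting the unit sphere orthogonally. By construction $\tfrac1\epsilon s^*$ is then Christoffel dual to this Koebe net, i.e.\ an $S_1$-maximal surface in the sense of Section~\ref{sec:Rto_discrete_cmc_surfaces_as_deformations_of_minimal_surfaces}, which recovers the description given there. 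The main obstacle is the dual-radius limit: one must control the $0/0$ expansion uniformly over all white vertices and verify that the ring-pattern variables $\gamma_v(q)$ indeed converge to those of the limiting circle pattern — and it is precisely here that the convexity of $S_{hyp}$ does the work that rigidity did in the Euclidean argument.
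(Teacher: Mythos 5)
Your proposal is correct and follows essentially the same route as the paper: degenerate the Jacobi elliptic functions in the radii \eqref{eq:Rto_radii} as $q\to 1$, extract the $O(\epsilon)$ rate of the dual radii from the identity $\dn^2-q^2\cn^2=1-q^2$ (the paper uses it in the form $(\dn-q\cn)(\dn+q\cn)=\epsilon(1+q)$, you in the equivalent form $(\dn-\cn)(\dn+\cn)=(1-q^2)\sn^2$), and conclude that $s$ tends to the spacelike Koebe net while $\tfrac1\epsilon s^*$ tends to its Christoffel dual. The extra justification you give for the existence and convergence of the family $\gamma_v(q)$ via convexity of $S_{hyp}$ is consistent with the paper, which folds that step into the hypothesis and the reference to Theorem~\ref{Thm:Rto_bdy_problem}.
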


\begin{proof}
	 To analyze the behavior of the surfaces, we examine how the vertex sphere radii \eqref{eq:Rto_radii} evolve in the limit $\epsilon \rightarrow 0$. The procedure is analogous to the one in the proof of Theorem \ref{Thm:Rt_limit}. Due to
	\begin{align}
		\label{eq:Rto_identity}
		(\dn- q \cn)(\dn+ q \cn)= \epsilon(1+q), 
	\end{align} and the approximation \eqref{eq:Rt_cn_dn_approx}, 
	 we find
	\begin{align}
		\label{eq:Rto_identity2}
		a_{\dn} - a_{\dn} = \cosh(\gamma_v),
	\end{align}
where $a_{\cn}:= -\frac{\partial}{ \partial q}\cn(\beta_v, 1)$ and $a_{\dn}:= -\frac{\partial}{ \partial q}\dn(\beta_v, 1)$.
	With this, we observe that the dual radii approach
	\begin{equation*}
		%\label{eq:Rto_limit}
		\begin{aligned} \frac{1}{2} \cosh(\gamma_v),
		\end{aligned}
	\end{equation*}
	while the primal radii go to infinity,
	\begin{align*} 
		&
		\frac{1}{\epsilon}
		\frac{1}{\cosh(\gamma_v)}.
	\end{align*}
	The primal radii remain finite for the normalized surface $s=\epsilon\frac{1}{\epsilon}s$.
	 Following the same reasoning as in Theorem \ref{Thm:Rt_limit}, the two surfaces with vertex sphere radii $\cosh(\gamma_v)$ and $\frac{1}{\cosh(\gamma_v)}$ indeed form a spacelike Koebe net and an S$_1$-maximal surface, respectively.
\end{proof}

\begin{remark}
In the limit we obtain spacelike Koebe nets. Through a projective transformation, the spacelike unit sphere can be mapped to the unit sphere in $\Rt$. Since Koebe nets are invariant under projective transformations, the image of a spacelike Koebe net under this projective transformation is a Koebe net whose edges are tangent to one half of the unit sphere in $\Rt$. Conversely, by first applying a projective transformation that maps a Koebe net in $\Rt$ to tangent to half of the unit sphere in $\Rt$, and then applying a transformation that maps the unit sphere of $\Rt$ to the unit sphere in $\Rto$, we obtain a spacelike Koebe net tangent to the upper half of the unit sphere in $\Rto$. Thus, the difference between the treatment of Euclidean minimal surfaces and Lorentz maximal surfaces lies only in the handling of boundary conditions.
\end{remark}

\section{Examples of cmc surfaces in $\Rto$}
\label{sec:Rto_examples}

By Theorem \ref{Thm:Rto_boundary_to_cmc}, we can construct S$_1$-cmc surfaces in $\Rto$ from any given Neumann or Dirichlet boundary data. 

In Figure \ref{Fig:Rto_Example_1} we present two examples: an S$_1$-maximal surface with corresponding hyperbolic orthogonal circle pattern, and an
S$_1$-cmc surface with corresponding hyperbolic orthogonal ring pattern. Both surfaces are constructed using the same boundary data, specifically chosen so that the centers of the boundary circles or rings lie on a hyperbolic quadrilateral with ideal vertices.

\begin{figure}[h]
	\begin{minipage}{.42\linewidth}
		\includegraphics[width=\linewidth]{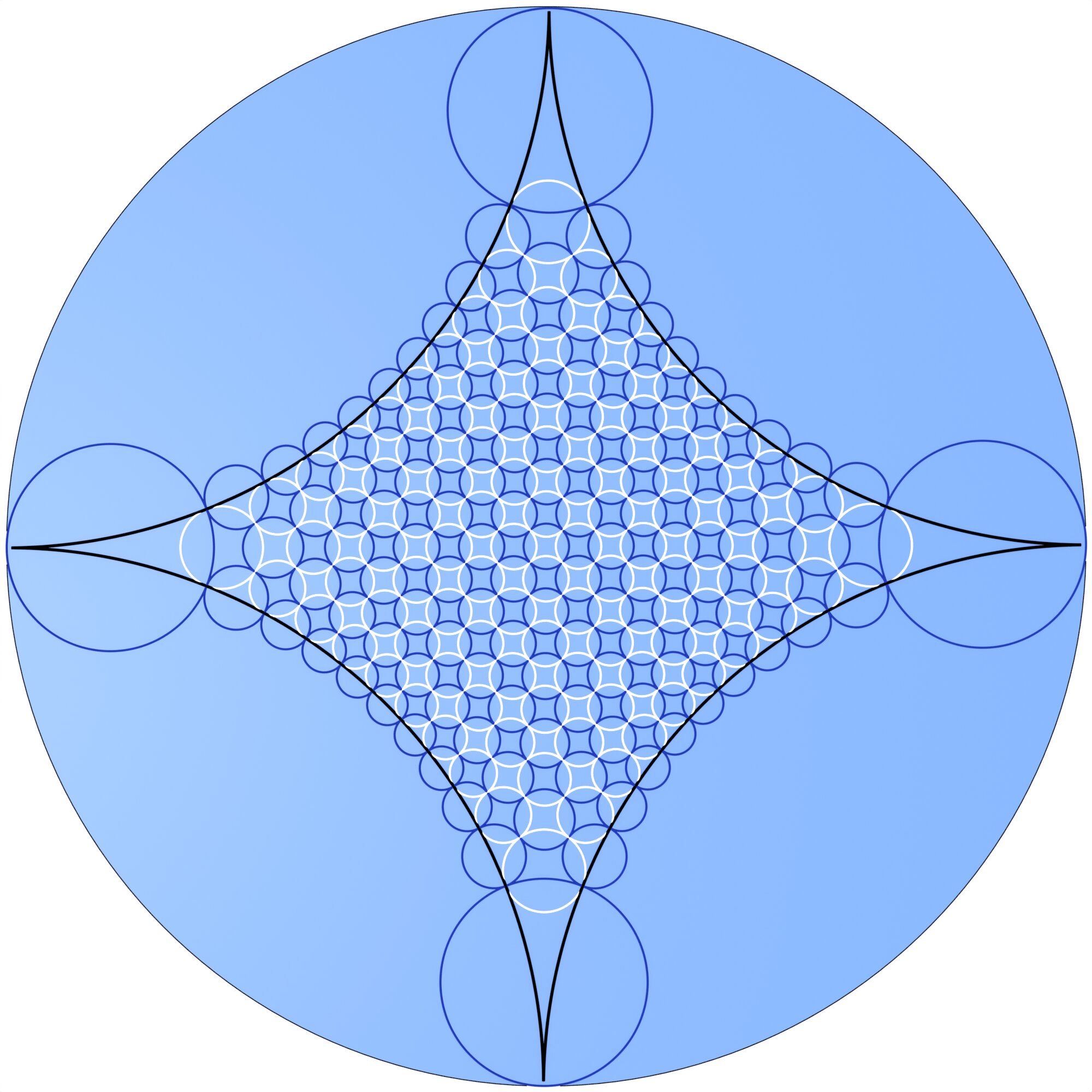}
	\end{minipage}
	\hspace{.2cm}
	\begin{minipage}{.55\linewidth}
		\includegraphics[width=\linewidth]{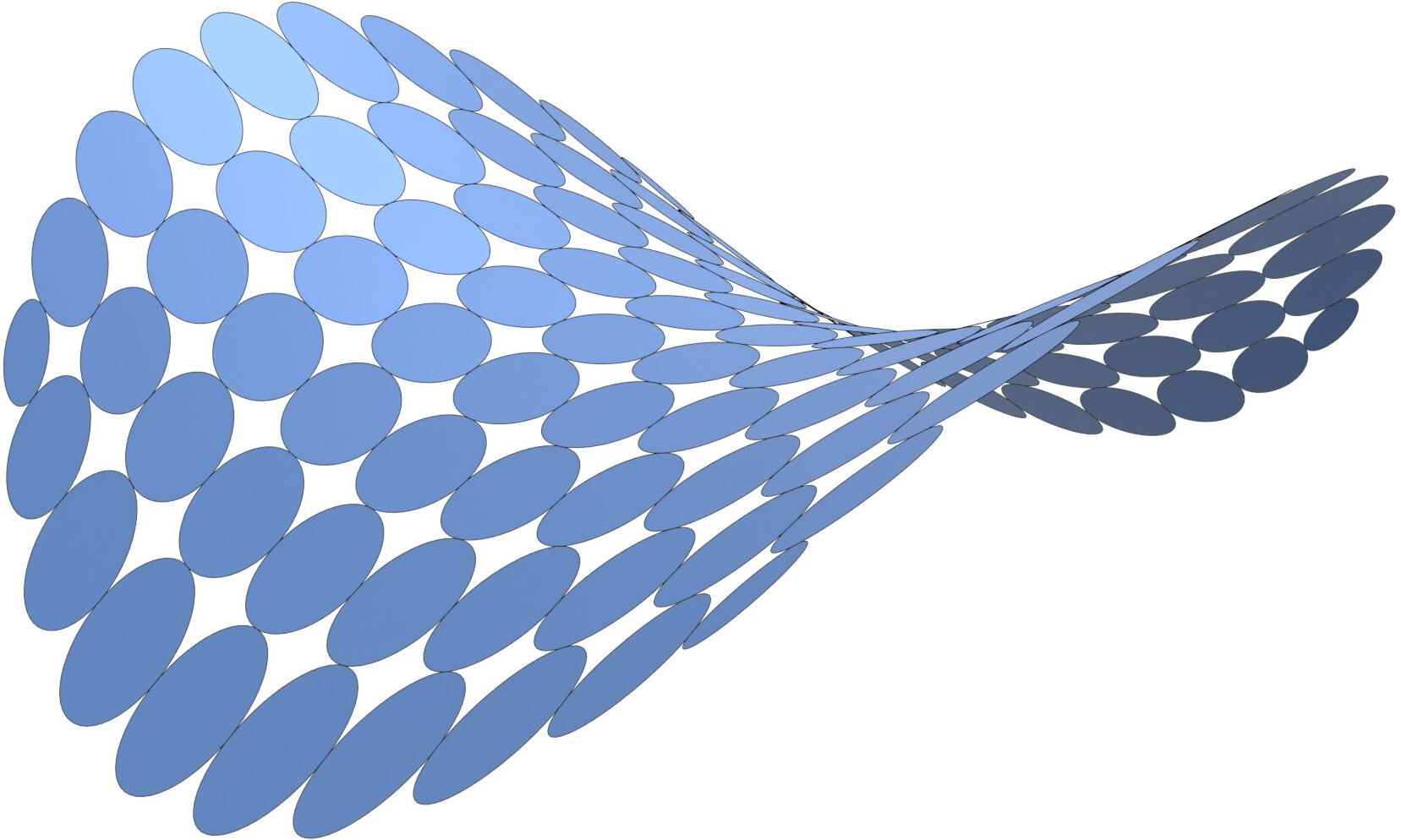}
	\end{minipage}
	
	\vspace{.5cm}
	
	\begin{minipage}{.42\linewidth}
		\includegraphics[width=\linewidth]{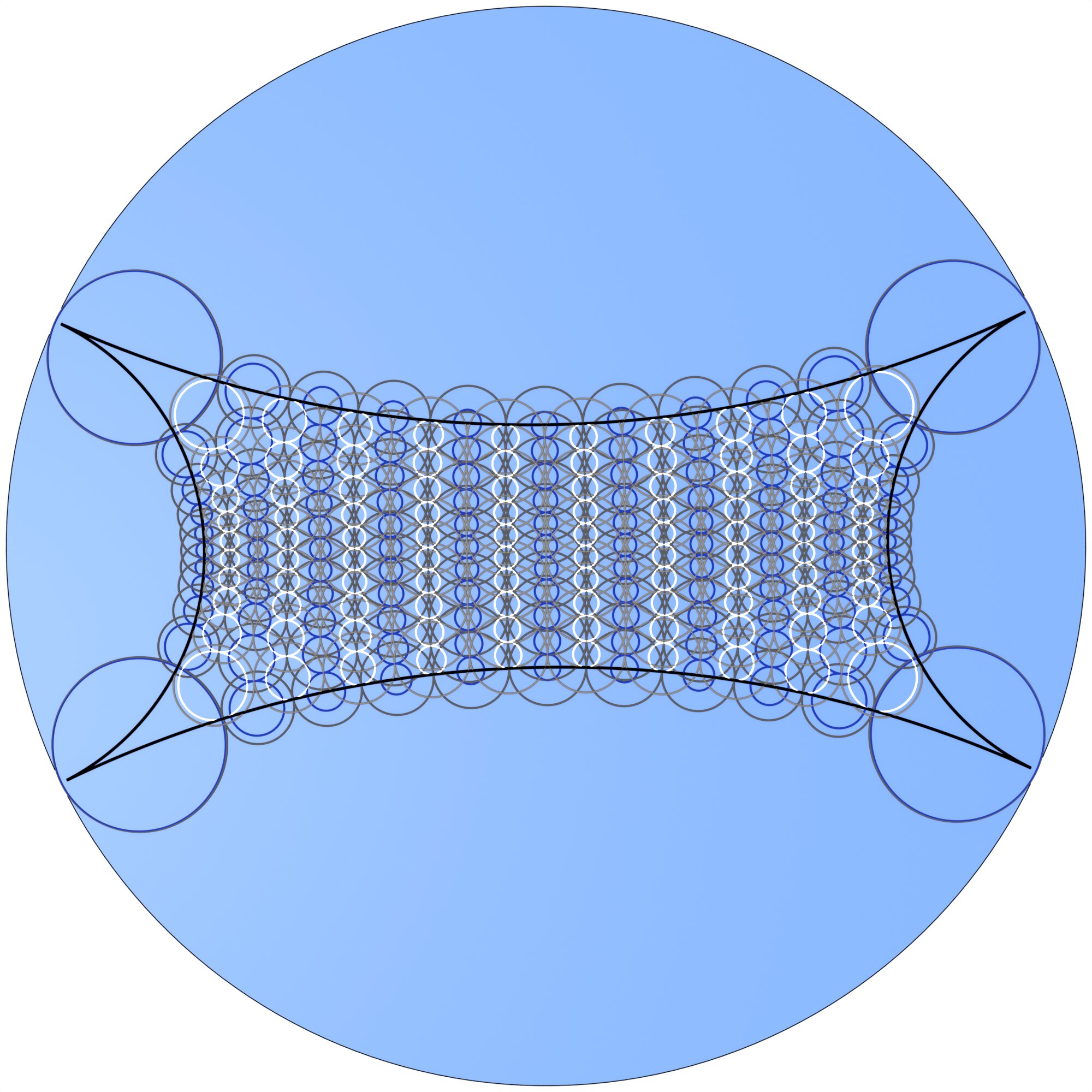}
	\end{minipage}
	\hspace{.2cm}
	\begin{minipage}{.55\linewidth}
		\includegraphics[width=\linewidth]{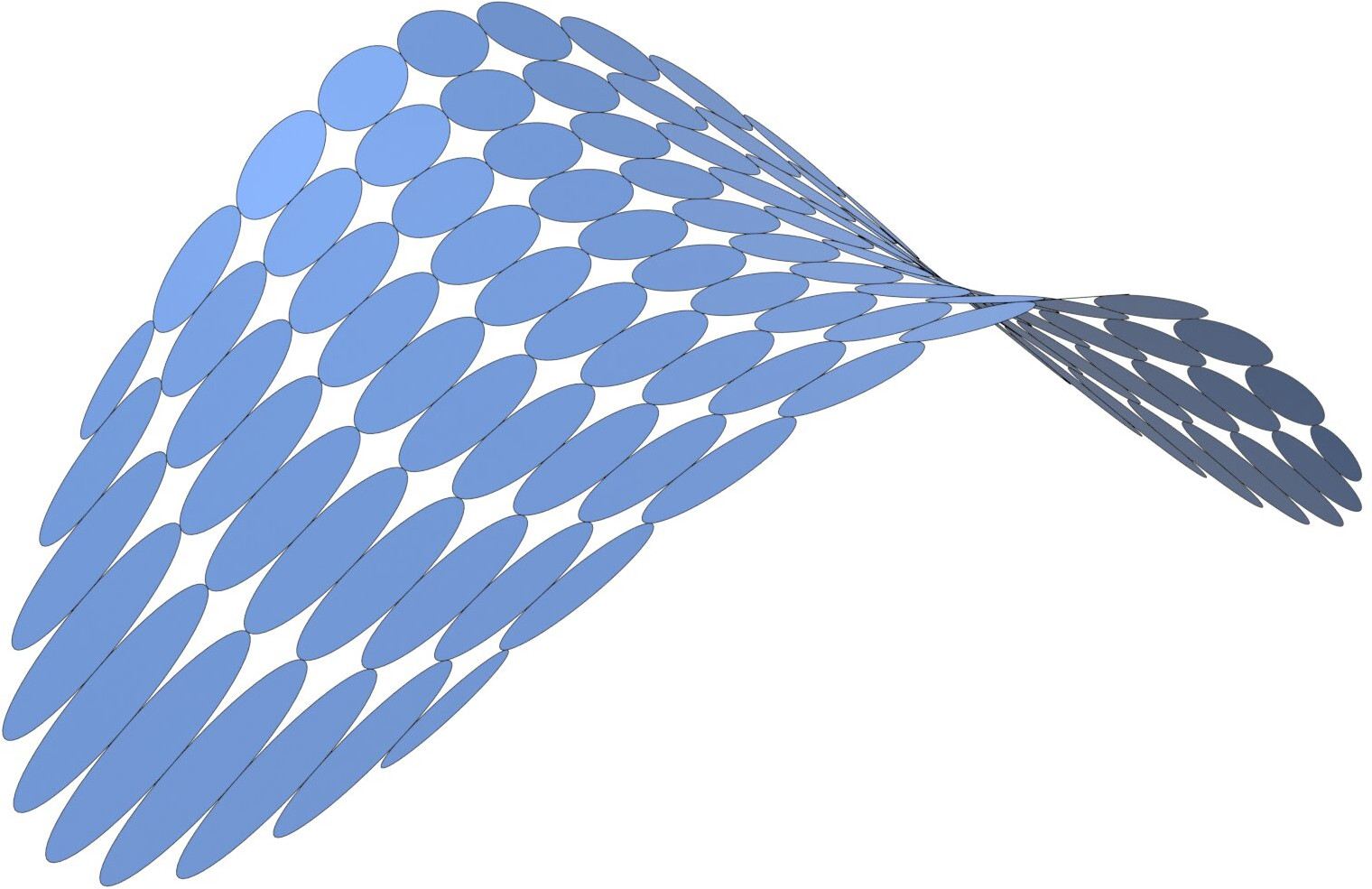}
	\end{minipage}
	
	\caption{A hyperbolic orthogonal circle pattern ($q = .99999$) and the corresponding S$_1$-minimal surface (top) and a hyperbolic orthogonal ring pattern ($q=0.99$) with corresponding S$_1$-cmc surface. The patterns are constructed using identical Neumann boundary data. The prescribed angles are approximately zero at the corners and $\pi$ at the white boundary vertices which lead to hyperbolic quadrilaterals with ideal vertices. }
	\label{Fig:Rto_Example_1}
\end{figure}

\setlength{\itemsep}{1cm}

\bibliographystyle{acm}
\bibliography{cmc_discrete}

\end{document}